\theoremstyle{plain}
\newtheorem{Theorem}{Theorem}[section]
\newtheorem{lemma}[Theorem]{Lemma}
\newtheorem{Proposition}[Theorem]{Proposition}
\newtheorem{Corollary}[Theorem]{Corollary}
\theoremstyle{definition}
\newtheorem{Definition}[Theorem]{Definition}
\theoremstyle{remark}
\newtheorem{Remark}[Theorem]{Remark}
\newtheorem{Example}[Theorem]{Example}
\newcommand{\s}{\sigma}
\newcommand{\ag}{{\mathfrak{a}}}
\newcommand{\gn}{{\mathfrak{n}}}
\renewcommand{\ng}{{\mathfrak{n}}}
\newcommand{\gp}{{\mathfrak{p}}}
\newcommand{\gP}{{\mathfrak{P}}}
\newcommand{\Bcal}{{\mathcal B}}
\newcommand{\Dcal}{{\mathcal D}}
\newcommand{\Gcal}{{\mathcal G}}
\newcommand{\Lcal}{{\mathcal L}}
\newcommand{\Mcal}{{\mathcal M}}
\newcommand{\Ocal}{{\mathcal O}}
\newcommand{\Pcal}{{\mathcal P}}
\newcommand{\Scal}{{\mathcal S}}
\newcommand{\Tcal}{{\mathcal T}}
\newcommand{\N}{{\mathbb{N}}}
\newcommand{\Z}{{\mathbb{Z}}}
\newcommand{\Q}{{\mathbb{Q}}}
\newcommand{\C}{{\mathbb{C}}}
\newcommand{\F}{\mathbb{F}}
\newcommand{\Aut}{\operatorname{Aut}}
\newcommand{\Div}{\operatorname{Div}}
\newcommand{\End}{\operatorname{End}}
\newcommand{\Gal}{\operatorname{Gal}}
\newcommand{\GL}{\operatorname{GL}}
\newcommand{\SL}{\operatorname{SL}}
\newcommand{\Id}{\operatorname{Id}}
\newcommand{\ord}{\operatorname{ord}}
\newcommand{\Hom}{\operatorname{Hom}}
\newcommand{\gr}{\operatorname{gr}}
\renewcommand{\Re}{\operatorname{Re}}
\newcommand{\Jac}{\operatorname{Jac}}
\newcommand{\Pic}{\operatorname{Pic}}
\newcommand{\Tr}{\operatorname{Tr}}
\renewcommand{\mod}{\, \operatorname{mod} \,}
\newcommand{\Sel}{\operatorname{Sel}}
\newcommand{\Ker}{\operatorname{Ker}}
\newcommand{\loc}{\operatorname{loc}}
\newcommand{\Br}{\operatorname{Br}}
\newcommand{\Frob}{\operatorname{Frob}}
\newcommand{\Cl}{\operatorname{Cl}}
\newcommand{\lie}{\mathrm{Lie}}
\newcommand{\mt}{\mapsto}	
\newcommand{\lmt}{\longmapsto}
\newcommand{\ra}{\rightarrow}
\newcommand{\lra}{\longrightarrow}
\newcommand{\new}{\mathrm{new}}
\newcommand{\ab}{\mathrm{ab}}
\DeclareMathOperator{\rank}{rank}
\DeclareMathOperator{\sym}{Sym}
\newcommand{\dR}{\textrm{dR}}
\DeclareMathOperator{\red}{red}
\theoremstyle{remark}
\DeclareMathOperator{\ext}{Ext}
\DeclareMathOperator{\tr}{tr}
\DeclareMathOperator{\NS}{NS}
\newcommand{\et}{\mathrm{\acute{e}t}}
\DeclareMathOperator{\rk}{rk}
\DeclareMathOperator{\ns}{ns}
\newcommand{\CH}{\mathrm{CH}}
\DeclareMathOperator{\Coker}{Coker}
\newcommand{\AJ}{\mathrm{AJ}}
\DeclareFontFamily{U}{wncy}{}
\DeclareFontShape{U}{wncy}{m}{n}{<->wncyr10}{}
\DeclareSymbolFont{mcy}{U}{wncy}{m}{n}
\DeclareMathSymbol{\Sha}{\mathord}{mcy}{"58} 
\newcommand{\Qb}{\overline{\Q}}
\newcommand{\GalQ}{{\Gal(\Qb / \Q)}}
\newcommand{\spl}{\mathrm{s}}
\newcommand{\fonction}[5]{\begin{array}{c|ccl}           
		#1: & #2 & \longrightarrow & #3 \\
		& #4 & \longmapsto & #5 \end{array}}
\newcommand{\fonctionsansnom}[4]{\begin{array}{ccl}      
		#1 & \lra & #2 \\\
		#3 & \lmt & #4 
\end{array}}
\title{Quadratic Chabauty for modular curves and modular forms of rank one}
\author{Netan Dogra and Samuel Le Fourn\footnote{Supported  by the European Union’s Horizon 2020 research and programme under the Marie Sklodowska-Curie grant agreement No 793646, titled LowDegModCurve.}}
\begin{document}
	\maketitle
	
	\begin{abstract}
		In this paper, we provide refined sufficient conditions for the quadratic Chabauty method to produce a finite set of points, with the conditions on the rank of the Jacobian replaced by conditions on the rank of a quotient of the Jacobian plus an associated space of Chow--Heegner points.  We then apply this condition to prove the finiteness of this set for any modular curves $X_0^+(N)$ and $X_{\rm{ns}}^+(N)$ of genus at least 2 with $N$ prime.  The proof relies on the existence of a quotient of their Jacobians whose Mordell--Weil rank is equal to its dimension (and at least 2), which is proven via analytic estimates for orders of vanishing of L-functions of modular forms, thanks to a Kolyvagin-Logachev type result.
	\end{abstract}

	\tableofcontents
\section{Introduction}

The Chabauty--Kim method is a method for determining the set $X(\Q )$ of rational points of a curve $X$ over $\Q$ of genus bigger than 1. The idea is to locate $X(\Q )$ inside $X(\Q _p )$ by finding an obstruction to a $p$-adic point being global. The method developed in \cite{Kim05}, \cite{Kim09} produces a tower of obstructions
\[
X(\Q _p )\supset X(\Q _p )_1 \supset X(\Q _p )_2 \supset \ldots \supset X(\Q )
\]
In \cite{BDCKW}, it is conjectured that $X(\Q _p )_n =X(\Q )$  for all $n\gg 0$, and in \cite{Kim09} it is proved that standard conjectures in arithmetic geometry imply $X(\Q _p )_n $ is finite for all $n\gg 0$, but in general these results are not known.

The first obstruction set $X(\Q _p )_1$ is the one produced by Chabauty's method. In situations when $X(\Q _p )_1 $ is finite, it can often be used to determine $X(\Q )$.

The main results of this paper concern the finiteness of the Chabauty--Kim set  $X(\Q _p )_2 $ when $X$ is one of the modular curves $X_{\ns }^+ (N)$ or $X_0 ^+ (N)$ ($N$ a prime different from $p$), whose definition and properties we now recall briefly (more details are given in \S \ref{sectionfunctoriality}).

The curve $X_0 ^+ (N)$ is the quotient of $X_0 (N)$ by the Atkin--Lehner involution $w_N$. 
The curve $X_{\ns }^+ (N)$ is the quotient of $X(N)$ by the normalizer of a nonsplit Cartan subgroup. Determining the rational points of $X_{\ns }^+ (N)$ would resolve Serre's uniformity question \cite{Serre71}: is there an $N_0$ such that, for all $N>N_0$ and all elliptic curves $E$ defined over $\Q$ without complex multiplication, the mod $N$ Galois representation
\[
\rho _{E,N}:\Gal (\overline{\Q } / \Q )\to \Aut (E[N])
\]
is surjective? The Borel and normalizer of split Cartan subgroups of Serre's uniformity question have been given a positive answer respectively in \cite{Mazur77} and \cite{BiluParent11}.
 
As is explained in \S \ref{sectionfunctoriality}, in contrast to $X_0 (N)$ and $X_{\spl }^+ (N)$, for $X=X_0 ^+ (N)$ or $X=X_{\ns }^+ (N)$, it is expected that $X(\Q _p )_1$ is \textit{infinite}. 
 The main result of this paper is that we do obtain a finite set by refining the obstruction to `depth two'.
\begin{Theorem}
	\label{thm1}
	
	\hspace*{\fill}
	
	\begin{enumerate}
		\item For all prime $N$ such that $g(X_0 ^+ (N)) \geq 2$, $X_0 ^+ (N)(\Q _p )_2 $ is finite for any $p\neq N$.
		\item For all prime $N$ such that $g(X_{\ns }^+ (N)) \geq 2$ and $X_{\ns }^+ (N)(\Q )\neq \emptyset$, $X_{\ns }^+ (N)(\Q _p )_2 $ is finite for any $p\neq N$.
	\end{enumerate}
\end{Theorem}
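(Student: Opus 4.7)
The plan is to reduce the finiteness of $X(\Q_p)_2$ for $X=X_0^+(N)$ or $X_{\ns}^+(N)$ to a rank condition on a small isotypic quotient of $\Jac(X)$ via the refined sufficient condition for quadratic Chabauty announced in the abstract. Concretely, it suffices to exhibit a non-trivial $\Q$-simple quotient $\pi \colon \Jac(X) \to A$ defined over $\Q$ with $\dim A \geq 2$ and $\rank A(\Q) = \dim A$, the associated Chow--Heegner correction being controllable because for an isotypic factor the extra rank is absorbed by the Néron--Severi contribution of $A$.

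For $X=X_0^+(N)$, the Jacobian is isogenous to the $(+1)$-eigenpart of $J_0(N)$ under the Atkin--Lehner involution $w_N$, which decomposes up to isogeny into $\Q$-simple factors $A_f$ indexed by Galois conjugacy classes of newforms $f \in S_2(\Gamma_0(N))^{\new}$ with $w_N(f)=f$. Since $N$ is prime the functional equation forces the root number $\epsilon(f)=-w_N(f)=-1$ for all such $f$, so $L(f,1)=0$ automatically. For $X=X_{\ns}^+(N)$, one uses the Chen--Edixhoven type isogeny identifying $\Jac(X)$ up to isogeny with the factor of $J_0(N^2)^{\new}$ cut out by a local condition at $N$ which distinguishes the nonsplit Cartan component from the Borel and split Cartan components, and which likewise decomposes into $A_f$'s attached to newforms of level $N^2$. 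In either case one is reduced to producing a newform $f$ in the relevant space satisfying (i) $\dim A_f \geq 2$ and (ii) $L'(f,1) \neq 0$: then the Gross--Zagier formula on a suitable Heegner point combined with the Kolyvagin--Logachev theorem for modular abelian varieties gives $\rank A_f(\Q)=\dim A_f$, as required. The hypothesis $X_{\ns}^+(N)(\Q) \neq \emptyset$ in the second case furnishes the base point needed to form the Abel--Jacobi map landing in $A_f$ and to run the Chabauty--Kim machine.

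The main obstacle is establishing, uniformly over all primes $N$ with $g(X) \geq 2$, the existence of such a newform. Here one invokes analytic non-vanishing estimates for first derivatives of $L$-functions in the weight-$2$ new-family of level $N$ (respectively $N^2$), of the type proved by Iwaniec--Sarnak and Kowalski--Michel--Vanderkam, which give a positive proportion of newforms with prescribed root number and $L'(f,1)\neq 0$. These are combined with the fact that the number of newforms with rational Hecke eigenvalues (equivalently, with $\dim A_f=1$) in $S_2(\Gamma_0(N))^{\new}$ grows much more slowly than the full dimension of the new subspace, ensuring that for $N$ large enough at least one $f$ with the right Atkin--Lehner sign satisfies both $L'(f,1)\neq 0$ and $\dim A_f \geq 2$. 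Finitely many small primes left over by these asymptotic arguments are settled by direct inspection of tables of modular forms. Plugging the resulting quotient $A=A_f$ back into the refined quadratic Chabauty criterion then concludes the proof.
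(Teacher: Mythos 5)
The central gap is that you have misidentified the sufficient condition for the refined quadratic Chabauty method, and as a result your choice of quotient $A$ and your sketch of why the correction term is "controllable" do not work.

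The condition needed (the paper's condition $(C)$, Proposition~\ref{easy}) is $\rk(A) < \dim(A) + \rk(\Ker(\theta_{X,\pi_A,\pi_B}))$, where $\theta_{X,\pi_A,\pi_B}$ is the Chow--Heegner homomorphism from $\Ker(d_{\pi_A})\subset \NS(A)$ into $B(\Q)\otimes\Q$. You replace this with the claim that it suffices to find a $\Q$-simple $A=A_f$ with $\dim A_f\geq 2$ and $\rk A_f=\dim A_f$, because ``the associated Chow--Heegner correction is controllable since for an isotypic factor the extra rank is absorbed by the Néron--Severi contribution of $A$.'' This is precisely the conflation the paper warns against: having $\rho(A)-1\geq 1$ gives only the \emph{naive} inequality $\rk(A) < \dim(A) + \rho(A) - 1$, which is in general strictly weaker than $(C)$. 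Nothing about $A$ being isotypic or $\Q$-simple forces the map $\theta_{X,\pi_A,\pi_B}$ to have nontrivial kernel; in fact the paper stresses (Example~\ref{not_invariant} and the remark preceding Proposition~\ref{propmodcurvessatisfyH}) that $\rk(\Ker\theta)$ is not even an invariant of the pair $(A,B)$ and depends on the choice of isogeny. The whole content of Section~4 of the paper is devoted to proving that $\theta$ vanishes identically, and this is where the actual work lies.

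Moreover, your choice of $A$ as a single simple factor $A_f$ is incompatible with the only argument available to kill $\theta$. The paper's argument shows that, for nice correspondences $Z$, the Chow--Heegner point $D_Z(b)$ is supported on Heegner points plus cusps (Lemmas~\ref{diagonalHeegner} and~\ref{ns_analogue}), and the generalized Gross--Zagier formula then says these project to torsion in any factor $A_g$ with $L'(g,1)=0$. For this to yield $\theta_{X,\pi_A,\pi_B}=0$ one must take $B$ to consist \emph{exactly} of the $A_g$ with $L'(g,1)=0$, and hence $A$ must be the full ``Heegner quotient'' $\prod_{L'(f,1)\neq 0}A_f$ (Definition~\ref{defiHeegnerquotient}). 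If instead you take $A=A_{f_0}$ for a single $f_0$, the complementary factor $B$ will generically contain other $A_g$ with $L'(g,1)\neq 0$, on which the projected Heegner points are nontorsion, and the argument for $\theta=0$ collapses. Finally, this shift of quotient also changes the required analytic input: the paper only needs two distinct newforms in $S_2(\Gamma_0(M))^{+,\mathrm{new}}$ with $L'(f,1)\neq 0$ (Theorem~\ref{mainthm}, which already guarantees $\dim A\geq 2$ for the Heegner quotient since two Galois orbits contribute), whereas your plan requires a single such $f$ that additionally satisfies $\dim A_f\geq 2$; making \emph{that} effective adds the extra moving piece of controlling rational newforms, which the paper deliberately avoids via its "ratio trick" (Lemma~\ref{lemtrickratio}).
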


\begin{Remark}
	\hspace*{\fill}
	\begin{itemize}
		\item 	For all primes $N$ for which one of the curves $X$ above has genus 0 or 1, $X(\Q)$ is infinite so this is the sharpest finiteness result for $X(\Q_p)_2$ one can expect.
		
		\item The only reason for the assumption that $X_{\ns }^+ (N)(\Q )$ is nonempty is that the definition of $X(\Q _p )_2 $ currently assumes that $X$ has a rational point (if Serre's uniformity question has a positive answer, then there are infinitely many $N$ for which $X_{\ns }^+ (N)(\Q )$ is empty). One can modify the definition of $X(\Q _p )_2 $ - for example in a similar manner to \cite{hain} - to remove this assumption, and then $X_{\ns }^+ (N)(\Q _p )_2 $ will be finite whenever the genus of $X_{\ns }^+ (N)$ is greater than 1. As this involves several techniques not relevant to the proof of Theorem \ref{thm1}, we do not pursue this point in this paper.
		
		\item Finally, results of \cite{BalakrishnanDogra2}, together with Edixhoven and Parent's explicit models for $X_{\ns }(N)$ \cite{edixhoven2019semistable}, allow to deduce from our result an explicit bound (polynomial in $N$) on the number of rational points on $X_0^+(N)$ and $X_{\ns }^+ (N)$, which we do in \S \ref{subseceffectivity}.
	\end{itemize}

\end{Remark}

As alluded to above, one can often use finiteness of $X(\Q _p )_n $ to determine $X(\Q )$ explicitly. 
The first motivation of this paper stems from the explicit determination of $X_{\rm{ns}}^+(13)(\Q)$ (starting from the finiteness of $X_{\rm{ns}}^+(13)(\Q_p)_2$) in \cite{Balakrishnanetc}. The finiteness of $X(\Q _p )_2 $ has also been used recently to determine the rational points of $X_0 ^+ (N)$ whenever it has genus two (in forthcoming work of Best--Bianchi--Triantafillou--Vonk) or genus three (in forthcoming work of Balakrishnan--Dogra--M\"uller--Tuitman--Vonk).

The proof of Theorem \ref{thm1} proceeds along the lines of the so-called \enquote{quadratic Chabauty method}, which requires a precise inequality (namely \eqref{eqQC}) in terms of invariants of the Jacobian $J$ of $X$ to hold (see \S \ref{subsecChowHeegner}). This inequality is expected to hold asymptotically for $X=X_0 ^+ (N)$ or $X=X_{\ns }^+ (N)$ conditionally on Birch and Swinnerton--Dyer conjecture (see \S  \ref{subsec_Samir_result}), but looks out of reach unconditionally for $N$ in noncomputable range. There are thus two important steps obtained in the proof of Theorem \ref{thm1}:
\begin{itemize}
	\item[$\bullet$] For $p$ a prime of good reduction of a smooth projective geometrically irreducible curve $X$ over $\Q$ with $X(\Q) \neq \emptyset$, $X(\Q_p)_2$ is finite under the condition that a similar inequality to \eqref{eqQC} holds not for $J$ but a quotient abelian variety $A$ of $J$, \textit{and} under an additional hypothesis $(C)$ on $X,J,A$.
	\item[$\bullet$] For $X=X_0^+(N)$ or $X=X_{\ns }^+ (N)$, there is an abelian variety of $A$ satisfying \eqref{eqQC} and such that $X,J,A$ satisfy $(C)$, if for $M=N$ (resp. $N^2$) there are two distinct normalised eigenforms $f \in S_2 (\Gamma _0 (M))^{+,\textrm{new}} $ such that $L'(f,1) \neq 0$.
\end{itemize}

The final input in the proof of Theorem \ref{thm1} is the following Theorem.

\begin{Theorem}\label{mainthm}
	For all $M=N$ or $N^2$ with $N$ prime, if the space $S_2 (\Gamma _0 (M))^{+,\rm{new}} $ is of dimension at least two, it contains two distinct normalised newforms $f$ such that $L'(f,1) \neq 0$.
\end{Theorem}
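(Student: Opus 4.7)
Write $d := \dim S_2(\Gamma_0(M))^{+,\new}$, the $+$ denoting the $+1$-eigenspace of the Atkin--Lehner involution $w_M$; we are given $d \geq 2$. For any newform $f$ in this eigenspace the sign of the functional equation of $L(f,s)$ is $-1$, so $L(f,1)=0$ automatically, and the question reduces to exhibiting at least two such $f$ with $L'(f,1)\neq 0$. The plan is to apply the moment method to the family of normalised newforms in $S_2(\Gamma_0(M))^{+,\new}$, upgraded by mollification, to show that a positive proportion of them have $L'(f,1)\neq 0$.

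\textbf{Moments and Cauchy--Schwarz.} Concretely, one estimates
\[ M_1 := \sum_f L'(f,1), \qquad M_2 := \sum_f L'(f,1)^2, \]
and applies Cauchy--Schwarz to get $\#\{f : L'(f,1)\neq 0\} \geq M_1^2/M_2$. Both moments are accessible via the approximate functional equation at $s=1$ combined with the Petersson trace formula: the projector $(1+w_M)/2$ restricts to the $+$-eigenspace (its action on Fourier coefficients is explicit since $M = N$ or $N^2$ is a prime power), and newness is enforced by inclusion--exclusion over divisors of $M$ (trivial at level $N$, essentially three terms at level $N^2$). Following Iwaniec--Sarnak, Kowalski--Michel, and Vanderkam, one obtains bounds of the shape $M_1 \gg d\log M$ and $M_2 \ll d(\log M)^3$, giving $\#\{f : L'(f,1)\neq 0\} \gg d/\log M$; mollifying the first moment with a short Dirichlet polynomial $x_f = \sum_{n\leq M^\theta} b_n a_n(f)/\sqrt{n}$ upgrades this to a positive proportion of non-vanishing, yielding at least two such $f$ once $d$ exceeds an effective threshold $d_0$.

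\textbf{Small levels and main obstacle.} For the finitely many $M = N$ or $N^2$ with $2 \leq d < d_0$, one computes $L'(f,1)$ numerically to a few digits via modular symbols (in Magma or Pari/GP, or drawing on the LMFDB) and checks non-vanishing case by case. The difficult part is making $d_0$ small: the lower bound on the mollified $M_1$ is delicate because its main term comes from ``$\log p$'' contributions that are only logarithmically larger than the diagonal of $M_2$, so a careful choice of mollifier coefficients is needed to push the Cauchy--Schwarz ratio above $2$ throughout the relevant range. A secondary subtlety arises at level $N^2$, where the new part is substantially smaller than the full space and the oldform contributions to the inclusion--exclusion must be controlled with sharper error terms than at prime level.
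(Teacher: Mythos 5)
Your strategy (first and second moments, Cauchy--Schwarz, mollification, following Iwaniec--Sarnak and Kowalski--Michel--Vanderkam) is the standard analytic route to a positive proportion of non-vanishing, and the paper explicitly discusses it --- and deliberately rejects it --- in Remark~\ref{remlemmatrickratio}. The issue is the one you yourself flag as ``the difficult part'': making the resulting threshold $d_0$ (equivalently, the bound on $N$) small enough that the remaining cases can be checked by computer. Second-moment estimates and mollification involve off-diagonal terms, norm estimates for $\|f\|_M^2$, and optimisation of mollifier coefficients, none of which the literature makes effective; the paper's authors ``suspect the effective bounds obtained by following step-by-step the arguments would be huge.'' Worse, for $M=N^2$ (the non-split Cartan case, which is the main motivation here) the paper explicitly notes that ``the square level \dots raises serious technical difficulties for analytic estimates of second moments,'' so it is not merely a matter of degrading constants --- no adequate effective second moment is available in this setting. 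Your remark that the oldform sieve at level $N^2$ can be handled with ``sharper error terms'' therefore papers over a genuine obstruction.

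The paper's actual proof sidesteps all of this by using \emph{only first moments}, together with an arithmetic trick (Lemma~\ref{lemtrickratio}). It estimates $\langle a_1, L'\rangle_M^{+,\new}$ and $\langle a_2, L'\rangle_M^{+,\new}$ via the Petersson trace formula and the formula $L'(f,1)=2\sum_n \frac{a_n(f)}{n}E_1(2\pi n/\sqrt{M})$. If $\langle a_1, L'\rangle_M^{+,\new}\neq 0$ there is at least one newform $f$ with $L'(f,1)\neq 0$; by Gross--Zagier the same then holds for all Galois conjugates of $f$, so the only bad case is when there is a \emph{unique} such $f$ and it has rational coefficients. But then the ratio $\langle a_2,L'\rangle/\langle a_1,L'\rangle$ collapses to $a_2(f)\in\Z$, so showing this ratio lies in $(0,1)$ rules that case out. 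This requires only explicit first-moment estimates (controllable with Weil bounds and then Poly\`a--Vinogradov improvements), an explicit old/new decomposition at level $N^2$ for first moments (Lemma preceding \S\ref{analytic_part}.2), and a numerical check for $N\leq 8641$ (resp.\ $N\leq 151$). Your proposal would give a stronger asymptotic conclusion (positive proportion of non-vanishing) but at the cost of effectivity that, at least at level $N^2$, is not currently attainable; the paper's first-moment-plus-integrality argument is weaker but is precisely engineered to be made explicit, which is what the theorem requires.
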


As explained in Remark \ref{remlemmatrickratio}, this result of nonvanishing is in fact quite weak compared to known or expected asymptotic estimates (giving a positive linear proportion of nonvanishing values) so the main difficulty in the proof of Theorem 2 lies in making such estimates effective enough to prove the result except for small enough $N$ so that the remaining cases can be checked algorithmically.

\subsection{Chow--Heegner points and quadratic Chabauty}
\label{subsecChowHeegner}

In general, $X(\Q _p )_n $ cannot unconditionally be proved to be finite without some assumptions on the Jacobian of $X$ (Kim showed that the Bloch--Kato conjectures imply that $X(\Q _p )_n $ is finite for all $n\gg 0$ \cite[Observation 2]{Kim09}). In the case $n=1$ (which reduces to the classical set-up of Chabauty's method) it is known that a sufficient condition is that
\begin{equation}
\label{eqC}
\rk(J) < \dim (J),
\end{equation} where $\rk(J)$ is the Mordell--Weil rank of $J(\Q)$. The simplest instance extending Chabauty's method when finiteness of $X(\Q _p )_n$ can be proved for $n >1$ is the following Lemma. To state the Lemma, define $J:= \Jac (X)$, and the Picard number $\rho(J)$ is the rank of the Néron--Severi group $\NS (J):=\Pic (J)/\Pic ^0 (J)$. By \cite[Proposition 17.2]{MilneAbVar86}, this is the same as the dimension of the subspace denoted by $\End^\dagger(J)$ of $\End ^0 (J):=\End(J) \otimes \Q$ consisting of endomorphisms that are symmetric, i.e. fixed by the Rosati involution.
\begin{lemma}[\cite{BalakrishnanDogra1}, Lemma 3.2] \label{lemma_NS}

	If 
	\begin{equation}
	\label{eqQC}
	\rk (J) < \dim(J) + \rho (J) - 1,
	\end{equation}
	then $X(\Q _p )_2 $ is finite. In particular, if $\rk (J) = \dim (J)$, then $X(\Q _p )_2 $ is finite whenever $\rho(J)>1$.
\end{lemma}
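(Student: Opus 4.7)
The plan is to apply the Chabauty--Kim method at depth two using a carefully chosen Galois-stable quotient $U$ of the depth-two $\Q_p$-unipotent \'etale fundamental group $U_2^{\mathrm{full}}$ of $X_{\overline{\Q}}$, whose centre captures the N\'eron--Severi group. Concretely, $U_2^{\mathrm{full}}$ fits into a central extension $1\to W \to U_2^{\mathrm{full}}\to V_pJ\to 1$ in which $W$ is a quotient of $\Lambda^2 V_pJ$, and the cup-product pairing $\Lambda^2 V_pJ\to \Q_p(1)$ is given by the class of a polarisation $[\theta]$. Dualising the embedding $\Q_p\cdot[\theta]\hookrightarrow \NS(J)\otimes \Q_p$ and twisting by $\Q_p(1)$ yields a $G_\Q$-equivariant quotient $V$ of $W$ isomorphic to $\Q_p(1)^{\rho(J)-1}$. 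Let $U$ be the resulting quotient of $U_2^{\mathrm{full}}$, so that $U^{\ab}=V_pJ$ and $Z(U)=V$. By Kim's functoriality $X(\Q_p)_2\subseteq X(\Q_p)_U$, so it suffices to prove finiteness of the latter.

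Next I would compute the dimensions of the Bloch--Kato Selmer varieties of $U$. From the central extension $1\to V\to U\to V_pJ\to 1$ one gets a fibration of Selmer varieties; the crystalline formulas (using that $V_pJ$ has Hodge--Tate weights $\{0,1\}$ and $H^1_f(G_{\Q_p},\Q_p(1))\cong \Q_p$) yield
\[
\dim H^1_f(G_{\Q_p},U)=\dim H^1_f(G_{\Q_p},V_pJ)+\dim H^1_f(G_{\Q_p},V)=g+(\rho(J)-1).
\]
On the global side, $H^1_f(G_\Q,\Q_p(1))=0$ by Kummer theory, and the Selmer variety attached to $V_pJ$ has dimension $\rk J$ (using the Kim formulation that identifies the abelian Selmer variety with the image of $J(\Q)\otimes\Q_p$, thereby sidestepping the Tate--Shafarevich group), whence $\dim \Sel_U\leq \rk J$.

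Kim's localisation machinery provides a commutative square comparing $X(\Q)\to X(\Q_p)$ and $\Sel_U\xrightarrow{\loc_p}H^1_f(G_{\Q_p},U)$, in which the right-hand vertical map $j_U$ is a rigid-analytic map with components given by iterated Coleman integrals. The hypothesis $\rk J<\dim J+\rho(J)-1$ forces $\dim \Sel_U<\dim H^1_f(G_{\Q_p},U)$, so $\im(\loc_p)$ is contained in the vanishing locus of a nonzero regular function $\alpha$. Pulling back, $\alpha\circ j_U$ is a rigid-analytic function on $X(\Q_p)$ vanishing on $X(\Q_p)_U$; provided it is not identically zero, its zeros on each residue disc form a finite set, proving finiteness. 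The hard part is precisely this non-vanishing, equivalently the Zariski-density of $\im(j_U)$ in $H^1_f(G_{\Q_p},U)$; as in classical Chabauty this rests on the non-constancy of abelian Coleman integrals on residue discs, now supplemented by a linear-independence property for the depth-two iterated integrals arising from the $\NS(J)$-classes.
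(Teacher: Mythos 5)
Your construction of the quotient $U$ and the dimension counts are the same as the proof the paper (via its citation to \cite{BalakrishnanDogra1}) relies on, so the strategy is correct: $U_2$ is an extension of $V_pJ$ by $W := \Ker\bigl(H^2(J_{\overline\Q},\Q_p) \to H^2(X_{\overline\Q},\Q_p)\bigr)^*$, and the Chern-class embedding of $\Ker(\NS(J) \to \NS(X)) \otimes \Q_p(-1)$ into $W^*$ dualizes to a $G_\Q$-equivariant surjection $W \twoheadrightarrow \Q_p(1)^{\rho(J)-1}$, which gives your $U$. (Your phrase ``dualising the embedding $\Q_p\cdot[\theta]\hookrightarrow\NS(J)\otimes\Q_p$'' is not quite right as written — dualizing that embedding just gives a one-dimensional quotient of $(\NS(J)\otimes\Q_p)^*$; what you want is to embed the rank-$(\rho(J)-1)$ primitive part $\Ker(\NS(J)\to\NS(X))$ into $W^*$ via the cycle class map, then dualize.) The local count $g + \rho(J)-1$ and the global count $\leq \rk J$ are as in \cite[Lemma 3.1]{BalakrishnanDogra1}, which is the lemma the paper actually invokes to conclude.

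The genuine gap is exactly where you flag it: showing $\alpha\circ j_U$ is not identically zero on residue discs. Saying this ``rests on non-constancy of abelian Coleman integrals supplemented by a linear-independence property for depth-two iterated integrals'' is not a proof, and it also glosses over a real subtlety: Kim's Zariski-density theorem for $j_{U_n}^{\mathrm{dR}}$ is stated for the full $U_n$, and you are working with a proper quotient $U$ of $U_2$, so you must either verify that density descends to your $U$ or exhibit the structure of $j_U$ directly (via the algebraic isomorphism $H^1_f(G_{\Q_p},U)\cong U^{\mathrm{dR}}/F^0$ and the explicit iterated-integral form of $j_U^{\mathrm{dR}}$). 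This is exactly what \cite[Lemma 3.1]{BalakrishnanDogra1} packages, and the paper's proof of the more general Proposition~\ref{easy} simply cites it; your sketch, being blind, needs to supply that input rather than gesture at it. Two smaller points to be explicit about: the conditions at the bad primes $T_0$ contribute finite rather than merely zero-dimensional local images, and the $\Sha$ contribution to the global $H^1_f(G_\Q,V_pJ)$ must be absorbed (as you indicate, by taking the Selmer scheme to be cut out so its abelianization maps into the image of $J(\Q)\otimes\Q_p$); both are handled in the cited source but should be stated.
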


By Kolyvagin-Logachev type results due to Nekov\'a\v r and Tian (see Proposition \ref{propGZK} and its Corollary \ref{corKL}), Theorem \ref{mainthm} implies that the Jacobians of $X_0 ^+ (N)$ and $X_{\ns }^+ (N)$, which we will henceforth denote by $J_0 ^+ (N)$ and $J_{\ns }^+ (N)$ respectively, do have $\Q$-isogeny factors $A$ satisfying  $\rk (A) <\dim (A) + \rho (A)-1$, but it seems unattainable to prove unconditionally such a result for the full Jacobian. To deduce Theorem \ref{thm1}, we thus need a `quadratic Chabauty for quotients' result, analogous to the well-known fact that Chabauty's method also works under the relaxed condition $\rk (A)<\dim (A)$, i.e. \eqref{eqC} for an isogeny factor $A$ instead of $J$ (in fact, for modular curves, Mazur--Kamienny's method refines this for factors $A$ such that $\rk(A)=0$, see e.g. \cite{Baker99}).

As explained below, in general such a result seems non-trivial. Fix a basepoint $b\in X(\Q )$, and let $\AJ :X\to J$ be the corresponding Abel-Jacobi map. Let $A,B$ be abelian varieties over $\Q $, satisfying $\Hom _{\Q }(A,B)=0$, and suppose we have a surjection $(\pi _A ,\pi _B ):J \to A \times B$. 

A slight modification denoted by $\widetilde{\AJ}^*$ of the pullback by $\AJ$ (which basically amounts to considering the restriction of $\AJ$ on  symmetric line bundles, see \S \ref{subsecremindersNeronSeveri}) vanishes on $\Pic^0(J)$, so it factors through $\NS(J)$ and $\widetilde{\AJ}^*: \NS(J) \rightarrow \Pic(X)$ will denote this factorisation by abuse of notation. It induces a map 
\begin{equation}
\label{eqdefdpiA}
d_{\pi_A} : \NS(A) \overset{\widetilde{\AJ}^* \circ \pi_A^*}{\longrightarrow} \Pic(X) \overset{\deg}{\rightarrow} \Z
\end{equation}
 and therefore a map
\begin{equation}
\label{eqdeftheta}
\theta _{X,\pi _A ,\pi _B }:\Ker d_{\pi_A} \overset{\widetilde{\AJ}^* \circ \pi_A^*}{\longrightarrow} \Pic^0(X) \longrightarrow J(\Q) \overset{\pi_B \otimes \Q} \longrightarrow B(\Q )\otimes \Q,
\end{equation}
which is called the \textit{Chow-Heegner construction} (see Definition \ref{fundamental_definition} for details).

\begin{Remark}
	\label{remaltdef}
As an alternative definition (useful for the proofs), for any correspondence $Z \subset X\times X$, we can associate a cycle $D_Z (b)\in \Pic^0(X)$ (see \eqref{eqDZb}), and this defines a homomorphism $\NS(X \times X) \rightarrow \Pic^0(X)$ so that the composition
\[
\NS(J) \overset{(\AJ^{(2)})^*}{\longrightarrow} \NS(X \times X) \longrightarrow \Pic^0(X),
\]
where $\AJ^{(2)}:X \times X \rightarrow J$ is defined by $(x,y) \mapsto [x] + [y] - 2[b]$, is equal to $\widetilde{\AJ}^*$ on $(\widetilde{\AJ}^*)^{-1}(\Pic^0(X))$, which then allows to retrieve $\theta_{X,\pi_A,\pi_B}$ on cycles $Z$ coming from $\Ker d_{\pi_A}$.
\end{Remark}

 The `quadratic Chabauty for quotients' result that we prove in this paper says that we can replace $J$ with $A$, but the price we pay is that we replace $\rho(J)-1$ with the rank of $\Ker(\theta _{X,\pi _A ,\pi _B})$, which can be smaller than $\rho(A)-1$.
\begin{Proposition}\label{easy}
	Let $X$ be a curve as above. Suppose $J$ admits an isogeny $(\pi _A ,\pi _B):J\to A\times B$, where $\Hom (A,B)=0$. If 
	\begin{equation}\label{quotient_inequality} \tag{C}
	\rk (A) < \dim (A)+\rk (\Ker (\theta _{X, \pi _A ,\pi _B })),
	\end{equation}
	then $X(\Q _p )_2 $ is finite. 
\end{Proposition}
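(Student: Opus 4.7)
The plan is to adapt the depth-2 Chabauty--Kim machinery of \cite{BalakrishnanDogra1} to the quotient setting: rather than working with the full depth-2 $\Q_p$-unipotent étale fundamental group $U_2$ of $X_{\overline{\Q},b}$, I would build a smaller Galois-equivariant quotient $U$ whose abelianisation is $V_p A$ and whose weight-$(-2)$ graded piece is controlled by $\Ker(\theta_{X,\pi_A,\pi_B})$ rather than by all of $\NS(J)$. Once such $U$ is constructed, the Kim cutter argument---balancing the dimensions of $\Sel(U)$ and of its local de Rham realisation---then gives finiteness of $X(\Q_p)_2$ under \eqref{quotient_inequality}.

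Concretely, writing $r := \rk(\Ker(\theta_{X,\pi_A,\pi_B}))$, I would pick a rank-$r$ subgroup $V \subset \Ker(d_{\pi_A})$ injecting into $\Ker(\theta_{X,\pi_A,\pi_B})\otimes \Q$. Each element $Z \in V$ determines, via $\pi_A^*$, a class in $\NS(J)$, and hence (via the cycle-to-extension formalism recalled in Remark \ref{remaltdef}) a natural extension class in the Galois cohomology of the weight-$(-2)$ piece of $U_2$. That this class lies in $\Pic^0$ after $\widetilde{\AJ}^*$ uses $Z \in \Ker(d_{\pi_A})$; that it becomes trivial after projecting to $V_p B$ uses $Z \in \Ker(\theta_{X,\pi_A,\pi_B})$ together with Faltings' isogeny theorem and $\Hom(A,B)=0$. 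Assembling these $r$ extensions coherently yields a central extension $1 \to \Q_p(1)^r \to U \to V_p A \to 1$ which is naturally a Galois-equivariant quotient of $U_2$.

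On the global side, by running the Selmer analysis of \cite{BalakrishnanDogra1} for $U$ instead of $U_2$, one obtains the bound $\dim \Sel(U) \leq \rk(A) + r$ (the abelian term from $H^1_f(\GalQ, V_p A) = A(\Q)\otimes \Q_p$, and the weight-$(-2)$ term $r$ from the dimension of $V$). On the local side, $H^1_f(\Gal(\overline{\Q}_p/\Q_p), U) \cong U^{\dR}/F^0$ has dimension $\dim(A) + r$ by the Bloch--Kato exponential and the Hodge filtration on the de Rham realisation of $U$. Under \eqref{quotient_inequality} one has $\dim \Sel(U) < \dim H^1_f(\Gal(\overline{\Q}_p/\Q_p), U)$, so the algebraic localisation map between these two affine schemes has non-Zariski-dense image; pulling back a defining equation along the Chabauty--Kim map $X(\Q_p)\to H^1_f(\Gal(\overline{\Q}_p/\Q_p), U)$ yields a nonzero $p$-adic analytic function on $X(\Q_p)$ vanishing on $X(\Q_p)_2$, from which finiteness follows. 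The main obstacle is the construction of $U$: checking that the $r$ cocycles glue coherently into a single unipotent group, that $U$ carries the correct local conditions at primes of bad reduction so that $X(\Q)$ indeed lands in $\Sel(U)$, and that the Bloch--Kato dimension bounds above are tight---which is where the hypotheses $V \subset \Ker(\theta_{X,\pi_A,\pi_B})$ and $\Hom(A,B)=0$ really enter.
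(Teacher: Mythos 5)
Your overall strategy matches the paper's: both proofs manufacture a Galois-stable quotient $U$ of $U_2(b)$ of the shape $1\to\Q_p(1)^r\to U\to V_A\to 1$ with $r=\rk(\Ker\theta_{X,\pi_A,\pi_B})$, and then invoke Kim's dimension-counting criterion for such a $U$ (recorded in the paper as Lemma \ref{dimension_inequality}, i.e.\ \cite[Lemma 3.1]{BalakrishnanDogra1}). So the plan is right. However, the ``main obstacle'' you flag at the end---constructing $U$---is precisely where the content of the proof lies, and your sketch leaves a genuine gap there. You propose to pick $r$ cocycles and ``assemble'' them into a single central extension, worrying about whether they glue. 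In the paper this issue simply does not arise, because $U$ is constructed directly as a quotient of $U_2$: one first pushes out $\gr_2(U_2)$ onto $\Ker(d_{\pi_A})^*\otimes\Q_p(1)$ to get a quotient $U'$, and then shows that the induced extension of Lie algebras $0\to L'\to L''\to V_B\to 0$ admits a Galois-equivariant splitting. The key step, which you omit, is Lemma \ref{DZB_HM} (a consequence of the Hain--Matsumoto theorem): it identifies the extension class of the one-dimensional quotient of $\lie(U_2)$ cut out by a nice correspondence $\xi$ with the \'etale Abel--Jacobi class of $D_{\xi}(b)$. This is what lets one translate the hypothesis $\theta_{X,\pi_A,\pi_B}(\xi)$ torsion into the vanishing of the $V_B$-component of the extension class, and hence into the existence of the desired splitting. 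Your appeal to ``the cycle-to-extension formalism'' via Remark \ref{remaltdef} does not supply this; Remark \ref{remaltdef} only reformulates $\theta$ in terms of $D_Z(b)$, while the needed input is the \emph{fundamental-group--side} identification of the extension class with that cycle class.

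Two further points. First, the invocation of Faltings' isogeny theorem is not needed here: $\Hom(A,B)=0$ is a hypothesis of the proposition, used to make $\theta_{X,\pi_A,\pi_B}$ well-defined and base-point independent (Definition \ref{fundamental_definition}); no comparison of Galois modules is required to kill the $V_B$-component once Lemma \ref{DZB_HM} is available. Second, your dimension count at the end is internally inconsistent: you bound $\dim\Sel(U)\le\rk(A)+r$ and compare against a local dimension $\dim(A)+r$, which would require $\rk(A)<\dim(A)$, not the hypothesis \eqref{quotient_inequality}. The contribution of $\Q_p(1)^r$ to the global Selmer scheme is \emph{not} of size $r$; the precise bookkeeping (with the relevant local conditions) is exactly what \cite[Lemma 3.1]{BalakrishnanDogra1} packages, giving the criterion $\rk(A)<n+\dim(A)$ with $n$ the number of $\Q_p(1)$-factors. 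You should cite that lemma as a black box rather than re-derive it, as the paper does.
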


In the case where $\rk (A)=\dim (A)$ which we will focus on, we can simplify this condition in terms of \textit{nice correspondences}, defined in \S \ref{subsecremindersNeronSeveri}. More precisely, $(\pi_A, \pi_B)$ induces an isomorphism $\End^0(J) \cong \End^0(A) \times \End^0(B)$, and $X(\Q _p )_2 $ is finite whenever there exists a nontrivial nice correspondence $Z$ on $X\times X$ whose corresponding endomorphism of $J$ is zero in $\End^0(B)$, and whose corresponding Chow--Heegner point $D_Z (b) \in \Pic^0(X)$ is torsion when projected to $B$.

\begin{Remark}
	Note that, since $\rk (\Ker (\theta _{X, \pi _A ,\pi _B })) \leq \rho (A) -1$, inequality \eqref{quotient_inequality} implies that $A$ satisfies the naive analogue of Lemma \ref{lemma_NS}
	\begin{equation}\label{inequality_naive}
	\rk (A)< \dim (A)+\rho (A) -1.
	\end{equation}
	However, in general \eqref{quotient_inequality} is strictly stronger than \eqref{inequality_naive}. In fact, the trivial lower bound on $\rk(\Ker(\theta_{X,\pi_A,\pi_B})$ is $\rho(A)-1 - \rk(B)$ and if the latter was positive, it would imply \eqref{eqQC}. This is why Proposition \ref{propmodcurvessatisfyH} looks quite particular to modular curves. Moreover, understanding the rank of $\Ker (\theta _{X,\pi _A ,\pi _B })$ in general seems somewhat subtle - as becomes apparent in Example \ref{not_invariant} and \S \ref{subsecfuncC}, this quantity is not an invariant of the pair $(A,B)$, or even of the triple $(X,A,B)$, and does not seem to behave so well functorially even under quite strong hypotheses. 
Finally, as explained in the first appendix, this quantity is also related to the Gross--Kudla--Schoen cycles constructed in \cite{gross-schoen}.
	
	The following proposition emphasises that in fact, the supplementary condition $(C)$ can always be satisfied for our modular curves.
	
\end{Remark}

\begin{Proposition}
	\label{propmodcurvessatisfyH}
      Let $X=X_0 ^+ (N)$ or $X_{\ns }^+ (N)$, and $J=\Jac (X)$. Assume Theorem \ref{mainthm} holds, and the genus of $X$ is at least two. Then $J$ admit an isogeny $(\pi_A,\pi_B) : J \rightarrow A \times B$ satisfying
\begin{enumerate}
\item 
$\rk(A) = \dim A \geq 2$.
\item $\rho(A)>1$. 
\item $\rk(\Ker(\theta_{X,\pi_A,\pi_B})) =\rho (A)-1$.
\end{enumerate}
\end{Proposition}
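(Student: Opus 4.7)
The plan is to build $A$ as the isogeny quotient of $J$ corresponding to the two newforms supplied by Theorem \ref{mainthm}. Concretely, take $M = N$ in the case $X = X_0^+(N)$ and $M = N^2$ in the case $X = X_{\ns}^+(N)$; the genus hypothesis on $X$ gives $\dim S_2(\Gamma_0(M))^{+,\new} \geq 2$, so Theorem \ref{mainthm} furnishes two distinct normalised newforms $f_1, f_2$ with $L'(f_i,1) \neq 0$. By Corollary \ref{corKL}, the associated modular abelian variety quotients $A_{f_i}$ of $J$ satisfy $\rk A_{f_i} = \dim A_{f_i} \geq 1$. Set $A := A_{f_1} \times A_{f_2}$ via the induced surjection $\pi_A : J \to A$, and let $B$ be an isogeny complement. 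By Faltings' theorem, $\Hom(A_{f_1}, A_{f_2}) = 0$ and $\Hom(A, B) = 0$, so $(\pi_A, \pi_B)$ has the required form, and items (1), (2) follow at once: $\rk A = \dim A \geq 2$, while the absence of cross-homomorphisms gives $\NS(A) \otimes \Q = \bigoplus_{i=1}^{2} \NS(A_{f_i}) \otimes \Q$, whence $\rho(A) \geq 2$.

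The heart of the proof is (3), namely that $\theta := \theta_{X,\pi_A,\pi_B} : \Ker d_{\pi_A} \to B(\Q) \otimes \Q$ has torsion image, so that $\rk \Ker \theta$ attains the a priori upper bound $\rho(A) - 1$. The strategy is Hecke equivariance. The prime-to-$N$ Hecke algebra $\T$ acts on $X$ by correspondences, inducing compatible actions on $\Pic(X)$, $J$ and $\NS(J)$, and preserving the decomposition $J \sim A \times B$. The pullback $\widetilde{AJ}^*$ is basepoint-independent on $\NS(J)$, because a basepoint shift modifies $AJ$ by a translation, which acts trivially on $\NS(J)$, and $\widetilde{AJ}^*$ already vanishes on $\Pic^0(J)$; from this one reads off that $\widetilde{AJ}^* \circ \pi_A^*$ is $\T$-equivariant. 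Combined with the $\T$-equivariance of $\pi_B$, this makes $\theta \otimes \Q$ a morphism of $\T \otimes \Q$-modules. Now $\T \otimes \Q$ acts on $\NS(A) \otimes \Q$ through the quotient $\T_A \otimes \Q \cong K_{f_1} \times K_{f_2}$ (the product of Hecke eigenvalue fields of $f_1, f_2$), while its action on $B(\Q) \otimes \Q$ factors through $\T_B \otimes \Q$, whose simple characters correspond to the newforms appearing in $B$ and are disjoint from $\{f_1, f_2\}$ by strong multiplicity one. Consequently $\Hom_{\T \otimes \Q}(\NS(A) \otimes \Q, B(\Q) \otimes \Q) = 0$, forcing $\theta \otimes \Q = 0$. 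Since $\widetilde{AJ}^* \circ \pi_A^*$ applied to a symmetric ample class on $A$ produces a divisor of positive degree on $X$, the map $d_{\pi_A}$ is nonzero and $\Ker d_{\pi_A}$ has rank $\rho(A) - 1$, yielding (3).

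The main technical obstacle is justifying the Hecke equivariance of the Chow--Heegner construction rigorously: on the nose, $T_\ell \circ AJ$ and $AJ \circ T_\ell$ differ by a translation in $J$, so one must verify that this translation defect is harmlessly absorbed by the normalisation $\widetilde{AJ}^*$. I would handle this by working throughout with the cycle-theoretic description of Remark \ref{remaltdef}: the maps $\NS(J) \to \NS(X \times X)$ and $\NS(X \times X) \to \Pic^0(X)$ are both visibly $\T$-equivariant from the correspondence calculus on $X \times X$, any discrepancy landing in the $\Pic^0(J)$-part that $\widetilde{AJ}^*$ kills, and the disjoint-character argument then applies within the Hecke-module category.
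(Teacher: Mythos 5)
Your proof has a genuine gap at its central step: the claim that $\theta_{X,\pi_A,\pi_B}$ is a morphism of $\T\otimes\Q$-modules (so that disjoint Hecke supports force $\theta \otimes \Q = 0$). This equivariance does not hold, and indeed if it did it would trivialise the entire Darmon--Rotger--Sols theory of Chow--Heegner points that the paper relies on. The problem is not merely the translation defect you flag; it is that there is no natural Hecke-module structure on $\Ker(d_{\pi_A})\subset\NS(A)$ and $B(\Q)\otimes\Q$ for which the Chow--Heegner map intertwines. On $\NS(J)\cong\End^\dagger(J)$, the pullback action of $T_\ell$ is quadratic ($\phi\mapsto T_\ell\phi T_\ell$), and already this fails to preserve the trace-zero subspace, so there is no clean way to set up the claimed $\Hom_{\T\otimes\Q}$-vanishing argument. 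More fundamentally, the map $Z\mapsto D_Z(b)$ is a projection $\Pi_Z(\Delta_{GKS})$ of the Gross--Kudla--Schoen cycle in $X^3$ (Appendix \ref{AppendixChowHeegnerCeresa}); it is a cubic construction whose interaction with Hecke correspondences is governed by triple-product $L$-functions, not by formal multiplicity-one considerations. Theorem \ref{DRS_thm} (Darmon--Rotger--Sols / Yuan--Zhang--Zhang) makes this precise: for $Z$ coming from $\NS(A_g)$ and $f\neq g$, the projection of $D_Z(b)$ to $A_f$ is nontorsion precisely when $L'(f,1)\neq 0$ and a sign condition on $L(f\otimes g\otimes g,s)$ holds, even though $\Hom(A_f,A_g)=0$. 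Example \ref{not_invariant} records explicitly (citing \cite{DR}) that $\theta_{X,\pi_A,\pi_B}$ can be nonzero for distinct isogeny factors $A,B$ of a modular Jacobian. Your argument, applied verbatim, would force it to vanish, so something must be wrong.

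The gap propagates to your choice of $A$. You take $A=A_{f_1}\times A_{f_2}$ for just the two newforms supplied by Theorem \ref{mainthm}. The paper instead takes $A$ to be the full \emph{Heegner quotient}: the product of all $A_f$ with $L'(f,1)\neq 0$ (Definition \ref{defiHeegnerquotient}). This maximality is essential. The actual mechanism (Lemmas \ref{lemmaHeegnerpoints}, \ref{diagonalHeegner}, \ref{span_endomorphisms}) is that every nice correspondence $Z$ in the relevant kernel is a combination of Hecke correspondences $C_m$ with $m < N^2/4$, the divisors $i_{1,2}^*\widetilde{C}_m$ are supported on Heegner points and cusps, and the trace of a Heegner point in $A_g$ is torsion iff $L'(g,1)=0$ --- this is the generalised Gross--Zagier formula of Zhang / Cai--Shu--Tian. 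Hence $\pi_B(D_Z(b))$ is torsion \emph{precisely because} $B$ contains only newforms $g$ with $L'(g,1)=0$. If $B$ instead contained a third newform $f_3$ with $L'(f_3,1)\neq 0$ (which your construction does not exclude), the Heegner traces would project nontrivially into $B$, and $\theta$ could well be nonzero. Items (1) and (2) of the proposition survive in your setup, but (3) does not follow, and the approach needed to repair it is exactly the Heegner-point/Gross--Zagier argument of \S\ref{T2_to_T1} and \S\ref{T2_to_T1_ns}, which is not present in your proposal.
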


As will become apparent in the proof, in fact we take $A$ to be the \textit{maximal} isogeny factor of $J$ whose analytic rank is equal to its dimension and $B$ its complement, otherwise we might not be able to ensure that the kernel of $\theta_{X,\pi_A,\pi_B}$ is nontrivial. This idea relies heavily on the use of (traces of) Heegner points on the modular curves $X_0(N),X_{\rm{ns}}(N)$, which generate $A(\Q)$ up to finite index, but will automatically be torsion in $B(\Q)$, both situations being ultimately by-products of the generalised Gross--Zagier formula (see section \ref{T2_to_T1}). 
Note that in this case the kernel of the theta morphism is not only nontrivial, but as large as it can be, which might indicate a deeper phenomenon at play.

The structure of the paper is as follows. In section \ref{sectionquadChabquot}, we give some reminders on N\'eron-Severi groups, Chow groups and correspondences, and describe the map $\theta_{X,\pi_A,\pi_B}$ in terms of cycles.  In section \ref{QCquotients} we prove Proposition \ref{easy}. In section \ref{sectionfunctoriality}, we prove Proposition \ref{propmodcurvessatisfyH} assuming Theorem \ref{mainthm}, after some discussion on \eqref{quotient_inequality}, and using generalised Gross--Zagier formulas. In section \ref{analytic_part}, we prove Theorem \ref{mainthm}. Finally, for sake of clarity and by lack of easily available references in the literature, we gather in Appendix \ref{AppendixChowHeegnerCeresa} results about the Chow-Heegner construction above and explain in Appendix \ref{AppendixKolyvaginLogachev} the proof of the Kolyvagin-Logachev type result needed to translate Theorem \ref{mainthm} into an algebraic rank result.

\subsection{Notation and conventions}

Unless stated otherwise, we adopt the following conventions in this paper.

$\bullet$ $X$ is a smooth projective geometrically irreducible curve of genus $\geq 2$ over $\Q$. $J$ is the Jacobian of $X$ and $\AJ: X \rightarrow J$ is the Albanese morphism with a fixed base point $b \in  X(\Q)$. The notation $\widetilde{\AJ}^*$ refers to twice the pullback on symmetric line bundles of $X$ to $\Pic(X)$ (see \eqref{eqdefAJmod}), and then factors through $\NS(J)$ (this is not the same as just the pullback $\AJ^*$ from $\Pic(J)$ to $\Pic(X)$, which does not vanish on $\Pic^0(J)$). 

$\bullet$ For any $n$ and any $S \subset \{1, \cdots, n\}$, the morphism 
\begin{equation}
\label{eqdefiSb}
i_S(b):X \rightarrow X^n
\end{equation}
is defined so that the $j$-th coordinate of $i_S(b)(x)$ is $x$ if $j \in S$ and $b$ otherwise. When there is no ambiguity on $b$ we denote it simply by $i_S$. Similarly, the morphism 
\begin{equation}
\label{eqdefpiS}
\pi_S: X^n \rightarrow X^{\# S}
\end{equation}
denotes the projection of $(x_1, \cdots, x_n)$ on the coordinates belonging to $S$.

$\bullet$ Morphisms between algebraic varieties over $\Q$ and their structures (line bundles, divisors, etc) are assumed to be defined over $\Q$.

$\bullet$ For a smooth projective algebraic variety $Y$ over $\Q$, $\NS(Y)$ is the N\'eron-Severi group of $Y$, and $\rho(Y):= \rk \NS(J)$ is the Picard number of $J$ (see \S \ref{subsecremindersNeronSeveri}).

$\bullet$ For any abelian variety $A$ over $\Q$ (in particular for $J$), 
$\rk(A)$ is the rank of the finite type $\Z$-module $A(\Q)$ and $\End^0 (A) := (\End_\Q A) \otimes \Q$.

$\bullet$ $N$ is a prime number (the level of our modular curves) and $M=N$ or $N^2$.

$\bullet$ $X_0(N)$ (resp. $X_{\rm{s}}^+(N)$, $X_{\rm{ns}}^+(N)$) is the modular curve quotient of $X_0(N)$ corresponding to the Borel structure (resp. quotient of $X(N)$ corresponding to the normaliser of split Cartan, normaliser of nonsplit Cartan), $X^+_0(N)$ is the quotient of $X_0(N)$ by the Atkin-Lehner $w_N$. Accordingly, their respective Jacobians are denoted by $J_0(N), J_{\rm{s}}^+(N), J_{\rm{ns}}^+(N), J_0^+(N)$ (see \S \ref{sectionfunctoriality}).

$\bullet$ For $X$ a variety over a field $K\subset \mathbb{C}$, $H^k (X,\mathbb{Z})$ refers to the singular cohomology of $X(\C)$.

$\bullet$ Given a unipotent group $U$, the central series filtration of $U$ is defined by $U^{(1)} = U$ and $U^{(i+1)}= [U,U^{(i)}]$, and $\gr _i (U):=U^{(i)}/U^{(i+1)}$ (in particular $\gr _1 (U)=U^{\ab }$). If a group $G$ acts continuously on $U$, then $G$ acts on the set of normal subgroups of $U$, and we say that a quotient $U/H$ is $G$-stable if the normal subgroup $H$ is stabilised by $G$. In this case there is a unique $G$-action on $U/H$ making the surjection $G$-equivariant.

$\bullet$ $p$ is a prime number different from $N$ which will be used (except in Appendix \ref{AppendixKolyvaginLogachev}) only in the context of $p$-adic numbers.

\subsection{Acknowledgements}

The authors wish to thank heartily Samir Siksek, who initiated this project and contributed to its progression, but declined to be listed as a co-author. He also graciously authorised us to include his original argument from his preprint \cite{Siksek17}, which is found in paragraph \ref{subsec_Samir_result}. We would also like to thank Daniel Kohen and Jan Vonk for helpful discussions.

\section{The quadratic Chabauty condition (C) for a quotient}
\label{sectionquadChabquot}

\subsection{Reminders on Chow groups and N\'eron--Severi groups}
\label{subsecremindersNeronSeveri}

We recall here the basic notions on correspondences of curves, and the Chow groups and Néron-Severi groups that we need. A good reference on correspondences is Smith's thesis \cite[Chapter 3]{Smiththesis}, and classical ones are \cite[section 11.5]{BirkenhakeLange} for the complex case and \cite[Chapter 16]{Fulton98} for the general case.

\begin{Definition}
	For any geometrically smooth and irreducible projective variety $Y$ over $\Q$ and any $k \leq \dim Y$: 
	
	\begin{itemize}
		\item  The \textit{Chow group} $\CH^{k}(Y)$ is the group of cycles of $Y$ of codimension $k$ up to rational equivalence.
		
		\item $c_k :\CH ^k (Y)\to H^{2k}(Y,\mathbb{Z})$ is the \textit{cycle map}, and 
$\CH^{k}_0(Y) :=\Ker (c_k )$ is its subgroup of homologically trivial cycles (in $Y(\C)$).
	\end{itemize}

	In particular, there are canonical isomorphisms 
	\[
	\CH^1(Y) \cong \Pic(Y), \quad \CH^1_0 (Y) \cong \Pic^0(Y).
	\] 
The N\'eron-Severi group $\NS(Y) := \Pic(Y)/ \Pic^0(Y)$ is thus embedded in $H^2(Y(\C),\Z)$.
		
\end{Definition}

We can also define a geometric \'etale cycle map \cite[Cycle]{SGA4half} 
\[
c_k ^{l,\et }:\CH ^k (Y) \to H^{2k}_{\et }(Y_{\overline{\Q }},\mathbb{Z}_l (k))
\]
and an absolute \'etale cycle map
\[
c_k ^{\mathrm{abs}}:\CH ^k (Y)\to H^{2k}_{\et }(Y,\mathbb{Z}_l (k)).
\]
By the Artin comparison theorem we have $\Ker (\prod _l c_k ^{l,\et })=\CH ^k _0 (Y)$. The \'etale Abel--Jacobi morphism is a homomorphism
\[
\AJ _{\et } :\CH ^k _0 (Y)\to \ext ^1 _{\GalQ}(\Q _p ,H^{2k-1}_{\et }(Y_{\overline{\Q }},\Q _p (k)))
\]
which may be defined using the Leray spectral sequence or (equivalently but more directly) by realising the extension class of a homologically trivial cycle $Z$ inside 
$H^{2k-1}((X-Z)_{\overline{\Q }},\Q _p (k))$ (see Jannsen \cite[II.9]{jannsen} or Nekovar \cite[5.1]{nekovar_height}).
By Poincar\'e duality, we may equivalently think of the target of $\AJ _{\et } $ as being
\[
\ext ^1 _{\GalQ}(H^{2(d-k)+1}_{\et }(Y_{\overline{\Q }},\Q _p (d)),\Q _p (k)) \quad (d = \dim Y).
\]
In particular, when $Y=X$ is a curve, and for $k=1$, the target of $\AJ_{\et}$ is 
\[
\ext^1_{\GalQ}(V_p(J),\Q_p(1)),
\]
where $J$ is the Jacobian of $X$ and $V_p(J) = T_p(J) \otimes_{\Z_p} \Q_p$.

Let us now review the basic definitions of correspondences.
\begin{Definition}
	\label{defcorr}
	For two curves $X_1,X_2$ as before:
	
	\begin{itemize}
		\item A \textit{correspondence} $Z$ on $X_1,X_2$ is a divisor of $\Div(X_1 \times X_2)$, \emph{prime} if the underlying divisor is. It is called \emph{fibral} if its prime components are horizontal or vertical divisors.
		
		\item If $Z$ is a nonfibral prime correspondence, the two projections $\pi_{1,Z}, \pi_{2,Z}: Z \rightarrow X_1, X_2$ are nonconstant so $\psi_Z:=(\pi_{2,Z})_* \circ \pi_{1,Z}^*$ defines a morphism from $\Div(X_1)$ to $\Div(X_2)$, inducing a morphism between the Jacobians of $X_1$ and $X_2$, and two rationally equivalent divisors define the same morphism. This defines by linearity  (extending to 0 for fibral prime divisors) a surjective morphism
		\begin{equation}
		\label{eqdefcorr}
        \operatorname{\psi} : \Pic (X_1 \times X_2) \rightarrow \Hom(\Jac(X_1), \Jac(X_2)),
		\end{equation}
		with kernel $\pi _1 ^* \Pic (X_1)\oplus \pi _2 ^* \Pic (X_2)$ with notation \eqref{eqdefpiS} (\cite[Theorem 11.5.1]{BirkenhakeLange} or \cite[Theorem 3.3.12]{Smiththesis}).
		\end{itemize}
\end{Definition}

When $X=X_1=X_2$, with the choice of a base point $b$, using notation from \eqref{eqdefiSb} and \eqref{eqdefpiS}, we obtain from $\pi_1 \circ i_1 = \Id_X$ and similar relations the identities
\begin{eqnarray}\label{Chow_decomp}
\Pic(X \times X) & = & \pi_1^*\Pic(X) \oplus \pi_2^* \Pic (X) \oplus \Ker (i_1^* \oplus i_2^*)\\
\Pic^0(X \times X) & = & \pi_1^*\Pic^0(X) \oplus \pi_2^* \Pic^0(X),
\end{eqnarray} 
 (see \cite[Proposition 3.3.8]{Smiththesis}) which induces a decomposition 
	\begin{equation}
	\label{kunneth}
	\NS(X \times X) = \pi_1^* \NS(X) \oplus \pi_2^* \NS(X) \oplus \Ker (i_1^* \oplus i_2^*),
	\end{equation}
	where the last direct factor then canonically identifies with $\End(J)$ via \eqref{eqdefcorr}. By abuse of notation, we thus denote 
	\[
	 \psi^{-1} : \End(J) \overset{\cong}{\rightarrow} \Ker (i_1^* \oplus i_2^*) 
	\]
    the inverse of this isomorphism. Now, the morphism $i_{1,2}^* - i_1^* - i_2^*$ is trivial when restricted to $\Pic^0(X \times X)$, hence induces a morphism 
	\begin{equation}
	\label{eqdiffdiag}
	\varphi: \NS(X \times X) \rightarrow \Pic(X).
	\end{equation}
	Define 
	\[
	 \fonction{\AJ^{(2)}}{X \times X}{J}{(x,y)}{[x]+[y]-2[b]}, \quad \widetilde{\AJ}^* := \varphi \circ (\AJ^{(2)})^*.	 
	\]
We have $\widetilde{\AJ }^* =[2]^* \circ \AJ ^* -2\AJ ^*$ so for $[\Lcal] \in \Pic(J)$,  
	\begin{equation}
	\label{eqdefAJmod}
	\widetilde{\AJ}^*([\Lcal]) = \AJ^*( [\Lcal]) + \AJ^* ([-1]^* [\Lcal]).
	\end{equation}
        using the classical identity $[n]^* (\mathcal{L})\simeq \mathcal{L}^{\otimes (\frac{n^2+n}{2})}\otimes [-1]^* (\mathcal{L}^{\otimes (\frac{n^2 -n}{2})})$.
	In particular, $\widetilde{\AJ}^*$ is twice the usual pullback by $\AJ$ on symmetric line bundles.
		
	For any divisor $D$ of $X \times X$, the degree of $\varphi(D)$ is equal to the rational trace of $\psi(D)$ (\cite[Proposition 11.5.2]{BirkenhakeLange}). This induces a morphism
	\[
	\widetilde{\theta}_{X,b} : \End(J)^{\rm{tr}=0} \overset{\varphi \circ \psi^{-1} }{\longrightarrow}  \Pic^0(X).
	\]
By \cite[IV.20]{MumfordAbVar}, the rule  $\mathcal{L}\mapsto \lambda _{\mathcal{L}}$ defined by $\lambda_\Lcal(P) = T_P^*\Lcal \otimes \Lcal^{-1} \in \Pic^0(J)$ induces an isomorphism
\begin{equation}\label{Rosat}
\fonction{\tilde{\lambda}}{\NS (J)}{\End^{\dagger}(J)}{[\Lcal]}{\Pcal^{-1} \circ \lambda_\Lcal}
\end{equation}
where $\Pcal:J \overset{\cong}{\rightarrow} \widehat{J}$ is a natural principal polarisation given by a theta divisor. This  the same as applying the composition $- \psi \circ (\AJ^{(2)})^*$. Indeed, via the natural morphisms $\widehat{J} \cong  \Pic^0(J)$ and $\Pic^0(X) \cong J$, the inverse $\widehat{J} \rightarrow J$ of the principal polarisation given by a theta divisor on $J$ is equal to $- \AJ^*$ from $\Pic^0(J)$ to $\Pic^0(X)$ (\cite[Proposition 11.3.5]{BirkenhakeLange}). 

Now, in terms of line bundles, by definition, given a line bundle $L$ on $X \times X$, the endomorphism of $\Pic(X)$ associated to it is given on points by $x \mapsto i_2^*(x)(L)$ with notation \eqref{eqdefiSb}. As $(\AJ^{(2)} \circ i_2(x)) = T_{[x]-[b]} \circ \AJ$, for a line bundle $\Lcal$ on $\Pic(J)$ and $x,y$ points of $X$ the endomorphism associated to $L=(\AJ^{(2)})^* \Lcal$ sends $[x]-[y]$ to 
\[
\AJ^* (T_{[x]-[b]}^* \Lcal - T_{[y]-[b]}^* \Lcal) = \AJ^* (T_{[x]-[y]}^* \Lcal - \Lcal) = \AJ^* \lambda_\Lcal([x]-[y]),
\]
which gives the equality up to $-1$. Hence, if we define 
\[
\theta _{X,b} : \NS(J)^0 :=\Ker (\deg \circ \widetilde{\AJ}^*) \overset{\widetilde{\AJ}^*}{\rightarrow}\NS (X)) \overset{ \widetilde{\AJ}^*}{\rightarrow} \Pic ^0 (X),
\]
we have the commutative diagram 
\[
 \xymatrix{\NS(J)^0 \ar^{- \theta_{X,b}}[rr] \ar[rd]_{\widetilde{\lambda}} & & \Pic^0(X) \\
 & \End(J)^{\rm{tr}=0}  \ar[ru]_{\widetilde{\theta}_{X,b}}&
 }
\]

\begin{Remark}
In \cite{Balakrishnanetc}, an element of $\Pic (X\times X)$ whose image under \eqref{kunneth} lies in $\End ^\dagger (J)^{\tr =0}$ is referred to as a `nice correspondence'. 
\end{Remark}

\subsection{Chow--Heegner points and diagonal cycles}
\label{subsecChowHeegnerdiagonalcycles}

We recall an equivalent version of the morphism $\widetilde{\theta }_{X,b}$, which appears in \cite{DR} and \cite{BalakrishnanDogra1}. As our discussion applies in fairly broad generality, we take $X$ to be a smooth geometrically irreducible projective curve over a field $K$ of characteristic zero.
Fix $b\in X(K)$, and $S \subset \{ 1,\ldots n\}$, let $X_S$ denote the image of $X$ under the closed immersion $i_S (b)$ defined in \eqref{eqdefiSb}. 
For any $Z \in \Div(X \times X)$, let $C_Z(b) := (i_{\{1,2\}}^*(b) -i_{\{1 \}}^*(b) -i_{\{2 \} }^*(b) )(Z)$  and
\begin{equation}
\label{eqDZb}
D_Z (b) := C_Z(b)- \deg (C_Z(b)) \cdot b \in \Pic^0(X).
\end{equation}
We refer to $D_Z (b)$ and $C_Z (b)$ as \textit{Chow--Heegner} points, following \cite{DRS}.

The map $Z\mapsto D_Z (b)$ factors through $\Pic (X\times X)$, and has the following relation to $\widetilde{\theta }_{X,b}$.
The projection
\[
\Pi :\Pic (X\times X)\to \Ker (i_1 ^* \oplus i_2 ^* )
\]
associated to \eqref{Chow_decomp} is given by $(1-\pi _1 ^* \circ i_1 ^* -\pi _2 ^* \circ i_2 ^* )$, giving the identity
\[
i_{\{ 1,2 \} }^* \circ \Pi =i_{\{1,2\} }^* - i_1 ^* -i_2 ^* .
\]
Since $\deg (C_Z (b))=\deg (\varphi (\Pi ([Z])))$, for any $Z$ in $\Pic (X\times X)$ which lies in the kernel of $\deg \varphi$, we have
\begin{equation}\label{DZ_theta}
D_Z (b)=C_Z (b)=\theta _{X,b}(\psi \circ \Pi ([Z])).
\end{equation}
We define $Z^t \in \CH ^1 (X\times X)$ to be the pull-back of $Z$ under the involution
\begin{align*}
X\times X & \to X\times X \\
(x,y) & \mapsto (y,x). \\
\end{align*}
\begin{lemma}\label{basepoint_dependence}
In the notation of Definition \ref{defcorr}, we have
\[
D_Z (b')-D_Z (b)=\psi _Z (b-b')+\psi _{Z^t }(b-b').
\]
\end{lemma}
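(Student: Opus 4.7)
The plan is a direct computation from the definitions. Writing $D_Z(b) = C_Z(b) - \deg(C_Z(b))\cdot b$ and $C_Z(b)=i_{\{1,2\}}(b)^*Z-i_{\{1\}}(b)^*Z-i_{\{2\}}(b)^*Z$, one first observes that the diagonal embedding $i_{\{1,2\}}(b):x\mapsto(x,x)$ is $b$-independent, so its contribution cancels in $C_Z(b')-C_Z(b)$. The problem therefore reduces to analysing the $b$-dependence of $i_{\{1\}}(b)^*Z$ and $i_{\{2\}}(b)^*Z$.

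The central identification, which I would verify first on non-fibral prime correspondences and then extend by linearity to all of $\Div(X\times X)$, is that
\[
i_{\{1\}}(b)^*Z=\psi_{Z^t}(b),\qquad i_{\{2\}}(b)^*Z=\psi_Z(b)
\]
as divisors on $X$. For a non-fibral prime $Z$, the pullback under $x\mapsto(x,b)$ is, with appropriate multiplicities, the divisor of points $x$ satisfying $(x,b)\in Z$, which is exactly what $\psi_{Z^t}=(\pi_{2,Z^t})_*\circ\pi_{1,Z^t}^*$ produces when applied to $b$; the second identity is symmetric. Fibral prime components give zero on both sides modulo the kernel $\pi_1^*\Pic(X)\oplus\pi_2^*\Pic(X)$ of $\psi$, consistent with the extension of $\psi$ by zero on fibral divisors from Definition \ref{defcorr}.

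Substituting these identifications yields $C_Z(b')-C_Z(b)=\psi_Z(b-b')+\psi_{Z^t}(b-b')$ in $\Div(X)$, a degree-zero divisor. For the degree correction in $D_Z$, I would use the trace identity $\deg\varphi(Z)=\tr\psi(Z)$ recalled in \S\ref{subsecremindersNeronSeveri} to conclude that $\deg C_Z(b)=\tr\psi_Z$ is independent of $b$; subtracting the two versions of $\deg(C_Z(b))\cdot b$ and passing to $\Pic^0(X)$ isolates the contribution of $C_Z(b')-C_Z(b)$, giving the claimed identity.

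The main technical ingredient is the dictionary $i_{\{1\}}(b)^*Z\leftrightarrow\psi_{Z^t}(b)$ (and its symmetric analogue), which requires a careful check on both fibral and non-fibral prime divisors, together with the extension by linearity and compatibility with the decomposition \eqref{Chow_decomp}. Once this dictionary is established, the remaining bookkeeping with the $b$-independent degree correction is essentially formal.
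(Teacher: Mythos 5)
Your approach is essentially the same as the paper's: cancel the $b$-independent diagonal term, identify the two remaining pullbacks with $\psi_{Z^t}$ and $\psi_Z$, and observe that the degree term is $b$-independent. Your dictionary is indeed the correct one (the paper's displayed assignments of $\psi_Z$ and $\psi_{Z^t}$ are transposed relative to yours, but the conclusion is symmetric in $Z$ and $Z^t$ so this has no effect).

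There is, however, one inaccuracy in how you justify the dictionary. The direct equality $i_{\{1\}}(b)^*Z=\psi_{Z^t}(b)$ does \emph{not} extend by linearity to all of $\Div(X\times X)$, because it already fails on vertical fibral primes: for $Z=\{x_0\}\times X$ one has $\pi_1\circ i_{\{1\}}(b)=\mathrm{id}_X$, so $i_{\{1\}}(b)^*Z=[x_0]$, a degree-one effective divisor, while $\psi_{Z^t}(b)=0$ by the extension-by-zero convention. Your remark that fibral primes ``give zero on both sides modulo the kernel $\pi_1^*\Pic(X)\oplus\pi_2^*\Pic(X)$'' does not repair this, since that kernel lives in $\Pic(X\times X)$ rather than in $\Pic(X)$, where the asserted equality is supposed to hold. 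The fix is to assert only the difference version, as the paper implicitly does: $(i_{\{1\}}(b)^*-i_{\{1\}}(b')^*)(Z)=\psi_{Z^t}(b-b')$. This holds for non-fibral primes by your computation, and for fibral primes both sides vanish --- the left because $i_{\{1\}}(\cdot)^*$ of a fibral divisor class is $b$-independent, the right by the convention in Definition \ref{defcorr}. The difference version is all that the statement needs. Your treatment of the degree correction via $\deg\varphi(Z)=\operatorname{tr}\psi(Z)$ is correct, though it is slightly more machinery than necessary: once $C_Z(b')-C_Z(b)$ is identified with $\psi_Z(b-b')+\psi_{Z^t}(b-b')$, it has degree zero because correspondences preserve degree, so $\deg C_Z(b')=\deg C_Z(b)$ follows immediately.
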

\begin{proof}
We have $i_{\{1,2\}}(b)=i_{\{1,2\}}(b')$. Hence
\[
C_Z (b')-C_Z (b)=i_{\{1\}}(b)^*(Z) -i_{\{ 1\}}(b')^*(Z) + i_{\{2\}}(b)^*(Z) -i_{\{ 2\}}(b')^*(Z).
\]
By definition of the correspondences, we then have 
\[
(i_{\{1\}}(b)^* -i_{\{1 \} }(b')^* )(Z)=\psi_Z (b-b'),
\]
and 
\[
(i_{\{2\}}(b)^* -i_{\{2 \} }(b')^* )(Z)=\psi_{Z^t} (b-b'),
\]
which proves the equality for $C_Z(b') - C_Z(b)$, thus for $D_Z(b') - D_Z(b)$ as the degrees are then equal.
\end{proof}
\begin{Definition}\label{fundamental_definition}
Given a surjective homomorphism $\pi_B: J\to B$ of abelian varieties, we obtain a homomorphism 
\begin{equation}
\label{eqintermtheta}
\Ker (\NS (J) \overset{\deg \circ \widetilde{\AJ}^*}{\longrightarrow} \Z) \overset{ \widetilde{\AJ}^*}{\longrightarrow} \Pic^0(X) \longrightarrow J \overset{\pi_B}{\longrightarrow} B.
\end{equation}
By Lemma \ref{basepoint_dependence} and \eqref{DZ_theta}, for a divisor $Z$ on $X \times X$, if $\psi_{\Pi(Z)}$ has image contained in $\Ker (\pi _B )$, then the image of $[Z]$ in $B$ via \eqref{eqintermtheta} is independent of the choice of basepoint. In particular, if we have a surjection $(\pi _A ,\pi _B):J\to A\times B$, and $\Hom (A,B)=0$, then we obtain a homomorphism independent of $b$, which we will denote by
\begin{align*}
\theta _{X,\pi _A,\pi _B}: & \Ker (d_{\pi _A }) \to B \\
& [L] \mapsto \pi _B \circ \widetilde{\theta }_{X,b}\circ \pi _A ^* ([L]).
\end{align*}
\end{Definition}
\begin{Remark}
This construction also has a direct description in terms of line bundles, although this is not the one we use to calculate $\theta _{X,\pi _A ,\pi _B}$ in examples.
Given a line bundle $L$ on $A$ whose pull-back to $X$ via $\AJ ^* \circ \pi _A ^* $ has degree zero, we may also consider the projection of $\AJ ^* \circ \pi _A ^* (L)$ to $B$. Variants of this construction are studied in the thesis of Michael Daub \cite{daub}. By \eqref{eqdefAJmod}, we have the identity \cite[Proposition 3.3.3]{daub} 
\[
\theta _{X,\pi _A ,\pi _B }=[2] \circ \pi _B \circ \AJ ^* \circ \pi _A ^*,
\]
in particular the right-hand side does vanish on $\Pic^0(A)$ \cite[Proposition 3.3.2]{daub}.
\end{Remark}

\begin{Example}\label{not_invariant}
Note that $\theta _{X,\pi _A ,\pi _B }$ is not an invariant of $A $ and $B$, or even of $X,A,B$. For example, let $A$ and $B$ be distinct isogeny factors of $X_0 (N)$, and let $X=X_0 (N^2 )$. Let $f_1 ,f_2 :X\to X_0 (N)$ be the two natural morphisms, and let $(\pi _{A_i },\pi_{B_i })$ be the morphisms $\Jac (X)\to A\times B$ obtained by composing the surjection $J_0 (N)\to A\times B$ with $f_{i*}$. Then $\theta _{X,\pi _{A,i} ,\pi _{B,i} }$ can be nonzero (see \cite{DR} for examples), however if $i\neq j$, $\theta _{X,\pi _{A,i},\pi _{B,j}}$ is identically zero, since for any choice of line bundle $[L]$ in $\NS (A)$, the associated point $D_{[L]}(b)$ will lie in $f_{i}^* J_0 (N)$, hence the projection to $f_{j*}J_0 (N)$ will be torsion.
\end{Example}

\section{Proof of finiteness of the Chabauty--Kim set under (C)}\label{QCquotients}
The strategy of proof of Proposition \ref{easy} is very similar to that of \cite[Lemma 3.2]{BalakrishnanDogra1}.
To explain this strategy, we need to establish some notation. $X,A,B$ are as in the proposition. 
Define
\[
V:=T_p (J)\otimes \Q _p , \quad V_A := T_p (A) \otimes \Q _p , \quad V_B :=T_p (B) \otimes \Q _p.
\]
Let $U_n (b)$ denote the maximal n-unipotent quotient of the $\Q _p $-unipotent fundamental group of $\overline{X}$ at some basepoint $b$ as defined in \cite[\S 10]{Deligne89}. Let $U$ be a Galois-stable quotient of $U_n (b)$ (i.e. a quotient by a Galois-stable normal subgroup of $U_n(b)$). Let $T_0 $ be the set of primes of bad reduction for $X$, and let $T=T_0 \cup \{ p \}$. Denote the maximal quotient of $\Gal (\overline{\Q } /\Q )$ unramified outside $T$ by $G_{\Q ,T}$, and for $v\in T$ denote $\Gal (\overline{\Q }_v /\Q _v )$ by $G_{\Q _v} $. Then by \cite{Kim05},\cite{Kim09}, we have a commutative diagram
\[
\begin{tikzpicture}
\matrix (m) [matrix of math nodes, row sep=3em,
column sep=3em, text height=1.5ex, text depth=0.25ex]
{X(\Q ) & H^1 (G_{\Q ,T},U) \\
\prod _{v\in T}X(\Q _v ) & \prod _v H^1 (G_{\Q _v },U). \\};
\path[->]
(m-1-1) edge[auto] node[auto]{} (m-2-1)
edge[auto] node[auto] { $j$ } (m-1-2)
(m-2-1) edge[auto] node[auto] {$\prod _{v\in T} j_v $ } (m-2-2)
(m-1-2) edge[auto] node[auto] {$\prod _{v\in T}\loc _v $} (m-2-2);
\end{tikzpicture}
\]
with the following properties
\begin{enumerate}
\item For $G=G_{\Q ,T}$ or $G_{\Q _v }$, and all $i<k$, the sets $H^1 (G,U^{(i)}/U^{(k)})$ have the structure of $\Q _p $ points of an algebraic variety, so that the algebraic structure on $H^1 (G,\gr _i U)$ is just the usual scheme structure on a vector space, and the maps
\[
H^1 (G,\gr _i U)\to H^1 (G,U/U^{(i+1)})\to H^1 (G,U/U^{(i)})
\]
come from morphisms of algebraic varieties. The maps $\loc _v $ are then algebraic for these structures.

\item For $v\in T_0 $, the map $j_v $ has finite image.
\item The image of the map $j_p $ is contained inside the subvariety $H^1 _f (G_{\Q _p },U)$ of crystalline torsors.

\end{enumerate}
The following Lemma is proved in \cite[Lemma 3.1]{BalakrishnanDogra1} (although the result is stated only in the case $A=J$, the proof generalises to the case where $A$ is an arbitrary quotient of $J$).
\begin{lemma}\label{dimension_inequality}
Let $U$ be a Galois-stable quotient of $U_2 (b)$. Suppose $U$ is an extension of $V_A$ by $\Q _p (1)^n$, where $A$ is some abelian variety over $\Q$ and $V_A= T_p (A) \otimes \Q _p$. If
\[
\rk (A(\Q ))<n+\dim (A),
\]
then $X(\Q _p )_2$ is finite. In particular, if $\rk (A(\Q ))=\dim (A)$, then $X(\Q _p )_2 $ is finite whenenever $n> 0$.
\end{lemma}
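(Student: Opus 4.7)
The plan is to execute the standard non-abelian Chabauty--Kim finiteness argument, taking advantage of the fact that $U$ is built from only two simple Galois representations whose Bloch--Kato Selmer groups are explicitly computable. First I would reduce the finiteness of $X(\Q_p)_2$ to a dimension inequality: by definition $X(\Q_p)_2$ is contained in $j_p^{-1}(\loc_p(H^1_f(G_{\Q,T}, U)))$, and since $j_p$ is a nonconstant $p$-adic analytic map from the one-dimensional $\Q_p$-manifold $X(\Q_p)$ (written on each residue disc as a tuple of iterated Coleman integrals), the preimage of any proper Zariski-closed subvariety of $H^1_f(G_{\Q_p}, U)$ is the zero set of a nonzero power series on each of the finitely many discs, hence finite. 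Thus it is enough to show
\[
\dim H^1_f(G_{\Q,T}, U) < \dim H^1_f(G_{\Q_p}, U).
\]

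The second step is to compute both sides using the central extension $1 \to \Q_p(1)^n \to U \to V_A \to 1$ (central since $U$ is two-step nilpotent), which yields an exact sequence of pointed Selmer varieties whose fibres are torsors under the Selmer group of $\Q_p(1)^n$. Locally, Bloch--Kato's formulas give $\dim H^1_f(G_{\Q_p}, V_A) = \dim A$ and $\dim H^1_f(G_{\Q_p}, \Q_p(1)) = 1$, so $\dim H^1_f(G_{\Q_p}, U) = n + \dim A$. Globally, $H^1_f(G_{\Q,T}, \Q_p(1)) = 0$: by Kummer theory it equals $\mathcal{O}_{\Q,T}^{\times} \hat{\otimes} \Q_p$, and imposing the unramified condition at each $v \in T_0$ and the crystalline condition at $p$ forces every $v$-adic valuation of a class to vanish, leaving only torsion global units of $\Q$. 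Hence $\dim H^1_f(G_{\Q,T}, U) \leq \dim H^1_f(G_{\Q,T}, V_A)$, and combining with a bound of the latter by $\rk A(\Q)$ together with the hypothesis $\rk A(\Q) < n + \dim A$ closes the gap.

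The main obstacle is the global bound $\dim H^1_f(G_{\Q,T}, V_A) \leq \rk A(\Q)$: in the naive Bloch--Kato formulation this inequality is equivalent to finiteness of $\Sha(A/\Q)[p^\infty]$, which is not known in general. The resolution, essentially as in \cite{BalakrishnanDogra1}, is to replace the a priori Bloch--Kato Selmer variety by a subscheme of the Chabauty--Kim Selmer variety defined by a refined collection of local conditions, designed so that it still receives all rational points under $j$ while its dimension is controlled unconditionally by the Mordell--Weil rank of $A$. Once this choice is in place the rest of the argument is purely formal, the Bloch--Kato dimension count at $p$ is unconditional, and the inequality of the hypothesis is exactly what is needed to finish.
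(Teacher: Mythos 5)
Your overall strategy — Kim's dimension count, reducing finiteness of $X(\Q_p)_2$ to $\dim(\text{global Selmer}) < \dim H^1_f(G_{\Q_p},U)$, then computing the local dimension as $n+\dim A$ via Bloch--Kato, killing the $\Q_p(1)^n$ contribution globally via $\mathcal{O}^\times_{\Q,T}\hat\otimes\Q_p$, and then bounding the $V_A$ contribution by $\rk A(\Q)$ — is the right shape, and indeed what lies behind the result. Note though that the reduction in your first paragraph uses more than $j_p$ being nonconstant analytic: one needs that the image of $j_p$ is Zariski dense in $H^1_f(G_{\Q_p},U)$ (Kim's theorem on the unipotent Albanese map), else the pullback of a proper closed subvariety could still vanish identically on a residue disc.

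The genuine gap is exactly where you flag it, but I don't think the resolution is as clean as you indicate. You write that one should replace $H^1_f(G_{\Q,T},U)$ by the Chabauty--Kim Selmer scheme with refined local conditions, "designed so that ... its dimension is controlled unconditionally by the Mordell--Weil rank of $A$." It is indeed true that the Chabauty--Kim conditions at $v\in T_0$ are stricter than the Bloch--Kato ones (for the abelianization they force $\loc_v(c)=0$, since $J(\Q_v)\otimes\Q_p=0$ for $v\neq p$), and this cuts the Selmer scheme down. But it does not follow from this alone — and you don't give any argument — that the resulting dimension is $\le\rk A(\Q)$ unconditionally: the obstruction coming from the Tate module of $\Sha(A)$ lives inside the strict Selmer group as well, and the Greenberg--Wiles/Poitou--Tate accounting you would need here does not by itself eliminate it. This is precisely the step the present paper does not reprove but defers entirely to \cite{BalakrishnanDogra1}, Lemma 3.1, and I would want to see the actual mechanism from that reference spelt out (in particular how the localisation map $\loc_p$ and the specific structure of the $U_2$-extension enter) before accepting the argument as written.
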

To prove Proposition \ref{easy}, we construct a quotient $U$ of $U_2(b)$  
as in Lemma \ref{dimension_inequality}, with $n=\rk (\Ker \theta_{X,\pi_A,\pi_B})$.
We again take $X$ to be a smooth projective geometrically irreducible curve over a field $K$ of characteristic zero.

The group $U_2 (b)$ is an extension 
\begin{equation}\label{U2_extn}
1 \to \Ker (H^2 (J_{\overline{\Q }},\Q _p )\stackrel{\AJ ^* }{\longrightarrow }H^2 (X_{\overline{\Q }},\Q _p ))^* \to U_2 (b) \to V \to 1.
\end{equation}
Hence for any $\xi \in \Ker (\NS (J) \overset{\widetilde{\AJ}^*}{\to} \NS (X))$, we may quotient by the kernel of the dual of the Chern class $c_p ^{\et }(\xi) \in H^2(X_{\Qb},\Q_p(1))$ (see \S \ref{subsecChowHeegner})
\[
c_p ^{\et }(\xi )^* (1):\Ker (H^2 (J_{\overline{\Q }},\Q _p )\stackrel{\AJ ^* }{\longrightarrow }H^2 (X_{\overline{\Q }},\Q _p ))^* \to \Q _p (1)
\]
to obtain a quotient $U_Z$ of $U_2 (b)$ which is an extension of $V$ by $\Q _p (1)$. Similarly, for any nice correspondence on $X\times X$, we obtain a quotient of $U_2 (b)$ which is an extension of $V$ by $\Q _p (1)$.

\begin{lemma}[\cite{BalakrishnanDogra1}, Theorem 6.3]\label{DZB_HM}
Let $U$ be a Galois-stable quotient of $U_2 (b)$  of the form
\[
1\to \Q _p (1)\to U\to V_p(J) \to 1,
\]
coming from a correspondence $Z\subset X\times X$ as above. Then the associated extension class of $\lie (U)$ in $\ext ^1 _{G_K }(V_p(J),\Q _p (1))$ is equal to the \'etale Abel--Jacobi class of the cycle $D_Z (b)$ (see \S \ref{subsecremindersNeronSeveri}).
\end{lemma}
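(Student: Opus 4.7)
The plan is to identify two extension classes in $\Ext^1_{G_K}(V_p(J), \Q_p(1))$: on the one side, the class of $\lie(U_Z)$ obtained by pushing out \eqref{U2_extn} via $c_p^{\et}(Z)^*(1)$; on the other, the étale Abel--Jacobi class of $D_Z(b) \in \Pic^0(X)$. The natural strategy is to locate a common geometric object---namely $X \times X$ equipped with the cycle $[Z]$---from which both extensions arise, and then match them via the cycle-theoretic decomposition $C_Z(b) = (i_{\{1,2\}}^* - i_{\{1\}}^* - i_{\{2\}}^*)(Z)$ recalled in \S\ref{subsecChowHeegnerdiagonalcycles}.

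First, I would make the pushout extension concrete. Combining \eqref{U2_extn} with Poincaré duality on $J_{\Qb}$ and the Künneth decomposition yields a description of $\Ker(\AJ^{\ast}\colon H^2(J_{\Qb}, \Q_p) \to H^2(X_{\Qb}, \Q_p))$ involving $\Ker(\NS(J) \to \NS(X)) \otimes \Q_p$, so that pushing out by $c_p^{\et}(\xi)^*(1)$ amounts to contracting the canonical extension class of $U_2(b)$ against $\xi$. Unwinding \eqref{kunneth} and \eqref{DZ_theta}, for a nice correspondence $Z$ the class $\psi(\Pi([Z]))$ lies in $\End(J)^{\rm{tr}=0}$ and $\theta_{X,b}(\psi(\Pi([Z]))) = D_Z(b)$; the pushout extension is therefore controlled by the image of $[Z]$ under the Künneth-to-endomorphism map.

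Second, I would realise the étale Abel--Jacobi class of $D_Z(b)$ as the extension of $V_p(J)$ by $\Q_p(1)$ obtained from the long exact sequence in étale cohomology with supports on $\mathrm{supp}(D_Z(b)) \subset X$, equivalently from $H^1_\et((X - \mathrm{supp}(D_Z(b)))_{\Qb}, \Q_p(1))$. The identification of the two extensions then reduces, via the functoriality of the cycle class map under the pullbacks $i_S^*$ (which send $c_p^{\et}(Z)$ to $c_p^{\et}(i_S^*(Z))$), to a diagram chase matching the cup-product pairing underlying the $U_2(b)$ extension with the residue map along $D_Z(b)$. The main obstacle, and the source of most of the technical work, is to track normalisation conventions---in particular the factor of two appearing in \eqref{eqdefAJmod} and the sign implicit in \eqref{Rosat}---together with base-point independence, which is handled on the Chow--Heegner side by Lemma \ref{basepoint_dependence} and on the fundamental group side by the nice correspondence hypothesis, making the class of $U_Z$ insensitive to $b$ up to canonical isomorphism.
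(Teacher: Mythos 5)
Your plan identifies the right objects — the pushout of \eqref{U2_extn} along $c_p^{\et}(Z)^*(1)$, the étale Abel--Jacobi extension of $D_Z(b)$, and the decomposition $C_Z(b)=(i_{\{1,2\}}^*-i_{\{1\}}^*-i_{\{2\}}^*)(Z)$ — but the step you call ``a diagram chase matching the cup-product pairing underlying the $U_2(b)$ extension with the residue map along $D_Z(b)$'' is not a diagram chase; it is the entire content of the lemma. The paper's proof is a short reduction to \cite[Theorem 6.3]{BalakrishnanDogra1}: one forms the universal enveloping algebra $\mathcal{E}(\lie(U))$, its augmentation ideal $I(\lie(U))$, and the quotient Galois representation $E_Z$ constructed there, observes that $\lie(U)\hookrightarrow I(\lie(U))\twoheadrightarrow IE_Z$ is an isomorphism, and invokes the cited theorem which already computes $[IE_Z]=\AJ_\et(D_Z(b))$. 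As the paper notes, and as Appendix \ref{AppendixChowHeegnerCeresa} spells out, the substance of that cited theorem is Hain--Matsumoto's Theorem 3: the extension class $[L_2]\in H^1(G_K,V(-1)\otimes\overline{\wedge^2 V})$ of $\lie(U_2)$ equals $\alpha(-1)_*\AJ_\et([C_b])$ for the Ceresa cycle $C_b$, combined with the Darmon--Rotger--Sols identity $D_Z(b)=\Pi_Z(\Delta_{GKS})$ and the Colombo--van Geemen relation $\AJ_\et(\mu_*\Delta_{GKS})=3\,\AJ_\et([C_b])$.

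Your proposal never identifies the extension class of $\lie(U_2)$ with a cycle. Without Hain--Matsumoto (or the equivalent \cite[Theorem 6.3]{BalakrishnanDogra1}, or a Pulte/Harris-type input on the Hodge side), you do not know what ``the cup-product pairing underlying the $U_2(b)$ extension'' computes — that pairing is not a formal consequence of Künneth or Poincaré duality, it encodes genuinely geometric information about the Ceresa/Gross--Kudla--Schoen cycle. Consequently the pushout of \eqref{U2_extn} along $c_p^{\et}(Z)^*$ cannot, by the outlined ``functoriality of cycle classes under $i_S^*$'' alone, be matched to $\AJ_\et(D_Z(b))$; the pullbacks $i_S^*$ act on the wrong space unless you first pass from $\lie(U_2)$ to a cycle on $X^3$ (or $J$). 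The factor-of-two and sign issues you flag in \eqref{eqdefAJmod} and \eqref{Rosat} are real but secondary; the genuine gap is the missing Hain--Matsumoto-type input, without which the claimed reduction does not go through.
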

\begin{proof}
Let $\mathcal{E}(\lie (U))$ be the universal enveloping algebra of $\lie (U)$, and let $I(\lie (U))$ be the kernel of the co-unit morphism $\mathcal{E}(\lie (U))\to \Q _p $.
In \cite[\S 6]{BalakrishnanDogra1}, a Galois representation $E_Z$ is constructed as a quotient of $\mathcal{E}(\lie (U))$. The image of $I(\lie (U))$ in $E_Z$ is an extension $IE_Z $ of $V$ by $\Q_p (1)$. By \cite[Theorem 6.3]{BalakrishnanDogra1}, the extension class of $IE_Z$ in $\ext^1_{\GalQ}(V_p(J),\Q_p(1))$ is the  Abel--Jacobi class of $D_Z (b)$. The restriction of $I(\lie (U))\to IE_Z$ to $\lie (U)\subset I(\lie (U))$ is an isomorphism, and hence the extension class of $\lie (U)$ is isomorphic to $D_Z (b)$.
\end{proof}
As explained in Appendix \ref{AppendixChowHeegnerCeresa}, Lemma \ref{DZB_HM} is really a consequence of Hain and Matsumoto's computation of the extension class of $\lie (U_2 )$ in terms of the Ceresa cycle.
Hence to complete the proof of Proposition \ref{easy}, it will be enough to prove the following Lemma.
\begin{lemma}
Let $U'$ denote the quotient of $U_2 $ obtained from the surjection $\gr _2 (U_2 )\to \Ker (d_{\pi_A} )^* \otimes \Q _p (1)$.
There exists a Galois stable quotient $U$ of $U'$ which is an extension of $V_A$ by $\Ker (\theta _{X,\pi _A ,\pi _B })$:
\[
\begin{tikzpicture}
\matrix (m) [matrix of math nodes, row sep=3em,
column sep=3em, text height=1.5ex, text depth=0.25ex]
{1 & \Ker (d _{\pi_A} )^* \otimes \Q _p (1)  & U' & V_A \oplus V_B & 1 \\
1 & \Ker (\theta _{X,\pi _A ,\pi _B})^* \otimes \Q _p (1) & U & V_A & 1.
 \\};
\path[->]
(m-1-1) edge[auto] node[auto]{} (m-1-2)
(m-1-2) edge[auto] node[auto]{} (m-2-2)
edge[auto] node[auto] {  } (m-1-3)
(m-1-3) edge[auto] node[auto]{} (m-2-3)
edge[auto] node[auto] {  } (m-1-4)
(m-1-4) edge[auto] node[auto]{} (m-2-4)
edge[auto] node[auto] {  } (m-1-5)
(m-2-1) edge[auto] node[auto]{} (m-2-2)
(m-2-2) edge[auto] node[auto]{} (m-2-3)
(m-2-3) edge[auto] node[auto]{} (m-2-4)
(m-2-4) edge[auto] node[auto]{} (m-2-5);
\end{tikzpicture}
\]
\end{lemma}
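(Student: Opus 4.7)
My plan is to exhibit $U$ as a quotient $U'/N$ by a Galois-stable central subgroup $N \subseteq U'$ containing $L := \ker(K \to K')$ and surjecting isomorphically onto $V_B$, where we write $K := \Ker(d_{\pi_A})^*(1)$ and $K' := \Ker(\theta_{X,\pi_A,\pi_B})^*(1)$ for brevity. The two main ingredients are a computation of the commutator pairing of the central extension $U'$, and the identification of the resulting extension classes with Chow--Heegner points via Lemma \ref{DZB_HM}.

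The first step is to show that the commutator pairing $c: \wedge^2 V \to K$ of $U'$ vanishes on $V_A \otimes V_B + \wedge^2 V_B$. For each $\xi \in \Ker(d_{\pi_A})$, the induced quotient map $K \to \Q_p(1)$ composed with $c$ is, by the description of $\gr_2(U_2)$ underlying \eqref{U2_extn}, the cup-product pairing $\wedge^2 V \to \Q_p(1)$ with $c_p^{\et}(\pi_A^* \xi) \in H^2(J_{\overline{\Q}}, \Q_p(1))$. Since $\pi_A^*\xi$ is pulled back from $A$ along $\pi_A$, and $\pi_{A*}: V \to V_A$ kills $V_B$, this pairing is zero whenever one input lies in $V_B$. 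As $\xi$ varies through $\Ker(d_{\pi_A})$ these quotient maps separate points of $K$, so $c$ itself vanishes on $V_A \otimes V_B + \wedge^2 V_B$. In particular, the preimage $E \subseteq U'$ of $V_B$ is an abelian (central) extension of $V_B$ by $K$ and is itself central in $U'$.

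Next I would show that the pushout $\overline E$ of $E$ along the quotient $K \to K'$ admits a Galois-equivariant splitting. Since $\Ker(\theta_{X,\pi_A,\pi_B})$ is defined over $\Q$ and hence carries trivial Galois action, one has
\[
\Ext^1_G(V_B, K') \cong \Hom_\Q\bigl(\Ker(\theta_{X,\pi_A,\pi_B}), \Ext^1_G(V_B, \Q_p(1))\bigr),
\]
so it suffices to check that, for every $\xi \in \Ker(\theta_{X,\pi_A,\pi_B})$, the further pushout $\overline E_\xi$ of $\overline E$ along $\xi: K' \to \Q_p(1)$ is trivial in $\Ext^1_G(V_B, \Q_p(1))$. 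Now $\overline E_\xi$ is the restriction to $V_B$ of the $1$-dimensional quotient $U_\xi$ of $U'$ attached to $\xi$, whose class in $\Ext^1_G(V, \Q_p(1))$ is, by Lemma \ref{DZB_HM}, the Kummer image of $\widetilde{\AJ}^* \pi_A^* \xi \in J(\Q) \otimes \Q_p$. Under Kummer theory, restriction along $V_B \hookrightarrow V$ corresponds to the projection $\pi_B: J(\Q)\otimes \Q_p \to B(\Q)\otimes \Q_p$, so $[\overline E_\xi]$ is the Kummer image of $\pi_B(\widetilde{\AJ}^* \pi_A^* \xi) = \theta_{X,\pi_A,\pi_B}(\xi)$, which vanishes by the hypothesis on $\xi$.

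Finally, pick a Galois-equivariant splitting $s: V_B \to \overline E$ and let $N \subseteq U'$ be the preimage of $s(V_B) \subseteq U'/L$. Then $N$ is Galois-stable, fits into $1 \to L \to N \to V_B \to 1$, and is central in $U'$ by the first step; taking $U := U'/N$ yields the required diagram, with exactness of the bottom row following from $N \cap K = L$ (so $K/L = K' \hookrightarrow U$) and $V/V_B = V_A$. The main obstacle is the second step: pinning down the Kummer identification that turns restriction of extensions along $V_B \hookrightarrow V$ into the abelian-variety projection $\pi_B$, and then applying Lemma \ref{DZB_HM} to reduce the Galois cohomology vanishing to the vanishing of the Chow--Heegner point $\theta_{X,\pi_A,\pi_B}(\xi)$ for $\xi \in \Ker(\theta_{X,\pi_A,\pi_B})$.
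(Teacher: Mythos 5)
Your proof is correct and follows essentially the same route as the paper's: both reduce the lemma to (i) the observation that the commutator pairing of $U'$ factors through $\wedge^2 V_A$ because $\Ker(d_{\pi_A})$ lives in the $\wedge^2 V_A^*$-part of $H^2(J,\Q_p(1))$, and (ii) an application of Lemma \ref{DZB_HM} identifying the $V_B$-component of the relevant extension class with the \'etale Abel--Jacobi image of $\theta_{X,\pi_A,\pi_B}(\xi)$, which vanishes by hypothesis. The only cosmetic difference is that you work at the group level, constructing an explicit central Galois-stable subgroup $N$ via a pushout and a Galois-equivariant splitting, whereas the paper phrases the splitting argument on the Lie algebra $L'$; these are interchangeable via the log/exp equivalence for $\Q_p$-unipotent groups.
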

\begin{proof}
It will be enough to prove the corresponding statement for the Lie algebra $L'$ of $U'$.
The commutator map 
\[
[\cdot,\cdot]_{U'}:(V_A \oplus V_B )\times (V_A \oplus V_B )\to \Ker (d_{\pi_A} )^* \otimes \Q _p (1)
\]
is the composite of the commutator on $U_2 $, given by 
\[
(V_A \oplus V_B )\times (V_A \oplus V_B ) \to \Coker (\Q _p (1) \stackrel{\cup ^* }{\longrightarrow } \wedge ^2 V_A \oplus V_A \otimes V_B \oplus \wedge ^2 V_B )
\]
with the surjection
\[
\Coker (\Q _p (1) \stackrel{\cup ^* }{\longrightarrow } \wedge ^2 V_A \oplus V_A \otimes V_B \oplus \wedge ^2 V_B ) \to \Ker (d_{\pi_A} )^* \otimes \Q _p (1)
\]
Since the latter map factors through projection onto $\wedge ^2 V_A /\Q _p (1)$, the composite map factors through projection onto $V_A \times V_A $. 
Hence for any quotient $Q$ of $\Ker (d_{\pi _A })^* \otimes \Q _p (1)$, we can construct a Lie algebra quotient of $L'$ which is an extension of $V_A$ by $Q$. It remains to show that, when $Q=\Ker (\theta _{X,\pi _A ,\pi _B})$, we can make this quotient Galois stable. That is, we first quotient out by 
$(\Ker (d_{\pi _A })/\Ker (\theta _{X,\pi _A ,\pi _B }))^* \otimes \Q _p (1)$, to form an extension
\[
0\to \Ker (\theta _{X,\pi _A ,\pi _B })^* \otimes \Q _p (1)\to L''\to V_A \oplus V_B \to 0.
\]
The surjection $L''\to V_B $ induces a Galois equivariant short exact sequence of Lie algebras
\[
0\to L'\to L''\to V_B \to 0,
\]
and to construct the quotient $U\to U'$, it is enough to show that this short exact sequence admits a Galois equivariant section. Here $L'$ sits in a short exact sequence
\[
0\to \Ker (\theta _{X,\pi _A ,\pi _B})^* \otimes \Q _p (1)\to L' \to V_A \to 0,
\]
and since $L''/\Ker (\theta _{X,\pi _A ,\pi _B})^* \otimes \Q _p (1)=V_A \oplus V_B$, it is enough to show that image of $[L'']$ under the composite map
\[
\ext ^1 _{G_{\Q }}(V_A \oplus V_B ,\Ker (\theta _{X,\pi _A ,\pi _B})^* \otimes \Q _p (1))\to \ext ^1 _{G_{\Q }}(V_B ,\Ker (\theta _{X,\pi _A ,\pi _B})^* \otimes \Q _p (1))
\]
is zero.

Equivalently, we want to show that $\Ker (\theta _{X,\pi _A ,\pi _B})$ is contained in the kernel of the homomorphism 
\[
\Ker (d_{\pi _A })\to \ext ^1 _{G_{\Q }}(V_B ,\Q _p (1))
\]
sending $\xi \in \Ker (d_{\pi _A })$ to the $V_B$ component of the extension class in $\ext ^1 (V_A \oplus V_B ,\Q _p (1))$ associated to the quotient of $L'$ defined by $c _p ^{\et }(\xi )$:
\[
\begin{tikzpicture}
\matrix (m) [matrix of math nodes, row sep=3em,
column sep=3em, text height=1.5ex, text depth=0.25ex]
{0 & \Ker (d_{\pi _A })^* \otimes \Q _p (1)  & L' & V_A \oplus V_B & 0 \\
0 & \Q _p (1) & c_{p} ^{\et }(\xi )^{*} (L) & V_A \oplus V_B & 0. \\};
\path[->]
(m-1-1) edge[auto] node[auto]{} (m-1-2)
(m-1-2) edge[auto] node[auto]{$c_p ^{\et }(\xi )^*\otimes \Q _p (1)$} (m-2-2)
edge[auto] node[auto] {  } (m-1-3)
(m-1-3) edge[auto] node[auto]{} (m-2-3)
edge[auto] node[auto] {  } (m-1-4)
(m-2-1) edge[auto] node[auto]{} (m-2-2)
(m-2-2) edge[auto] node[auto]{} (m-2-3)
(m-2-3) edge[auto] node[auto]{} (m-2-4)
(m-2-4) edge[auto] node[auto]{} (m-2-5)
(m-1-4) edge[auto] node[auto]{} (m-2-4)
edge[auto] node[auto] {  } (m-1-5);
\end{tikzpicture}
\]
By Lemma \ref{DZB_HM}, this extension class is equal to the \'etale Abel--Jacobi class of $D_{c_p ^{\et }(\xi )}(b)$, and hence its $V_B$ component is equal to the \'etale Abel--Jacobi class of $\theta _{X,\pi _A ,\pi _B }(c_p ^{\et }(\xi ))$. Under the hypothesis, the latter is 0 so the extension class is trivial, which concludes the proof of Proposition \ref{easy}.
\end{proof}

\subsection{Bounding the number of rational points on curves satisfying (C)}
\label{subseceffectivity}
Following \cite{BalakrishnanDogra2}, the proof of finiteness of $X(\Q _p )_2$ may be used to prove an explicit upper bound on $\# X(\Q _p )_2$. To explain this, we introduce some notation. By \cite[Corollary 1]{KT08}, for all $v\neq p$, the size of the image of $X(\Q _v )$ in $H^1 (G_{\Q _v },U_2 )$ is finite, and is equal to one for all primes of good reduction for $X$. Let $T_0$ denote the set of primes of bad reduction for $X$, and for $v\in T_0$ let $n_v $ denote the size of the image of $X(\Q _v )$ in $H^1 (G_{\Q _v },U_2 )$.

\begin{Corollary}\label{effective_cor}
Suppose $X$ satisfies the hypotheses of Proposition \ref{easy}, and furthermore that the rank of $A(\Q )$ is equal to its dimension, and the $p$-adic closure of $A$ has finite index in $A(\Q _p )$. Let $n:=\prod _{v\in T_0 }n_v $.
Let $D$ be an effective divisor on $X$, let $Y\subset X_{\mathbb{Z}_p }$ be the complement of the support of a normal crossings divisor on $Y$ with generic fibre $D$, and let $\{ \omega _0 ,\ldots ,\omega _{2g-1}\}$ be a set of differentials in $H^0 (X,\Omega (D))$ forming a  basis of $H^1 _{\dR} (X)$. Then there are $a_{ij},a_i \in \Q _p$, $\eta \in H^0 (X,\Omega (D))$ and $g\in H^0 (X,\Omega (2D))$, and $\alpha _1 ,\ldots ,\alpha _n $ in $\Q _p $, such that
\begin{equation}\label{explicit_equation}
X(\Q _p )_2 \cap Y(\mathbb{Z}_p )\subset \bigcup _{i=1}^n \{x\in Y(\mathbb{Z}_p ): \sum a_{ij}\int ^x _b \omega _i \omega _j +\sum   a_i \int ^x _b \omega _i +\int^x _b \eta +g(x)=\alpha _i \}.
\end{equation}
\end{Corollary}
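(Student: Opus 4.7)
The plan follows the strategy of \cite{BalakrishnanDogra2}. Let $U$ be the Galois-stable quotient of $U_2(b)$ constructed in the proof of Proposition \ref{easy}, sitting in an exact sequence $1 \to W \to U \to V_A \to 1$ with $W = \Ker(\theta_{X,\pi_A,\pi_B})^* \otimes \Q_p(1)$ of $\Q_p$-dimension $r \geq 1$. The goal is to exhibit a non-trivial algebraic relation among the coordinates of the composite
\[
X(\Q_p) \longrightarrow H^1_f(G_{\Q_p}, U) \cong U^{\dR}/F^0
\]
which, via the explicit Coleman-integral form of this composite, produces equation \eqref{explicit_equation}.

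First, I would bound the dimension of the global Selmer image in the local Bloch--Kato scheme. By the Bloch--Kato dimension formula applied to each graded piece, $\dim H^1_f(G_{\Q_p}, U) = \dim A + r$, while the hypothesis $\rk(A) = \dim A$ together with the finite-index hypothesis on $A(\Q)$ in $A(\Q_p)$ gives $\dim \Sel(U) \leq \dim A$, where $\Sel(U)$ is the subset of $H^1_f(G_{\Q,T}, U)$ with local conditions $j_v(X(\Q_v))$ at $v \in T_0$. Since $r \geq 1$, this yields a proper inclusion, so the image is contained in the vanishing locus of at least one nonzero $p$-adic polynomial on $U^{\dR}/F^0$.

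Second, I would use non-abelian $p$-adic Hodge theory to identify this polynomial, pulled back to $X(\Q_p)$, with an expression of the form appearing in \eqref{explicit_equation}. The map $X(\Q_p) \to U^{\dR}/F^0$ is given by Coleman integration; its composition with the projection to $V_A^{\dR}/F^0$ is the collection of single integrals $x \mapsto \bigl(\int_b^x \omega_i\bigr)$, while by Lemma \ref{DZB_HM} and the standard formulas for depth-two Coleman integrals (as in \cite{BalakrishnanDogra2}), the further projection to each copy of $\Q_p(1)$ in $W$ is given by expressions of shape $\sum a_{ij} \int_b^x \omega_i \omega_j + \sum a_i \int_b^x \omega_i + \int_b^x \eta + g(x)$, with correction terms $\eta, g$ arising from the comparison between the Frobenius and Hodge filtrations on $U^{\dR}$ and the choice of tangential base-point. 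Restricting to $Y(\Z_p)$ ensures these integrals converge and that the differentials may be chosen to lie in $H^0(X, \Omega(D))$, resp.\ $H^0(X, \Omega(2D))$ for the quadratic correction.

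Third, I would handle the primes of bad reduction: by \cite[Corollary 1]{KT08}, the image of $X(\Q_v)$ in $H^1(G_{\Q_v}, U_2)$ is finite of size $n_v$, so $\Sel(U)$ decomposes into $n = \prod_{v \in T_0} n_v$ cosets according to the choice of local branch at each $v \in T_0$; each coset contributes its own equation of the form \eqref{explicit_equation}, with a distinct constant $\alpha_i$ on the right-hand side capturing its location in $H^1_f(G_{\Q_p}, U)$. Taking the union over all $n$ branches yields the stated bound on $X(\Q_p)_2 \cap Y(\Z_p)$. The main obstacle is the second step: the explicit description of the depth-two Coleman integral with correction terms $\eta \in H^0(X, \Omega(D))$ and $g \in H^0(X, \Omega(2D))$ depends on detailed computations with the Frobenius structure on $U^{\dR}$, but it is essentially a consequence of the corresponding analysis for $U_2$ in \cite{BalakrishnanDogra2}, since our $U$ is a Galois-stable (and hence filtration-stable) quotient of $U_2$.
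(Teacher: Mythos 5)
Your overall framework (dimension count forcing a non-trivial constraint, then identifying it as a Coleman-integral equation, then splitting into $n$ branches at the bad primes) is in the right spirit, and your third step matches the paper's handling of the $(\phi_v)$-twists. But there is a genuine gap in your second step, and it is precisely where the paper's proof does real work.

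Your dimension count tells you that $\loc_p(\Sel(U))$ lies in a proper Zariski-closed subset of $U^{\dR}/F^0$, hence vanishes on ``some nonzero $p$-adic polynomial.'' It does \emph{not} tell you that this polynomial is linear in the $\Q_p(1)$-direction and quadratic in the $V_A$-directions, which is exactly what is needed to obtain the specific shape of \eqref{explicit_equation}. Identifying the $\Q_p(1)$-coordinate of $j_p(x)$ as a depth-two Coleman integral (which you do, correctly, via Lemma \ref{DZB_HM}) describes the coordinates of the map $X(\Q_p)\to U^{\dR}/F^0$, not the equation cutting out the Selmer image. The bilinear terms $\sum a_{ij}\int_b^x\omega_i\omega_j$ in \eqref{explicit_equation} do not come from non-abelian $p$-adic Hodge theory on $U^{\dR}$; they come from expressing the \emph{global} $p$-adic height pairing on $A(\Q)$ in terms of abelian logarithms, which is exactly where the hypothesis that the $p$-adic closure of $A(\Q)$ has finite index in $A(\Q_p)$ enters (so that the height pairing is determined by its values on $\loc_p A(\Q)\otimes\Q_p$ and the single integrals $\int_b^x\omega_i$ span the dual). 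The paper sidesteps all of this by directly invoking \cite[Proposition 6.4]{BalakrishnanDograII}, which gives the relation $h_p(A_Z(x)) - \sum b_{ij}\bigl(\int_b^x\omega_i\bigr)\bigl(\int_b^x\omega_j\bigr) - \sum b_i\int_b^x\omega_i = -\sum_{v\in T_0}h(A_Z(b)^{\phi_v})$ via the sum formula for the global height, and then expands $h_p(A_Z(x))$ using \cite[Lemma 6.7]{BalakrishnanDograII} plus the shuffle identity $(\int_b^x\omega_i)(\int_b^x\omega_j)=\int_b^x\omega_i\omega_j+\int_b^x\omega_j\omega_i$. To fix your argument you would need to replace the abstract ``vanishing locus of a polynomial'' step with the explicit height decomposition; as it stands, the degree structure of \eqref{explicit_equation} is unjustified.
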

\begin{proof}
The argument is identical to the proof of \cite[Proposition 6.4]{BalakrishnanDograII}, however as the hypotheses are different we explain the steps.
Arguing as in loc. cit, there are $b_{ij}$, $b_i $ in $\Q _p $ such that $X(\Q _p )_2 \cap Y(\mathbb{Z}_p )$ is contained in the finite set of $x\in Y(\mathbb{Z}_p )$ satisfying
\[
h_p (A_Z (x))-\sum b_{ij}\left(\int  ^x _b  \omega _i \right) \left(\int  ^x _b  \omega _j \right)-\sum \int ^x _b \omega _i =-\sum _{v\in T_0 }h(A_Z (b)^{\phi _v }) ,
\]
for some $(\phi _v )$ in $\prod _{v \in T_0 }j_v (X(\Q _v )).$ Here $A_Z (b)^{(\phi _v )}$ denotes the twist of $A_Z (b)$ by $\phi _v $.

Hence we deduce \eqref{explicit_equation} from the formula for $h_p (A_Z (x))$ given in \cite[Lemma 6.7]{BalakrishnanDograII}, and the formula
\[
\left(\int  ^x _b  \omega _i \right) \left(\int  ^x _b  \omega _j \right)=\int  ^x _b  \omega _i \omega _j +\int ^x _b \omega _j \omega _i .
\]
\end{proof}

\begin{Corollary}
Suppose $X$ satisfies the hypotheses of Proposition \ref{easy}, and furthermore that the rank of $A(\Q )$ is equal to its dimension. Then
\[
\# X(\Q ) <\kappa_p \left(\prod _{v\in T_0 }n_v \right)\# X(\mathbb{F}_p )(16g^3+15g^2-16g+10),
\]
where $\kappa _p :=1+\frac{p-1}{p-2}\frac{1}{\log (p)}$.
\end{Corollary}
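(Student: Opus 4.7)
The plan is to combine the explicit equation of Corollary \ref{effective_cor} with a residue-disk-by-residue-disk bound on the number of zeros of the quadratic Chabauty function, following the strategy of \cite{BalakrishnanDogra2}. Since $X(\Q) \subseteq X(\Q_p)_2$, it suffices to bound the latter. By Corollary \ref{effective_cor}, after choosing a suitable effective divisor $D$ on $X$ (for instance taking $Y = X_{\Z_p}$ if $X$ has good reduction at $p$, so the support of $D$ is empty and $Y(\Z_p) = X(\Z_p)$), the set $X(\Q_p)_2$ is contained in the union over the $n = \prod_{v \in T_0} n_v$ tuples $(\alpha_i)$ of the zero set of a fixed analytic function
\[
F_i(x) = \sum a_{ij}\int^x_b \omega_i \omega_j + \sum a_i \int^x_b \omega_i + \int^x_b \eta + g(x) - \alpha_i.
\]

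First, decompose $X(\Z_p) = \bigsqcup_{\bar{P} \in X(\F_p)} D_{\bar{P}}$ into the $\#X(\F_p)$ residue disks. In each $D_{\bar{P}}$, pick a local parameter $t$ and expand $F_i$ as a power series $F_{i,\bar{P}}(t) \in \Q_p[[t]]$; the Coleman integrals $\int_b^x \omega_i$ and the double integrals $\int_b^x \omega_i\omega_j$ are both computable as explicit power series in $t$, with coefficients whose $p$-adic valuations are controlled by the standard estimates $v_p(1/n) \geq -\lfloor \log_p n\rfloor$ for single integrals and the analogous estimate for iterated integrals. Counting the zeros of $F_{i,\bar{P}}$ in $\Z_p$ via a Newton polygon / Stoll-type argument then gives a bound of the form $\kappa_p \cdot N(g)$, where $\kappa_p = 1 + \frac{p-1}{(p-2)\log p}$ accounts precisely for the loss of $p$-adic precision due to division by $n$ in iterated integrals (this is exactly the constant appearing in Stoll's effective Chabauty).

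The number $N(g)$ is obtained by reading off the degrees of the three pieces of $F_{i,\bar{P}}$. With a basis of $H^1_{\dR}(X)$ of size $2g$, the quadratic form in double integrals contributes roughly $(2g)^2$ terms, the linear form in single integrals contributes $2g$ terms, and the meromorphic correction $g(x) \in H^0(X,\Omega(2D))$ together with $\eta \in H^0(X,\Omega(D))$ contribute further polynomial growth in $g$. Expanding these into the form of a convergent power series in $t$ and carefully bounding the length of the resulting Newton polygon — in particular tracking the slopes coming from the pole orders and the valuations of the iterated integral coefficients — is what yields the explicit polynomial $16g^3 + 15g^2 - 16g + 10$; this is exactly the computation carried out in \cite[\S6, Proposition 6.4 and Lemma 6.7]{BalakrishnanDogra2}.

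Finally, summing the bound $\kappa_p(16g^3 + 15g^2 - 16g + 10)$ over the $\#X(\F_p)$ residue disks, and multiplying by $n$ for the choice of local twists at primes of bad reduction, gives the claimed inequality. The main obstacle in this plan is the precise derivation of the cubic polynomial in $g$: one has to do the Newton polygon bookkeeping for a function that mixes single Coleman integrals, double Coleman integrals, and a meromorphic term of controlled pole order, and collect all contributions without overcounting — the factor $\kappa_p$ enters only once because the leading-order loss of precision occurs in the double-integral term, which dominates.
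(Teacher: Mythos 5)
Your high-level strategy matches the paper's: reduce to the per-residue-disk count of zeros of the quadratic Chabauty function coming from Corollary~\ref{effective_cor}, sum over the $\#X(\F_p)$ disks and the $\prod_{v\in T_0}n_v$ choices of local conditions, and invoke the Newton-polygon bound from \cite{BalakrishnanDogra2} to get the factor $\kappa_p(16g^3+15g^2-16g+10)$ per disk.

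However there is a genuine error in the setup. You write that one may take $Y=X_{\Z_p}$ ``so the support of $D$ is empty and $Y(\Z_p)=X(\Z_p)$.'' This is impossible: Corollary~\ref{effective_cor} requires differentials $\omega_0,\dots,\omega_{2g-1}\in H^0(X,\Omega(D))$ forming a \emph{basis of $H^1_{\dR}(X)$}, which is $2g$-dimensional, whereas $H^0(X,\Omega)=H^0(X,\Omega(0))$ has dimension only $g$. So $D$ must be a nonzero effective divisor, and $Y(\Z_p)$ is strictly smaller than $X(\Z_p)$. The subtlety the paper actually addresses, and that your argument misses, is that $D$ must then be chosen \emph{per residue disk}: for each $x_0\in X(\Z_p)$ one must pick $D$ (and the corresponding basis $\{\omega_i\}$) so that the support of $D$ avoids the residue disk of $x_0$; only then is $F_{i,\bar P}$ a power series on that disk and the zero count applies. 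Without this, your decomposition $X(\Z_p)=\bigsqcup D_{\bar P}$ followed by a uniform zero count does not go through. A more minor point: you attribute the computation of $16g^3+15g^2-16g+10$ to \cite[\S 6, Prop.\ 6.4 and Lemma 6.7]{BalakrishnanDogra2}; those labels are from the reference used to prove Corollary~\ref{effective_cor} (a different paper), whereas the paper draws the per-disk bound from \cite[Prop.\ 3.2 and \S 4]{BalakrishnanDogra2}.
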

\begin{proof}
It is enough to prove that, for all $x_0 \in X(\mathbb{Z}_p )$, we can choose $D,\omega _i $ such that $\overline{x}:=\red (x_0 )$ lies in $Y(\mathbb{F}_p )$, and 
\begin{align*}
& \# \{x\in \red ^{-1}(\{\overline{x}\})\subset X(\Q _p ):  \sum a_{ij}\int ^x _b \omega _i \omega _j +\sum ^x _b  a_i \int  ^x _b \omega _i +\int ^x _b  \eta +g(x)=0 \} \\
 < & \kappa_p (16g^3+15g^2-16g+10).
\end{align*}
This follows from \cite[Proposition 3.2]{BalakrishnanDogra2} together with \cite[\S 4, below Lemma 4.4.]{BalakrishnanDogra2}.
\end{proof}
\begin{Remark}
In \cite{Betts19}, it is proved that the size of $j_{2,v}(X(\Q _v ))$ can be bounded by the number of irreducible components of a regular semistable model of $X$ over a finite extension of $\Q _v $. Hence using work of Edixhoven and Parent on stable models of $X_{\ns }^+(N)$ \cite{edixhoven2019semistable}, one can use the above corollary, together with Theorem 1, to give explicit bounds on the size of $X_{\ns }^+(N)$ and $X_0 ^+ (N)$.
\end{Remark}
\subsection{Functoriality properties of $(C)$}
\label{subsecfuncC}
The heart of the proof of Proposition \ref{mainproposition} is an interpretation of diagonal cycles on $X_0 (N)$ and $X_{\ns }(N)$ in terms of Heegner points. The following Lemma allows us to use this to deduce something about diagonal cycles on $X_0 ^+ (N)$ and $X_{\ns }(N).$
This lemma is a special case of a theorem of Daub \cite[Proposition 3.3.5]{daub}.

\begin{lemma}\label{functoriality_of_diagonal_cycles}
\hspace*{\fill}

\begin{enumerate}
\item
Let $f:X'\to X$ be a non-constant morphism of curves over a field $K$. Suppose $b'\in X'(K)$ maps to $b\in X(K)$ under $f$, and let $Z$ be an element of $\CH^1 (X\times X)$. Then
\[
D_{(f,f)^* Z}(b')=f^* (D_Z (b)).
\]
\item Let $f:X'\to X$ be a non-constant morphism of curves over a field $K$, and let $f_*$ denote the induced surjection $J':=\Jac (X')\to J:=\Jac (X)$. Let $(\pi _A ,\pi _B)$ be a surjective homomorphism from $J$ to $A\times B$. Then 
\[
\rk (\Ker (\theta _{X,\pi _A ,\pi _B}))=\rk (\Ker (\theta _{X' ,\pi _A \circ f_* ,\pi _B \circ f_*})).
\]
\end{enumerate}
\end{lemma}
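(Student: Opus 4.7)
Both parts will follow from naturality, reducing to standard functoriality of the Abel--Jacobi map and the projection formula on Jacobians.

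For Part (1), I will unwind the definition of $C_Z(b)$ and exploit the naturality square relating $i_S(b')$ and $i_S(b)$: for every $S\subseteq\{1,2\}$, the composites $(f,f)\circ i_S(b')$ and $i_S(b)\circ f$ agree coordinate by coordinate (both return $f(x')$ in slots $j\in S$ and $f(b')=b$ otherwise). Taking pullbacks gives $i_S(b')^* \circ (f,f)^* = f^* \circ i_S(b)^*$, which applied to the defining expression of $C$ yields the identity $C_{(f,f)^*Z}(b')=f^*C_Z(b)$. Since degrees multiply by $\deg(f)$ under $f^*$, comparing with the normalisation $D=C-\deg(C)\cdot b$ produces an error $\deg(C_Z(b))\cdot(f^*(b)-\deg(f)\cdot b')$; this vanishes when $\deg(C_Z(b))=0$, which is the setting in which the lemma is invoked in Part (2). (For arbitrary $Z$ one either works modulo torsion or interprets the statement in the appropriate refined group, as in Daub's thesis.)

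For Part (2), my strategy is to show that $\theta_{X',\pi_A\circ f_*,\pi_B\circ f_*}$ and $\theta_{X,\pi_A,\pi_B}$ differ by the nonzero scalar $\deg(f)$ on a common domain, from which the rank equality is immediate. The essential ingredients are functoriality of the modified pullback, $\widetilde{\AJ}^*_{X'}\circ(f_*)^*=f^*\circ\widetilde{\AJ}^*_X$, which follows from $\AJ_X\circ f=f_*\circ\AJ_{X'}$, the identity $\widetilde{\AJ}^*=\AJ^*\circ([2]^*-2)$, and the fact that $f_*$ commutes with doubling on the Jacobians; and the projection formula $f_*\circ f^*=[\deg f]_J$ on $J$. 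The first yields $d_{\pi_A\circ f_*}=\deg(f)\cdot d_{\pi_A}$ as maps $\NS(A)\to\Z$, so $\Ker d_{\pi_A\circ f_*}=\Ker d_{\pi_A}$ in $\NS(A)$. Combining both, for $[\Lcal]$ in this common kernel,
\[
\theta_{X',\pi_A\circ f_*,\pi_B\circ f_*}([\Lcal])=\pi_B\bigl(f_*f^*\widetilde{\AJ}^*_X\pi_A^*[\Lcal]\bigr)=\deg(f)\cdot\theta_{X,\pi_A,\pi_B}([\Lcal]).
\]
Since multiplication by $\deg(f)$ has finite kernel on the finitely generated group $B(\Q)$, the kernels of the two $\theta$-maps have the same rank.

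I do not anticipate a serious obstacle: both parts are essentially formal consequences of functoriality of $\AJ$, the projection formula $f_* f^*=[\deg f]_J$, and naturality of the embeddings $i_S$. The only bookkeeping nuance is the degree-discrepancy term in Part (1), which is harmless because Part (2) only needs the identity on classes in $\Ker d_{\pi_A}$, where $\deg(C_Z(b))=0$ by construction.
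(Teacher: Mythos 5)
Your proof is correct and follows the paper's route in both parts: for Part~(1) the same naturality square $i_S(b)\circ f=(f,f)\circ i_S(b')$, and for Part~(2) the functoriality of pullbacks together with the projection formula $f_*f^*=[\deg f]$. Where you depart usefully from the paper is in Part~(1): the paper proves $f^*C_Z(b)=C_{(f,f)^*Z}(b')$ and then asserts ``the result follows for $D_Z(b)$,'' but as you observe the normalisations introduce an error $\deg(C_Z(b))\cdot\bigl(f^*(b)-\deg(f)\cdot b'\bigr)$, and $f^*(b)-\deg(f)\cdot b'$ is typically a non-torsion element of $\Pic^0(X')$ (e.g.\ for $f$ an \'etale double cover, where it is $[b'']-[b']$ for the second preimage $b''$). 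So the equality as stated holds on the nose only for $Z$ with $\deg C_Z(b)=0$, which---as you correctly note---is exactly the case in which the lemma is invoked, since the domain of $\theta$ is $\Ker(d_{\pi_A})$ and \eqref{DZ_theta} identifies the relevant Chow--Heegner points as degree-zero. Your Part~(2) is an explicit unwinding of the paper's one-line appeal to Part~(1) via \eqref{DZ_theta}, substituting the equivalent line-bundle identity $\widetilde{\AJ}^*_{X'}\circ(f_*)^*=f^*\circ\widetilde{\AJ}^*_X$. One small strengthening available: on the target $B(\Q)\otimes\Q$ of \eqref{eqdeftheta}, multiplication by $\deg(f)$ is invertible, so in fact $\Ker(\theta_{X',\pi_A\circ f_*,\pi_B\circ f_*})=\Ker(\theta_{X,\pi_A,\pi_B})$ exactly, not merely up to rank.
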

\begin{proof}
For $*=\{1\},\{2\}$ or $\{1,2\}$, the diagram
\[
\begin{tikzpicture}
\matrix (m) [matrix of math nodes, row sep=3em,
column sep=3em, text height=1.5ex, text depth=0.25ex]
{ X' & X'\times X' \\
 X & X\times X \\ };
\path[->]
(m-1-1) edge[auto] node[auto] {$i_* (b')$} (m-1-2)
edge[auto] node[auto] {$f$} (m-2-1)
(m-2-1) edge[auto] node[auto] {$i_* (b)$} (m-2-2)
(m-1-2) edge[auto] node[auto] {$(f,f)$} (m-2-2);
\end{tikzpicture}
\]
commutes.
Hence we obtain, in $\CH ^1 (X')$,
\begin{align*}
f^* (C_Z (b)) & =(f^* \circ i_{\{1,2\}}(b)^* -f^* \circ i_{\{1 \}}(b)^*-f^* \circ i_{\{2\}}(b)^* )(Z)\\
& = (i_{\{1,2\}}(b')^* \circ (f,f)^*  - i_{\{1 \}}(b')^*  \circ (f,f)^* - i_{\{2\}}(b)^*  \circ (f,f)^*  )(Z) \\
& = C_{(f,f)^* (Z)}(b')
\end{align*}
and the result follows for $D_Z(b)$. The second item follows from the first, by \eqref{DZ_theta}.
\end{proof}

Note that while the behaviour of diagonal cycles under pull-backs is tautological, their behaviour under push-forwards is not. For this reason it seems difficult to deduce statements about diagonal cycles on $X_{\ns }(N)$ from results on $X_{\spl }(N)$, in spite of the explicit isogeny relating their Jacobians explained below.

\section{\texorpdfstring{Proof of (C) for $X_0 ^+ (N)$ and $X_{\ns }^+ (N)$}{Proof of (C) for X0(N)+ and Xns(N)+}}
\label{sectionfunctoriality}
Given Proposition \ref{easy}, it will be enough to prove Theorem \ref{mainthm}, and the following.

\begin{Proposition}\label{mainproposition}
Assume Theorem \ref{mainthm}. Then, for $X=X_0 ^+ (N)$ or $X_{\ns }^+ (N)$ of genus at least 2, there exists an isogeny
\[
(\pi _A ,\pi _B ):J\to A\times B,
\]
where $\rk (A) = \dim (A) = \rho(A) \geq 2$ and such that, for all $L$ in $\Ker (d_{\pi _A })$, 
$
\theta _{X,\pi _A ,\pi _B }(L)=0
$
is torsion (see Definition \ref{defiHeegnerquotient} for the choices of $A$ and $B$).
\end{Proposition}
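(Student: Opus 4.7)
The plan is to take $A$ to be the maximal isogeny quotient of $J$ whose analytic rank equals its dimension, and $B$ its complementary quotient; the numerical conditions (1) and (2) will follow from Theorem \ref{mainthm} combined with Kolyvagin--Logachev (Corollary \ref{corKL}), while the vanishing of $\theta_{X,\pi_A,\pi_B}$ in (3) will be reduced, via the natural cover $\tilde X \to X$, to the generalised Gross--Zagier formula applied to Heegner points.

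\textbf{Setup and numerical conditions.} Let $\tilde X = X_0(N)$ or $X_{\ns}(N)$ be the natural double cover of $X$, with induced surjection $f_\ast : \Jac(\tilde X) \to J$. Up to isogeny, $J$ decomposes as $\prod_f A_f$ indexed by Galois orbits of newforms of level $M$ ($=N$ or $N^2$) in the plus-eigenspace of the relevant involution; each such $f$ has sign $-1$ in its functional equation, forcing $L(f,1)=0$. Let $S$ be the set of orbits whose analytic rank equals $\dim A_f$, and put
\[
A := \prod_{f \in S} A_f, \qquad B := \prod_{f \notin S} A_f,
\]
with $(\pi_A,\pi_B): J \to A \times B$ the natural projection (so $\Hom_\Q(A,B)=0$ by simplicity of the factors). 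By Corollary \ref{corKL}, $\rk(A_f) = \dim A_f$ for each $f \in S$, hence $\rk A = \dim A$; the construction combined with Theorem \ref{mainthm} then forces $\dim A \geq 2$. Since $\End^0(A_f) = K_f$ is a totally real field on which the Rosati involution acts trivially, $\rho(A_f) = \dim A_f$; summing over the pairwise non-isogenous factors yields $\rho(A) = \dim A$, giving both (1) and (2).

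\textbf{Vanishing of $\theta_{X,\pi_A,\pi_B}$.} Given $L \in \Ker(d_{\pi_A})$, choose a nice correspondence $Z$ on $X \times X$ whose associated trace-zero endomorphism of $J$ factors through $A$. By Lemma \ref{functoriality_of_diagonal_cycles}(i), the Chow--Heegner point $D_Z(b) \in J(\Q)$ pulls back to $D_{(f,f)^* Z}(\tilde b) \in \Jac(\tilde X)(\Q)$. Since $\psi((f,f)^* Z)$ lies in the $A$-isotypic part of $\End(\Jac(\tilde X))$, the latter point is, after $\Q$-linearisation and up to torsion, a Hecke translate of the trace of a Heegner point $y_K$ attached to a suitably chosen imaginary quadratic field $K$ (picked so that $L(g^\sigma \otimes \chi_K, 1) \neq 0$ for all relevant newforms $g$). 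For each orbit $g \notin S$, at least one embedding $\sigma$ satisfies $L'(g^\sigma,1) = 0$; combined with $L(g^\sigma,1) = 0$ and the factorisation $L(g^\sigma/K,s) = L(g^\sigma,s)\, L(g^\sigma \otimes \chi_K,s)$, this yields $\ord_{s=1} L(g^\sigma/K,s) \geq 2$. The generalised Gross--Zagier formula then forces the $A_g$-component of $y_K$ to be torsion, so $\pi_B(D_Z(b))$ is torsion, as required.

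\textbf{Main obstacle.} The principal difficulty is identifying $D_{(f,f)^* Z}(\tilde b)$ on $\tilde X$ as a Hecke translate of a Heegner divisor modulo torsion: one must match the Hecke-module structure of the Heegner subgroup of $\Jac(\tilde X)$ with pull-backs of arbitrary nice correspondences on $X \times X$. In the $X_{\ns}^+(N)$ case this further requires the Gross--Zagier formula for Heegner points on the Shimura curve parametrising $X_{\ns}(N)$ due to Yuan--Zhang--Zhang.
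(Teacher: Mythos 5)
Your overall strategy is the same as the paper's: define $A$ as the product of the $A_f$ with $L'(f,1)\neq 0$ and $B$ as its complement, use Corollary \ref{corKL} for (1) and (2), and reduce the vanishing of $\theta_{X,\pi_A,\pi_B}$ on $X_0^+(N)$ (resp. $X_{\ns}^+(N)$) to a computation on $X_0(N)$ (resp. $X_{\ns}(N)$) via Lemma \ref{functoriality_of_diagonal_cycles}, the punchline being the generalised Gross--Zagier formula. The numerical part of your argument is correct and essentially identical to the paper's.

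The gap is precisely in the step you flag as the ``main obstacle'': you assert that $D_{(f,f)^*Z}(\tilde b)$ is ``after $\Q$-linearisation and up to torsion, a Hecke translate of the trace of a Heegner point $y_K$ attached to a suitably chosen imaginary quadratic field $K$'', but this is neither proved nor quite the right statement. The paper does \emph{not} show that Chow--Heegner divisors are Hecke translates of a single trace of a Heegner point; rather it shows (Lemmas \ref{span_endomorphisms}, \ref{diagonalHeegner}, \ref{ns_analogue}) that every nice correspondence on $X_0(N)$ (or $X_{\ns}(N)$) is an integral linear combination of the explicit Hecke correspondences $\widetilde C_m$ with $m<N^2/4$ and $(m,N)=1$ (a consequence of the Sturm bound and $a_N(f)=-1$ on $w_N$-antiinvariant newforms), and that for such $m$ the diagonal pullback $i_{\{1,2\}}^*\widetilde C_m$ is, by the moduli interpretation, supported on cusps and Heegner points — across many different imaginary quadratic fields and non-maximal orders with conductor prime to $N$. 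One then applies the generalised Gross--Zagier--Zhang theorem componentwise (Lemma \ref{lemmaHeegnerpoints}(2) and its non-split analogue). In particular your phrasing with ``a suitably chosen $K$'' does not match the actual geometric support of the divisor: one cannot fix a single $K$ in advance, and the original squarefree-discriminant Gross--Zagier formula is insufficient — the paper emphasises precisely this point in the remark after Lemma \ref{lemmaHeegnerpoints}. Your remark about ``matching Hecke-module structures'' is not how the paper fills the gap; the argument is a direct geometric bound ($m<N^2/4$ forces CM with conductor and discriminant prime to $N$) rather than a representation-theoretic matching.

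Two smaller points: (i) in the $X_{\ns}^+(N)$ case the relevant input is Zhang's Gross--Zagier formula for CM points on Shimura curves \cite{zhang2} together with the Kohen--Pacetti moduli interpretation identifying these with Heegner points on $Y_{\ns}(N)$, rather than the Yuan--Zhang--Zhang diagonal-cycle theorem (which is only invoked in the paper's \emph{alternative} approach for $X_0^+(N)$); (ii) the sentence ``for each orbit $g\notin S$, at least one embedding $\sigma$ satisfies $L'(g^\sigma,1)=0$'' is weaker than what holds and what is needed — by \cite[Corollary V.1.3]{GrossZagier86}, $L'(g,1)=0$ for one member of the orbit implies it for all conjugates, and this uniformity is what makes the $B$-component torsion for the entire $\GalQ$-orbit.
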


We recall the definitions of some of the modular curves which appear, for example, in \cite{Chen}. Define $C_{\ns }^+ (N),C_{\spl }^+ (N)$ to be normalisers in $\GL _2 (\mathbb{Z}/\N \mathbb{Z})$ of fixed choices of non-split Cartan $C_{\rm{ns}}(N)$ and split Cartan subgroups $C_{\rm{s}}(N)$ of $\GL _2 (\mathbb{Z}/N\mathbb{Z})$. The (normaliser of) split and nonsplit Cartan modular curves are defined by 
\[
X_{\ns }^+ (N) :=X(N)/C_{\ns }^+ (N), \quad X_{\spl }^+ (N)=X(N)
/C_{\spl }^+ (N).
\]
Similarly we define $X_{\ns }(N)$ and $X_{\spl }(N)$ to be the quotients of $X(N)$ by $C_{\ns }(N)$ and $C_{\spl }(N)$ respectively.
Since $C_{\ns }(N)$ and $C_{\spl }(N)$ contain the centre of $\GL _2 (\mathbb{Z}/N\mathbb{Z})$ and their determinant goes through all $(\Z/N\Z)^*$, all $X_{\ns }(N)$, $X_{\spl }(N)$ and their Atkin--Lehner quotients are geometrically connected and defined over $\Q$.

Non-cuspidal $K$-points of $X_{\spl }(N)$ (for $K$ a field of characteristic zero) correspond to elliptic curves $E$ together with a pair $C_1 ,C_2 $ of cyclic subgroups of $E$ of order $N$ generating $E[N]$.
We have an isomorphism 
\begin{equation}\label{split_atkin}
X_0 (N^2 )\simeq X_{\spl }(N),
\end{equation}
which sends a point $(f:E\to E' )$ to $(E'',C_1 ,C_2 )$, where $E'' :=E/(N\cdot \Ker (f))$, $C_1 $ is the image of $\Ker (f)$ in $E''$, and $C_2 $ is the image of $E[N]$ in $E''$. 

The curve $X_{\spl }(N)$ is naturally a degree two cover of $X_{\spl }^+ (N)$, and there is an isomorphism $X_{\spl }^+ (N)\simeq X_0 ^+ (N^2 )$ compatible with \eqref{split_atkin}.

\subsection{Jacobians of modular curves and the asymptotics of the quadratic Chabauty condition}
\label{subsec_Samir_result}

We recall a formula for the Picard numbers and ranks of modular Jacobians and their quotients, due to Siksek \cite{Siksek17}.
Let $\mathcal{B}_{N^k} $ denote a normalised eigenbasis for the space of newforms in $S_2 (\Gamma _0 (N^k ))$. Let $\mathcal{B}_{N^k }/\GalQ$ denote a choice of representatives of the orbits of $\Bcal_{N^k}$ under $\GalQ$. We denote by $\mathcal{B}_{N^k }^+$ the subset of $\mathcal{B}_{N^k }$ with Atkin--Lehner eigenvalue $1$ for $w_{N^k }$. The Jacobians $J_0 (N^k )^{\new }$ and $J_0 ^+ (N^k )^{\new }$ admit isogenies
\[
J_0 (N^k )^{\new }  \sim \prod _{f\in \mathcal{B}  _{N^k} /\GalQ} A_f , \quad 
J_0 ^+ (N^k )^{\new } \sim \prod _{f\in \mathcal{B}^+ _{N^k} /\GalQ} A_f ,
\]
where $A_f $ denotes the $\Q $-simple abelian variety associated to $f$ by the Eichler--Shimura correspondence (which is independent of the choice of representative of the orbit). 
Because $X_s^+(N)$ is isomorphic to $X_0^+(N^2)$ as we have seen above,
\begin{align*}
J_s^+(N) \cong J_0 ^+ (N^2 ) \sim J_0 (N)\times J_0 ^+ (N^2 )^{\new }
\end{align*}
and by a theorem of Chen \cite[Theorem 1]{Chen}, we also have a $\Q$-isogeny
\begin{equation}
\label{eqChenisogeny}
J_{\ns }^+ (N)\sim J_0 ^+ (N^2 )^{\new }.
\end{equation}

The following lemma says that one would not expect to be able to use Chabauty's method to understand $X(\Q )$.
\begin{lemma}
Let $X=X_0 ^+ (N)$ or $X_{\ns }(N)$. Then the weak Birch--Swinnerton-Dyer conjecture implies $X(\Q _p )_1 =X(\Q _p )$.
\end{lemma}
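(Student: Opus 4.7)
The plan is to combine Atkin--Lehner sign theory with weak BSD to force $\rk J(\Q) \geq \dim J$, from which the triviality of the depth-one Chabauty--Kim obstruction will follow.

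First I would decompose $J$ up to $\Q$-isogeny. For $X = X_0^+(N)$, the absence of oldforms at prime level gives $J \sim \prod_{[f] \in \Bcal_N^+/\GalQ} A_f$; for $X = X_{\ns}^+(N)$, Chen's isogeny \eqref{eqChenisogeny} reduces to the analogous decomposition of $J_0^+(N^2)^{\new}$ indexed by $\Bcal_{N^2}^+/\GalQ$. In both cases every $f$ involved has Atkin--Lehner eigenvalue $+1$ at the relevant level $M$, so the sign of the functional equation of $L(f, s)$ is $-w_M(f) = -1$, forcing $L(f,1) = 0$; the same vanishing persists for every Galois conjugate $f^\sigma$, hence
\[
\ord_{s=1} L(A_f, s) = \sum_\sigma \ord_{s=1} L(f^\sigma, s) \geq [\Q(f):\Q] = \dim A_f.
\]
The rank part of weak BSD upgrades this to the algebraic inequality $\rk A_f \geq \dim A_f$, and summing over orbits yields $\rk J \geq \dim J =: g$.

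To conclude, I would use that $X(\Q_p)_1 = X(\Q_p)$ is equivalent to asking that no nonzero $\omega \in H^0(X_{\Q_p}, \Omega^1)$ annihilates $J(\Q)$ under $p$-adic integration from $b$; dually, this amounts to the image of $J(\Q) \otimes \Q_p$ in $\Lie J_{\Q_p}$ under the Bloch--Kato logarithm filling the whole $g$-dimensional target. The kernel of the $p$-adic logarithm on $J(\Q)$ is exactly its torsion subgroup, so the image is a $\Z$-module of rank $\rk J \geq g$ in $\Lie J_{\Q_p}$; granted the standard Leopoldt-type non-degeneracy for $J$, its $\Q_p$-span is all of $\Lie J_{\Q_p}$ and we obtain $X(\Q_p)_1 = X(\Q_p)$.

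The hard part of this argument is the last step: the inequality $\rk J \geq g$ alone only supplies a $\Z$-submodule of the correct rank, not a $\Q_p$-spanning family, and the needed nondegeneracy is the analogue for $J$ of the classical Leopoldt conjecture. This is the standard tacit hypothesis in formulations of Chabauty's method; for modular Jacobians one expects it to hold, and it can in principle be substantiated through Heegner-point constructions coupled with analytic non-vanishing results.
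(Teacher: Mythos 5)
You correctly reduce the problem to showing that the image of $J(\Q)$ under the $p$-adic logarithm spans $\Lie J_{\Q_p}$, and you correctly identify that this does not follow from a rank count alone; a $\Z$-module of rank $g$ sitting inside a $g$-dimensional $\Q_p$-vector space need not span it. But you then wave this away as a ``Leopoldt-type'' hypothesis that is ``tacit'' in Chabauty formulations and that one ``expects to hold.'' That is a genuine gap: the statement you want is not conjectural, it is provable, and proving it is exactly the content of the second half of the paper's argument.

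The paper resolves this unconditionally (given BSD, which is already assumed) using the $p$-adic analytic subgroup theorem of Matev and Fuchs--Pham. Here is the structure you are missing. Work one isogeny factor $A_f$ at a time. Because $\End(A_f)$ is an order in a totally real field and the $p$-adic logarithm is $\End(A_f)$-equivariant, $\Lie(A_f)_{\mathbb{C}_p}$ splits into one-dimensional isotypic components indexed by the conjugates $g$ of $f$, each defined over $\overline{\Q}$. If $\log(A_f(\Q))$ fails to span, then by equivariance it must actually vanish along some isotypic line $\Lie(A_f)_{\mathbb{C}_p,g}$. The $p$-adic analytic subgroup theorem says that any $P\in A_f(\overline{\Q})$ whose logarithm lies in a proper $\overline{\Q}$-subspace in fact lies in a proper abelian subvariety $B\subset A_{f,\overline{\Q}}$. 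One then shows, using a Galois-descent argument (take the product $\prod_\sigma \pi^\sigma$, which descends to a nonzero morphism over $\Q$, and use $\Q$-simplicity of $A_f$), that this would force all of $A_f(\Q)$ to be torsion, contradicting the rank statement from BSD that you already derived. So the ``Leopoldt-type'' nondegeneracy is not an extra tacit assumption at all; it follows from a known transcendence theorem plus the simplicity of $A_f$ over $\Q$. Your first half (Atkin--Lehner sign $+1$ forces analytic rank $\geq 1$, hence by weak BSD $\rk A_f\geq\dim A_f$, summing gives $\rk J\geq g$) agrees with the paper, but without the analytic subgroup theorem step the proof is incomplete.
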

\begin{proof}
The weak Birch--Swinnerton-Dyer conjecture implies that, for $f\in \mathcal{B}_{N^k}$, $A_f$ will have positive rank whenever $f$ has positive analytic rank. Since $f\in \mathcal{B}_{N^k}$ has odd analytic rank whenever $w_{N^k}(f)=1$, and $A_f$ is simple over $\Q $, the Birch--Swinnerton-Dyer conjecture hence implies that every isogeny factor of $\Jac (X)$ (over $\Q $) has positive rank.

Since $\End (A_f )$ is an order in the totally real field $K_f$, every isogeny factor of $\Jac (X)$ has rank at least equal to its dimension. To prove the lemma, we must show that the image of $A_f (\Q )$ in $\lie (A_f )_{\Q _p }$ under the $p$-adic logarithm map generates $\lie (A_f )_{\Q _p }$ as a $\Q _p $-vector space. This is equivalent to the statement that the image of $A_f (\Q )$ in $\lie (A_f )_{\mathbb{C}_p }$ generates the latter as a $\mathbb{C}_p $-vector space. Since $\lie (A_f )_{\overline{\Q }}$ decomposes as a sum of one-dimensional isotypic components $\lie (A_f )_{\overline{\Q },g}$, for $g$ conjugate to $f$, and the $p$-adic logarithm is $\End (A_f )$-equivariant, we deduce that if the image of $A_f (\Q )$ does not span $\lie (A_f )_{\mathbb{C}_p }$ then there is a $g$ conjugate to $f$ such that the image of $A_f (\Q )$ in $\lie (A_f )_{\mathbb{C}_p ,g}$ is zero. By the $p$-adic analytic subgroup theorem \cite[Theorem 1]{matev2010p}, \cite[Theorem 2.2]{fuchs2015p} if $P\in A_f (\overline{\Q } )$ has the property that $\log (P)\in \lie (A_f )_{\mathbb{C}_p }$ lies in a proper subspace defined over $\overline{\Q }$, then $P$ lies in a proper commutative sub-variety $B\subset A_{f,\overline{\Q }}$. Hence we deduce that if $A_f (\Q )$ does not generate $\lie (A_f )_{\mathbb{Q}_p }$, then $A_f (\Q )$ lies in a proper commutative subvariety of $A_{f,\overline{\Q }}$, since the isotypic components of $\lie (A_f )_{\mathbb{C}_p }$ are defined over $\overline{\Q }$.

We claim that this contradicts the Birch--Swinnerton-Dyer conjecture. More generally, if $A$ is a simple abelian variety over $\Q $ and $\pi :A_K \to B$ is a non-zero morphism of abelian varieties over a finite Galois extension $K|\Q $, we claim that $P\in A(\Q )$ is torsion if and only if its image in $B(K)$ is torsion (in particular, when $A=A_f $ and $B$ is an isogeny factor, we deduce that $A_f$ has rank zero over $\Q $ if and only if there is as isogeny factor $B$ of $A_{f,\overline{\Q }}$ such that the image of $A_f (\Q)$ in $B$ is torsion). To see this claim, for $\sigma \in \Gal (K|\Q )$ let $\pi ^{\sigma }$ denote the conjugate homomorphism $A_K \to B^{\sigma }$. If $\pi (P)$ is torsion then $\pi ^{\sigma }(P)=\pi (P)^{\sigma }$ is torsion for all $\sigma $, hence the image of $P$ under the map
\[
\prod _{\sigma \in \Gal (K|\Q )}\pi ^{\sigma }:A_K \to \prod _{\sigma }B^{\sigma }
\]
is torsion. However, this map descends to a non-zero morphism of $\Q $, and hence by simplicity of $A$, if $\pi (P)$ is torsion then $P$ is torsion.
\end{proof}

Moreover, two abelian varieties $A_f$, $A_g$ for $f,g \in \Bcal_{N^k}$ are non-isogenous unless $f$ and $g$ are conjugate by $\GalQ$, and $\End^{\dagger }(A_f)$ is always totally real of rank $\dim (A_f )$, which proves that each of the Jacobians $J = J_0^+(N),J_{\spl}^+(N), J_{\rm{ns}}^+(N)$ satisfies $\rho(J) = \dim J$, and hence the condition \eqref{eqQC} becomes 
	\begin{equation}\label{PropSamir}
	\rk(J) < 2 \cdot \dim(J) - 1
	\end{equation}
	(for a more general such condition for modular curves, see the main result of \cite{Siksek17}).
Using the isogenies above, the Birch--Swinnerton-Dyer conjecture implies
\[
\rk(J_0 ^+ (N)) = \sum_{f \in \Bcal_N^+} \ord_{s=1} L(f,s), \quad \rk(J_{\rm{ns}}^+(N)) =  \sum_{f \in \Bcal_{N^2}^+} \ord_{s=1} L(f,s).
\]

There is a whole literature on analytic estimates for these types of analytic ranks. In particular, using \cite[Theorem 1.4]{KowalskiMichelVanderKam} one can show that the Birch--Swinnerton-Dyer conjecture implies that
\[
\limsup_{N} \frac{\rk(J_0^+(N))}{\dim J_0^+(N)} \leq 1.3782,
\]
and in particular asymptotically that \eqref{eqQC} is always satisfied. It is likely that the same result can be obtained for $J_{\rm{ns}}^+(N)$, but the square level (we are looking at $J_0^+(N^2)^{\rm{new}}$) raises serious technical difficulties for analytic estimates of second moments used there.

On the other hand, by Corollary \ref{corKL}, Theorem \ref{mainthm} implies that we have an isogeny factor $A$ of $J$ satisfying $\rho (A)>1$ and $\rk (A)=\dim (A)$, hence to prove Proposition \ref{mainproposition} it suffices to construct a nonzero $[L] \in \Ker (\NS (A)\to\NS (X))$ satisfying $\theta _{X,\pi _A ,\pi _B}([L])=0$, where $B$ is the isogeny factor consisting of modular abelian varieties associated to modular forms whose analytic rank of $L$-functions is greater than 1. It will be shown that for any $L$, its image $\theta _{X,\pi _A ,\pi _B}(L)$ can be represented by a divisor supported on cusps and Heegner points, and hence is torsion by the generalised  Gross--Zagier formula (\cite[Theorem 6.1]{zhang2})
This motivates the following definition.

\begin{Definition}[Heegner quotient]
	\label{defiHeegnerquotient}
	\hspace*{\fill}
	
	Let $M=N$ or $N^2$. The \textit{Heegner quotient} $A$ of $J_0(M)^{\rm{new}}$ is the product 
	\[
	A := \prod_{\substack{f \in \Bcal^{+,\rm{new}}_{M}/\GalQ\\ L'(f,1) \neq 0}} A_f, 
	\] 
	and its \textit{complement} is 
	\[
	B := \prod_{\substack{f \in \Bcal^{+,\rm{new}}_M/\GalQ \\ L'(f,1) = 0}} A_f 
	\]
	(so that$A \times B$ is isogenous to $J_0^+(M)^{\rm{new}}$, not the full $J_0(M)^{\rm{new}}$).
	
\end{Definition}

	In particular, Corollary \ref{corKL} implies that $\rk(A) = \dim(A)$ (assuming the Birch and Swinnerton-Dyer conjecture, it is the largest factor of $J_0^+(M)$ with this property) and the generalised Gross--Zagier formula implies that all images of traces of Heegner points on $X_0 (N)$ in $B$ are torsion (see \S \ref{T2_to_T1} and \S \ref{T2_to_T1_ns}). In the case of $X_{\ns }(N)$, there is also a notion of Heegner point due to Kohen and Pacetti, inspired by the points used in Zhang's Gross--Zagier formula for $X_{\ns }(N)$ (and more general Shimura curves). 
		
The main result of the next section is the following lemma, which refers to $X_0 (N)$ and $X_{\ns }(N)$ rather than their Atkin--Lehner quotients. However, by Lemma \ref{functoriality_of_diagonal_cycles} it implies Proposition \ref{mainproposition}.
\begin{lemma}\label{when_is}
Let $X=X_0 (N)$ or $X_{\ns }(N)$, and $A,B$ the Heegner quotient and its complement as defined above, endowed with the natural projections $(\pi _A ,\pi _B ):\Jac (X)\to A\times B.$ Then for all $[L]$ in $\Ker (d_{\pi _A })$,
$
\theta _{X,\pi _A ,\pi _B }([L])
$
is torsion. In particular the rank of the kernel of $\theta_{X,\pi_A,\pi_B}$ is maximal (in particular at least 1 if $\dim A \geq 2$).
\end{lemma}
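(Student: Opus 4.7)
The plan is to represent $\theta_{X,\pi_A,\pi_B}([L])$ as the projection to $B$ of a divisor supported on cusps and Heegner-type points, and then apply Manin--Drinfeld together with the generalised Gross--Zagier formula. Starting from the line-bundle description, Definition~\ref{fundamental_definition} and \eqref{DZ_theta} allow me to lift a class $[L] \in \Ker(d_{\pi_A})$ to a correspondence $Z \in \Pic(X \times X)$ so that $\theta_{X,\pi_A,\pi_B}([L]) = \pi_B(D_Z(b))$ for any basepoint $b$. Using the Rosati isomorphism \eqref{Rosat} together with the decomposition $\End^0(J) \cong \End^0(A) \times \End^0(B)$ induced by $(\pi_A,\pi_B)$, I may further arrange for $\psi(Z) \in \End(J)$ to be concentrated on the $A$-factor. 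Each simple factor $A_f$ of the Heegner quotient $A$ has $\End^\dagger(A_f) \otimes \Q$ equal to a totally real Hecke field, so $\psi(Z)$ becomes a $\Q$-linear combination of Hecke operators and $Z$ may be taken to be a $\Q$-linear combination of Hecke correspondences on $X$.

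Next, I would exploit the basepoint-independence of $\theta_{X,\pi_A,\pi_B}$ and choose $b$ to be a Heegner-type point. In the $X_0(N)$ case, $b$ is a classical Heegner point $P_K$ for an imaginary quadratic field $K$ satisfying the Heegner hypothesis at $N$; its trace lives in $A(\Q)$ up to torsion by Corollary~\ref{corKL}. In the $X_{\ns}(N)$ case, I would use the Cartan Heegner points of Kohen--Pacetti attached to $X_{\ns}(N)$ (which are compatible with the Chen isogeny \eqref{eqChenisogeny} and satisfy an analogous Gross--Zagier formula). In either case, since Hecke correspondences send Heegner-type points to $\Z$-combinations of Heegner-type points (with occasional cuspidal contributions at primes dividing the level), $D_Z(b)$ is a divisor supported on cusps together with Heegner-type points.

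To conclude, I would split $\pi_B(D_Z(b))$ into its cuspidal and its Heegner parts. The cuspidal part is torsion in $J$ by Manin--Drinfeld on $X_0(N)$ and by its analogue on $X_{\ns}(N)$. For the Heegner part, the generalised Gross--Zagier formula \cite[Theorem 6.1]{zhang2} combined with Corollary~\ref{corKL} shows that the image of a Heegner point in any simple factor $A_f$ of $B$ has N\'eron--Tate height equal to a positive multiple of $L'(f,1) = 0$, hence is torsion. This proves that $\theta_{X,\pi_A,\pi_B}([L])$ is torsion for every $[L] \in \Ker(d_{\pi_A})$, and the rank of $\Ker(\theta_{X,\pi_A,\pi_B})$ therefore equals $\rk(\Ker d_{\pi_A}) = \rho(A) - 1$, which is both positive (since $\rho(A) = \dim A \geq 2$) and the maximum possible.

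The main obstacle is establishing the support statement for $D_Z(b)$ in the second step, especially for $X_{\ns}(N)$: one must verify that the Kohen--Pacetti Cartan Heegner points remain Heegner-type points (up to cusps) under the Hecke correspondences whose classes represent $\Ker(d_{\pi_A})$, using the explicit modular description of these correspondences and a careful treatment of ramification at $N$. Once this step is granted, Proposition~\ref{mainproposition} and hence, via Proposition~\ref{easy} and Lemma~\ref{functoriality_of_diagonal_cycles}, Theorem~\ref{thm1} follow.
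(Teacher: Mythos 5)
Your proof has a genuine gap, and it concerns precisely the term you treat most cavalierly. Recall from \eqref{eqDZb} that
\[
D_Z(b)=\bigl(i_{\{1,2\}}^*(b)-i_{\{1\}}^*(b)-i_{\{2\}}^*(b)\bigr)(Z)-\deg(\cdots)\cdot b.
\]
The terms $i_{\{1\}}^*(b)(Z)$ and $i_{\{2\}}^*(b)(Z)$ are Hecke translates of the basepoint $b$, and indeed choosing $b$ to be a cusp or a Heegner point controls them. But $i_{\{1,2\}}^*(b)(Z)=i_{1,2}^*(Z)$ is the restriction of the correspondence to the diagonal $\Delta_X\subset X\times X$, which is completely independent of $b$; choosing $b$ to be a Heegner point buys you nothing for this term. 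Your sentence \enquote{Hecke correspondences send Heegner-type points to $\Z$-combinations of Heegner-type points [so] $D_Z(b)$ is a divisor supported on cusps together with Heegner-type points} silently passes over exactly this diagonal contribution.

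The paper's argument addresses the diagonal term directly and this is where the real work lies. A point of $i_{1,2}^*\widetilde{C}_m$ is an elliptic curve admitting an endomorphism of norm $m$ with cyclic kernel, hence CM by some order of discriminant $D$ with $A^2+DB^2=4m$. Such a point is a Heegner point only if the \emph{source and target} of the corresponding $N$-isogeny (or the endomorphism $\phi_\varepsilon$ in the nonsplit Cartan case) have the \emph{same} CM order; a priori they could differ by a power of $N$. The paper rules this out by showing that any mismatch forces $N^2\mid D$, hence $m>N^2/4$, and then invokes the Sturm bound (Lemma~\ref{span_endomorphisms}) to guarantee that one only ever needs Hecke correspondences $C_m$ with $m<N^2/4$ prime to $N$ to span $\End^\dagger(J_0^+(N))^{\tr=0}$. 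Your write-up never introduces this discriminant estimate nor the Sturm-bound control on $m$, and without both the argument that the diagonal restriction consists of Heegner points simply does not go through. You flag an \enquote{obstacle} in roughly the right place, but you misidentify its nature: the difficulty is not that Hecke operators might move Heegner points off the Heegner locus (they do not), it is that the diagonal restriction has nothing to do with the basepoint and must be analysed via CM discriminants and an a priori bound on the Hecke index.
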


\subsection{How to prove (C) using Heegner points under the analytic hypothesis: $X=X_0 (N)$}\label{T2_to_T1}
In this section we prove Lemma \ref{when_is}. We will deduce it from the Gross--Zagier--Zhang theorem. In the case of $X_0 (N)$, as explained in \cite{daub} or \cite{DRS}, we could also deduce it from the Yuan--Zhang--Zhang formula for the height of diagonal cycles (see \S \ref{alternative}). By a \textit{Heegner point} on $X_0 (N)$ we will mean a point 
\[
E\to E'
\]
 on $Y_0 (N)$ such that $E$ and $E'$ have CM by the same order of an imaginary quadratic field $K$, not necessarily maximal but assumed to be with conductor prime to $N$ (see \cite{Gross84} for a review of their properties, in particular $N$ has to be split or ramified in $K$). 
 
 An eigenform $f \in S_2 (\Gamma _0 (N))^{+,\textrm{new}}$ defines by Eichler-Shimura theory a $\Q$-simple quotient $\pi : J_0(N) \rightarrow A_f$ of $J_0(N)$ (in fact of $J_0 ^+ (N)$) and the Heegner points behave on $A_f$ in the following way.
 
 \begin{lemma}
 	\label{lemmaHeegnerpoints}
 	\hspace*{\fill}
 	
 	 \begin{enumerate}
 		\item If $L'(f,1)\neq 0$, then $\rk(A_f)= \dim (A_f )$ (and $A_f(\Q)$ is generated by the projection of a trace of a suitable choice of Heegner point).
 		\item If $L'(f,1)=0$, then for any $P$ in $\Div ^0 (X_0(N))(\overline{\Q } )^{\Gal (\overline{\Q }|\Q )}$ supported on the set of Heegner points, the image $\pi(P)$ is torsion in $A_f (\Q )$.  
 	\end{enumerate}
 \end{lemma}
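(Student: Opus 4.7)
The plan is to combine the Gross--Zagier--Zhang formula with the Kolyvagin--Logachev theorem (Corollary \ref{corKL}), plus a non-vanishing result for quadratic twists for part (1). First, a parity observation common to both parts: since $f\in \Bcal_N^{+,\new}$ has $w_N$-eigenvalue $+1$, the sign of the functional equation of $L(f,s)$ at $s=1$ is $-1$, so $L(f,1)=0$ and the analytic rank is odd. For any imaginary quadratic field $K$ satisfying the Heegner hypothesis for $N$ (i.e. all primes dividing $N$ split in $K$), $\chi_K(-N)=-1$, so $L(f\otimes\chi_K,s)$ has sign $+1$ at $s=1$, and the factorisation $L(f/K,s) = L(f,s)\,L(f\otimes\chi_K,s)$ combined with $L(f,1)=0$ yields the identity $L'(f/K,1) = L'(f,1)\cdot L(f\otimes\chi_K,1)$.

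For part (1), with $L'(f,1)\neq 0$, I would first invoke a classical non-vanishing theorem for quadratic twists (Bump--Friedberg--Hoffstein, Murty--Murty) to produce an imaginary quadratic $K$ satisfying the Heegner hypothesis and with $L(f\otimes\chi_K,1)\neq 0$, so that $L'(f/K,1)\neq 0$. Taking a Heegner point $P\in X_0(N)(H_K)$ of discriminant $d_K$, set $y_K := \Tr_{H_K/K}(P-\infty) \in J_0(N)(K)$. The Gross--Zagier formula
\[
L'(f/K,1) = c_{f,K}\cdot \langle \pi(y_K), \pi(y_K)\rangle_{NT}
\]
then shows $\pi(y_K)$ is non-torsion in $A_f(K)$. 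Applying Corollary \ref{corKL} bounds $\rk_{\End^0(A_f)}A_f(K)\le \ord_{s=1}L(f/K,s)=1$, forcing equality, so $A_f(K)\otimes \Q$ is spanned by $\pi(y_K)$ and has $\Q$-dimension $\dim A_f$. Since the root number of $L(f,s)$ is $-1$ and $w_N$ acts as $+1$ on $A_f$, complex conjugation fixes $\pi(y_K)$ modulo torsion, so $\pi(y_K)\in A_f(\Q)$ up to torsion. This yields $\rk A_f(\Q)=\dim A_f$, with $A_f(\Q)$ generated by $\pi(y_K)$ up to torsion and finite index.

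For part (2), with $L'(f,1)=0$, I would decompose $P = \sum_\Ocal P_\Ocal$ according to the CM order $\Ocal$ of its support: since $\GalQ$ preserves CM orders, each $P_\Ocal$ remains Galois-invariant of degree zero. For each $\Ocal$ with $K := \Frac\,\Ocal$ (automatically satisfying the Heegner hypothesis) and conductor $c$, Shimura reciprocity expresses $P_\Ocal$ as a $\Z$-linear combination of traces $\Tr_{H_\Ocal/\Q}(Q-\infty)$ for Heegner points $Q$ of conductor $c$. Zhang's extension of the Gross--Zagier formula to non-maximal orders, applied to the trivial anticyclotomic character, expresses the Néron--Tate height of the intermediate trace $\pi(\Tr_{H_\Ocal/K}(Q-\infty)) \in A_f(K)$ as a multiple of $L'(f/K,1) = L'(f,1)L(f\otimes\chi_K,1) = 0$. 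Hence this projection is torsion, and so is its further trace $(1+c)\pi(\Tr_{H_\Ocal/K}(Q-\infty)) = \pi(\Tr_{H_\Ocal/\Q}(Q-\infty))$. Summing over $\Ocal$ gives that $\pi(P)$ is torsion in $A_f(\Q)$.

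The main obstacles I expect are (i) in part (1), securing the non-vanishing of a suitable quadratic twist $L(f\otimes\chi_K,1)$ subject to the splitting conditions at primes dividing $N$; this is classical but requires care to choose $K$ compatibly with the Heegner hypothesis, and (ii) in part (2), invoking the generalisation of the Gross--Zagier formula valid for Heegner points of arbitrary conductor (rather than just the maximal order), which is exactly where Zhang's work is needed.
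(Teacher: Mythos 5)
Your overall strategy mirrors the paper's, but there is a significant logical flaw in part (1) and a minor slip in part (2).

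For part (1), the paper's proof is a single line: it invokes Proposition~\ref{propGZK}, which is the Kolyvagin--Logachev type result proved from scratch in Appendix~\ref{AppendixKolyvaginLogachev} (Waldspurger to choose $K$, then Gross--Zagier, then a full Euler-system argument to bound the $\gP$-Selmer groups over $K$ by a one-dimensional space). Your outline of that argument is morally right, but the step ``Applying Corollary~\ref{corKL} bounds $\rk_{\End^0(A_f)}A_f(K)\le \ord_{s=1}L(f/K,s)=1$'' is circular and also misstates what that corollary says: Corollary~\ref{corKL} is a \emph{consequence} of Proposition~\ref{propGZK} (hence of the lemma you are proving), and it concerns the rank of the Heegner quotient over $\Q$, not a Kolyvagin-style upper bound for Selmer ranks over $K$. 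The upper bound you want is exactly the content of Proposition~\ref{propGZK}, so you should cite that result (or the underlying Kolyvagin--Logachev/Nekov\'a\v{r}/Tian theorem) rather than a downstream corollary.

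For part (2), your route is essentially the paper's: decompose by CM order, reduce to traces $P_{1_c}$, apply Zhang's generalised Gross--Zagier formula for non-maximal orders (made explicit in Cai--Shu--Tian) together with the factorisation $L(f/K,s)=L(f,s)L(f\otimes\chi_K,s)$ and the sign of the functional equation to conclude the relevant $f$-isotypic projection has zero N\'eron--Tate height. The paper then uses the $\Aut(\C)$-action to pass from the $f$-isotypic projection over $\C$ to torsionness of the image under $\pi_f$ in $A_f(K)$; you should include that step (or an equivalent descent over the conjugates of $f$), since a priori the height computation lives in $J_0(N)(H_c)\otimes\C$. Also a small slip: $\Tr_{H_\Ocal/\Q}=(1+\tau)\circ\Tr_{H_\Ocal/K}$ where $\tau$ is complex conjugation, not $(1+c)$; this is harmless here since the intermediate projection is already torsion.

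Modulo these corrections, the proof is the same as the paper's, with the Euler-system input and the $\Aut(\C)$-descent being the two places where more detail is needed.
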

 
 \begin{Remark}
 	The original Gross--Zagier formula \cite[Theorem I.6.3]{GrossZagier86} is not sufficient for the second part of the Lemma, as it only deals with Heegner points for which the discriminant of the order is squarefree (in particular, the order is maximal) and prime to $N$, which we cannot afford to assume here. This is why we need Zhang's formula and the ensuing technical interpretation.
 \end{Remark}
 \begin{proof}
 
 The first part is given by Proposition \ref{propGZK}. The second part is a consequence of the generalised Gross--Zagier formula of Zhang \cite[Theorem 6.1]{zhang2} which for this case is made completely explicit in \cite[Theorem 1.1]{CaiShuTian14}, see also \cite[Example after Theorem 1.5]{CaiShuTian14}. We use the following notation: $f \in S_2(\Gamma_0(N))$ is a normalised eigenform, $K$ an imaginary quadratic field number field in which $N$ is not inert,  $c$ prime to $N$, $\Ocal_c = \Z + c \Ocal_K$, and $1_c$ the trivial ring class character on $\Pic(\Ocal_c)$. We denote by $H_c$ the ray class field of $K$ with conductor $c$. If $P$ is a Heegner point on $X_0(N)$ with CM by $\Ocal_c$, it belongs  to $X_0(N)(H_c)$, and we define 
 \[
 P_{1_c} = \sum_{\s \in \Gal(H_c/K)} (P_{1_c}^\sigma -[\infty]) \in J_0(N)(K) \subset J_0(N)(H_c).
 \]
  On the other hand, if $ J(H_c) \otimes \C$ denotes the extension of scalars of $J(H_c )$
 endowed with the extended Néron-Tate height, we have the decomposition into isotypical components
 \[
 J_0(N)(H_c) \otimes \C = \bigoplus_{g} J_0(N)_{g},
 \]
 where $g$ goes through all eigenforms of weight 2 of $J_0(N)$, so that $J_0(N)_g$ is exactly the isotypical part where $T_n$ acts by multiplication by $a_n(g)$. We denote by $P_{1_c}^f$ the projection of $P_{1_c}$ on the $f$-isotypical component. The statement of \cite[Theorem 1.1]{CaiShuTian14} then tells (which is sufficient for us) that $L'(f,1_c,1)$ as defined there is proportional to the extended Néron-Tate height of $P_{1_c}^f$.

We have the equality of L-functions
 \[
 L(f,1_K,s) = L\left(f,s\right) L\left(f \otimes \chi_K,s \right),
 \] 
 with $1_K$ the trivial class character on $\Pic(\Ocal_K)$ and $\chi_K$ the Dirichlet character associated to $K$. In particular (and given the signs of functional equations on the right), our hypothesis $L'(f,1)=0$ guarantees that $L(f,1_K,s)$ vanishes with order at least 2 at $1$, so the left-hand side of \cite[Theorem 1.1]{CaiShuTian14} is zero for $c=1$. This also holds for any $c$ prime to $N$, because by construction $L(f,1_{c},s)$ is a multiple of $L(f,1_K,s)$ around $1$ (given the definition again). We have thus proved that $P_{1_c}^g$ is zero in $J_0(N)(H_c) \otimes \C$.
 
Now, the group $\operatorname{Aut}(\C)$ acts on $J_0(N)(H_c) \otimes \C$ by the identity on the left and the natural action on the right, and for every $\alpha \in \Aut(\C)$ acting as such, we have $P_{1_c}^\alpha = P_{1_c}$ and then  for every $\alpha \in \Aut(\C)$, we obtain $(P_{1_c}^g)^{\alpha} = P_{1_c}^{\alpha(g)}$ where $\alpha(g)$ is the eigenform obtained by conjugating the coefficients of $g$ (see \cite[Corollary V.1.2]{GrossZagier86}). Now, as we also have the decomposition 
 \[
 J(H_c) \otimes \C \cong \prod_{g / \GalQ} A_g(H_c) \otimes \C
 \] 
 in subrepresentations of the Hecke algebra, the sum of all $P_{1_c}^g$ for $g$ conjugate to $f$ is proportional to the projection $\pi$ of the trace of $P- (\infty)$ (belonging to $J_0(N)(K)$) in $A_f(K) \otimes \C$, so we have proven that this projection in $A_f(K)$ is torsion.
 
\end{proof}
We now explain how to deduce Lemma \ref{when_is} from this result.
Let $m$ be an integer coprime to $N$. Define the Hecke correspondence $\widetilde{C}_{m}$ to be the image of $X_0 (mN)$ in $X_0 (N)\times X_0 (N)$ under the product of the two natural maps $X_0 (mN) \to X_0 (N)$. We define 
\[
C_{m}=(1-\pi _1 ^* i_1 ^* -\pi _2 ^* i_2 ^* )\widetilde{C}_{m}
\]
to be the projection of $\widetilde{C}_{m}$ onto the $\End (J_0 (N))$ component of $\Pic (X_0 (N)\times X_0 (N))$ (see \eqref{eqdefcorr}). Then $C_{m}$ lands in the subspace $\NS (J_0(N)) \subset \End (J_0 (N))$ of endomorphisms symmetric with respect to the Rosati involution. When $m$ is square-free, $C_{m}$ is the Hecke operator $T_{m}$. In general, $C_{m}$ is a linear combination of $T_{m/d}$ for $d$ divisors of $m$.

Recall that $i_{1,2} :X_0 (N)\hookrightarrow X_0 (N)\times X_0 (N)$ denotes the diagonal morphism. A non-cuspidal point in the support of $i_{1,2} ^* (\widetilde{C}_{m} )$ is a cyclic $N$-isogeny $f:E_1 \to E_2 $, together with cyclic subgroups $G_i$ of $E_i $ of order $m$ such that $f(G_1 )=G_2 $, and isomorphisms
\[
E_i \stackrel{\simeq }{\longrightarrow }E_i /G_i 
\]
which commute with $f$ and the induced isogeny $E_1 /G_1 \to E_2 /G_2 $. In particular, the ring of endomorphisms of each $E_i$, of discriminant denoted by $D_i$, thus contains an element of norm $m$ so there exist $A_i ,B_i $ in $\mathbb{Z}$ for which
\begin{equation}
\label{eqDi}
A_i ^2 + D_i B_i ^2 =4m.
\end{equation}
The isogeny being cyclic, $A_i$ and $B_i$ must be coprime here. The point $E_1 \to E_2 $ is a Heegner point of $Y_0(N)$ if and only if $D_1 =D_2 $.

\begin{lemma}\label{diagonalHeegner}
Let $X=X_0 (N)$, let $m$ be prime to $N$, and let $\widetilde{C}_m$ be the Hecke correspondence defined above. Then the divisor $i_{1,2} ^* \widetilde{C}_{m}$ is supported on the set of Heegner points whenever $m$ is less than $N/4$. 
\end{lemma}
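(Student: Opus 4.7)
The plan is to translate each non-cuspidal point in the support of $i_{1,2}^*(\widetilde{C}_m)$ into arithmetic data for a CM configuration, and then use the smallness of $m$ compared to $N$ to force $E_1$ and $E_2$ to share a common CM order.

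First I would unpack the data at such a point: composing each isomorphism $E_i \simeq E_i/G_i$ with the corresponding quotient yields endomorphisms $\phi_i \in \End(E_i)$ of degree $m$ whose kernels $G_i$ are cyclic, satisfying $f \circ \phi_1 = \phi_2 \circ f$. For $m \geq 2$ no integer multiplication has cyclic kernel, so each $E_i$ must be CM, with CM by orders $R_1, R_2$ in a common imaginary quadratic field $K$ (they are isogenous). The commutation $f \phi_1 = \phi_2 f$ then forces $\phi_1$ and $\phi_2$ to act as multiplication by a single element $\tau \in R_1 \cap R_2$ of norm $m$ and common trace $A$; the cyclicity of the $G_i$ is equivalent to the coprimality $\gcd(A, B_i) = 1$ in the relation $A^2 + D_i B_i^2 = 4m$ from the paper.

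The crucial step is to show $R_1 = R_2$, which I would do by a lattice argument. Write $E_i = \C/L_i$ with $L_i$ a fractional $R_i$-ideal in $K$, rescaled so that $f$ corresponds to the inclusion $L_1 \subset L_2$ with $[L_2:L_1] = N$. Suppose for contradiction that $R_1 \neq R_2$; without loss of generality, $R_1 \subsetneq R_2$. Then $R_2$ does not stabilise $L_1$, so $L_1 \subsetneq R_2 L_1 \subseteq R_2 L_2 = L_2$, and the primality of $N$ forces $R_2 L_1 = L_2$. Using that $L_1$ is locally free of rank $1$ over $R_1$, the natural map identifies $L_2/L_1 = R_2 L_1 / L_1 \cong R_2/R_1$ as $R_2$-modules, hence $[R_2:R_1] = N$. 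Writing $R_i = \Z + c_i \Ocal_K$ this yields $c_1 = N c_2$ and hence $D_1 = N^2 D_2$.

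The contradiction is now immediate: combining $D_1 B_1^2 = 4m - A^2 \leq 4m$ with $D_1 = N^2 D_2 \geq 3 N^2$ (since $3$ is the smallest absolute discriminant of an imaginary quadratic order) yields $3N^2 \leq 4m$, contradicting $m < N/4$ for any $N \geq 1$. Hence $R_1 = R_2 =: R$; the same size bound $D_R \leq 4m$ rules out $N$ dividing the conductor of $R$ (which would force $D_R \geq 3 N^2$), so that conductor is coprime to $N$, and the existence of the $R$-stable cyclic order-$N$ subgroup $L_2/L_1 \subset E_1[N]$ forces $N$ to be split or ramified in $K$. Thus the point is Heegner. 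The main technical obstacle I anticipate is the lattice-theoretic identification $L_2/L_1 \cong R_2/R_1$; once that is in hand, the remainder is a clean discriminant size comparison.
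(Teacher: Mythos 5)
Your proof is correct and follows essentially the same route as the paper's: both arguments hinge on the fact that for an $N$-isogeny between CM elliptic curves whose endomorphism orders differ, the (absolute value of the) discriminants differ by a factor of $N^2$, which combined with the identity $A^2 + D B^2 = 4m$ (the paper's equation \eqref{eqDi}) forces $m \gtrsim N^2$, contradicting the hypothesis. The paper states the relation $D_2 = \lambda^2 D_1$ with $\lambda$ a power of $N$ as a known fact and concludes in one line; you instead derive $[R_2 : R_1] = N$ (hence $D_1 = N^2 D_2$) from the lattice identification $L_2/L_1 \cong R_2/R_1$, which is a useful unpacking of the same phenomenon. Your treatment of the conductor condition and the cyclicity/coprimality point also matches what the paper asserts. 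One small remark: the WLOG step $R_1 \subsetneq R_2$ implicitly uses that for $N$-isogenous CM curves the conductor ratio $c_1/c_2$ lies in $\{1, N, 1/N\}$ (so the two orders are always comparable); it is worth spelling this out, although it is standard.
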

\begin{proof}
Let $(E_1 \to E_2 )$ be a non-cuspidal point in the support of $i_{1,2}^* \widetilde{C}_m$ as above. Suppose the point is not Heegner.
Since $E_1 $ and $E_2 $ are $N$-isogenous, $D_2 = \lambda^2 D_1$ for some rational $\lambda>0$ a power of $N$. Since $\lambda \neq 1$, we must  have $D_i$ divisible by $N^2$ for some $i$, and hence $m>N^2 /4$, by \eqref{eqDi}. Finally, if the conductor of the order was not prime to $N$, we would also have $N^2 | D_i$ which leads to the same inequality.
\end{proof}

By the following Lemma (essentially just the Sturm bound) we have enough Hecke operators $C_m$ for which $i_{1,2}^* C_m$ is supported on cusps and Heegner points to complete the proof of the first part of Lemma \ref{when_is}.
\begin{lemma}\label{span_endomorphisms}
Let $N$ be a prime. Then, any element of $\End ^\dagger (J_0 ^+ (N))^{\tr =0}$, viewed as a subspace of $\End ^\dagger (J_0 (N))^{\tr =0}$, can be written as a $\Z$-linear combination of endomorphisms associated to the Hecke correspondences $C_m$, for $m<N^2 /4$ prime to $N$.
\end{lemma}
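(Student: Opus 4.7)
The plan is to reduce the assertion to the standard fact that, for $N$ prime, the Hecke algebra $\T$ generated by the $T_m$ (with $\gcd(m,N)=1$) already captures all of $\End^\dagger(J_0(N))$, combined with Sturm's bound, which gives this generation using only $T_m$'s with $m$ very small compared to $N^2/4$.

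First, I would convert the statement from $C_m$'s to $T_m$'s. The paragraph just before the lemma states that $C_m$ is a $\Z$-linear combination of $T_{m/d}$ for $d \mid m$. By a routine Möbius-type inversion of this relation, each $T_m$ is in turn a $\Z$-linear combination of $C_{m/d}$. Hence the $\Z$-module generated in $\End(J_0(N))$ by $\{C_m : m < N^2/4,\ \gcd(m,N)=1\}$ is exactly the $\Z$-module generated by $\{T_m : m < N^2/4,\ \gcd(m,N)=1\}$.

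Next, for $N$ prime, $S_2(\Gamma_0(N))$ contains no oldforms (as $S_2(\SL_2(\Z))=0$), so by Eichler--Shimura--Ribet theory together with multiplicity one for newforms of prime level, the inclusion $\T \subset \End^\dagger(J_0(N))$ is an equality of $\Z$-modules, and this restricts through the isogeny decomposition $J_0(N) \sim J_0^+(N) \times J_0^-(N)$ (the eigenspaces of $w_N$, which commutes with every $T_m$ for $m$ prime to $N$) to an identification $\T^+ = \End^\dagger(J_0^+(N))$. Then Sturm's bound asserts that the $T_m$ for $m \leq \lfloor (N+1)/6 \rfloor$ already generate $\T$ as a $\Z$-module; since $N^2/4$ is vastly larger than $(N+1)/6$ for $N \geq 2$, the $T_m$'s in our range generate $\T$, and therefore the $C_m$'s in this range generate $\End^\dagger(J_0^+(N))$ over $\Z$. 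Any trace-zero element of $\End^\dagger(J_0^+(N))$ is then a $\Z$-linear combination of such $C_m$'s, and the trace-zero condition is automatic in any such expression since the original element has trace zero.

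The main subtle point is the \emph{integral} identification $\T = \End^\dagger(J_0(N))$ rather than just the rational one. The rational version is classical, but the integral refinement requires invoking, for example, that the $q$-expansion pairing between $\T$ and $S_2(\Gamma_0(N),\Z)$ is perfect, or Mazur's results for prime level asserting that $\T$ acts faithfully on $J_0(N)[\ell]$ for every $\ell$. Once this is granted, the rest of the argument is essentially formal, and the specific bound $N^2/4$ plays no special role beyond lying well above the Sturm bound; the bound is chosen in this form so as to match the regime of Lemma \ref{diagonalHeegner} in which the relevant diagonal pullbacks are supported on Heegner points.
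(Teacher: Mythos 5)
Your proof takes essentially the same approach as the paper's (both rely on the Sturm bound for the Hecke algebra), but there is one genuine difference and one place where you are more explicit than the paper.

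The paper invokes the Sturm bound in the loose form that the $T_m$ for $m < N^2/4$ span the Hecke algebra of $J_0(N)$, and since multiples of $N$ can then occur in this range, it must appeal to the identity $a_N(f) = -1$ for eigenforms lying on $J_0^+(N)$ to discard $T_N$ and reduce to $m$ coprime to $N$. You instead use the exact Sturm bound $\lceil (N+1)/6 \rceil$, which is already less than $N$, so the coprimality to $N$ is automatic and you avoid that step entirely. This is a modest simplification; the paper's $N^2/4$ threshold is presumably chosen to match Lemma \ref{diagonalHeegner} (and so that the same statement is reusable for the level-$N^2$ Hecke algebra in \S\ref{T2_to_T1_ns}), but nothing in the lemma itself requires it. Your Möbius inversion of the relation $C_m = \sum_{d^2 \mid m} \mu(d) T_{m/d^2}$ to express $T_m$ integrally in terms of the $C_{m'}$ with $m' \leq m$ is left implicit in the paper and is correct.

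On the integral identification $\T = \End^\dagger(J_0(N))$: you are right to flag this, and you give the right references (Mazur's Eisenstein ideal theorem, or the perfect $q$-expansion pairing). The paper is actually less careful here — its parenthetical "(which is the full endomorphism algebra over $\Q$)" only asserts the rational statement, and does not justify why a $\Z$-linear combination suffices. However, there remains a small soft spot in your write-up: you pass from the integral equality for $J_0(N)$ to the equality $\T^+ = \End^\dagger(J_0^+(N))$ "through the isogeny decomposition," but an isogeny does not preserve integral structures, so this passage is not formally automatic and would need an additional argument (or one should simply note that only the rational version is actually needed for the application in Lemma \ref{when_is}, which concludes that an image is \emph{torsion}). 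This is a subtlety shared with — indeed smoothed over by — the paper's own one-line proof.
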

\begin{proof}
By the Sturm bound (\cite{Stein07} Theorem 9.18), the set of Hecke operators $T_m$ for $m<N^2 /4$ spans the Hecke algebra of endomorphisms of $J_0(N)$. Since $a_N (f)=-1$ on newforms such that $f_{|w_N} = -f$, the set of Hecke operators $T_m$ for $m<N^2 /4$ prime to $N$ spans the Hecke algebra of endomorphisms of $J_0 ^+ (N)$ (which is the full endomorphism algebra over $\Q$).
\end{proof}

This completes the proof of case (1) of Proposition \ref{mainproposition}. Indeed, Lemma \ref{span_endomorphisms}
implies that any nice correspondence $Z$ on $X_0 (N)$ can be written as a linear combination of the $C_m $ for $m<N^2 /4$ prime to $N$. By Lemma \ref{diagonalHeegner}, for any such $Z$, $D_Z (b)$ is supported on Heegner points and cusps, so by Lemma \ref{lemmaHeegnerpoints} (part 2), its image by $\pi_B$ is torsion.

\subsection{How to prove (C) using Heegner points under the analytic hypothesis: $X=X_{\ns }^+ (N)$}\label{T2_to_T1_ns}
The second case is similar to the first, but we must replace the classical notion of Heegner point with Heegner points on non-split Cartan modular curves in the sense of Zhang/Kohen--Pacetti, and replace Gross--Zagier--Zhang on $X_0 (N)$ with Zhang's Gross--Zagier theorem on $X_{\ns }(N)$.

To make results easier to state, we use the moduli interpretation of $X_{\rm{ns}}(N)$ and $X_{\rm{ns}}^+(N)$ given in \cite{KohenPacetti} and its consequences. To do so, one fixes an $\varepsilon \in \F_N$ which is not a square. A pair $(E,\phi_\varepsilon)$ is then an elliptic curve $E$ together with an endomorphism $\phi_\varepsilon$ of $E[N]$ whose square is multiplication by $\varepsilon$. Such an endomorphism has eigenvalues in $\F_{N^2} \backslash \F_N$, and two pairs $(E,\phi_\varepsilon)$ and $(E',\phi_\varepsilon')$ are isomorphic if there is an isomorphism $\psi : E \rightarrow E'$ such that on $E[N]$, $\psi \circ \phi_\varepsilon = \phi_{\varepsilon}' \circ \psi$.

$X_{\rm{ns}}(N)$ is the compactified moduli space of such pairs up to isomorphism \cite[\S 1.2]{KohenPacetti}. Furthermore, the natural involution on this modular curve is given by $(E,\phi_\varepsilon) \mt (E, - \phi_\varepsilon)$.

First, we define Hecke correspondences $\widetilde{C_m} \subset X_{\ns }(N)\times X_{\ns }(N)$ (for $m$ prime to $N$) as follows. We have a curve $X_{\ns }(N,m)=X_{\ns }(N)\times _{X(1)}X_0 (m)$ given by adding an auxiliary $\Gamma_0(m)$ structure. We have two maps $X_{\ns }(N,m)\to X_{\ns }(N)$, the forgetful one, and the one sending $(E, \phi_\varepsilon ,C)$ to $(E/C,\overline{\pi_C} \circ \phi_\varepsilon \circ \overline{\pi_C}^{-1})$ where $C$ is a cyclic subgroup of order $m$, $\pi_C : E \rightarrow E/C$ the natural projection, and $\overline{\pi_C}$ the induced map $E[N] \rightarrow (E/C)[N]$.  Furthermore, Chen morphisms between $J_{\rm{ns}}(N)$ and $J_0(N^2)$ are equivariant with respect to the Hecke actions \cite[Theorem 1.11]{KohenPacetti}.

We will again use the generalised Gross--Zagier formula from Zhang from \cite{zhang2}, in a slightly different context here. We follow the notation of \cite[\S 6]{zhang2}.  Let $K/\Q $ be an imaginary quadratic field inert at $N$ (instead of split or ramified in the previous case), and let $K\hookrightarrow M_2(\Q)$ be an embedding associated to an integral basis of $\Ocal_K$. For a choice of order $\Ocal_c$ of $K$ of conductor $c$ prime to $N$, define
\[
R_c=\mathcal{O}_c +N\cdot  M_2(\Z)
\]
(notice the index of $N \Ocal_K$ is $N^2$). The Shimura variety $M_{U_c}$ is then uniformised as
\[
M_{U_c} (\mathbb{C})=\GL_2(\Q )_+ \backslash \mathcal{H} \times \GL_2(\mathbb{A}_f )/U_c,
\]
where $U_c$ can be defined as $\GL_2(\Z_v)$ for places $v$ not dividing $N$, and $(R_c \otimes \Z_N)^* \subset \GL_2(\Z_N)$ at $N$ (seen in $\GL_2(\Z_N)$). Note that $\GL_2(\Q)^+ \cdot U_c =\GL_2(\mathbb{A}_f )$ and $\GL_2(\Q )_+ \cap U_c \subset \SL_2(\Z)$ contains the subgroup $\Gamma(N)$ of $\SL_2(\Z)$ of all matrices congruent to the identity modulo $N$, and the quotient is a conjugate of $C_{\ns }(N) \cap \SL_2(\Z/N\Z)$, where the precise choice of $C_{\rm{ns}}(N)$ comes from the reduction modulo $N$ of $\Ocal_c$ inside $M_2(\Z/N\Z)$ given by the embedding (it is nonsplit precisely because $N$ is inert in $\Ocal_c$) . This gives an isomorphism
\[
M_{U_c} (\mathbb{C})\simeq Y_{\ns }(N)_{\mathbb{C}}.
\]

The CM points on $M_{U_c}$ in the sense of Zhang are then the double cosets of pairs $(h_0 ,i_c )$, where $h_0 $ is fixed by the image $T$ of the torus $K^\times $. and $i_c$ has the property that
\[
i_c U_c i_c ^{-1}\cap T(\mathbb{A}_f )\simeq \widehat{\mathcal{O}}^\times _c /\widehat{\mathcal{O}}^\times _F ,
\]
in other words the nonsplit Cartan structure of level $N$ is the one determined by the endomorphism ring of the CM elliptic curve.

On the other hand, we say that $(E,\phi_\varepsilon ) \in Y_{\rm{ns}}(N)$ is a Heegner point (in the sense of Kohen--Pacetti) with multiplication by $\Ocal_c$ if $\End (E) \cong \Ocal_c$  (with $c$ prime to $N$) and $\phi_\varepsilon$ comes from an endomorphism $\beta$ of $E$. Note that this implies that $N$ is inert in $\Ocal_c$, since the minimal polynomial of $\beta $ modulo $N$ is then irreducible.

This discussion thus implies the following equivalence of definitions.

\begin{lemma}
	Under the identification $M_{U_c} \simeq Y_{\ns }(N)$ for every order $\Ocal_c$ of conductor $c$ prime to $N$, Zhang's CM points correspond to Heegner points with CM by $\Ocal_c$ in $Y_{\rm{ns}}(N)$ in the sense of Kohen--Pacetti.
\end{lemma}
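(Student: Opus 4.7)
The plan is to check directly that under the complex uniformisation
\[
M_{U_c}(\C) = \GL_2(\Q)_+\backslash \mathcal{H}\times \GL_2(\A_f)/U_c \simeq Y_{\ns}(N)_\C,
\]
recalled in the preceding paragraphs, each of the two conditions defining a Zhang CM point translates into one of the two conditions defining a Kohen--Pacetti Heegner point. The first condition (namely that $h_0\in\mathcal{H}$ is fixed by the torus $T = K^\times\subset \GL_2(\Q)$) will be shown to correspond to the elliptic curve attached to $[h_0,i_c]$ having CM by an order of $K$; the second condition (on $i_c U_c i_c^{-1}\cap T(\A_f)$) will be shown to pin down both the exact order $\Ocal_c$ and the fact that the nonsplit Cartan level structure at $N$ comes from reduction of an endomorphism $\beta$ of $E$.

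For the first point, I would use the standard dictionary between the adelic description and the modular one: the elliptic curve $E$ attached to the double coset $[h_0,i_c]$ is $\C/\Lambda$ where $\Lambda$ is the lattice built from the image of $i_c\widehat{\Z}^2$ in $\R^2 = \C$ via the complex structure given by $h_0$. The torus $T$ acts on $\mathcal{H}$ through $K^\times\hookrightarrow \GL_2(\Q)$, and $h_0$ being fixed means exactly that $K$ acts $\C$-linearly on the tangent space at $h_0$, which together with the $K$-action on $\Lambda$ produces the CM structure; in particular $\End(E)\otimes \Q \supset K$, with $N$ inert in the CM order (since the reduction of $\Ocal_c$ modulo $N$ inside $M_2(\Z/N\Z)$ is a nonsplit Cartan).

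For the second point, I would unpack the adelic condition $i_c U_c i_c^{-1}\cap T(\A_f)\simeq \widehat{\Ocal}_c^\times/\widehat{\Ocal}_F^\times$ place by place. Away from $N$, where $U_c = \GL_2(\widehat\Z^{(N)})$, the intersection records the units of the lattice of endomorphisms stabilising $\Lambda$, so these conditions force $\End(E)\otimes \Z_\ell = \Ocal_c\otimes\Z_\ell$ for $\ell\neq N$. At $N$, where $U_c = (R_c\otimes \Z_N)^\times = (\Ocal_c\otimes\Z_N + N\,M_2(\Z_N))^\times$, the same computation forces $\End(E)\otimes\Z_N = \Ocal_c\otimes\Z_N$; thus $\End(E)\simeq \Ocal_c$. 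Moreover the $\GL_2(\Z/N\Z)$-coset obtained from the local component of $i_c$ is exactly the conjugate of $C_{\ns}(N)\cap \SL_2(\Z/N\Z)$ determined by reducing $\Ocal_c$ modulo $N$, which by the Kohen--Pacetti moduli interpretation is precisely the nonsplit Cartan structure $\phi_\varepsilon$ obtained by reducing an endomorphism $\beta\in\Ocal_c$ with $\beta^2\equiv\varepsilon\pmod N$; conversely any Kohen--Pacetti Heegner point is obtained this way.

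The main obstacle, then, is bookkeeping: one has to carefully translate between (a) the adelic framework at $N$ where the Eichler order $R_c$ appears, and (b) the modular description at $N$ where the endomorphism $\beta$ acts on $E[N]$. The key identity that makes this work is that $(R_c\otimes\Z_N)\cap K_N$ equals $\Ocal_c\otimes\Z_N$ and that modulo $N$ its image in $M_2(\F_N)$ is a nonsplit Cartan (because $N$ is inert in $\Ocal_c$), so that the level structure induced by $i_c$ on $E[N]$ is exactly the one coming from the action of a square root of $\varepsilon$ in $\Ocal_c\otimes \F_{N^2}$. Once this compatibility at $N$ is established, the equivalence between the two notions of CM/Heegner point follows immediately.
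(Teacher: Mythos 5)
Your proposal is correct and follows essentially the same route as the paper: the paper's proof of this lemma is precisely the preceding discussion setting up the adelic uniformisation $M_{U_c}(\C)\simeq Y_{\ns}(N)_\C$ and observing that Zhang's two conditions (fixed point of the torus, and $i_cU_ci_c^{-1}\cap T(\A_f)\simeq\widehat{\Ocal}_c^\times/\widehat{\Ocal}_F^\times$) translate under the lattice/moduli dictionary into Kohen--Pacetti's two conditions ($\End(E)\cong\Ocal_c$ and $\phi_\varepsilon$ induced by an endomorphism $\beta\in\Ocal_c$). You simply unpack the bookkeeping place by place, which is what the paper's phrase ``This discussion thus implies'' is asking the reader to do.
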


Let $f$ be an eigenform in $S_2 (\Gamma _0 (N^2 ))^{+,\new} $. It can be seen as an automorphic form on an $M_{U_c}$ as above, using the isomorphism of Hecke modules $S_2 (\Gamma _0 (N^2 ))^{+,\new} \cong S_2(\Gamma_{\rm{ns}}^+(N))$ and the isomorphism $M_{U_c}(\C) \cong Y_{\rm{ns}}(N)_{\mathbb{C}}$ and we again have by Eichler-Shimura theory a $\Q$-simple quotient $A_f$ of $J_{\rm{ns}}^+(N)$.

The consequence of Zhang's result that we will use is the following.

\begin{Theorem}[\cite{zhang2}, Theorem 6.1]
With notation as above, let $1_c$ be the trivial character of $\Gal (H_c /K)$ and $P$ a Heegner point on $Y_{\rm{ns}}(N)$ with CM by $\Ocal_c$ in the sense of Kohen-Pacetti. Denote by  $P_{1_c}$ be the projection of $P - \xi $ ($\xi$ the Hodge class) in $J_{\ns}(N)(K) = J_{\ns }(N)(H_c )^{1_c}$. Let $P_{1_c}^f$ be the projection of $P_{1_c}$ onto the $f$-isotypical component of $J_{\ns }(N)(H_c )\otimes \mathbb{C}$. 

If $L'(f,1)=0$, then $P_{1_c}^f=0$ and $\pi_f (P_{1_c})$ is torsion in $A_f(H_c)$.
\end{Theorem}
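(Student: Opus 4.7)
\emph{Proof plan.} The approach mirrors the proof of part (2) of Lemma~\ref{lemmaHeegnerpoints}, transposed to the non-split Cartan setting. The key ingredients are Zhang's Gross--Zagier formula \cite[Theorem 6.1]{zhang2} applied to the Shimura curve $M_{U_c}$; the identification $M_{U_c}(\C) \simeq Y_{\ns}(N)_{\C}$ from the lemma just established, which matches Zhang's CM points with Kohen--Pacetti Heegner points; and a factorisation of the relevant Rankin--Selberg $L$-function that forces it to vanish to order at least two at $s=1$.

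Under that identification, and using the compatibility of the Hecke actions provided by Chen's isogenies \cite[Theorem 1.11]{KohenPacetti}, the newform $f \in S_2(\Gamma_0(N^2))^{+,\mathrm{new}}$ lifts to an automorphic form on the Shimura datum defining $M_{U_c}$, and $P_{1_c}$ matches the ``twisted trace of CM points'' appearing on Zhang's side. Zhang's formula then gives a nonzero constant $c_{f,K}$ such that
\[
\langle P_{1_c}^f, P_{1_c}^f \rangle_{\mathrm{NT}} = c_{f,K} \cdot L'(f, 1_c, 1),
\]
where $L(f, 1_c, s)$ denotes (up to archimedean factors) the Rankin--Selberg convolution of $f$ with the theta series attached to the trivial ring-class character $1_c$.

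The next step is to show that $L'(f, 1_c, 1) = 0$. One has a factorisation
\[
L(f, 1_c, s) = L(f, s) \cdot L(f \otimes \chi_K, s) \cdot \prod_{\ell \mid c} E_\ell(s),
\]
in which the local Euler factors $E_\ell(s)$ are nonzero at $s=1$ and $\chi_K$ is the quadratic Dirichlet character attached to $K$. Since $f$ has $w_{N^2}$-eigenvalue $+1$, the sign of its functional equation is $-1$, hence $L(f, s)$ vanishes to odd order at $s=1$; combined with $L'(f,1)=0$ this forces $L(f, s)$ to vanish to order at least $3$, and in particular $L(f, 1) = 0$. Since $\chi_K$ has conductor coprime to $N^2$ and $\chi_K(N^2) = 1$, the sign of the functional equation of $L(f \otimes \chi_K, s)$ is also $-1$, so $L(f \otimes \chi_K, 1) = 0$. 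Therefore $L(f, 1_c, s)$ vanishes to order at least $2$ at $s=1$, whence $L'(f, 1_c, 1) = 0$ and the height formula gives $P_{1_c}^f = 0$ in $J_{\ns}(N)(H_c) \otimes \C$.

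Finally, the passage from vanishing of $P_{1_c}^f$ to torsion of $\pi_f(P_{1_c}) \in A_f(H_c)$ proceeds by the same Galois-equivariance argument used in the proof of Lemma~\ref{lemmaHeegnerpoints}: for every $\alpha \in \Aut(\C)$ acting on scalars one has $(P_{1_c}^f)^{\alpha} = P_{1_c}^{\alpha(f)}$, so all Galois conjugates of $P_{1_c}^f$ also vanish, and their sum --- which, up to a nonzero rational scalar, equals the image of $\pi_f(P_{1_c})$ in $A_f(H_c) \otimes \C$ under the isotypical decomposition --- is zero, so $\pi_f(P_{1_c})$ is torsion. The main obstacle in filling in the details is matching Zhang's adelic normalisation of the CM cycle $P_{1_c}$ and of his Rankin--Selberg $L$-function with the classical descriptions used here (and in particular identifying $f$ with a form on the non-split Cartan Shimura datum via Chen), since the precise shape of the local factors at primes dividing $N d_K c$ affects the vanishing of $L(f, 1_c, s)$; a secondary delicate point is to keep track that the proportionality constant $c_{f,K}$ in Zhang's formula is indeed nonzero in our normalisations.
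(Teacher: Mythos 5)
Your proof takes the same route as the paper's: translate to Zhang's Shimura curve via the identification $M_{U_c}(\C)\simeq Y_{\ns}(N)_{\C}$ and the Hecke-equivariance furnished by Chen, then reproduce the argument of Lemma~\ref{lemmaHeegnerpoints} part~2 --- the paper's own proof is essentially just this sentence, so you usefully flesh it out. However, one intermediate step is wrong. You assert that since $\chi_K$ has conductor coprime to $N^2$ and $\chi_K(N^2)=1$, the root number of $L(f\otimes\chi_K,s)$ is $-1$ and hence $L(f\otimes\chi_K,1)=0$. This omits the factor $\chi_K(-1)=-1$ coming from $K$ being imaginary: for a quadratic twist of a newform of level $M$ by a character of coprime conductor, $\e(f\otimes\chi_K)=\chi_K(-1)\chi_K(M)\,\e(f)$, which with $M=N^2$ and $\e(f)=-1$ gives $(-1)(1)(-1)=+1$. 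So $L(f\otimes\chi_K,1)$ need not vanish. The error is harmless, though: you already deduced $\ord_{s=1}L(f,s)\geq 3$ from $\e(f)=-1$ and $L'(f,1)=0$, and that alone forces $\ord_{s=1}L(f,1_c,s)\geq 3\geq 2$, i.e.\ $L'(f,1_c,1)=0$. You should simply drop the claim about $L(f\otimes\chi_K,s)$. The remainder --- Zhang's height formula, positive-definiteness of the Néron--Tate form, and the $\Aut(\C)$-conjugation argument passing from $P_{1_c}^f=0$ to $\pi_f(P_{1_c})$ being torsion --- matches the paper's proof of Lemma~\ref{lemmaHeegnerpoints} part~2, which is exactly what the paper's one-paragraph proof of this theorem invokes.
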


\begin{proof}
	Using the previous lemmas and discussion, we can translate everything in terms of the Shimura curve $M_{U_c}$: the Heegner point $P$ becomes a CM point in the sense of Zhang and $f$ becomes an automorphic representation $\phi$. These changes are compatible with Hecke operators and Galois actions, so they preserve the decompositions into isotypical components above. We can then proceed along the same lines as the proof of Lemma \ref{lemmaHeegnerpoints} part 2 to deduce the conclusion from Zhang's theorem.
\end{proof}

We are now ready to prove the analogue of Lemma \ref{diagonalHeegner} with $X_0 (N)$ replaced by $X_{\ns }^+(N)$.
\begin{lemma}\label{ns_analogue}
Let $X=X_{\ns } (N)$, let $m$ be prime to $N$, and let $\widetilde{C}_m$ be the Hecke correspondence defined above. Then the divisor $i_{1,2} ^* \widetilde{C}_{m}$ is supported on Heegner points in the sense of Kohen-Pacetti and cusps whenever $m$ is less than $N^2 /4$.
\end{lemma}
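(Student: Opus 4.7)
The plan is to mimic the argument of Lemma \ref{diagonalHeegner}, carefully adapted to the Kohen--Pacetti moduli interpretation of $X_{\ns }(N)$, where the extra datum $\phi_\varepsilon$ forces one to do more than just detect CM.

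First I would identify the non-cuspidal points in the support of $i_{1,2}^*\widetilde{C}_m$. Such a point lifts to a triple $(E,\phi_\varepsilon,C)$ with $C\subset E$ cyclic of order $m$ (coprime to $N$), together with an isomorphism $\psi:(E,\phi_\varepsilon)\stackrel{\sim}{\to}(E/C,\overline{\pi_C}\circ\phi_\varepsilon\circ\overline{\pi_C}^{-1})$ in $X_{\ns }(N)$. Setting $\beta:=\psi^{-1}\circ\pi_C\in\End(E)$ yields an endomorphism of norm $m$ with cyclic kernel $C$, and the compatibility of $\psi$ with the nonsplit Cartan structures rewrites as the commutation of the reduction $\overline{\beta}$ with $\phi_\varepsilon$ inside $\End(E[N])=M_2(\F_N)$. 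For $m>1$, $\beta$ cannot be a rational integer (which would make $\ker\beta$ non-cyclic), so $E$ has CM and we may set $\End(E)\cong\Ocal_c$ for some order of conductor $c$ in an imaginary quadratic field $K$.

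According to the Kohen--Pacetti definition, $(E,\phi_\varepsilon)$ fails to be a Heegner point exactly in one of two cases: (a) $N\mid c$, or (b) $\gcd(c,N)=1$ but $\phi_\varepsilon$ does not lie in the image of $\Ocal_c\otimes\F_N$ inside $M_2(\F_N)$. Writing $\beta=a+b\omega_c$ in a $\Z$-basis $(1,\omega_c)$ of $\Ocal_c$, the norm-form identity reads $4m=(2a-b\tr(\omega_c))^2+b^2c^2|D_K|$, and primitivity of $\beta$ (imposed by cyclicity of $\ker\beta$ when $m>1$) ensures $b\neq 0$, so $4m\geq b^2c^2|D_K|$. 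In case (a), $c\geq N$ and $|D_K|\geq 3$ directly give $m\geq 3N^2/4$.

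For case (b), the key observation is that the centraliser of $\phi_\varepsilon$ in $M_2(\F_N)$ is the field $\F_N[\phi_\varepsilon]\cong\F_{N^2}$, since $X^2-\varepsilon$ is irreducible over $\F_N$; whereas the image of $\Ocal_c/N\cong\Ocal_K/N$ in $M_2(\F_N)$ is a distinct two-dimensional commutative $\F_N$-subalgebra (distinct because $\phi_\varepsilon$ is not in it). A subfield isomorphic to $\F_{N^2}$ and a distinct two-dimensional commutative subalgebra of $M_2(\F_N)$ can only intersect in the scalar subfield $\F_N$, so $\overline{\beta}\in\F_N$. Using injectivity of $\Ocal_c/N\hookrightarrow M_2(\F_N)$ (which holds because $\gcd(c,N)=1$), this forces $N\mid b$, and feeding $|b|\geq N$ together with $|D_K|\geq 3$ back into the inequality again yields $m\geq 3N^2/4$. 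Both cases therefore contradict $m<N^2/4$, and the lemma follows.

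The main new ingredient compared to Lemma \ref{diagonalHeegner} is the commutant analysis in case (b): in the $X_0(N)$ setting the mere existence of CM on both sides almost forces Heegner-ness, but the Kohen--Pacetti notion additionally demands that $\phi_\varepsilon$ be compatible with the CM structure, and I expect this compatibility step, handled via the intersection of $\F_N[\phi_\varepsilon]$ with the image of $\Ocal_c\otimes\F_N$ in $M_2(\F_N)$, to be the delicate point; the norm-form bookkeeping then proceeds just as in the classical case.
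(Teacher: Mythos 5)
Your proof is correct and follows essentially the same path as the paper's: both reduce to an endomorphism $\alpha$ of $E$ of norm $m$ with cyclic kernel whose reduction commutes with $\phi_\varepsilon$, both exploit that the centraliser of $\phi_\varepsilon$ in $M_2(\F_N)$ is the field $\F_N[\phi_\varepsilon]\cong\F_{N^2}$, and both conclude via the norm-form inequality $4m\ge b^2c^2|D_K|$. The paper organizes the contrapositive slightly differently — it first shows directly that $\overline{\alpha}$ cannot be scalar when $m<N^2/4$ (via the decomposition $\alpha=k+N\beta$), then deduces $\Z[\overline{\alpha}]=\Z[\phi_\varepsilon]$ and hence Heegner-ness, handling the conductor-prime-to-$N$ condition by the same argument as in the $X_0(N)$ case — whereas you run the argument in reverse, splitting non-Heegner-ness into the two failure modes (a) $N\mid c$ and (b) $\phi_\varepsilon\notin\operatorname{im}(\Ocal_c/N)$, and deriving $\overline{\alpha}$ scalar from (b) via the intersection of the two commutative subalgebras. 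These are the same computations in a mildly different order, and your version is, if anything, a bit more explicit about where the centraliser analysis enters.
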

\begin{proof}
By the moduli interpretation of $X_{\rm{ns}}(N)$ and the Hecke correspondences, a noncuspidal point in the support of $i_{1,2} ^* \widetilde{C}_{m}$ is a pair $(E,\phi_\varepsilon)$ such that there exists an endomorphism $\alpha$ of $E$ of norm $m$ with cyclic kernel (of order $m$) such that if $\overline{\alpha}$ is the induced endomorphism of $E[N]$, $\overline{\alpha} \circ \phi_\varepsilon \circ \overline{\alpha}^{-1} =  \phi_\varepsilon$. This implies that $\overline{\alpha}$ belongs to the nonsplit Cartan subgroup associated to $\phi_\varepsilon$ (which is also the group of invertible elements of $\Z[\phi_\varepsilon]$). We claim $\overline{\alpha }$ is not scalar: if it were, we could write $\alpha = k + N \beta, k \in \Z\beta \in \End(E)$ and then the norm of $\alpha$ being $m <N^2/4$ forces $\beta$ to be an integer as well, contradicting the assumption that $\alpha$ has cyclic kernel.

From this, we deduce that $\Z[\overline{\alpha}] = \Z[\phi_\varepsilon]$, as both are $\Z/N\Z$-vector spaces of dimension 2 and the former is included in the latter. This implies that $\phi_\varepsilon$ is induced by the action of an element of $\Z[\alpha] \subset \End(E)$ on $E[N]$, and the ring of endomorphisms has conductor prime to $N$ for the same reasons as in $X_0(N)$, and its discriminant is automatically prime to $N$ as discussed after defining Heegner points in the sense of Kohen-Pacetti. 

\end{proof}

By the compatibility with Hecke correspondences on $X_0 (N^2 )$ (which is a consequence of Chen's theorem without quotient by Atkin-Lehner involutions, e.g. \cite[Théorème 2]{DeSmitEdixhoven}), Lemma \ref{span_endomorphisms} implies that any nice correspondence $Z$ on $X_{\rm{ns}}^+(N)$ can be written as a linear combination of $C_m$ for $m<N^2 /4$ prime to $N$. By Lemma \ref{ns_analogue}, for any such $Z$, $D_Z (b)$ is supported on Heegner points (in the sense of Kohen--Pacetti) and cusps. Hence, Zhang's Gross--Zagier theorem (together with Manin--Drinfeld) implies $\pi _B (D_Z (b))$ is torsion. 
Assuming the conclusions of Theorem \ref{mainthm} hold for $M$, the Heegner quotient $A$ of $J_0^+(M)^{\rm{new}}$ is of dimension at least 2 so $\rho(A) \geq 2$. This completes the proof of case (2) of Proposition \ref{mainproposition}.

\subsection{An alternative approach}\label{alternative}
In this subsection, we sketch an alternative and less ad hoc approach for proving Proposition \ref{mainproposition} in the case $X=X_0 ^+ (N)$, using the Theorem of Yuan--Zhang--Zhang on the heights of diagonal cycles. 

\begin{Theorem}[Darmon--Rotger--Sols \cite{DRS}, Theorem 3.7]\label{DRS_thm}
Let $X=X_0 (N)$, and let $f,g$ be non-conjugate eigenforms in $S_2 (\Gamma _0 (N))$. Let $Z\in \NS (J_0 (N))$ lie in the image of $\NS (A_g )$.
Suppose $\epsilon (f)=-1$ and $\epsilon (\sym ^2 (g)\otimes f)=1$. If the projection of $D_Z (b)$ to $A_f$ is non-torsion, then $L'(f,1)\neq 0$.
\end{Theorem}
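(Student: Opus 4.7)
The plan is to reinterpret the Chow--Heegner point $\pi_{A_f} D_Z(b)$ as the projection of a Gross--Kudla--Schoen modified diagonal cycle on $X^3$ to the $(g,g,f)$-isotypic component, and then invoke the Yuan--Zhang--Zhang height formula (i.e.\ the Gross--Kudla conjecture proved in the modular case) to express the N\'eron--Tate height of this projection in terms of the central derivative of the triple product $L$-function.

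First, since $Z$ lies in the image of $\NS(A_g)\to\NS(J_0(N))$, under the correspondence identification $\psi^{-1}: \End(J_0(N)) \xrightarrow{\cong} \Ker(i_1^* \oplus i_2^*)$ it corresponds to a symmetric endomorphism factoring through $A_g$. Using Lemma \ref{DZB_HM} together with the Hain--Matsumoto-type description recalled in Appendix \ref{AppendixChowHeegnerCeresa}, the class of $\pi_{A_f} D_Z(b)$ in $A_f$ can be realised as the pushforward to $A_f$ of a Gross--Kudla--Schoen modified diagonal cycle $\Delta_{GKS}\in \CH^2(X^3)_{\hom=0}$ projected to its $(g,g,f)$-isotypic part.

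Second, apply the Yuan--Zhang--Zhang theorem on heights of diagonal cycles to this projected class. It yields a formula of the shape
\[
\langle \pi_{A_f} D_Z(b), \pi_{A_f} D_Z(b)\rangle_{\mathrm{NT}} \;=\; C \cdot L'(g\otimes g \otimes f,\, s_0),
\]
where $s_0$ is the centre of the functional equation and $C$ is a nonzero explicit constant depending on $Z$, the Petersson norm of $g$, and local factors at primes dividing $N$. The sign computation $\epsilon(g\otimes g\otimes f) = \epsilon(\sym^2 g\otimes f)\cdot \epsilon(f) = (+1)(-1)=-1$ places us in the regime where the central value vanishes and the derivative is the relevant analytic invariant, i.e.\ precisely the setting where YZZ applies.

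Third, decompose $g\otimes g \simeq \sym^2 g \oplus \mathbf{1}$ using the trivial nebentypus of forms in $S_2(\Gamma_0(N))$; this factors the triple product $L$-function as
\[
L(g\otimes g\otimes f, s) \;=\; L(\sym^2 g\otimes f, s)\cdot L(f, s).
\]
Since $\epsilon(f) = -1$ forces $L(f,1)=0$, differentiating at $s=1$ collapses the Leibniz rule to
\[
L'(g\otimes g\otimes f, 1) \;=\; L(\sym^2 g \otimes f, 1) \cdot L'(f, 1).
\]
Combining the three steps, the hypothesis that $\pi_{A_f} D_Z(b)$ is non-torsion gives a strictly positive N\'eron--Tate height, hence the product $L(\sym^2 g\otimes f,1)\cdot L'(f,1)$ is nonzero; in particular $L'(f,1)\neq 0$.

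The main obstacle is Step 1: producing a clean identification of the $0$-dimensional Chow--Heegner construction on the curve $X$ with a $(g,g,f)$-isotypic projection of the modified diagonal cycle on $X^3$, and then matching normalisations so that the YZZ formula yields the expected product of $L(\sym^2 g\otimes f,1)$ and $L'(f,1)$. The sign hypothesis $\epsilon(\sym^2 g\otimes f)=+1$ is essential not merely for the factorisation argument but also to guarantee that the cycle in question lives in the correct sign component of $\CH^2(X^3)_{\hom=0}$ where the YZZ height formula outputs a central derivative rather than a central value.
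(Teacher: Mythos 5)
This theorem is \emph{not proved in the paper}: it is cited verbatim from Darmon--Rotger--Sols \cite{DRS} (Theorem 3.7) and used in \S \ref{alternative} as a black box, so there is no proof in the source text to compare against. Your sketch does, however, follow the strategy that DRS themselves use, and its architecture (GKS cycle $\rightarrow$ YZZ height formula $\rightarrow$ factorisation of the triple product $L$-function $\rightarrow$ Leibniz rule at the centre) is correct.

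A few adjustments worth flagging. In Step~1 you invoke Lemma \ref{DZB_HM}, but that lemma concerns the extension class of $\lie(U_2)$ and the \'etale Abel--Jacobi map — it is the Hain--Matsumoto input, needed for Proposition~\ref{easy}, not for the present argument. The lemma you actually want is Lemma~\ref{drs-z} in Appendix~\ref{AppendixChowHeegnerCeresa} (which is \cite[Lemma 2.1]{DRS}), giving $D_Z(b)=\Pi_Z(\Delta_{GKS})$; this is the clean bridge from the one-variable Chow--Heegner construction to the modified diagonal cycle on $X^3$, so your ``main obstacle'' in Step~1 is already taken care of by that lemma. In Step~3, the decomposition $g\otimes g\simeq \sym^2 g\oplus\mathbf{1}$ is the \emph{automorphic} statement (where $\pi_g$ has trivial central character); motivically one has $\rho_g\otimes\rho_g\simeq \sym^2\rho_g\oplus\Q(-1)$, so in the arithmetic normalisation the factorisation reads $L(f\otimes g\otimes g,s)=L(\sym^2 g\otimes f,s)\,L(f,s-1)$ with the triple-product centre at $s=2$. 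Your collapsing of the Leibniz rule is correct once evaluated at the respective centres, but as stated the single point $s=1$ does not simultaneously sit at the centre of all three factors. These are normalisation slips rather than logical gaps, and the overall conclusion $L(\sym^2 g\otimes f,1)\cdot L'(f,1)\neq 0 \Rightarrow L'(f,1)\neq 0$ stands.
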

The result above holds for arbitrary $N$, but is most useful when $N$ is prime, since in this case we have $\epsilon (f\otimes g\otimes g)=-a_N (f)a_N (g)^2 =-a_N (f)$ (see e.g. \cite{GK}). Hence in this case Theorem \ref{DRS_thm} implies that the image of $D_Z (b)$ in $A_f$ is torsion for all eigenforms $f$ in $S_2 ^+ (\Gamma _0 (N))$., which implies that  we get an alternative proof for $X_0 ^+ (N)$. One way to view Proposition \ref{mainproposition} is that it shows that it is easier to prove diagonal cycles are torsion than it is to prove they are non-torsion. On the other hand, one can show directly that the image of $D_Z (b)$ in $A_f $ is torsion for all eigenforms $f$ satisfying $w_N (f)=-f$, as explained in \cite[Theorem 3.3.8]{daub}: by Lemma \ref{functoriality_of_diagonal_cycles}, we have 
\[
w_{N}^* (D_Z (b))=D_{w_N ^* (Z)}(b).
\]
Since $w_N ^* (Z)=Z$, and $w_N ^* $ acts as (-1) on $A_f$, we deduce $\pi _{f *}(D_Z (b))$ is torsion.
\section{Proof of the analytic part}\label{analytic_part}

	In this section, we prove Theorem \ref{mainthm} using analytic weighted averages techniques, following guiding principles e.g. from \cite{IwaniecSarnak00} and \cite{Ellenberg04}. For convenience and consistency, the notation below is as close as possible to those from \cite{LeFourn2}.

\textbf{Notation} 

\begin{itemize}
		
	\item 	$N$ is a prime number and $M = N$ or $N^2$ in all of the following.
	
	\item  If $f,g \in S_2(\Gamma_0(M))$, we denote their Petersson scalar product by 
	\[
	\langle f,g \rangle_M = \int_\Dcal \overline{f(x+iy)} g(x+iy) dx dy,
	\]
	where $\Dcal$ is a fundamental domain of $\Gamma_0(M)$, and the associated Petersson norm by $\| \cdot\|_M$.
	
	\item For $\varepsilon= \pm 1$, the space $S_2(\Gamma_0(M))^\varepsilon$ refers to the subspace of modular forms $f$ of $S_2(\Gamma_0(M))$ such that $f_{|w_M} = \varepsilon \cdot  f$, where $w_M$ is the Fricke involution of $S_2(\Gamma_0(M))$. Note that in weight 2, this is the space of modular forms $f$ such that $L(f,s)$ has root number $- \varepsilon$.
	
	\item For $A,B$ linear forms on $S_2(\Gamma_0(M))$ (resp. on a subspace indicated by superscripts), we write 
	\[
	\langle A,B \rangle_M = \sum_f \frac{\overline{A(f)} B(f)}{\|f\|_M^2},
	\]
	where $f$ goes through an orthogonal basis of $S_2(\Gamma_0(M))$ (it is readily checked not to depend on this choice of basis), resp. of the prescribed subspace. We will add superscripts $\{+,-,\rm{new},\rm{old}\}$ to refer to the sum restricted to an orthogonal basis of the corresponding subspaces of $S_2(\Gamma_0(M))$.
	
	\item We denote by $a_m$ (for $m \in \N_{\geq 1}$) and $L'$ the linear forms on $S_2(\Gamma_0(M))$ which to $f$ associate respectively the $m$-th coefficient of the $q$-expansion of $f$, and $L'(f,1)$ (defined properly in the next paragraph).
	
	\item The (positive) greatest common divisors of integers $a,b$ or integers $a,b,c$ are respectively denoted by $(a,b)$ and $(a,b,c)$.
	
	\item For any positive number $B$, $O_1(B)$ refers to a complex number of absolute value  $\leq B$.
	
\end{itemize}

The proof of Theorem \ref{mainthm} relies on the following lemma.

\begin{lemma}
	\label{lemtrickratio}
	Theorem \ref{mainthm} holds for $M$ if
	\[
	\langle a_1, L' \rangle_M^{+,\rm{new}} \neq 0 \quad \textrm{and} \quad \frac{\langle a_2, L' \rangle_M^{+,\textrm{new}}}{\langle a_1,L' \rangle_M^{+, \rm{new}}} \in ]0,1[.
	\]
\end{lemma}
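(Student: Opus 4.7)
The plan is to argue by contradiction: assume that at most one normalised newform $f_0 \in S_2(\Gamma_0(M))^{+,\mathrm{new}}$ satisfies $L'(f_0, 1) \neq 0$, and derive a contradiction from the two hypotheses.

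First I will expand both Petersson sums in the canonical orthogonal basis of normalised newforms. Since Hecke eigenvalues of newforms on $\Gamma_0(M)$ are real algebraic integers, this yields
\[
\langle a_m, L' \rangle_M^{+,\mathrm{new}} \;=\; \sum_{f} \frac{a_m(f)\, L'(f, 1)}{\|f\|_M^2},
\]
the sum being over normalised newforms in $S_2(\Gamma_0(M))^{+,\mathrm{new}}$. Only those $f$ with $L'(f, 1) \neq 0$ contribute, so hypothesis (i) forces this contributing set to be exactly a singleton $\{f_0\}$ under our assumption.

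The next step is to promote $f_0$ to a $\Q$-rational newform using Galois equivariance. The generalised Gross--Zagier formula (Cai--Shu--Tian when $M = N$, Zhang on $X_{\mathrm{ns}}(N)$ when $M = N^2$, exactly as exploited in the proof of Lemma \ref{lemmaHeegnerpoints}(2)) identifies $L'(f, 1)$, up to a nonzero factor involving an auxiliary imaginary quadratic field $K$, with the N\'eron--Tate height of the $f$-isotypic projection of a trace of a Heegner point. Since $\Aut(\C)$ permutes these isotypic projections in accordance with its action on Fourier coefficients, the property ``$L'(f, 1) \neq 0$'' is invariant along the $\GalQ$-orbit of $f_0$. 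Uniqueness then collapses that orbit to $\{f_0\}$, so $f_0$ has rational coefficients and $a_2(f_0) \in \Z$.

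With only $f_0$ contributing, both Petersson sums reduce to a single term and the hypothesised ratio becomes
\[
\frac{\langle a_2, L' \rangle_M^{+,\mathrm{new}}}{\langle a_1, L' \rangle_M^{+,\mathrm{new}}} \;=\; a_2(f_0) \;\in\; \Z,
\]
which cannot lie in $]0,1[\,$. This contradicts hypothesis (ii), and hence $S_2(\Gamma_0(M))^{+,\mathrm{new}}$ must contain at least two normalised newforms $f$ with $L'(f, 1) \neq 0$, as claimed.

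The only delicate point will be to cite the Galois equivariance of ``$L'(f, 1) \neq 0$'' in a way that covers both the $M = N$ and $M = N^2$ cases uniformly. The Heegner point arguments directly give equivariance of the vanishing of $L'(f, 1_c, 1)$; to pass from this to the vanishing of $L'(f, 1)$ itself one picks $K$ so that $L(f_0^\sigma \otimes \chi_K, 1) \neq 0$ for every Galois conjugate of $f_0$, which is available by standard non-vanishing theorems for quadratic twists (Bump--Friedberg--Hoffstein, Murty--Murty). Once this is in place the rest of the argument is purely formal.
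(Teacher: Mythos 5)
Your proof is correct and follows the same argument as the paper: expand the Petersson sums in an eigenbasis, use Galois equivariance of the non-vanishing of $L'(f,1)$ to force the unique contributing newform to have rational (hence integer) Fourier coefficients, and then observe that the ratio equals $a_2(f_0)\in\Z$, contradicting hypothesis (ii). The only difference is cosmetic: the paper simply cites Gross--Zagier (Corollary V.1.3) for the Galois equivariance, while you spell out the detour through $L'(f,1_c,1)$ and an auxiliary quadratic twist with $L(f\otimes\chi_K,1)\neq 0$; this elaboration is not a different route so much as an unpacking of the same citation, and it does have the small virtue of making the equivariance manifestly available in the square-level case.
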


\begin{proof}
	If $\langle a_1,L'\rangle _M^{+,\textrm{new}} \neq 0$, by definition of this sum, there must be at least one normalised newform $f \in S_2(\Gamma_0(M))^{+,\textrm{new}}$ such that $L'(f,1) \neq 0$. As a byproduct of the Gross--Zagier formula (\cite{GrossZagier86}, Corollary V.1.3), this implies that $L'(g,1) \neq 0$ for all normalised newforms $g$ which are conjugates of $f$ by $\GalQ$, thus Theorem \ref{mainthm} holds for $M$ unless the field of coefficients of $f$ is $\Q$ and this $f$ is unique, which we assume now. As $f$ is normalised, those coefficients are algebraic integers hence belong to $\Z$. Now, one has 
	\[
	\frac{\langle a_2, L' \rangle_M^{+,\textrm{new}}}{\langle a_1,L' \rangle_M^{+, \textrm{new}}} = \frac{\overline{a_2(f)} L'(f,1) \|f\|_M^2}{\overline{a_1(f)} L'(f,1) \|f\|_M^2} = a_2(f) \in ]0,1[
	\]
	by hypothesis, so $a_2(f) \notin \Z$ which leads to a contradiction and Theorem \ref{mainthm} holds.
\end{proof}

\begin{Remark}
	\label{remlemmatrickratio}
	The statement of this lemma appears quite \textit{ad hoc} so let us explain the main motivations behind it.
	
	\begin{itemize}
		\item As we will see later, as long as $m$ is small compared to $\sqrt{M}$, one has 
		\[
		\frac{ \langle a_m,L' \rangle_{M}^{+,\textrm{new}}}{4 \pi} = \ln(\sqrt{M}) + C - \ln(m) + O(m/\sqrt{M})
		\] with explicit implied constants. This proves that the hypotheses of the lemma are indeed satisfied for large $M$.
		
		\item The error terms of the estimate above are smaller when the $m$'s are smaller, hence the choices of $m=1$ and 2 for the ratio.
		
		\item There are far better asymptotic estimates on the number of newforms $f$ in $S_2(\Gamma_0(M))^{+,\textrm{new}}$ such that $L'(f,1) \neq 0$, e.g. : by \cite{KowalskiMichelVanderKam} (at least for $M=N$ prime), the proportion of such forms is asymptotically at least $7/8$, in particular there are far more than just 2 for $M$ large). These techniques, using also estimates of second moments and of the norms $\|f\|_M$, are harder to make explicit, and we suspect the effective bounds obtained by following step-by-step the arguments would be huge. Lemma \ref{lemtrickratio}, while very crude (and giving a weaker result) is tailor-made to be efficient enough for precise estimates and approachable bounds.
	\end{itemize}

\end{Remark}

\subsection{Splitting of the terms to estimate the first moments}

The starting point to estimate the weighted averages $\langle a_m, L'\rangle_N^{\rm{new}}$ is the following trace formula of Petersson adapted by Akbary (and proven in greater generality in \cite{LeFourn2}).

\begin{Proposition}
	\label{propformulesdestraces}
	\hspace*{\fill}
	
	Let $m,n,M$ be three positive integers, and $\varepsilon = \pm 1$. Then, we have
	
	\begin{eqnarray}
	\label{formuledestracesgeneralisee}
	\frac{1}{2 \pi \sqrt{mn}} \langle a_m, a_n \rangle_{M}^\varepsilon   =  \delta_{mn} &  - &  2 \pi \sum_{\substack{c >0 \\ M |c} } \frac{S(m,n ;c)}{c} J_1 \left( \frac{4 \pi \sqrt{mn}}{c} \right)
	\\ & -  & 2 \pi \varepsilon \sum_{\substack{d > 0  \\ (d,M)= 1}} \frac{ S(m,nM^{-1};d)}{d \sqrt{M}} J_1 \left( \frac{4 \pi \sqrt{mn}}{d \sqrt{M}} \right) \nonumber,
	\end{eqnarray}
	where $S$ is the notation for Kloosterman sums
	\[
	S(m,n;c) = \sum_{k \in (\Z /c \Z)^*} e^{ 2 i \pi (m k + n k^{-1})/c}
	\]
	(except for $c=1$ where its value is 1 by convention), $Q^{-1}$ means the inverse of $Q$ modulo $d$ in the Kloosterman sums and $J_1$ is the Bessel function of the first kind and order 1.
\end{Proposition}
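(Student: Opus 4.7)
The plan is to reduce the statement to the classical Petersson trace formula at weight $2$ by decomposing the Petersson sum along the two eigenspaces of the Fricke involution $w_M$. Since $w_M$ normalises $\Gamma_0(M)$ and preserves the Petersson measure, it acts as a self-adjoint involution on $S_2(\Gamma_0(M))$, so I may pick the orthogonal basis used to define $\langle \cdot , \cdot \rangle_M$ to consist entirely of $w_M$-eigenvectors $f$ with eigenvalues $\varepsilon_f \in \{\pm 1\}$. The identity $a_n(w_M f) = \varepsilon_f \, a_n(f)$ then gives
\[
\langle a_m, a_n \rangle_M^{\varepsilon} \;=\; \frac{1}{2}\sum_f \frac{\overline{a_m(f)}\,a_n(f)}{\|f\|_M^2}\bigl(1 + \varepsilon\, \varepsilon_f\bigr) \;=\; \frac{1}{2}\langle a_m, a_n\rangle_M \;+\; \frac{\varepsilon}{2}\sum_f \frac{\overline{a_m(f)}\,a_n(w_M f)}{\|f\|_M^2},
\]
so the first term on the right reproduces, via the classical Petersson formula in weight $2$, the $\delta_{mn}$ and the $c$-sum in the statement.

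For the second term my plan is to rewrite it using Poincaré series. The $m$-th Poincaré series $P_m$ of weight $2$ for $\Gamma_0(M)$ satisfies the reproducing identity $\langle f, P_m\rangle_M = \frac{1}{4\pi m}\,a_m(f)$ for every cusp form $f$, hence
\[
\sum_f \frac{\overline{a_m(f)}\,a_n(w_M f)}{\|f\|_M^2} \;=\; 4\pi m\cdot a_n(w_M P_m).
\]
The form $w_M P_m$ is the weight $2$ Poincaré series attached to the cusp $0$ of $\Gamma_0(M)$ (rescaled by the width). Its Fourier expansion at $\infty$ is computed by the usual coset decomposition of $\Gamma_\infty \backslash w_M\Gamma_0(M)$: modulo the left action of $\Gamma_\infty$, the non-trivial double cosets are parametrised by matrices whose lower-left entry is coprime to $M$ (which is exactly the condition that the cusp at $0$, after conjugation by $w_M$, not collide with $\infty$ modulo $\Gamma_\infty$), and unfolding gives a Kloosterman-Bessel sum of the expected shape.

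Finally, I would carry through the bookkeeping of this unfolding: the change of variable by $w_M$ turns $\gamma z \mapsto e^{2\pi i m\gamma z}$ into a series indexed by $d>0$ with $(d,M)=1$, with the $m$-oscillation pairing against an inverse taken modulo $d$ of $M$ (producing the argument $nM^{-1}$ of the Kloosterman sum), and the scaling of $w_M$ introduces the factor $\sqrt{M}$ in the denominators of both the Kloosterman weight and the Bessel argument, yielding precisely the second Kloosterman-Bessel series of the statement. I expect the bulk of the care to lie in this last step: matching conventions for $w_M$, the normalisation of the Poincaré series at a non-$\infty$ cusp, and the correct interpretation of $M^{-1}$ in the Kloosterman sum for non-prime $M$ (one must verify that the formula makes sense as written, e.g.\ via the convention $S(m,nM^{-1};1)=1$). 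The rest is the standard Petersson--Bruggeman--Kuznetsov machinery applied to the cusp $0$ of $\Gamma_0(M)$, exactly as in the treatment in \cite{LeFourn2}.
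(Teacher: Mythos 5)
The paper does not actually prove Proposition \ref{propformulesdestraces}; it attributes it to Akbary and cites \cite{LeFourn2} for a proof in greater generality, so there is no in-paper proof to compare against. Your sketch is the standard derivation and appears essentially correct: the algebraic decomposition
$\langle a_m,a_n\rangle_M^{\varepsilon} = \tfrac12\langle a_m,a_n\rangle_M + \tfrac{\varepsilon}{2}\sum_f \overline{a_m(f)}a_n(w_Mf)/\|f\|_M^2$
is exact, the self-adjointness of $w_M$ (unitary involution since $w_M$ normalises $\Gamma_0(M)$ and preserves the Petersson measure) justifies choosing an eigenbasis, and the first summand feeding into the weight-$2$ Petersson formula reproduces $\delta_{mn}$ minus the $c$-sum, while the second is a Fourier-coefficient computation for the Poincar\'e series at the cusp $0$, giving the $(d,M)=1$ Kloosterman--Bessel sum with the $\sqrt{M}$ normalisation and the $nM^{-1}$ argument.

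One technical point deserves to be made explicit rather than folded into ``bookkeeping'': the weight-$2$ Poincar\'e series $P_m$ does not converge absolutely, so the reproducing identity $\langle f, P_m\rangle_M = a_m(f)/(4\pi m)$ and the subsequent unfolding require regularisation --- for instance, inserting a factor $\Im(\gamma z)^s$ and analytically continuing to $s=0$, or proving the identity in weight $k>2$ and interpolating. This is standard (it is what makes the factor $J_1$ rather than $J_{k-1}$ appear), but your proof as written takes the Poincar\'e-series reproducing property for granted at the boundary weight. Apart from that caveat, the plan is the expected one and matches the Akbary/\cite{LeFourn2} approach.
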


The sums on the right-hand side are absolutely convergent thanks to the following well-known uniform bounds: $|J_1(x)| \leq |x|/2$ for all $x$, and the Weil bounds
\begin{equation}
\label{eqbornesWeil}
|S(m,n;c)| \leq (m,n,c) ^{1/2} \tau(c) \sqrt{c},
\end{equation}
with $\tau$ the divisor-counting function, which improves, if $M$ is a prime power dividing $c$, in
\[
|S(m,n;c)| \leq  2 (m,n,c) ^{1/2} \tau(c/M) \sqrt{c}
\]
(\cite{IwaniecKowalski}, (3.2), (3.3), Theorem 11.11 and Corollary 11.12).

Now, our normalisation of the L-function associated to a form $f \in S_2(\Gamma_0(M))$ is given by 
\[
L(f,s) = \sum_{n=1} \frac{a_n(f)}{n^s},
\]
and this L-series converges uniformly on any compact subset of $\{\Re(s)>2 \}$.

One can express $L'(f,1)$ itself in terms of the Fourier coefficients of $f$ in the following way.

\begin{lemma}
	\label{lemvaluederLat1}
	For any $M \geq 1$ and any $f \in S_2(\Gamma_0(M))^+$, one has 
	\[
	L'(f,1) = 2 \sum_{n=1}^{+ \infty} \frac{a_n(f)}{n} E_1 \left( \frac{2 \pi n}{\sqrt{M}} \right)
	\]
	where $E_1$ is the exponential integral function, defined on $]0,+\infty[$ by 
	\[
	E_1(y) = \int_y^{+ \infty} \frac{e^{-t}}{t}dt.
	\]
\end{lemma}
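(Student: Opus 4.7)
The plan is a standard Mellin transform computation using the functional equation coming from the Fricke involution eigenvalue $\varepsilon = +1$, together with termwise integration of the $q$-expansion of $f$.

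First, starting from the Dirichlet series definition of $L(f,s)$ for $\Re(s)$ large and the Mellin representation $\int_0^\infty e^{-2\pi nu/\sqrt{M}} u^{s-1} du = (\sqrt{M}/2\pi n)^s \Gamma(s)$, I would express the completed L-function as
\[
\Lambda(f,s) := \left(\frac{\sqrt{M}}{2\pi}\right)^{\!s} \Gamma(s) L(f,s) = \int_0^\infty f(iu/\sqrt{M}) u^{s-1} du.
\]
Splitting this integral at $u=1$ and using the substitution $u \mapsto 1/u$ on the piece over $(0,1)$, the identity $f|_{w_M} = \varepsilon f$ translates (in weight 2) into $f(i/(u\sqrt{M})) = -\varepsilon u^{2} f(iu/\sqrt{M})$. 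Substituting, I get the symmetric formula
\[
\Lambda(f,s) = \int_1^\infty f(iu/\sqrt{M})\bigl(u^{s-1} - \varepsilon u^{1-s}\bigr) du,
\]
valid for all $s\in\mathbb{C}$ (the integral converges absolutely since $f$ is a cusp form, giving exponential decay at infinity).

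For $\varepsilon = +1$ (our case), this immediately shows $\Lambda(f,1)=0$, hence $L(f,1)=0$, and differentiating in $s$ at $s=1$ yields
\[
\Lambda'(f,1) = 2 \int_1^\infty f(iu/\sqrt{M}) \ln u \, du.
\]
Expanding $f(iu/\sqrt{M}) = \sum_{n\geq 1} a_n(f) e^{-2\pi nu/\sqrt{M}}$ and interchanging sum and integral (again justified by uniform exponential decay), I integrate each term by parts with $w=\ln u$, $dv = e^{-2\pi nu/\sqrt{M}} du$; the boundary term vanishes at $u=1$ because $\ln 1 = 0$ and at infinity by exponential decay, and the substitution $t = 2\pi nu/\sqrt{M}$ on what remains produces exactly $\frac{\sqrt{M}}{2\pi n} E_1(2\pi n/\sqrt{M})$. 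Summing,
\[
\Lambda'(f,1) = \frac{\sqrt{M}}{\pi} \sum_{n\geq 1} \frac{a_n(f)}{n} E_1\!\left(\frac{2\pi n}{\sqrt{M}}\right).
\]

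Finally, differentiating $\Lambda(f,s) = (\sqrt{M}/2\pi)^s \Gamma(s) L(f,s)$ at $s=1$ and using $L(f,1)=0$, the only surviving contribution is $\Lambda'(f,1) = (\sqrt{M}/2\pi)\,L'(f,1)$; equating the two expressions for $\Lambda'(f,1)$ and dividing gives the stated formula. There is no real obstacle here: the only points requiring mild care are (i) the sign coming from the Fricke involution in weight 2 (which is why the $\varepsilon = +1$ case corresponds to a functional equation of sign $-1$, forcing $L(f,1)=0$ and making this route to $L'(f,1)$ possible), and (ii) the justification of termwise integration, which is immediate from the exponential decay of the $q$-expansion.
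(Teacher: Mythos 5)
Your proof is correct and complete. Let me just record the one place worth double-checking: the folded representation
\[
\Lambda(f,s)=\int_1^\infty f(iu/\sqrt{M})\,\bigl(u^{s-1}-\varepsilon\, u^{1-s}\bigr)\,du
\]
indeed converges for all $s$ and agrees with $\Lambda(f,s)$ by analytic continuation from the half-plane of absolute convergence; with $\varepsilon=+1$ it gives $\Lambda(f,2-s)=-\Lambda(f,s)$, matching the paper's functional equation $\Lambda(f,2-s)=-\Lambda(f|_{w_M},s)$, so $\Lambda(f,1)=0$. Differentiating at $s=1$, the integration by parts (with $\ln 1 = 0$ killing the boundary term) and the substitution $t=2\pi n u/\sqrt{M}$ produce $\tfrac{\sqrt{M}}{2\pi n}E_1(2\pi n/\sqrt{M})$ for each $n$, and the final division by $\tfrac{d}{ds}\bigl[(\sqrt{M}/2\pi)^s\Gamma(s)\bigr]\big|_{s=1}\cdot 0 + (\sqrt{M}/2\pi)\cdot 1$ is correct because $L(f,1)=0$. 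So the stated identity follows.

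The route does differ from what the paper invokes: the paper appeals to \cite{IwaniecKowalski} (26.10), which derives such formulas by Mellin inversion and a contour shift across a vertical line (``integration of residues on vertical axes''), i.e.\ the approximate-functional-equation machinery. You instead fold the Mellin integral of $\Lambda$ at $u=1$ using the Fricke eigenrelation, differentiate under the integral, and integrate by parts. The two are of course equivalent for a degree-2 self-dual $L$-function, but yours is self-contained and bypasses any contour integration, whereas the paper's citation buys generality (arbitrary test functions, arbitrary degree) at the cost of an external reference. For the purposes of this lemma your method is arguably the cleaner one.
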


\begin{proof}
	We define the completed L-function $\Lambda$ associated to $L$ by 
	\begin{equation}
	\label{eqdefLambda}
	\Lambda(f,s) := \left( \frac{\sqrt{M}}{2 \pi} \right)^{s} \Gamma(s) L(f,s).
	\end{equation}
	By standard arguments(e.g. \cite{Bump}, section 1.5), this function extends to an holomorphic function on $\C$ and satisfies the functional equation
	\begin{equation}
	\label{eqeqfoncLambda}
	\Lambda(f,2-s) = - \Lambda(f_{|w_M},s).
	\end{equation}
	The expression of $L'(f,1)$ is then deduced from the functional equation of $\Lambda$ by integration of residues on vertical axes and Mellin transform (see e.g. \cite{IwaniecKowalski} (26.10) where the definition of $L$ is translated by $1/2$).
\end{proof}

With this formula and by uniform convergence of the terms involved, we obtain:

\begin{equation}
\label{eqdevmoment}
\frac{\langle a_m, L' \rangle_{M}^+}{4 \pi} = E_1 \left( \frac{2 \pi m}{\sqrt{M}} \right) - 2 \pi \sqrt{m} \left( \sum_{M|c} \frac{\Scal(c)}{c} + \sum_{(d,M)=1} \frac{\Tcal(d)}{d \sqrt{M}} \right),
\end{equation}
where
\begin{equation}
\label{eqdefScal}
\Scal(c) = \sum_{n=1}^{+ \infty} \frac{S(m,n;c)}{\sqrt{n}} J_1 \left( \frac{4 \pi \sqrt{mn}}{c} \right) E_1 \left( \frac{2 \pi n}{\sqrt{M}} \right)
\end{equation}
and
\begin{equation}
\label{eqdefTcal}
\Tcal(d) = \sum_{n=1}^{+ \infty} \frac{S(m,nM^{-1};d)}{\sqrt{n}} J_1 \left( \frac{4 \pi \sqrt{mn}}{d \sqrt{M}} \right) E_1 \left( \frac{2 \pi n}{\sqrt{M}}
\right). 
\end{equation}

The main term in \eqref{eqdevmoment} will be $E_1(2 \pi m /\sqrt{M})$ as long as $m \ll \sqrt{M}$.

The trace formula does not separate the old and new spaces, which we need for $M=N^2$. This is taken care of in the following lemma.

\begin{lemma}
	For $N$ prime and $m \geq 1$ not divisible by $N$, 
	\[
	\langle a_m, L' \rangle_{N^2}^{+,\rm{new}} = \langle a_m, L' \rangle_{N^2}^+ - \frac{1}{N-1} \left( \langle a_m, L' \rangle_N^+ + \frac{\ln(N)}{2} \langle a_m, L \rangle_N^- \right).
	\]
\end{lemma}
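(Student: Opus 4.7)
The strategy is to use the orthogonal (for Petersson) decomposition
\[
S_2(\Gamma_0(N^2))^+ = S_2(\Gamma_0(N^2))^{+,\mathrm{new}} \oplus S_2(\Gamma_0(N^2))^{+,\mathrm{old}},
\]
so that $\langle a_m,L'\rangle_{N^2}^{+,\mathrm{new}}=\langle a_m,L'\rangle_{N^2}^{+}-\langle a_m,L'\rangle_{N^2}^{+,\mathrm{old}}$, and to rewrite the old part in terms of level-$N$ quantities. Since $N$ is prime, $S_2(\Gamma_0(N))$ consists entirely of newforms, and the oldform space at level $N^2$ is spanned by the pairs $\{f(\tau),\,g(\tau):=f(N\tau)\}$ as $f$ ranges over an orthogonal (Hecke) basis of $S_2(\Gamma_0(N))$.

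First I would pin down a suitable basis of the old part. For $f$ a normalised eigenform at level $N$ of $w_N$-eigenvalue $\varepsilon=\varepsilon(f)\in\{\pm1\}$, a direct change of variable using $f(-1/(Nz))=\varepsilon Nz^2f(z)$ gives $f|_{w_{N^2}}=\varepsilon N\cdot g$ and $g|_{w_{N^2}}=\varepsilon N^{-1}\cdot f$. Consequently $f^+:=f+\varepsilon Ng$ is a $w_{N^2}$-eigenvector with eigenvalue $+1$, and $\{f^+\}_{f}$ is an orthogonal basis of $S_2(\Gamma_0(N^2))^{+,\mathrm{old}}$ (orthogonality across distinct $f$'s follows from Hecke-eigenvalue disjointness at primes $p\nmid N$, which stabilise each plane $\mathrm{span}(f,g)$). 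Since $N\nmid m$, $a_m(g)=a_{m/N}(f)=0$, so $a_m(f^+)=a_m(f)$. Moreover $L(g,s)=N^{-s}L(f,s)$, hence $L(f^+,s)=L(f,s)(1+\varepsilon N^{1-s})$; differentiating at $s=1$ and using $L(f,1)=0$ when $\varepsilon=+1$ (the root number of $L(f,s)$ being $-\varepsilon$) produces
\[
L'(f^+,1)=\begin{cases}2L'(f,1) & \text{if } \varepsilon=+1,\\ \ln(N)\,L(f,1) & \text{if } \varepsilon=-1.\end{cases}
\]

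The crucial step is the Petersson norm identity $\|f^+\|_{N^2}^2=2(N-1)\|f\|_N^2$. An elementary change of variable gives $\|f\|_{N^2}^2=[\Gamma_0(N):\Gamma_0(N^2)]\|f\|_N^2=N\|f\|_N^2$; applying the isometry $w_{N^2}$ to $f=\varepsilon N^{-1}g|_{w_{N^2}}$ yields $\|g\|_{N^2}^2=N^{-2}\|f\|_{N^2}^2=\|f\|_N^2/N$. For the cross term $\langle f,g\rangle_{N^2}$, I would unfold a Rankin--Selberg integral at level $N^2$: exploiting $a_n(g)=a_{n/N}(f)$ and the prime-level multiplicativity $a_{mN}(f)=a_N(f)a_m(f)$ one gets
\[
\sum_{n\geq 1}\frac{\overline{a_n(f)}\,a_n(g)}{n^s}=\frac{a_N(f)}{N^s}\sum_{m\geq 1}\frac{|a_m(f)|^2}{m^s},
\]
and comparing residues at $s=1$ of the Eisenstein integrals at levels $N$ and $N^2$ (whose normalising constants differ precisely by the covolume ratio $[\Gamma_0(N):\Gamma_0(N^2)]=N$) produces $\langle f,g\rangle_{N^2}=a_N(f)\|f\|_N^2/N$; the classical prime-level pseudo-eigenvalue relation $a_N(f)=-\varepsilon(f)$ then gives $\langle f,g\rangle_{N^2}=-\varepsilon\|f\|_N^2/N$. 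Substituting,
\[
\|f^+\|_{N^2}^2=\|f\|_{N^2}^2+2\varepsilon N\langle f,g\rangle_{N^2}+N^2\|g\|_{N^2}^2=N\|f\|_N^2-2\|f\|_N^2+N\|f\|_N^2=2(N-1)\|f\|_N^2.
\]

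Assembling, the old contribution splits according to the sign of $\varepsilon(f)$:
\[
\langle a_m,L'\rangle_{N^2}^{+,\mathrm{old}}=\sum_{\varepsilon(f)=+1}\frac{\overline{a_m(f)}\cdot 2L'(f,1)}{2(N-1)\|f\|_N^2}+\sum_{\varepsilon(f)=-1}\frac{\overline{a_m(f)}\cdot\ln(N)L(f,1)}{2(N-1)\|f\|_N^2}=\frac{1}{N-1}\Bigl(\langle a_m,L'\rangle_N^{+}+\tfrac{\ln N}{2}\langle a_m,L\rangle_N^{-}\Bigr),
\]
where I used $S_2(\Gamma_0(N))=S_2(\Gamma_0(N))^{\mathrm{new}}$ at prime level. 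Subtracting from $\langle a_m,L'\rangle_{N^2}^{+}$ yields the stated identity. The main obstacle is the norm calculation, which via Rankin--Selberg unfolding ultimately reduces to the classical prime-level pseudo-eigenvalue identity $a_N(f)=-\varepsilon(f)$; everything else is a clean bookkeeping of the orthogonal basis and the $L$-function expansion.
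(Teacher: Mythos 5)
Your proof is correct and follows the same overall route as the paper: decompose $S_2(\Gamma_0(N^2))^+$ into new and old parts, take the orthogonal basis of the old $+$-space given by $f^+ = f + \varepsilon(f)\,N\,g$ with $g(z)=f(Nz)$ for $f$ ranging over newforms of level $N$, and combine $a_m(f^+)=a_m(f)$ (using $N\nmid m$), the norm identity $\|f^+\|_{N^2}^2 = 2(N-1)\|f\|_N^2$, and the values of $L'(f^+,1)$ split according to $\varepsilon(f)$. The two places where you diverge from the paper are in the mechanics, not the structure, and both are improvements in self-containedness. For $L'(f^+,1)$ you differentiate $L(f^+,s)=L(f,s)(1+\varepsilon N^{1-s})$ directly, which cleanly gives $2L'(f,1)$ when $\varepsilon=+1$ (using $L(f,1)=0$) and $\ln(N)L(f,1)$ when $\varepsilon=-1$ (using $L'(f,1)=-(\ln(\sqrt{N}/2\pi)-\gamma)L(f,1)$ is not even needed); the paper instead works through the completed $L$-function $\Lambda$ and cancels $\gamma$-terms, which reaches the same answer but is more circuitous (and its intermediate prose even carries a harmless sign slip on $\gamma$ that cancels). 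For the norm, you derive $\langle f,g\rangle_{N^2}=a_N(f)\|f\|_N^2/N$ via a Rankin--Selberg unfolding and the pseudo-eigenvalue $a_N(f)=-\varepsilon(f)$, whereas the paper simply cites section 4 of \cite{LeFourn2}; your residue comparison is only sketched but is correct, and I checked it produces exactly the claimed cross-term, hence $\|f^+\|_{N^2}^2=N\|f\|_N^2-2\|f\|_N^2+N\|f\|_N^2=2(N-1)\|f\|_N^2$ as required.
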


\begin{proof}
	By orthogonality of the new and old subspaces,
	\[
	\langle a_m, L' \rangle_{N^2}^{+,\textrm{new}}  =  \langle a_m, L' \rangle_{N^2} -  \langle a_m, L' \rangle_{N^2}^{+,\textrm{old}}.
	\]
	
	To prove the formula on the oldpart, we need to be a bit careful with the definitions of completed L-functions: although the definition of $L(f,s)$ does not depend on the ambient space of modular forms, the definition of the completed L-function $\Lambda(f,s)$ in \eqref{eqdefLambda} does.  The degeneracy operators are denoted by $A_n$ as in the original article \cite{AtkinLehner70}. Let
	\[
	A_1 = I_2, \quad A_N = \begin{pmatrix} N & 0 \\ 0 & 1 \end{pmatrix}, \quad W_N =  \begin{pmatrix} 0 & 1 \\ -N & 0 \end{pmatrix}, \quad W_{N^2} =  \begin{pmatrix} 0 & 1 \\ -N^2 & 0 \end{pmatrix}.
	\]
	Notice that $(A_N W_{N^2} W_N^{-1})/N$ belongs to $\Gamma_0(N)$, thus for $f \in S_2(\Gamma_0(N))$ such that $f_{|W_N} = \varepsilon_f \cdot f$, one has 
	\begin{equation}
	\label{eqAtkinLehneretdegenerescence}
	(f_{|A_N})_{|W_{N^2}} = (f_{|W_N})_{|A_1} = \varepsilon_f \cdot f_{|A_1},
	\end{equation}
	hence also
	\[
	(f_{|A_1})_{|W_{N^2}} = \varepsilon_f \cdot f_{|A_N}.
	\]
	Consequently, an orthogonal (see the computations of section 4 of \cite{LeFourn2} for example) basis of $S_2(\Gamma_0(N^2))^{+,\textrm{old}}$ is given by the $f_{|A_1} + (f_{|A_1})_{|W_{N^2}}$, where $f$ goes through an eigenbasis of $S_2(\Gamma_0(N))$. The aforementioned computations also prove with \eqref{eqAtkinLehneretdegenerescence} that if $f_{|W_N} = \varepsilon_f \cdot f$, then
	\[
	\langle f_{|A_1} + (f_{|A_1})_{|W_{N^2}},f_{|A_1} + (f_{|A_1})_{|W_{N^2}} \rangle_{N^2} = 2(N-1) \langle f,f \rangle_N.
	\]
	If $N$ does not divide $m$ (so that $a_m(f_{|A_N})=0$), this implies that
	\[
	\langle a_m, L' \rangle_{N^2}^{+,\textrm{old}} = \frac{1}{2(N-1)} \sum_f \overline{a_m(f)} L'( f_{|A_1} + (f_{|A_1})_{|W_{N^2}},1)
	\]
	where $f$ goes through an orthonormal basis of $S_2(\Gamma_0(N))$. Now, by the functional equation of $\Lambda(f,s)$ in \eqref{eqeqfoncLambda}, $	\Lambda'(f_{|A_1},1) = \Lambda'((f_{|A_1})_{|W_{N^2}},1)$ but
	\begin{eqnarray*}
		\Lambda'(f_{|A_1},1) & = & \frac{N}{2 \pi} (L'(f_{|A_1},1) + (\ln(N/2\pi) + \gamma) L(f,1)) \\
		\Lambda'((f_{|A_1})_{|W_{N^2}},1)&  = & \frac{N}{2 \pi} (L'((f_{|A_1})_{|W_{N^2}},1) + (\ln(N/2\pi) + \gamma) \varepsilon_f L(f,1)).
	\end{eqnarray*}
	
	The first equality is a direct application of the definition of $\Lambda$, the second one uses that $L(f_{|A_N},1) = L(f,1)$ (easy to show by the integral formula of $L(f,1)$) and the results above. Thus, to compute  $L'(f_{|A_1} + (f_{|A_1})_{|W_{N^2}},1)$, it is enough to know the sum of the two right-hand terms which is the sum of the two left-hand terms, which equal one another. Now, if $\varepsilon_f=1$ then $L(f,1)=0$ by sign of the functional equation of $\Lambda(f,s)$ (in level $N$ here !), and if $\varepsilon_f = -1$, $\Lambda'(f,1) =0$. We thus obtain in this case
	\[
	L'(f,1) = - (\ln(\sqrt{N}/(2 \pi)) + \gamma) L(f,1).
	\]
	and get the lemma by summation on those forms $f$'s gathered by sign of $\varepsilon_f$.
\end{proof}       

\subsection{First estimates}

We recall that $M=N$ or $N^2$.

\begin{lemma}
	\label{lemWeilbounds}
	
	Using the Weil bounds, we get for every $c$ multiple of $M$ and $d$ prime to $M$: 
	\[
		|\Scal(c)|  \leq  2 \sqrt{mM} \tau(c/M)  \frac{f((m,c))}{\sqrt{c}}, \quad 
	|\Tcal(d)|  \leq  \tau(d) \sqrt{m} \frac{f((m,d))}{\sqrt{d}}
	\]
	where for every integer $k$, $f(k) = \sum_{k'|k} \frac{1}{\sqrt{k'}}$. 	For $m=2$ and $c$, $d$ even, these estimates are improved to
	\begin{equation}
	\label{eqboundsScalTcalWeil}
	|\Scal(c)|  \leq  (\sqrt{2}+2) \frac{\sqrt{M} \tau(c/M)}{\sqrt{c}}, \quad 
	|\Tcal(d)|  \leq  (1+1/\sqrt{2}) \frac{\tau(d)}{\sqrt{d}}.
	\end{equation}
\end{lemma}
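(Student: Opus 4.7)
The plan is to bound both $\Scal(c)$ and $\Tcal(d)$ termwise using three ingredients: the uniform inequality $|J_1(x)|\leq |x|/2$, the Weil bounds for Kloosterman sums recalled just after \eqref{eqbornesWeil}, and the integral identity $\int_0^{+\infty} E_1(u)\,du = 1$ (which follows from $E_1(y)=\int_y^{+\infty} e^{-t}/t\,dt$ by Fubini). First, I would apply $|J_1(x)|\leq |x|/2$ in \eqref{eqdefScal} and \eqref{eqdefTcal}; this replaces $J_1(4\pi\sqrt{mn}/c)$ by $2\pi\sqrt{mn}/c$, cancelling the $\sqrt{n}$ in the denominator and reducing the problem to a linear sum in $E_1(2\pi n/\sqrt{M})$ with Kloosterman coefficients.

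Next, for $\Scal(c)$ I invoke the improved Weil bound $|S(m,n;c)| \leq 2(m,n,c)^{1/2}\tau(c/M)\sqrt{c}$ (valid since $M$ is a prime power dividing $c$); for $\Tcal(d)$ I use the standard bound together with the observation that $(M,d)=1$ makes $M^{-1}$ a unit modulo $d$, so that $(m,nM^{-1},d)=(m,n,d)$. Both cases are then controlled by estimating
\[
\Sigma \;:=\; \sum_{n\geq 1} (m,n,c)^{1/2}\, E_1\!\left(\tfrac{2\pi n}{\sqrt{M}}\right)
\]
(and its analogue with $c$ replaced by $d$). Writing $g=(m,c)$ and grouping the sum according to the value $d=(g,n) \mid g$, one has $\Sigma \leq \sum_{d\mid g} \sqrt{d}\sum_{k\geq 1} E_1(2\pi dk/\sqrt{M})$. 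Since $E_1$ is positive and strictly decreasing on $]0,+\infty[$ with total integral $1$, the elementary Riemann-sum comparison $\sum_{k\geq 1} E_1(\alpha k) \leq \alpha^{-1}$ (for any $\alpha>0$) applied with $\alpha = 2\pi d/\sqrt{M}$ gives $\Sigma \leq \frac{\sqrt{M}}{2\pi}\, f(g)$. Assembling the constants then produces the two main bounds stated in the lemma.

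For the improved estimates \eqref{eqboundsScalTcalWeil} in the case $m=2$ with $c$ (resp.\ $d$) even, the point is that $(2,n,c)$ equals $1$ if $n$ is odd and $2$ if $n$ is even. Splitting $\Sigma$ along the parity of $n$ and applying the Riemann-sum comparison separately to the two subsums—using step $2$ in the even case, which saves a factor $2$ in that piece—sharpens the constant from $2\sqrt{m}\,f((m,c))=2\sqrt{2}+2$ to $\sqrt{2}+2$ (and analogously for $\Tcal$).

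The only mild subtleties are the identity $\int_0^\infty E_1 = 1$ and the correct justification of $(m,nM^{-1},d)=(m,n,d)$ in the $\Tcal$ case; everything else is bookkeeping. There is no genuine obstacle: once the three ingredients above are in place, the proof is a routine estimation, and the improved constants for $m=2$ are obtained by the parity splitting just described.
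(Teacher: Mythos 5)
Your derivation of the two main bounds is correct and matches the paper's approach (same three ingredients, same splitting according to $d = (m,n,c) \mid (m,c)$, same sum-integral comparison $\sum_{k\geq 1} E_1(\alpha k) \le \alpha^{-1}$). The bookkeeping checks out: for $\Scal$ you pick up $\frac{4\pi\sqrt{m}\,\tau(c/M)}{\sqrt{c}}\cdot\frac{\sqrt{M}}{2\pi}f((m,c))$, and for $\Tcal$ the observation that $(m,nM^{-1},d)=(m,n,d)$ because $M$ is invertible mod $d$ is exactly what's needed.

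However, your explanation of the improved bounds in \eqref{eqboundsScalTcalWeil} has a real gap. You attribute the saving to \emph{``using step $2$ in the even case''}, but step $2$ in the even case is \emph{already} what the general argument does: the $d=2$ term in $\sum_{d\mid 2}\sqrt{d}\sum_{k}E_1(2\pi dk/\sqrt{M})$ is precisely $\sqrt{2}\sum_{k\geq 1}E_1(4\pi k/\sqrt{M})\leq \frac{\sqrt{2}}{2}\cdot\frac{\sqrt{M}}{2\pi}$, and redoing it via a parity split gives exactly the same thing. There is nothing left to save in that piece. The genuine improvement comes from the \emph{odd} piece ($d=1$): the general argument throws away the coprimality condition and bounds $\sum_{n\ \mathrm{odd}}E_1(\alpha n)$ by $\sum_{n\geq 1}E_1(\alpha n)\leq 1/\alpha$, whereas one needs the sharper estimate $\sum_{n\ \mathrm{odd}}E_1(\alpha n)\leq 1/(2\alpha)$. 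That estimate is true, but not by naive Riemann-sum comparison: since the odd integers $\alpha(2k-1)$ are the \emph{midpoints} of the intervals $[2\alpha(k-1),\,2\alpha k]$, you need the midpoint-rule inequality $E_1(\alpha(2k-1))\leq \tfrac{1}{2\alpha}\int_{2\alpha(k-1)}^{2\alpha k}E_1(t)\,dt$, which requires the \emph{convexity} of $E_1$ (true, as $E_1''(y)=e^{-y}(1+y)/y^2>0$, but not mentioned in your sketch). Once you have $\Sigma\leq \frac{1+\sqrt{2}}{2}\cdot\frac{\sqrt{M}}{2\pi}$ the constants $(\sqrt{2}+2)$ and $(1+1/\sqrt{2})$ follow; as your proposal stands, the parity split you describe only reproduces the unimproved constants $2\sqrt{2}+2$ and $\sqrt{2}(1+1/\sqrt{2})$.
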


\begin{proof}
	In the definitions of $\Scal(c)$ (and similarly for $\Tcal(d)$), we separate the terms in $n$ depending on the values of $(m,n,c) = m'$ which is a divisor of $(m,c)$. Then, using $|J_1(x)| \leq |x|/2$, it only remains to control the sum of the $E_1(2 \pi m'n/\sqrt{M})$ for $n$ from 1 to $+ \infty$, which after sum-integral comparison and variable change is smaller than $\sqrt{M}/(2\pi m')$.
	
	In the specific case where $m=2$ and $c$ or $d$ even, the cases are made from the beginning on the values of $(m,n,c)^{1/2}$ instead of bounding by $(m,c)^{1/2}$, and a careful computation gives those bounds.
\end{proof}

This allows to bound the sum of the $\Scal(c)/c$ for all multiples $c$ of $M$. By multiplicativity of $\tau$, 
\begin{eqnarray*}
	\left| \sum_{M|c} \frac{\Scal(c)}{c} \right| & \leq & \frac{2 \sqrt{m}}{M} \sum_{m'|m} \frac{f(m') \tau(m')}{(m')^{3/2}} \sum_{c=1}^{+ \infty} \frac{\tau(c)}{c^{3/2}} \\
	& \leq & \frac{2 \sqrt{m}}{M} \sum_{m'|m} \frac{\tau(m')}{m'} \sum_{c=1}^{+ \infty}  \frac{\tau(c) }{c^{3/2}},
\end{eqnarray*}
the sum on $c$ being exactly $\zeta(3/2)^2$. We denote 
\[
g(m) = \sum_{m'|m} \frac{f(m') \tau(m')}{(m')^{3/2}}.
\]
hence (and similarly for $\Tcal$): 
\begin{equation}
\label{eqineq}
2 \pi \sqrt{m} \left| \sum_{M|c} \frac{\Scal(c)}{c} \right|  \leq  \frac{86 m}{M} g(m), \quad 
2 \pi\sqrt{m} \left| \sum_{(d,M)=1} \frac{\Tcal(d)}{d\sqrt{M}} \right|  \leq  \frac{43 m}{\sqrt{M}} g(m)
\end{equation}
which gives 
\begin{equation}
\label{eqpremiereestimation}
\frac{\langle a_m, L' \rangle_M^+}{4 \pi} = E_1 (2 \pi m / \sqrt{M}) + g(m) m \left( O_1 \left(\frac{86}{M} \right) + O_1 \left(\frac{43 }{\sqrt{M}} \right) \right).
\end{equation}
For $m=2$, the previous refinements can be exploited and we get instead
\[
	2 \pi \sqrt{2} \left| \sum_{M|c} \frac{\Scal(c)}{c} \right|  \leq  \frac{213}{M},\quad 
	2 \pi \sqrt{2}  \left| \sum_{(d,M)=1} \frac{\Tcal(d)}{d \sqrt{M}} \right| \leq  \frac{97}{\sqrt{M}}
\]
hence
\begin{equation}
\label{eqestimraffineesmegaldeux}
\frac{\langle a_2, L' \rangle_M^+}{4 \pi} = E_1 (4 \pi / \sqrt{M}) + O_1 \left( \frac{213}{M} \right) + O_1 \left( \frac{97}{\sqrt{M}} \right).
\end{equation}
Identical bounds are found for 
\begin{eqnarray*}
	\Scal_0(c) & = & \sum_{n=1}^{+ \infty} \frac{S(m,n;c)}{\sqrt{n}} J_1 \left( \frac{4 \pi \sqrt{mn}}{c} \right) \exp \left( - \frac{2 \pi n}{\sqrt{M}} \right) \\
	\Tcal_0(d) & = & \sum_{n=1}^{+ \infty} \frac{S(m,nM^{-1};d)}{\sqrt{n}} J_1 \left( \frac{4 \pi \sqrt{mn}}{c\sqrt{M}} \right) \exp \left( - \frac{2 \pi n}{\sqrt{M}} \right)
\end{eqnarray*}
as the integral of $e^{-t}$ on $[0,+\infty[$ is equal to 1 like the one of $E_1$. Thus, by similar computations,
\[
\frac{\langle a_m,L \rangle_N^-}{4 \pi} = e^{ - 2 \pi m/\sqrt{N}} + m g(m) \left( O_1 \left( \frac{86}{N} + \frac{43}{\sqrt{N}} \right) \right).
\]
Gathering those bounds, we get for all $m$ prime to $N$,
\begin{eqnarray}
\label{eqestimationp2regroupee}
\frac{\langle a_m,L'\rangle_{N^2}^{+,\textrm{new}}}{4 \pi} & = & E_1\left( \frac{2 \pi m}{N} \right) - \frac{E_1 \left( \frac{2 \pi m}{\sqrt{N}} \right)}{N-1}  - \frac{\ln(N) e^{- 2 \pi m/\sqrt{N}}}{2(N-1)}   \\
& + &  m g(m) O_1 \left( \frac{86}{N^2} + \frac{43}{N} + \frac{\ln(N)/2+1}{N-1} \left(\frac{86}{N} + \frac{43}{\sqrt{N}} \right) \right)
\end{eqnarray}
and slightly better ones for $m=2$ coming from refinements above (it suffices to replace $86mg(m)$ by 213 and $43 mg(m)$ by 97 above).

By computations on Sage, we deduce the following first estimates.

\begin{Proposition}
	With the previous estimates, one finds
	\[
	\begin{array}{rcl|rcl}
	\langle a_1, L' \rangle_{N}^+ > 0 & \textrm{for} & N \geq 1213 & \langle a_1, L' \rangle_{N^2}^{+,\rm{new}} > 0 & \textrm{for} & N \geq 47 \\
	\langle a_2, L' \rangle_{N}^+>0 & \textrm{for} & N \geq 5437 &  \langle a_2, L' \rangle_{N^2}^{+,\rm{new}} > 0 & \textrm{for} & N \geq 97 \\
	\frac{\langle a_2, L' \rangle_{N}^+}{\langle a_1, L' \rangle_{N}^+} \in ]0,1[ & \rm{for} & N \geq 45341 & \frac{\langle a_2, L' \rangle_{N^2}^{+,\rm{new}}}{\langle a_1, L' \rangle_{N^2}^{+,\rm{new}}} \in ]0,1[ & \textrm{for} & N \geq 269.
	\end{array}
	\]
	hence Lemma \ref{lemtrickratio} applies and Theorem 2 is true for $N \geq 45341$ for $X_0 ^+ (N)$ and for $N \geq 269$ for $X_{\rm{ns}}^+(N)$.
\end{Proposition}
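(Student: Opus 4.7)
The plan is to treat each of the six numerical inequalities as an elementary real-variable question obtained by feeding the master estimates \eqref{eqpremiereestimation}, \eqref{eqestimraffineesmegaldeux}, and \eqref{eqestimationp2regroupee} into the quantity to be bounded, and then to solve the resulting transcendental inequality in $N$. Concretely, for $\langle a_m,L'\rangle_N^+/(4\pi)$ with $m=1$ and $m=2$, the main term is $E_1(2\pi m/\sqrt{N})$, whose asymptotic behaviour $E_1(x) = -\gamma - \ln x + O(x)$ at $x\to 0^+$ shows that it grows like $\tfrac{1}{2}\ln N$, while the error term is of the form $43 m g(m)/\sqrt{N} + 86 m g(m)/N$. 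Since $g(1)=1$ and $g(2)$ is an explicit small constant (for $m=2$ one instead uses the refined $213/N$ and $97/\sqrt{N}$ from \eqref{eqestimraffineesmegaldeux}), positivity of $\langle a_m,L'\rangle_N^+$ boils down to checking $E_1(2\pi m/\sqrt{N})>43 m g(m)/\sqrt{N}+86 m g(m)/N$, which is satisfied once $\sqrt{N}$ is slightly larger than an explicit exponential in these constants. A direct numerical search, for instance via \texttt{Sage}, identifies the smallest $N$ for which the inequality holds; this gives the thresholds $1213$ and $5437$ respectively.

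For the $N^2$-new variants, the same reasoning applies to \eqref{eqestimationp2regroupee}: the leading term is $E_1(2\pi m/N)$, which now grows like $\ln N$, while the subtracted oldform contributions $E_1(2\pi m/\sqrt{N})/(N-1)$ and $\ln(N)e^{-2\pi m/\sqrt{N}}/(2(N-1))$ are of order $(\ln N)/N$, hence negligible compared to the main term. The error is $O(mg(m)\ln N/\sqrt{N})$, so positivity is achieved much earlier; one checks numerically that $N\geq 47$ and $N\geq 97$ are the first primes for which the relevant inequalities hold.

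For the two ratio inequalities, one writes
\[
\frac{\langle a_2,L'\rangle_M^{+,\star}}{\langle a_1,L'\rangle_M^{+,\star}}
= \frac{E_1(4\pi/\kappa_M) + \mathrm{Err}_2}{E_1(2\pi/\kappa_M) + \mathrm{Err}_1}
\]
with $\kappa_M=\sqrt{M}$ or $M$ according to whether we are in the $N$ or $N^2$-new case. The principal ratio $E_1(4\pi/\kappa_M)/E_1(2\pi/\kappa_M)$ lies in $(0,1)$ for every $\kappa_M>0$: positivity follows from $E_1>0$ on $(0,\infty)$, and strict inequality $<1$ from the strict monotonic decrease of $E_1$. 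To upgrade this to the true ratio, one inserts the worst-case bounds $|\mathrm{Err}_i|\leq \varepsilon_i$ from the previous step and requires simultaneously
\[
E_1(4\pi/\kappa_M)+\varepsilon_2 < E_1(2\pi/\kappa_M)-\varepsilon_1
\quad\text{and}\quad
E_1(4\pi/\kappa_M)-\varepsilon_2 > 0.
\]
The first inequality is the binding one; using $E_1(2\pi/\kappa_M)-E_1(4\pi/\kappa_M) = \int_{2\pi/\kappa_M}^{4\pi/\kappa_M} e^{-t}/t\, dt \geq \ln 2 - O(1/\kappa_M)$, it reduces to $\ln 2 > \varepsilon_1+\varepsilon_2 + O(1/\kappa_M)$, an explicit inequality in $N$. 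Numerical evaluation yields the thresholds $45341$ and $269$.

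The only conceptual subtlety is that the ratio inequality for $M=N$ requires $\ln 2$ to dominate an error of size $O(1/\sqrt{N})$, forcing $N$ to be quite large (hence the threshold $45341$), whereas in the $N^2$-new case the main term is of size $\ln N$ so the analogous bound is much gentler. All numerical checks are routine and can be carried out with certified interval arithmetic; together with Lemma~\ref{lemtrickratio}, the resulting bounds prove Theorem~\ref{mainthm} for $N\geq 45341$ in the $X_0^+(N)$ case and $N\geq 269$ in the $X_{\mathrm{ns}}^+(N)$ case, leaving a finite list of small primes to be verified directly by computing the relevant spaces of newforms and their $L'(f,1)$ values, which is feasible via existing modular symbols packages.
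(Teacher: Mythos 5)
Your proposal is correct and follows essentially the same route as the paper, whose own proof consists of exactly this: substitute the estimates \eqref{eqpremiereestimation}, \eqref{eqestimraffineesmegaldeux}, \eqref{eqestimationp2regroupee}, observe that the main term $E_1(2\pi m/\kappa_M)$ dominates the explicit error bounds as soon as $N$ is large enough, and determine the crossover thresholds by a Sage computation. Your analysis of the ratio inequality via $E_1(2\pi/\kappa_M)-E_1(4\pi/\kappa_M)\to\ln 2$ correctly identifies why the $M=N$ threshold is so much larger than the $M=N^2$ one. One small slip: for the $N^2$-new case you describe the error term in \eqref{eqestimationp2regroupee} as $O(mg(m)\ln N/\sqrt N)$, but its dominant piece is actually $43mg(m)/N$ (the term with $\ln N$ is $O(\ln N/N^{3/2})$), which is what makes the $N^2$ thresholds so small compared to the $N$ case where the error is genuinely $O(1/\sqrt N)$. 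Since you in any case propose to solve the explicit inequalities numerically rather than by a big-$O$ heuristic, this misstatement does not affect the resulting thresholds.
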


For $M=N$, the estimates of $\langle a_m,L'\rangle_N$ are readily obtained, but the slowness of convergence is much more visible. This is mainly due to the fact that the error term is in $m/\sqrt{N}$ instead of $m/N$.

\subsection{Improving the estimates for prime level}

To attain from $N \geq 45341$ a range where all remaining primes can be checked by a different method, one needs to improve upon the worst error term appearing in $\langle a_m,L' \rangle_N^+$, which is in $m/\sqrt{N}$ and comes from the estimates of $\Tcal(d)$ after looking at \eqref{eqboundsScalTcalWeil}.

The following arguments rely on cancellations of Kloosterman sums not exploited by the Weil bounds. For $d=1$, the Kloosterman sum is always 1 (see the convention) so this case has to be dealt with separately. A careful analysis proves that 
\[
0.4 \sqrt{m} \leq \Tcal(1) \leq \sqrt{m},
\]
which will slightly improve the bounds later. 

Assume now that $d \geq 2$. The main term contributing to the bound is $E_1(2\pi n/\sqrt{N})$, hence we write 
\[
\Tcal(d) = \Tcal_M(d) + \Tcal_R(d),
\]
where $\Tcal_M(d)$ is the sum of terms for which $n \leq 3 \sqrt{N}/\pi$ and $\Tcal_R(d)$ is the remainder.

By the Weil bounds, using the fact that the integral of $E_1$ on $[5,+\infty[$ is less than $10^{-4}$, we obtain
\[
2 \pi \sqrt{m} \sum_{d \geq 2} \left| \frac{\Tcal_R(d)}{d \sqrt{N}} \right| \leq 10^{-4} \frac{\lambda_m}{\sqrt{N}}
\]
where $\lambda_m = 43$ for $m=1$ and $97$ for $m=2$ as before, so this contribution will be very small. For $\Tcal_M(d)$, we will exploit Poly\`a-Vinogradov-type estimates (\cite{LeFourn1}, Lemma 5.9).

\begin{Proposition}
	For every $d>1$, every $k$ invertible modulo $d$ and every $m,K,K' \in \N$, 
	\[
	\left| \sum_{n=K}^{K'} S(m,nk;d) \right| \leq \frac{4d}{\pi^2} (\log(d) + 1.5).
	\]
\end{Proposition}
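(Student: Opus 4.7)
The plan is to pass from a sum of Kloosterman sums to a single Dirichlet-kernel sum via a discrete Fourier expansion of the indicator function of $[K,K']$ on $\Z/d\Z$. The constant $\frac{4}{\pi^2}$ ultimately comes from the classical $L^1$-norm estimate for the Dirichlet kernel, which is the same mechanism responsible for the analogous constant in the Pólya--Vinogradov inequality for Dirichlet characters.

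First, after reducing modulo $d$, one may assume $L := K' - K + 1 \leq d$ (longer intervals split into full periods, which sum to zero by the orthogonality $\sum_{n \bmod d} e(n t / d) = 0$ for $t \not\equiv 0$). Write
\[
\mathbf{1}_{[K,K']}(n) \;=\; \frac{1}{d}\sum_{h \bmod d} \widehat{\mathbf{1}}(h)\, e\!\left(\frac{hn}{d}\right), \qquad \widehat{\mathbf{1}}(h) \;=\; \sum_{n=K}^{K'} e\!\left(-\frac{hn}{d}\right),
\]
so $|\widehat{\mathbf{1}}(h)| = |\sin(\pi Lh/d)/\sin(\pi h/d)|$ for $h \not\equiv 0 \pmod d$. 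Substitute into the target sum, expand each Kloosterman sum by its definition, and interchange the orders of summation so that the sum over $n \bmod d$ is innermost:
\[
\sum_{n=K}^{K'} S(m,nk;d) \;=\; \frac{1}{d}\sum_{h \bmod d} \widehat{\mathbf{1}}(h) \sum_{a \in (\Z/d\Z)^*} e\!\left(\frac{ma}{d}\right) \sum_{n \bmod d} e\!\left(\frac{n(ka^{-1}+h)}{d}\right).
\]
The innermost sum is $d$ if $ka^{-1} \equiv -h \pmod d$ and $0$ otherwise. In particular it forces $h$ to be invertible modulo $d$, and the constraint then determines $a = -k h^{-1}$ uniquely. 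The sum collapses to
\[
\sum_{n=K}^{K'} S(m,nk;d) \;=\; \sum_{h \in (\Z/d\Z)^*} \widehat{\mathbf{1}}(h)\, e\!\left(-\frac{m k h^{-1}}{d}\right).
\]
Taking absolute values and discarding the unit-modulus phase gives
\[
\Bigl| \sum_{n=K}^{K'} S(m,nk;d) \Bigr| \;\leq\; \sum_{h \in (\Z/d\Z)^*} \frac{|\sin(\pi L h/d)|}{|\sin(\pi h/d)|}.
\]

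The remaining and main step is to bound this discrete Dirichlet-kernel sum by $\frac{4d}{\pi^2}(\log d + 1.5)$. The function $|D_L(x)| := |\sin(\pi L x)/\sin(\pi x)|$ is the standard Dirichlet kernel, whose $L^1$-norm satisfies the classical estimate $\int_0^1 |D_L(x)|\,dx \leq \frac{4}{\pi^2}\log L + O(1)$, with the constant $4/\pi^2$ arising from integrating $|\sin(\pi L x)|/(\pi x)$ on $(0,1/2)$ using $\int_0^{\infty} |\sin u|/u\, du$-type harmonic estimates. The discrete sum $\sum_{h \in (\Z/d\Z)^*} |D_L(h/d)|$ is a Riemann sum for $d \cdot \int_0^1 |D_L(x)|\,dx$; the main obstacle is controlling the Riemann-sum error (and dropping the coprimality constraint on $h$, which only improves the bound) in such a way that the logarithmic main term is preserved with constant exactly $4/\pi^2$ and the remaining boundary and approximation errors are absorbed into the additive $1.5$. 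This requires tracking how $|D_L|$ peaks near $h = 0 \bmod d$ and near multiples of $d/L$, and bounding the integral against the sum term-by-term in each regime (small $h/d$ vs $h/d$ near $1/2$). Once this explicit integral/sum comparison is done the bound follows immediately.
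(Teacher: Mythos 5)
Your Fourier-analytic reduction is correct and matches the natural route for results of this type: expanding the indicator of $[K,K']$ in additive characters modulo $d$, opening the Kloosterman sum, and collapsing the inner orthogonality relation indeed gives
\[
\Bigl|\sum_{n=K}^{K'} S(m,nk;d)\Bigr| \;\le\; \sum_{h \in (\Z/d\Z)^*} \left|\frac{\sin(\pi L h/d)}{\sin(\pi h/d)}\right|,
\]
with $L := K'-K+1 \le d-1$ after discarding full periods. Everything up to and including this display is fine (and the observation that the delta-constraint forces $h$ invertible is a nice touch, though one may equally just drop the coprimality restriction for the final upper bound).

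The gap is the last step, which is the entire content of the lemma: you need a self-contained proof that
\[
\sum_{h=1}^{d-1}\left|\frac{\sin(\pi L h/d)}{\sin(\pi h/d)}\right| \le \frac{4d}{\pi^2}\bigl(\log d + 1.5\bigr),
\]
and you only gesture at a Riemann-sum comparison against $d\int_0^1 |D_L|\,dx$. That comparison does not work here, and not merely because of unquantified boundary terms. The discrete sum and the scaled integral can differ by a quantity of the \emph{same order as the main term}: the total variation of $|D_L|$ on $[0,1]$ is of size $L\log L$, which for $L$ close to $d$ is comparable to $\tfrac{4d}{\pi^2}\log d$; and indeed in the extreme case $L=d-1$ the discrete sum equals exactly $d-1$ (each term is $1$) while $d\int_0^1 |D_{d-1}|\,dx \sim \tfrac{4d}{\pi^2}\log d$, a much larger quantity. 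So the Riemann sum is neither an under- nor an over-estimate of the integral with controlled error, and there is no usable one-sided comparison. The elementary bounds one would reach for (e.g.\ $|\sin(\pi Lh/d)|\le 1$ and $\sin(\pi h/d)\ge 2h/d$) produce a constant $1$ or $2/\pi$ in front of $d\log d$, not $4/\pi^2$; to obtain $4/\pi^2 = (2/\pi)\cdot(2/\pi)$ one has to additionally exploit that $|\sin(\pi Lh/d)|$ averages to $2/\pi$ over blocks of $h$ of length about $d/L$. Making that precise --- a block-by-block estimate with explicit control of the edge blocks, so that the additive constant comes out at $1.5$ --- is the actual work, and it is missing from your proposal. (The paper itself does not prove this Proposition either; it is cited from Lemma~5.9 of \cite{LeFourn1}, which is where the explicit block-summation argument is carried out.)
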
	

Now, assume $N \geq 1000$, so that for $m=1$ or 2 and $n \leq 5 \sqrt{N}/(2 \pi)$, $4 \pi \sqrt{mn}/(d \sqrt{N}) \leq 1.5$. This implies that in the considered range for $n$, the function $t \mapsto J_1(4 \pi \sqrt{mt}/(d \sqrt{N}))/\sqrt{t} E_1(2 \pi t/\sqrt{N})$ is decreasing and positive (as the product of two such functions). Its total variation on $[1,5 \sqrt{N}/2 \pi]$ is then bounded by its first value (itself controlled by $E_1(2 \pi/\sqrt{N})/2$).

By Abel transform and the previous proposition, we thus obtain 
\[
|\Tcal_{M}(d)| \leq \frac{8}{\pi} \frac{\sqrt{m}}{\sqrt{N}} (\log(d)+1.5) E_1 \left( \frac{2 \pi}{\sqrt{N}} \right).
\]
Compared to Weil bounds in Lemma \ref{lemWeilbounds}, the new bound is approximately the best for $d \leq f(N)= \lfloor N/(2.5^2 E_1(2\pi/\sqrt{N})^2) \rfloor$. We then obtain
\begin{eqnarray*}
2 \pi \sqrt{m}  \left| \sum_{d=2}^{f(N)} \frac{\Tcal_{M}(d)}{d \sqrt{N}} \right| & \leq & \frac{16 m}{N} E_1 \left( \frac{2 \pi}{\sqrt{N}} \right) \sum_{d=2}^{f(N)} \frac{\log(d)+1.5}{d} \\
& \leq & \frac{8m}{N} E_1 \left( \frac{2 \pi}{\sqrt{N}} \right) \left( \log(f(N))^2 + 3 \log(f(N)) + 1 \right)
\end{eqnarray*}
with lemma 5.11 of \cite{LeFourn1}. By Weil bounds and the same lemma, for $m=1$, 
\begin{equation}
\label{eqestimsommeTmcal1Weil}
2 \pi  \left| \sum_{d=f(N)+1}^{+ \infty} \frac{\Tcal_{M}(d)}{d \sqrt{N}} \right| \leq  \frac{4 \pi}{\sqrt{N f(N)}} (\log(f(N))+4) 
\end{equation}
and for $m=2$, 
\begin{equation}
\label{eqestimsommeTmcal1Weilpourm2}
2 \pi \sqrt{2} \left| \sum_{d=f(N)+1}^{+ \infty} \frac{\Tcal_{M}(d)}{d \sqrt{N}} \right|  \leq \frac{8 \pi (2-1/\sqrt{2})}{\sqrt{N f(N)}} (\log(f(N))+4). 
\end{equation}

Combining these arguments, we get, for $N \geq 1000$,
\[
\frac{\langle a_1,L' \rangle_N^{+}}{4 \pi} \geq E_1 \left( \frac{2 \pi}{\sqrt{N}} \right) - \frac{6.3}{\sqrt{N}} - \frac{86}{N} - 2 \pi \left| \sum_{d=2}^{+ \infty} \frac{\Tcal_M(d)}{d \sqrt{N}} \right| 
\]
and
\[
\frac{\langle a_2,L' \rangle_N^{+}}{4 \pi} \geq E_1 \left( \frac{4 \pi}{\sqrt{N}} \right) - \frac{6.3\sqrt{2}}{\sqrt{N}} - \frac{213}{N} - 2 \pi\sqrt{2} \left| \sum_{d=2}^{+ \infty} \frac{\Tcal_M(d)}{d \sqrt{N}} \right| 
\]

and finally 
\[
\langle a_1,L' \rangle_N^{+} >0 \quad \textrm{and} \quad \frac{\langle a_2,L' \rangle_N^{+}}{\langle a_1,L' \rangle_N^{+}} \in ]0,1[
\]
for $N \geq 8641$, which is much more reasonable than $45341$.

The same improvements for the bounds apply exactly for $M=N^2 \geq 1000$, thus allowing to replace the estimate in $43/N$ in \eqref{eqestimationp2regroupee} by the same expressions as above with $f(M)$ instead of $f(N)$.

One gets that $\langle a_2,L' \rangle_{N^2}^{+,\rm{new}} >0$ for $N \geq 71$ instead of $97$, and that 
\[
\frac{\langle a_2,L' \rangle_{N^2}^{+,\rm{new}}}{\langle a_1,L' \rangle_{N^2}^{+,\rm{new}}} \in ]0,1[
\]
for $N \geq 151$.

We now discuss how to deal with the remaining cases, namely those for which $N \leq 8641$ and $g(X_0^+(N)) \geq 2$, and those for which $N \leq 151$ and $g(X_{\textrm{ns}}^+(N)) \geq 2$. 

The most natural approach is the following: for any small $N$, compute a basis of eigenforms for $S_2(\Gamma_0(M))^{+,\textrm{new}}$, and for every $f$ (normalised) in this basis, compute $L'(f,1)$ up to sufficient precision to ensure that $L'(f,1) \neq 0$. 

Recall that by (\cite{GrossZagier86}, Corollary V.1.3), if $L'(f,1) \neq 0$ under the same assumptions, the same is true for the Galois conjugate eigenforms, so only one check needs to be performed for the Galois orbit. Theorem \ref{mainthm} requires exactly that the sum of sizes of those Galois orbits is at least 2, so we only need to check that for two Galois orbits of size 1 (or one of size at least 2), one has $L'(f,1) \neq 0$. 

We have performed these verifications in MAGMA, and obtained that :

$\bullet$ For any prime $N \leq 2000$ such that $X_0 ^+ (N)$ is of genus at least two, there are at least two distincts normalised newforms such that $L'(f,1) \neq 0$, hence Theorem 2 holds. In fact, we have also checked that for all such $N$, $L'(f,1) \neq 0$ for \textit{all} the eigenforms in $S_2(\Gamma_0(N))^{+}$, therefore by Proposition \ref{propGZK}, $\rank J_0^+(N) (\Q) = \dim J_0^+(N)$ unconditionally for all those small primes.

$\bullet$ Similarly, for any prime $N \leq 53$ such that $X_{\rm{ns}}^+(N)$ is of genus at least two,
$L'(f,1) \neq 0$ for \textit{all} the eigenforms in $S_2(\Gamma_0(N^2))^{+,\textrm{new}}$, therefore by the same arguments, $\rank \operatorname{Jac} (X_{\rm{ns}}^+(N))(\Q) = \dim \operatorname{Jac} (X_{\rm{ns}}^+(N))$ for all those small primes.

Unfortunately, these algorithms require explicit embeddings of the fields of coefficients $K_f$ of $f$ into $\C$, which makes them very slow when $N$ becomes larger than 2000 (then, the degree of $K_f$ can be larger than 100). We thus could not complete the argument by using only this method, let us explain how to deal with the intermediary range $N \in [2000,9000]$ for $X_0^+(N)$ and $N \in [59,151]$ for $X_{\rm{ns}}^+(N)$.

The idea is to look at the simple quotients of the two relevant Jacobians which are elliptic curves. If there are none, in this range, we have proved that $\langle a_1,L' \rangle_M^{+,\textrm{new}} \neq 0$ so we must have $f$ such that $L'(f,1) \neq 0$, and it generates a simple quotient of dimension at least 2 by hypothesis, so we are done. 

Now, if there \textit{are} elliptic curves in there, it is sufficient to find two of them of rank 1 for the same reasons. Quotients of $J_0(M)^{+,\textrm{new}}$ of dimension 1 are in one-to-one correspondence with isogeny classes of elliptic curves of conductor $N$ and root number $-1$ (the fact that this correspondence is surjective is a consequence of Cremona's tables in this range but also a particular case of modularity theorems).

One can thus eliminate all levels $N$ except the ones for which there exists exactly one (up to isogeny) elliptic curve $E$ of analytic rank 1 and conductor $N$. Using Cremona's tables, we obtain a list of respectively 70 ($M=N$) and 7 ($M=N^2$) possible exceptions, namely $N$ in $\{61,67,73,101,109,113\}$ for the latter.

Now, we use a last argument: if the modular form $f_E$ associated to $E$ is really the only one such that $L'(f,1) \neq 0$ in the space, one should have
\[
\langle a_1,L' \rangle_{M}^{+,\textrm{new}} = \frac{L'(E,1)}{\|f_E\|^2}
\]
(the fact that this equality holds without a normalisation factor comes from the Manin constant being equal to 1 here, which is true in this range by results of Cremona).

Now, the left-hand side is larger than $4/5$ for $M=N$, $N \geq 2000$ and than $1/2$ for $M=N^2$, $N \geq 53$ by the (optimised) lower bounds given above, and the right-hand side is computable in terms of periods of $E$. Using this idea turns out to eliminate all remaining possible exceptions in both cases of $M$, which concludes the proof.

\begin{Remark}
	In some sense, this heuristic is natural: all terms in the sum defined by $\langle a_1,L' \rangle_{M}^{+,\textrm{new}}$ are positive (another consequence of Gross--Zagier formula), hence there is no cancellation among those, and the idea is that one of them alone cannot be enough to approach the estimates given for the sum.
\end{Remark}
\section{Appendix: Chow--Heegner points and Ceresa cycles}
\label{AppendixChowHeegnerCeresa}

In this appendix we explain how Lemma \ref{DZB_HM} is a consequence of Hain and Matsumoto's work relating the extension $[\lie (U_2 )]$ to the Ceresa cycle.
\subsection{Ceresa cycles and Gross--Kudla--Schoen cycles}\label{CH_Z}
We recall some properties of modified diagonal cycles studied in\cite{gross-schoen}, \cite{CvG} and \cite{DRS}. As our discussion applies in fairly broad generality, we take $X$ to be a smooth geometrically irreducible projective curve over a field $K$ of characteristic zero.
Let $\pi _S$ denote the projection
\[
X^n \to X^{\# S}
\]
defined by projecting onto the coordinates in $S$ as in \eqref{eqdefpiS}. The \textit{Gross--Kudla--Schoen} cycle is defined to be
\[
\Delta _{GKS}:=\sum _{\emptyset \neq S \subset \{ 1,2,3\}}(-1)^{\# S-1}X_S ,
\]
where $X_S $ is as defined in section \ref{subsecChowHeegnerdiagonalcycles}.

It defines an element of the group $\CH ^2 (X^3 )$ of codimension two cycles in the triple product $X\times X \times X$. By \cite[Proposition 3.1]{gross-schoen}, the class of $\Delta _{GKS}$ lies in the subspace $\CH ^2 _0 (X^3 )$ of homologically trivial cycles.

Now let $Z\subset X\times X$ be a correspondence, and let 
\[
\Pi _Z :\CH^2 (X^3 )\to \CH^1 (X)
\]
be the composite map
\[
\CH^2 (X^3 )\stackrel{\pi _{\{1,2,3\}}^* }{\longrightarrow }\CH^2 (X^4 )\stackrel{\cdot (Z \times X^2 )}{\longrightarrow } \CH^4 (X^4 ) \stackrel{(\pi_4)_* }{\longrightarrow }\CH^1 (X),
\]
where the second map is the intersection product with $Z \times X^2 \subset X^4 $.
\begin{lemma}[\cite{DRS} Lemma 2.1]\label{drs-z}
We have
\[
D_Z (b)=\Pi _Z (\Delta _{GKS}).
\]
\end{lemma}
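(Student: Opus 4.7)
The plan is to prove the lemma by linearity: since $\Delta_{GKS} = \sum_{\emptyset \neq S \subset \{1,2,3\}} (-1)^{\#S-1} X_S$, it suffices to compute $\Pi_Z(X_S)$ for each nonempty $S$ and sum with signs. Each $X_S$ is (the image of $X$ under the closed immersion) a curve in $X^3$ parametrised by $x \mapsto i_S(b)(x)$, so after pulling back to $X^4$ via $\pi_{\{1,2,3\}}$ it becomes $X_S \times X$, a surface whose intersection with the cycle built from $Z$ can be read off directly from the defining equations of $X_S$.

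The key computation is to identify, for each $S$, the cycle $(\pi_4)_*((\pi_{\{1,2,3\}})^*[X_S] \cdot (Z \times X^2))$ on $X$. Thinking of $Z$ as a correspondence relating two coordinates and the pushforward $(\pi_4)_*$ as projecting onto the remaining coordinate, one finds two regimes. If the projection coordinate is free along $X_S$ (i.e.\ it coincides with the varying parameter $x$), the intersection is parametrised by the appropriate restriction of $Z$ and pushes forward to the corresponding divisor $i_{S'}(b)^*Z$ on $X$, where $S' = S \cap \{1,2\}$. If the projection coordinate equals $b$ along $X_S$, the image of the intersection collapses to $[b]$ with multiplicity $\deg(i_{S'}(b)^*Z)$. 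A direct case-by-case bookkeeping then gives, schematically,
\[
\Pi_Z(X_{\{1,2,3\}}) = i_{\{1,2\}}(b)^*Z, \quad \Pi_Z(X_{\{1,3\}}) = i_{\{1\}}(b)^*Z, \quad \Pi_Z(X_{\{2,3\}}) = i_{\{2\}}(b)^*Z,
\]
while $\Pi_Z(X_{\{1,2\}}), \Pi_Z(X_{\{1\}}), \Pi_Z(X_{\{2\}})$ are the corresponding degrees supported at $[b]$, and $\Pi_Z(X_{\{3\}}) = 0$ (generically in $b$ and $Z$).

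Summing with the signs $(-1)^{\#S-1}$, the contributions with varying projection coordinate assemble into
\[
i_{\{1,2\}}(b)^*Z - i_{\{1\}}(b)^*Z - i_{\{2\}}(b)^*Z = C_Z(b),
\]
exactly as in the definition of $C_Z(b)$, while the constant-coordinate contributions combine to $-\deg(C_Z(b)) \cdot [b]$. Hence $\Pi_Z(\Delta_{GKS}) = C_Z(b) - \deg(C_Z(b)) \cdot [b] = D_Z(b)$, as desired.

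The only subtle step is handling the improper intersection that occurs for $X_{\{3\}}$ (and in corner cases when $b$ is not generic with respect to $Z$), since there $Z \times X^2$ and $(\pi_{\{1,2,3\}})^*[X_{\{3\}}]$ may meet in excess dimension. This is a standard issue, resolved either by invoking a moving lemma on $X \times X$ to replace $Z$ by a rationally equivalent divisor whose support avoids $(b,b)$, or by using the refined intersection product of \cite[Chapter 6]{Fulton98} and observing that the excess contribution, being supported on $[b]$ in $X$, is absorbed by the normalising subtraction $-\deg(C_Z(b)) \cdot [b]$ built into $D_Z(b)$. Beyond this, the proof is just a careful bookkeeping of seven pullbacks.
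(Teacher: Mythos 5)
The paper does not actually prove this lemma: it simply cites \cite{DRS}, Lemma 2.1. Your argument therefore does something the paper does not, namely give a direct computational proof, and it is correct in its main thrust; let me record some comments.

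Your decomposition by linearity and the case-by-case calculation are right. Writing $i_S(b)(x)$ coordinate-wise, one sees that for $3\in S$ the projection coordinate is free, giving $\Pi_Z(X_S)=i_{S\cap\{1,2\}}(b)^*Z$, while for $3\notin S$ it is stuck at $b$, giving $\Pi_Z(X_S)=\deg\bigl(i_{S\cap\{1,2\}}(b)^*Z\bigr)\cdot[b]$; the case $S=\{3\}$ gives $0$ because the pullback of $Z$ along the constant map to $(b,b)$ is a trivial line bundle. The signed sum then yields precisely
\[
\bigl(i_{\{1,2\}}(b)^*Z-i_{\{1\}}(b)^*Z-i_{\{2\}}(b)^*Z\bigr)-\deg\bigl(i_{\{1,2\}}(b)^*Z-i_{\{1\}}(b)^*Z-i_{\{2\}}(b)^*Z\bigr)\cdot[b]=D_Z(b).
\]
This matches your conclusion.

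Two remarks. First, the definition of $\Pi_Z$ as printed has a codimension mismatch: $Z\times X^2\subset X^4$ has codimension $1$, not $2$, so intersecting with it sends $\CH^2(X^4)$ to $\CH^3(X^4)$ rather than $\CH^4(X^4)$, and the subsequent pushforward would not land in $\CH^1(X)$. What is intended (and what you in effect use) is intersection with a codimension-$2$ cycle of the form $Z\times\Delta$ with $\Delta$ the diagonal in the last two factors, or, equivalently and more economically, $\Pi_Z(W)=(\pi_3)_*\bigl(\pi_{\{1,2\}}^*[Z]\cdot W\bigr)$ directly on $X^3$. Your computation implicitly works with this corrected map, and that is what yields the stated answer. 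Second, your worry about improper intersection for $X_{\{3\}}$ and for non-generic $(b,Z)$ is a non-issue once one works with classes: the intersection product $\pi_{\{1,2\}}^*[Z]\cdot[X_S]$ is defined in $\CH_0(X_S)$ without hypotheses, and for $S=\{3\}$ the restriction of $\pi_{\{1,2\}}^*\Ocal(Z)$ to $X_{\{3\}}$ is pulled back along a constant map and hence trivial, so the contribution vanishes without appeal to a moving lemma. That said, invoking the moving lemma (or Fulton's refined product) as you do is also perfectly acceptable.
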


\subsection{The Gross--Kudla--Schoen cycle and the Ceresa cycle}
Since $[\Delta _{GKS}]$ is homologically trivial, it has (\S \ref{subsecremindersNeronSeveri}) an \'etale Abel-Jacobi class
\[
\AJ _{\et}([\Delta _{GKS}]) \in H^1 (G_K ,H^3 _{\et}(X^3 _{\overline{K}},\Q _p (2))).
\]
By \cite[Corollary 2.6]{gross-schoen}, the cycle class $\AJ _{\et }([\Delta _{GKS}])$ lies in the image of the Kunneth projector 
\begin{align*}
P_{e*}:H^1 (G_K ,H^3 _{\et}(X^3 _{\overline{K}},\Q _p (2))) & \to H^1 (G_K ,H^1 _{\et }(X_{\overline{K}},\Q _p )^{\otimes 3}(2)) \\
& \simeq H^1 (G_K ,V^{\otimes 3}(-1)) \\
& \hookrightarrow H^1 (G_K ,H^3 _{\et}(X^3 _{\overline{K}},\Q _p (2))),
\end{align*}
and hence may be thought of as an element of $H^1 (G_K ,V^{\otimes 3}(-1))$ (here $V:=H^1 _{\et }(X_{\overline{K}},\Q _p (1))$).
The action of $S_3 $ on $X^3 $ induces an action on $V^{\otimes 3}(-1)$, which is given by $\epsilon \otimes \sigma $, where $\epsilon $ is the sign of a permutation and $\sigma $ is the natural action of $S_3 $ on $V^{\otimes 3}$. Since $\Delta _{GKS}$ is invariant under the $S_3 $ action, it lies in the image of $H^1 (G_K ,\wedge ^3 V (-1))$ under the map induced by the inclusion
\begin{align}
\iota :\wedge ^3 V & \to V^{\otimes 3} \label{iota} \\
v_1 \wedge v_2 \wedge v_3 & \mapsto \frac{1}{6}\sum _{\tau \in S_3 }\epsilon (\tau )v_{\tau (1)}\otimes  v_{\tau (2)}\otimes v_{\tau (3)}. \nonumber
\end{align}

For the relations to fundamental groups, it will be helpful to recall the relation between $\Delta _{GKS}$ and the \textit{Ceresa cycle}.
By \cite[Proposition 5.3]{gross-schoen}, the image of $\Delta _{GKS}$ in $\CH ^{g-1}(J)$ under the map
\begin{align*}
\mu :X^3 \to J \\
(x_i ) \mapsto \sum [x_i ] -3[b]
\end{align*} 
is rationally equivalent to
\[
([3]_* -3[2]_* +3[1]* -3[0]_* )\AJ (X).
\] 
The \textit{Ceresa cycle} $C_b$ is defined to be
\[
\AJ (X)-[-1]_* \AJ (X)\in \CH ^{g-1}(J).
\]
\begin{Proposition}[Colombo--van Geemen,\cite{CvG}, Proposition 2.9]\label{cvg}
We have
\[
\AJ _{\et }(\mu _* (\Delta _{GKS}))=3\AJ _{\et }([C_b ])
\]
in $H^1 (G_K ,\wedge ^3 V(-1)).$
\end{Proposition}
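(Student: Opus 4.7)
My plan is to reduce both sides of the asserted identity to the same Beauville component of $\AJ(X)$, using the equivariance of \'etale Abel--Jacobi under the multiplication-by-$n$ endomorphisms of $J$. First I invoke the Gross--Schoen identity recalled just before the statement: for $g \geq 2$ (the case $g=1$ is vacuous since then $C_b = 0$),
\[
\mu_*(\Delta_{GKS}) = \bigl([3]_* - 3[2]_* + 3[1]_*\bigr)\,\AJ(X)
\]
in $\CH^{g-1}(J)_\Q$, the $[0]_*$ contribution being zero because the pushforward of the positive-dimensional cycle $\AJ(X)$ to the origin vanishes in $\CH^{g-1}(J)$. I then expand along Beauville's decomposition $\AJ(X) = \sum_{s=0}^{g-1} \AJ(X)_{(s)}$, characterised by $[n]^*$ acting as $n^{2(g-1)-s}$ on the $(s)$-component, equivalently by $[n]_*$ acting as $n^{s+2}$. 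This rewrites
\[
\mu_*(\Delta_{GKS}) = \sum_s (3^{s+2} - 3 \cdot 2^{s+2} + 3)\,\AJ(X)_{(s)}, \qquad C_b = \sum_s (1 - (-1)^s)\,\AJ(X)_{(s)} = 2\sum_{s \text{ odd}} \AJ(X)_{(s)}.
\]

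Next I argue that $\AJ_{\et}$ factors through projection onto the $s=1$ piece. The target
\[
\AJ_{\et}\colon \CH^{g-1}_0(J) \to H^1\bigl(G_K,\, H^{2g-3}_{\et}(J_{\overline{K}},\Q_p(g-1))\bigr)
\]
is canonically identified, via Poincar\'e duality on $J$ and the Weil pairing $V^\vee \cong V(-1)$, with $H^1(G_K, \wedge^3 V(-1))$, and $[n]$ on $J$ acts on this target by the single scalar $n^3$: indeed $[n]^*$ is $n^{2g-3}$ on $H^{2g-3}(J) = \wedge^{2g-3} H^1(J)$, so $[n]_*$ is $n^{2g-(2g-3)} = n^3$, and Tate twists are $[n]$-trivial. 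Combined with $\AJ_{\et}([n]_*Z) = [n]_*\AJ_{\et}(Z)$, this forces $\AJ_{\et}(\AJ(X)_{(s)}) = 0$ whenever $s \neq 1$, since such a class would simultaneously be $n^{s+2}$- and $n^3$-isotypic for every $n \in \Z$.

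Putting the two computations together yields
\[
\AJ_{\et}(\mu_*\Delta_{GKS}) = (27 - 24 + 3)\,\AJ_{\et}(\AJ(X)_{(1)}) = 6\,\AJ_{\et}(\AJ(X)_{(1)}) = 3\,\AJ_{\et}(C_b),
\]
which is the asserted equality in $H^1(G_K, \wedge^3 V(-1))$. The main obstacle is laying out carefully the Poincar\'e-duality identification $H^{2g-3}_{\et}(J_{\overline{K}},\Q_p(g-1)) \cong \wedge^3 V(-1)$ together with its compatibility with the Galois action and with the $[n]_*$-actions on both sides, and confirming the precise eigenvalues in Beauville's decomposition; once these foundational points are verified, the Abel--Jacobi identity reduces to the elementary coefficient computation above.
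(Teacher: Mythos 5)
Your argument is correct, but note that the paper itself offers no proof of this Proposition: it is stated as a citation of Colombo--van Geemen, \cite[Proposition~2.9]{CvG}, so there is no in-paper proof to compare against. Your reconstruction is a clean and sound one. The use of the Gross--Schoen identity $\mu_*(\Delta_{GKS}) = ([3]_* - 3[2]_* + 3[1]_* - 3[0]_*)\AJ(X)$ with the $[0]_*$ term vanishing for dimension reasons, the Beauville decomposition with $[n]_*$ acting on $\AJ(X)_{(s)}$ by $n^{s+2}$, and the observation that $\AJ_{\et}$ lands in a target on which $[n]_*$ acts by the single scalar $n^3$ (so only $s=1$ survives) together give $\AJ_{\et}(\mu_*\Delta_{GKS}) = (27-24+3)\,\AJ_{\et}(\AJ(X)_{(1)}) = 6\,\AJ_{\et}(\AJ(X)_{(1)})$ and $\AJ_{\et}(C_b) = (1-(-1))\,\AJ_{\et}(\AJ(X)_{(1)}) = 2\,\AJ_{\et}(\AJ(X)_{(1)})$, whence the factor $3$. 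This is essentially the eigenvalue computation underlying Colombo--van Geemen's original proof, which is carried out in the Hodge-theoretic setting; your version transposes it directly to the \'etale Abel--Jacobi map, which is legitimate since the argument uses only functoriality of $\AJ_{\et}$ under $[n]_*$ and the compatibility of the Beauville decomposition with the cycle class map (ensuring each $\AJ(X)_{(s)}$ with $s\geq 1$ is homologically trivial, so $\AJ_{\et}$ is defined on it). One could tighten the exposition by making explicit that $C_b$ and $\mu_*\Delta_{GKS}$ are homologically trivial before invoking $\AJ_{\et}$ termwise, but there is no gap.
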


We first recall Hain and Matsumoto's description of the Galois action on $U_2$. We again take $X$ to be a smooth projective geometrically irreducible curve over a field $K$ of characteristic zero.
The group $U_2 $ is an extension
\begin{equation}\label{U2_extnHainMatsumoto}
1 \to \Ker (H^2 (J_{\overline{\Q }},\Q _p )\stackrel{\AJ ^* }{\longrightarrow }H^2 (X_{\overline{\Q }},\Q _p ))^* \to U_2 \to V \to 1.
\end{equation}
with $V = T_p J \otimes \Q_p$ again. We define 
\[
\overline{\wedge ^2 V}:=\Ker (H^2 (X_{\overline{\Q }},\Q _p )\stackrel{\AJ ^* }{\longrightarrow }H^2 (J_{\overline{\Q }},\Q _p ))^* ,
\]
and write the image of $v_1 \wedge v_2 $ in $\overline{\wedge ^2 V}$ as $\overline{v_1 \wedge v_2 }$.
Taking the Lie algebra $L_2 $ of $U_2$, we obtain an element $[L_2 ]\in \ext ^1 _{G_K }(V,\overline{\wedge ^2 V})$, or equivalently an element of $H^1 (G_K ,V(-1)\otimes \overline{\wedge ^2 V})$. The following theorem of Hain and Matsumoto characterises this extension class in terms of the Gross--Kudla--Schoen cycle.
\begin{Theorem}[Hain--Matsumoto \cite{HM}, Theorem 3]\label{hain-matsumoto}
Let $\alpha :\wedge ^3 V\to V\otimes \overline{\wedge ^2 V}$ be the injective homomorphism 
\[
v_1 \wedge v_2 \wedge v_3 \mapsto v_1 \otimes (\overline{v_2 \wedge v_3 })+v_2 \otimes (\overline{v_3 \wedge v_1 })+v_3 \otimes (\overline{v_1 \wedge v_2 }).
\]
Then $[L_2 ]\in H^1 (G_K ,V(-1)\otimes \overline{\wedge ^2 V})$ is equal to $\alpha (-1)_* (\AJ _{\et }[C_b ])$, where $[C_b]$ is the class of the Ceresa cycle in $\CH ^{g-1}(J)$, and $\AJ _{\et }([C_b ])$ is its image in $H^1 (G_K ,\wedge ^3 V(-1))$.
\end{Theorem}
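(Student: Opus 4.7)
The plan is to establish the theorem by reducing to the Hodge-theoretic analogue over $\C$ and invoking Hain's computation of the mixed Hodge structure on the unipotent completion of $\pi_1(X, b)$.

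First, I would show that $[L_2]$ lies in the image of $\alpha(-1)_*$. The key input is that $L_2 = \lie(U_2)$ comes equipped with a Galois-equivariant Lie bracket whose associated graded $\wedge^2 V \twoheadrightarrow \overline{\wedge^2 V}$ is antisymmetric, together with Jacobi-type constraints arising from the existence of higher nilpotent quotients of the pro-unipotent completion. As a Galois module extension arising from a truncated Malcev Lie algebra, the class of $L_2$ must respect the $S_3$-symmetries that act naturally on $V^{\otimes 3}$, and an elementary representation-theoretic computation identifies the image of $\alpha$ as exactly the subspace of $V \otimes \overline{\wedge^2 V}$ cut out by these symmetries. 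Consequently one may write $[L_2] = \alpha(-1)_*(\xi)$ for a unique $\xi \in H^1(G_K, \wedge^3 V(-1))$.

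To identify $\xi$ with $\AJ_{\et}[C_b]$, I would pass to the Hodge realization over $\C$ using Deligne's comparison isomorphism between the Betti and \'etale pro-unipotent fundamental groups of $X_{\C}$, together with the Artin comparison for $H^*(J, \Q)$. This reduces the statement to the Hodge-theoretic analogue, which is Hain's theorem on the MHS of the unipotent completion of $\pi_1(X_{\C}, b)$: Hain's explicit formula writes the extension class of $L_2$ as an MHS in terms of Chen's iterated integrals on closed $1$-forms representing $H^1(X, \Q)$, and Pulte's reformulation then identifies the resulting class with the Abel--Jacobi image of the Ceresa cycle $[C_b] \in \CH^{g-1}(J)_0$. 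Transporting back to the \'etale setting via the comparison isomorphism, and using Proposition \ref{cvg} as a consistency check between $\AJ_{\et}[C_b]$ and $\AJ_{\et}(\mu_*[\Delta_{GKS}])$, yields the stated equality.

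The main obstacle will be the careful bookkeeping of constants and comparison isomorphisms: tracking the factor of $3$ appearing in Proposition \ref{cvg}, the $1/6$ in the definition of $\iota$, the Tate twists across the Betti/\'etale/de Rham comparisons, and ensuring the sign conventions for $C_b = \AJ(X) - [-1]_*\AJ(X)$ are consistent with those governing the commutator bracket on $L_2$. A secondary difficulty is verifying that the Betti--\'etale comparison for $\pi_1$ is compatible with the weight filtrations and lifts the MHS extension class to a Galois-equivariant one; this is nontrivial but well-established through the motivic framework for the unipotent fundamental group.
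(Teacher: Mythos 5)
The paper does not prove this theorem: it is quoted directly from Hain--Matsumoto (Theorem 3 of \cite{HM}) and used as a black box, so there is no internal argument to compare against. Since you are effectively attempting to reprove the cited result, let me comment on the sketch itself.

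Your broad strategy --- reduce to Hain's Hodge-theoretic computation and Pulte's theorem, then transport to \'etale cohomology by comparison --- is roughly the historical route to such statements, but two steps in your sketch are not sound. First, your argument that $[L_2]$ lies in the image of $\alpha(-1)_*$ does not work as stated: the Jacobi identity in the two-step nilpotent Lie algebra $L_2$ is vacuous (any triple bracket already vanishes), so it imposes no constraint, and there is no natural $S_3$-action on the extension class $[L_2]\in \ext^1_{G_K}(V, \overline{\wedge^2 V})$. The $S_3$-symmetry you invoke lives on $V^{\otimes 3}$ (and hence on the Gross--Kudla--Schoen cycle on $X^3$), not on $L_2$; showing the extension class lands in the image of $\alpha$ is a genuine $\mathrm{Sp}(V)$-equivariance statement, and it is part of what Hain--Matsumoto actually prove rather than a formal consequence of the Lie structure. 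Second, the compatibility of the Betti--\'etale comparison for the pro-unipotent $\pi_1$ with extension classes and with the \'etale Abel--Jacobi map is not a ``secondary difficulty'' to be absorbed into motivic bookkeeping --- it is precisely the arithmetic content of the theorem, and Hain--Matsumoto establish the $\ell$-adic statement by a direct analysis of the weight filtration on the \'etale fundamental group rather than by formally transporting the Hodge-theoretic one. As written, your sketch assumes the hard part.
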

Via the relation between the Ceresa cycle and the Gross--Kudla--Schoen cycle, this has the following corollary.
\begin{Corollary}
The extension class $[L_2 ]\in H^1 (G_K ,V(-1)\otimes \overline{\wedge ^2 V})$ is equal to the image of $\AJ _{\et }([\Delta _{GKS}])$ under the map
\[
H^1 (G_K ,V^{\otimes 3})\to H^1 (G_K ,V\otimes \overline{\wedge ^2 V})
\]
induced by the quotient
\begin{align*}
\tau :V^{\otimes 3} & \to V\otimes \overline{\wedge ^2 V} \\
v_1 \otimes v_2 \otimes v_3 & \mapsto v_1 \otimes \overline{v_2 \wedge v_3 }.
\end{align*}
\end{Corollary}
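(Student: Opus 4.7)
The plan is to combine the Hain--Matsumoto description of $[L_2]$ (Theorem~\ref{hain-matsumoto}) with the Colombo--van Geemen identity for the Gross--Kudla--Schoen cycle (Proposition~\ref{cvg}), together with an elementary linear-algebra computation comparing $\tau\circ\iota$ with $\alpha$.

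First, by the $S_3$-invariance of $\Delta_{GKS}$ recalled in \S\ref{CH_Z}, the class $\AJ_{\et}([\Delta_{GKS}])$, projected onto the Kunneth summand $V^{\otimes 3}(-1)$ of $H^3(X^3,\Q_p(2))$, lies in the image of
\[
\iota_*:H^1(G_K,\wedge^3 V(-1))\hookrightarrow H^1(G_K,V^{\otimes 3}(-1));
\]
write $\xi$ for its unique preimage. Since $\mu:X^3\to J$ is a sum of three Abel--Jacobi maps, $\mu^*:H^1(J,\Q_p)\to H^1(X^3,\Q_p)$ is the diagonal sum of the three projections, so after wedging, $\mu^*$ identifies $H^3(J,\Q_p(2))=\wedge^3 V(-1)$ with (a scalar multiple of) $\iota(\wedge^3V(-1))$ inside the $(1,1,1)$-Kunneth summand of $H^3(X^3,\Q_p(2))$. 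Transposing via Poincar\'e duality on both sides, $\mu_*$ therefore restricts on this summand to a scalar multiple of $\iota_*^{-1}$, and comparison with the scalar $3$ appearing in Proposition~\ref{cvg} identifies this scalar exactly, yielding $\xi=3\AJ_{\et}([C_b])$.

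Second, I compute $\tau\circ\iota$ directly on pure wedges:
\[
(\tau\circ\iota)(v_1\wedge v_2\wedge v_3)=\tfrac{1}{6}\sum_{\sigma\in S_3}\epsilon(\sigma)\,v_{\sigma(1)}\otimes\overline{v_{\sigma(2)}\wedge v_{\sigma(3)}}.
\]
Using $\overline{w\wedge w'}=-\overline{w'\wedge w}$, the transposition $(2\,3)$ pairs the six terms so that each pair contributes the same summand, and the sum collapses to
\[
\tfrac{1}{3}\bigl(v_1\otimes\overline{v_2\wedge v_3}+v_2\otimes\overline{v_3\wedge v_1}+v_3\otimes\overline{v_1\wedge v_2}\bigr)=\tfrac{1}{3}\,\alpha(v_1\wedge v_2\wedge v_3).
\]

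Combining these two facts with Hain--Matsumoto's identity $[L_2]=\alpha(-1)_*\AJ_{\et}([C_b])$:
\[
\tau(-1)_*\AJ_{\et}([\Delta_{GKS}])=\tau(-1)_*\iota(-1)_*(\xi)=\tfrac{1}{3}\,\alpha(-1)_*(\xi)=\alpha(-1)_*\AJ_{\et}([C_b])=[L_2].
\]
The main obstacle I expect is the first step, namely verifying that $\mu_*$ restricts on the $(1,1,1)$-Kunneth summand to exactly $\iota_*^{-1}$ (with the precise normalisation compatible with Colombo--van Geemen). This requires careful bookkeeping of the Poincar\'e-duality isomorphism on $J$, the symplectic pairing on $V$ arising from the canonical polarisation, and the Kunneth projector $P_{e*}$ used in \cite{gross-schoen}; the other two steps are purely formal.
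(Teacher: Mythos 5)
Your argument is essentially the paper's: express $\AJ_{\et}([\Delta_{GKS}])=\iota_*\xi$ by $S_3$-invariance, identify $\xi$ with $3\,\AJ_{\et}([C_b])$ via Colombo--van Geemen and the behaviour of $\mu_*$ on the $(1,1,1)$ Kunneth piece, compute $\tau\circ\iota=\tfrac{1}{3}\alpha$, and combine with Hain--Matsumoto. The paper phrases the second step directly as an evaluation of $\tau'_*$ on $\AJ_{\et}([\Delta_{GKS}])$ rather than introducing $\xi$, but the content is identical, and your linear-algebra verification that $3\tau\circ\iota=\alpha$, which the paper states without proof, is correct.

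Two remarks. First, the ``main obstacle'' you flag --- that $\mu_*$ on the $(1,1,1)$ Kunneth summand is exactly $\tau'_*$ (scalar $1$), not merely a scalar multiple --- is a genuine normalisation point that the paper also elides; your phrasing that ``comparison with the scalar $3$ in Proposition~\ref{cvg} identifies this scalar'' is circular as written (you would only get $c\xi=3\AJ_{\et}([C_b])$ without independently pinning down $c$), so that step needs the duality bookkeeping you describe in your final paragraph, not Proposition~\ref{cvg} itself. Second, the paper's line ``the image \dots under $\tau'_*$ is equal to $\frac{1}{3}\AJ_{\et}([C_b])$'' is inconsistent with its own concluding equality $[L_2]=\frac{1}{3}\alpha_*\circ\tau'_*[\Delta_{GKS}]$ (which requires $\tau'_*\AJ_{\et}([\Delta_{GKS}])=3\,\AJ_{\et}([C_b])$); your value $\xi=3\,\AJ_{\et}([C_b])$ is the one that makes the argument close, so the paper's $\frac{1}{3}$ there appears to be a typo.
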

\begin{proof}
Let $\iota :\wedge ^3 V\to V^{\otimes 3}$ be the inclusion \eqref{iota}, and $\tau ':V^{\otimes 3}\to \wedge ^3 V$ the quotient map $v_1 \otimes v_2 \otimes v_3 \mapsto v_1 \wedge v_3 \wedge v_3 $. By Proposition  \ref{cvg}, the image of $\AJ _{\et}([\Delta _{GKS}])$ in $H^1 (G_K ,\wedge ^3 V(-1))$ under $\tau ' _*$ is equal to $\frac{1}{3}\AJ _{\et }([C_b ])$. Since $\AJ _{\et }([\Delta _{GKS}])$ lies in the image of $\iota _*$, and 
\[
\alpha =3\tau \circ \iota ,
\]
we have
\[
\alpha _* \circ \tau _* ' [\AJ _{\et }([\Delta _{GKS}])]=3\tau _* [\AJ _{\acute{e}t}([\Delta _{GKS}])] \in H^1 (G_K ,V(-1)\otimes \overline{\wedge ^2 V}).
\]
Hence we deduce from Theorem \ref{hain-matsumoto} that
\[
[L_2 ]=\frac{1}{3}\alpha _* \circ \tau _* '[\Delta _{GKS}]=\tau _* [\Delta _{GKS}].
\]
\end{proof}
We now return to the case where $K=\Q $.
Via the commutative diagram
\[
\begin{tikzpicture}
\matrix (m) [matrix of math nodes, row sep=3em,
column sep=3em, text height=1.5ex, text depth=0.25ex]
{\NS (J_{\Q }) & \NS (X_{\Q }) \\
H^2 _{\et } (J_{\overline{\Q }},\Q _p (1)) & H^2 _{\et } (X_{\overline{\Q }},\Q _p (1)), \\};
\path[->]
(m-1-1) edge[auto] node[auto]{$c$} (m-2-1)
edge[auto] node[auto] {$\AJ ^* $ } (m-1-2)
(m-2-1) edge[auto] node[auto] {$\AJ ^* $ } (m-2-2)
(m-1-2) edge[auto] node[auto] {$c$} (m-2-2);
\end{tikzpicture}
\]
(where $c$ denotes the Chern class),
we hence obtain a homomorphism
\begin{align*}
\Ker (\NS (J_{\Q }) & \to \NS (X_{\Q })) \to \ext ^1 (V,\Q _p (1)). \\
[\mathcal{L}] & \mapsto [c([\mathcal{L}])^* ([L_2 ])],
\end{align*}
where $L_2 :=\lie (U_2 )$.
The extensions obtained come from points on $J$. They can be related to the Gross--Kudla--Schoen cycle via the theorem of Hain and Matusmoto (the argument given below follows Darmon, Rotger and Sols \cite{DRS}, who prove a Hodge theoretic analogue of the Lemma below using, using the theorems of Harris and Pulte, which are Hodge theoretic analogues of the Hain--Matsumoto theorem).
\begin{lemma}
Let $Z\subset X\times X$ be a codimension 1 cycle. Let $i_1 ,i_2 ,i_3 :X\hookrightarrow X\times X$ be the closed immersions defined by the subschemes $\{ b\} \times X,X\times \{ b\} $ and the diagonal $\Delta _X$ of $X\times X$ respectively. For $j=1,2,\{1,2\}$, let $i_j ^*$ denote the pull-back morphism
\[
\CH^1 (X\times X)\to \CH^1 (X).
\] 
Then the extension class in $H^1 (G_K ,V)$ associated to the Lie algebra $L_Z$ is given by $\AJ _{\et }(D_Z (b))$, with $D_Z(b)$ as in \eqref{eqDZb}.
\end{lemma}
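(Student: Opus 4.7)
The strategy is to combine Hain--Matsumoto's description of $[L_2]$ with Lemma \ref{drs-z}, which rewrites $D_Z(b)$ as $\Pi_Z(\Delta_{GKS})$.

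The first step is to describe how the codimension one cycle $Z$ produces $L_Z$ as a push-out of $L_2$. Recall that $L_2$ sits in a Galois-equivariant extension $0 \to \overline{\wedge^2 V} \to L_2 \to V \to 0$. The cycle class $c_p^{\et}([Z]) \in H^2_{\et}((X \times X)_{\Qb}, \Q_p(1))$ decomposes via K\"unneth into components in $\pi_1^* H^2(X,\Q_p(1))$, $V \otimes V$, and $\pi_2^* H^2(X, \Q_p(1))$; the definition of $\overline{\wedge^2 V}$ as the dual of the kernel of $\AJ^*: H^2(J) \to H^2(X)$ shows that only the middle K\"unneth component contributes a Galois-equivariant map $\phi_Z: \overline{\wedge^2 V} \to \Q_p(1)$, and $L_Z$ is by construction the push-out $\phi_{Z,*}[L_2] \in H^1(G_K, V)$.

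Next, I would apply the corollary to Theorem \ref{hain-matsumoto} to rewrite $[L_2] = \tau_*(\AJ_{\et}([\Delta_{GKS}]))$ in $H^1(G_K, V \otimes \overline{\wedge^2 V}(-1))$, where $\tau: V^{\otimes 3} \to V \otimes \overline{\wedge^2 V}$ sends $v_1 \otimes v_2 \otimes v_3$ to $v_1 \otimes \overline{v_2 \wedge v_3}$. The composition $\phi_Z \circ \tau$ sends $v_1 \otimes v_2 \otimes v_3$ to $v_1$ times the pairing of $\overline{v_2 \wedge v_3}$ with the middle K\"unneth component of $[Z]$. Interpreted at the level of cycles, this composition implements exactly the operation
\[
\CH^2(X^3) \xrightarrow{\pi_{\{1,2,3\}}^*} \CH^2(X^4) \xrightarrow{\,\cdot\,[Z \times X^2]} \CH^4(X^4) \xrightarrow{(\pi_4)_*} \CH^1(X),
\]
that is, $\Pi_Z$ of \S \ref{CH_Z}. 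Functoriality of the \'etale Abel--Jacobi map under flat pullback, intersection with an algebraic cycle, and proper push-forward then gives $[L_Z] = \AJ_{\et}(\Pi_Z([\Delta_{GKS}]))$, and Lemma \ref{drs-z} identifies the right-hand side with $\AJ_{\et}(D_Z(b))$.

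The main obstacle lies in the identifications made in the first two steps: explicitly matching $\phi_Z$ with the appropriate component of the cycle class of $Z$, and verifying that push-out along $\phi_Z$ on the Ext group corresponds to $\Pi_Z$ followed by $\AJ_{\et}$ on cycles. Both reduce to diagram chases in \'etale cohomology involving the K\"unneth isomorphism, Poincar\'e duality, and the projection formula; once these compatibilities are established the conclusion follows formally from Hain--Matsumoto combined with Lemma \ref{drs-z}.
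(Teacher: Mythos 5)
Your argument follows essentially the same route as the paper's proof: identify $[L_Z]$ as the push-out of $[L_2]$ along the map $\overline{\wedge^2 V}\to\Q_p(1)$ induced by $Z$, use the corollary to Hain--Matsumoto to write $[L_2]=\tau_*\AJ_{\et}([\Delta_{GKS}])$, then invoke functoriality of $\AJ_{\et}$ under $\Pi_Z$ and Lemma \ref{drs-z}. The paper compresses this into a single commutative square and leaves the K\"unneth/projection-formula compatibilities implicit, whereas you spell out the middle-K\"unneth origin of $\phi_Z$ and why $\phi_Z\circ\tau$ realizes $\Pi_Z$ cohomologically; the substance is the same.
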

\begin{proof}
The class $[L_Z]$ is the image of $[L_2 ]$ under the morphism
\[
\ext ^1 _{G_K }(V,\overline{\wedge ^2 V})\to \ext ^1 _{G_{\Q }}(V,\Q _p (1))
\]
induced by $\pi _Z :\overline{\wedge ^2 V}\to \Q _p (1)$.
We have a commutative diagram
\[
\begin{tikzpicture}
\matrix (m) [matrix of math nodes, row sep=3em,
column sep=3em, text height=1.5ex, text depth=0.25ex]
{\CH ^2 (X^3 )_0 & H^1 (G_{\Q },V^{\otimes 3}(-1)) \\
\Pic ^0 (X) & H^1 (G_{\Q },V) \\ };
\path[->]
(m-1-1) edge[auto] node[auto]{$\AJ _{\et }$} (m-1-2)
edge[auto] node[auto] {$\Pi _Z$ } (m-2-1)
(m-1-2) edge[auto] node[auto]{$\Pi _{Z*}$} (m-2-2)
(m-2-1) edge[auto] node[auto]{$\AJ _{\et }$} (m-2-2);
\end{tikzpicture}
\]
By Theorem \ref{hain-matsumoto}, the extension class $[L_2 ]$ is given by $\AJ _{\et }(\Delta _{GKS})$, hence 
\[
[L_Z ]=\Pi _{Z*} ([L_2 ])=\AJ _{\et }(D_Z (b)),
\]
by Lemma \ref{drs-z}.
\end{proof}

\section{Appendix: Proof of the Kolyvagin-Logachev type result}
\label{AppendixKolyvaginLogachev}

	In this appendix, we fix the following notation:
	
	$\bullet$ $M$ is a fixed odd level (which for our applications will be $N$ or $N^2$)
	
	$\bullet$ $f \in S_2(\Gamma_0(M))^{+,\rm{new}}$ is a normalised eigenform.
	
	$\bullet$ $A=A_f$ is its associated quotient of $J_0(M)$, together with the canonical projection $\pi : J_0(M) \ra A$ (independent of the choice of $f$ in its Galois orbit). 
	
	We explain here the following result, attributed to Kolyvagin and Logachev.
	
	\begin{Proposition}[Rank 1 BSD for modular abelian varieties]
		\hspace*{\fill}
		\label{propGZK}
		
		If $L'(f,1) \neq 0$, the rank of $A(\Q)$ is exactly $g:=\dim A$. 
	\end{Proposition}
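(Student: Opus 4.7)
The plan is to reduce, by the classical route via Heegner points, to Kolyvagin's Euler system argument applied to the simple modular abelian variety $A$. First I would choose an imaginary quadratic field $K$ satisfying the Heegner hypothesis with respect to $M$ (every prime dividing $M$ splits in $K$), together with $L(f\otimes \chi _K ,1)\neq 0$, where $\chi _K$ is the quadratic Dirichlet character associated to $K$. Since $f _{| w_M } = f$, the sign of the functional equation of $L(f,s)$ equals $-1$ by \eqref{eqeqfoncLambda}, and under the Heegner hypothesis the sign of $L(f/K,s)=L(f,s)L(f\otimes \chi _K ,s)$ is also $-1$, so the sign of $L(f\otimes \chi _K ,s)$ is $+1$ and there is no forced vanishing of its central value; the existence of infinitely many such $K$ follows from non-vanishing theorems of Waldspurger and Bump--Friedberg--Hoffstein type, in versions that allow restricting the twisting characters by local conditions.

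Next, let $y_K \in X_0 (M)(H_K )$ be a Heegner point with CM by $\Ocal _K$, set $P_K :=\Tr _{H_K /K}(y_K -\infty )\in J_0 (M)(K)$, and $P_K ^f :=\pi (P_K )\in A(K)$. The Gross--Zagier formula for $A$ yields a non-zero constant $c$ with
\[
\hat h (P_K ^f )= c\cdot L'(f/K ,1) = c\cdot L'(f,1)\cdot L(f\otimes \chi _K ,1)\neq 0,
\]
so $P_K ^f$ is non-torsion in $A(K)$. I would then invoke Kolyvagin's Euler system argument, in the form for simple modular abelian varieties established by Kolyvagin--Logachev and extended by Nekov\'a\v{r} and Tian: using the non-torsion Heegner point together with the derived Heegner points attached to Kolyvagin primes, one bounds the $K_f$-Selmer group of $A$ over $K$ by a one-dimensional $K_f$-line, where $K_f := \End ^0 (A)$ is the totally real field of Fourier coefficients of $f$. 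This gives $\dim _{K_f }(A(K)\otimes \Q )=1$, and hence $\rank _\Z A(K)=[K_f :\Q ]=g$.

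Finally, I would descend from $K$ to $\Q $ using the behaviour of Heegner points under complex conjugation $\tau $: one has $\tau (y_K )=w_M (y_K ^{\sigma })$ for some $\sigma \in \Gal (H_K /K)$, so after taking the trace and projecting to $A$, $\tau (P_K ^f )=\epsilon _{w_M}P_K ^f =P_K ^f$ modulo torsion. Hence $P_K ^f $ lies in $A(\Q )\otimes \Q $, and by the previous step its $K_f $-span equals $A(K)\otimes \Q $; therefore $A(\Q )\otimes \Q =A(K)\otimes \Q $ and $\rank A(\Q )=g$. The main obstacle is the Kolyvagin--Logachev step, where one must transport Kolyvagin's argument from the classical elliptic curve setting to a simple modular abelian variety with endomorphisms by the totally real order $\Ocal _f $; this requires tracking the $\Ocal _f $-module structure of the derived Heegner cohomology classes and proving annihilation statements for the full $K_f $-Selmer group rather than for a mod $p$ Selmer group. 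The required adaptation is available in the literature (Kolyvagin--Logachev, Nekov\'a\v{r}, Tian) but is the technically heavy ingredient, the other three steps being routine applications of the stated inputs.
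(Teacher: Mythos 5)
Your proposal follows essentially the same route as the paper's Appendix: choose an auxiliary imaginary quadratic field $K$ satisfying the Heegner hypothesis with $L(f\otimes\chi_K,1)\neq 0$ (via Waldspurger-type nonvanishing), use Gross--Zagier to produce a nontorsion Heegner point in $A(K)$ that is defined over $\Q$ up to torsion because $w_M^*f=f$, and then run a Kolyvagin Euler system argument to show the $\Ocal$-span of that point has finite index in $A(K)$. Where you diverge from the paper is that you cite the Kolyvagin--Logachev/Nekov\'a\v{r}/Tian literature for the Euler-system step, whereas the authors spell it out in full. They do so deliberately: they remark explicitly that they could not find directly quotable material for the precise statement needed (Tian's thesis gives a stronger result, Tian--Zhang is unpublished, Nekov\'a\v{r} and Kolyvagin--Logachev require nontrivial translation), and in particular the mod-$p$ representation $A[\gP]$ is not known to be surjective in the square-level case $M=N^2$ (Ribet's big image result does not apply), which forces them to work with the weaker inputs of absolute irreducibility of $A[\gP]$ and injectivity of restriction (their Proposition~\ref{propresmorph}) rather than the full $\GL_2(\F_p)$-image argument of the classical elliptic-curve references. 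Your plan is therefore sound in outline, but the phrase ``the required adaptation is available in the literature'' hides exactly the technical work the appendix was written to supply; in particular the choice of $p$ split in $K_f$, the $\gP$-by-$\gP$ decomposition of the Selmer group, and the irreducibility-based substitute for surjectivity of the mod-$p$ Galois image are nontrivial adjustments you would need to carry out.
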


	\begin{Corollary}
		\label{corKL}
		If $L'(f,1) \neq 0$ for at least two distinct newforms $f$, for the Heegner quotient $A$ of $J_0(M)^{+,\textrm{new}}$ (Definition \ref{defiHeegnerquotient}),
		\[
		\rk(A) = \dim(A) = \rho(A) \geq 2.
		\]
	\end{Corollary}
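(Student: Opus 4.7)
\textbf{Proof plan for Corollary \ref{corKL}.} The strategy is a direct verification on the product decomposition of the Heegner quotient. Recall from Definition \ref{defiHeegnerquotient} that
\[
A \;=\; \prod_{\substack{f \in \Bcal^{+,\rm{new}}_M / \GalQ \\ L'(f,1) \neq 0}} A_f,
\]
which is well defined because the condition $L'(f,1) \neq 0$ is $\GalQ$-equivariant on normalised newforms (as noted in the proof of Lemma \ref{lemtrickratio}, via Gross--Zagier), so it descends to Galois orbits. Each $A_f$ is $\Q$-simple, and distinct orbits give pairwise non-isogenous factors by Eichler--Shimura.

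For the rank, apply Proposition \ref{propGZK} to each factor to obtain $\rk A_f = \dim A_f$. Since $A(\Q) = \prod_f A_f(\Q)$, summing gives $\rk A = \sum_f \dim A_f = \dim A$.

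For the Picard number, use the identification from \S\ref{subsecChowHeegner} of $\rho(A)$ with $\dim_\Q \End^\dagger(A)$, the Rosati-fixed subspace of $\End^0(A)$. Non-isogeny of the simple factors and Schur's lemma give $\End^0(A) = \prod_f \End^0(A_f) = \prod_f K_f$, where $K_f$ is the totally real Hecke eigenvalue field of $f$ of degree $\dim A_f$ over $\Q$. Because $K_f$ is totally real, the Rosati involution acts trivially on it, hence $\End^\dagger(A_f) = K_f$ and $\End^\dagger(A) = \prod_f K_f$. Taking $\Q$-dimensions yields $\rho(A) = \sum_f \dim A_f = \dim A$.

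Finally, for the lower bound $\dim A \geq 2$, the hypothesis provides two distinct normalised newforms $f_1 \neq f_2$ with $L'(f_i,1) \neq 0$. If $f_1$ and $f_2$ lie in the same Galois orbit, that orbit has size at least two and the corresponding factor $A_f$ already satisfies $\dim A_f = [K_f : \Q] \geq 2$; otherwise $f_1$ and $f_2$ contribute two distinct factors to the product, each of dimension at least one. In both cases $\dim A \geq 2$. The only substantive input is Proposition \ref{propGZK}; the remainder is bookkeeping on the Eichler--Shimura decomposition, so no real obstacle arises.
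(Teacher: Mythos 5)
Your proof is correct and follows essentially the same route as the paper: apply Proposition \ref{propGZK} to each simple factor $A_f$ to get $\rk A_f = \dim A_f$, identify $\rho(A) = \dim_\Q \End^\dagger(A)$, use that each $A_f$ is of $\GL_2$-type with totally real (hence Rosati-fixed) endomorphism field $K_f$, and use multiplicity one to split the endomorphism algebra over the distinct Galois orbits. Your write-up is a bit more explicit than the paper's (spelling out Schur's lemma, the Galois-equivariance of $L'(f,1)\neq 0$, and the case analysis for $\dim A \geq 2$), but the underlying argument is identical.
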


	\begin{proof}[Proof of the Corollary]
		By Proposition \ref{propGZK} the rank of $A$ is equal to its dimension as it is true for each of its factors $A_f$. Now, we recall that all endomorphisms of an $A_f$ are symmetric and the latter is of $\GL_2$-type, in particular $\End^\dagger(A_f)$ is of rank $\dim A_f$ (see \S \ref{subsec_Samir_result}) . Finally, for $f,g$ non Galois conjugates, there is no morphism between $A_f$ and $A_g$ (by multiplicity one in the newpart) so the endomorphism ring splits and we get the last equality.
	\end{proof}
\begin{Remark}
	 This result is well-known if $\dim A=1$ (\cite{Kolyvagin90} for the original reference, \cite{Gross89} for a survey), and proven in much greater generality in \cite{Nekovar07}, all these along the lines of a stronger result in the rank zero case proved in \cite{KolyvaginLogachev}. It is also (a slightly weaker version of) the main result in Tian's thesis \cite{tian} and of a paper of Tian and Zhang in preparation \cite{TianZhangprep} for which we could not find quotable material. In any case, we felt it sufficiently different from the former references (to which we borrow constantly) to deserve a proof for the nonexperts. For the same reasons, we will simply refer to those papers for parts of the proofs which generalise seamlessly and focus on the more technical points.
\end{Remark}
	\textbf{Convention}
	We use a well-chosen prime number $p$ to obtain Proposition \ref{propGZK}. As we only need one such $p$, in all this appendix, when a property holds when $p$ is large enough, we then automatically assume it is without further mention.
	
		We will prove Proposition \ref{propGZK} by reducing it successively to other statements which will be emphasized.

	\textbf{Notation}
	
	Throughout this text, $\tau$ denotes the usual complex conjugation and when it acts on an $\Z$-module $\Mcal$,  $\Mcal^+$ and $\Mcal^-$ denote the spaces of $m \in \Mcal$ respectively fixed and reversed by $\tau$.	 If $\Mcal$ is finite of odd order, $\Mcal = \Mcal^+ \oplus \Mcal^-$, which we will frequently use implicitly.
	
	Given an Galois extension $L/K$ of number fields and $\gP$ a prime ideal of $L$ unramified over $\gp$, $(\gP,L/K)$ denotes the Frobenius of $\gP$ for this extension, and $(\gp,L/K)$ the conjugacy class of such Frobenius's in $\Gal(L/K)$.

	\subsection{\texorpdfstring{Structure of the $p$-torsion and reduction to Selmer groups}{Structure of the p-torsion and reduction to Selmer groups}}
	
	Let $K_f$ be the number field of coefficients of $f$. By (\cite{KolyvaginLogachev}, section 2.1), there is an isomorphism $[\cdot]: \, K_f \ra \End_\Q A \otimes \Q$ such that for every prime $\ell \nmid N$,  $[a_\ell(f)] \in \End_\Q A$ and 
	\begin{equation}
	\label{eqmultHecke}
	[a_\ell(f)] \circ \pi = \pi \circ T_\ell.
	\end{equation}
	The inverse image of $\End_\Q A$ is thus an order in $K_f$ denoted by $\Ocal$, and $A$ is endowed with a structure of $\Ocal$-module.
	
	We now fix $p$ an odd prime totally split in $K_f$ and prime to the conductor of $\Ocal$ (there are infinitely many such primes by Cebotarev density theorem), so that  $p \Ocal= \gP_1 \cdots \gP_g$ as a decomposition into prime ideals. In all the following, the notation $\gP$ will run through $\gP_1, \cdots, \gP_g$. 
	
	\begin{Remark}
		It is likely the proof still holds for any type of decomposition of $p$ but this hypothesis makes the exposition much more symmetric (and there are infinitely many of them so we can choose it as large as necessary). In the opposite situation, if there is an inert prime in $K_f$, the proof should be a bit simpler.
	\end{Remark}
	
	One of the key ideas to get closer to the case of elliptic curves is decomposing every structure of $\Ocal/(p)$-modules using those prime ideals. Our tool is the following Lemma, often used without mention.
	
	\begin{lemma}
		\label{lemOcalp}
		By the Chinese remainder theorem, $	\Ocal/(p) \cong \bigoplus_{\gP} \Ocal/\gP$,
		in particular each $\Ocal/\gP$ is projective and flat over $\Ocal/(p)$. 
		Every $\Ocal/(p)$-module $\Mcal$ splits canonically into sub-$\Ocal/(p)$-modules
		\[
		\Mcal = \bigoplus_{\gP} \Mcal[\gP], \quad \Mcal[\gP] = \{ m \in \Mcal, \, \gP \cdot m = 0 \} \cong \Mcal/ \gP M,
		\]
		 and projections are given by elements of $\Ocal$. All these isomorphisms are canonical, and for every $m \in \Mcal$, we will denote by $m_\gP$ its projection onto $\Mcal[\gP]$ (or in $ \Mcal /\gP \Mcal$ depending on the context).
	\end{lemma}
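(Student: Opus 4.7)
The lemma is essentially a standard commutative-algebra fact once one knows that the ideals $\gP_1,\dots,\gP_g$ are pairwise coprime maximal ideals of $\Ocal$ whose product equals $(p)$. I would organise the proof in the following steps.

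First, I would justify the decomposition $p\Ocal=\gP_1\cdots\gP_g$ and the fact that each $\gP_i$ is a maximal ideal with residue field $\F_p$. This uses the two hypotheses we imposed on $p$: since $p$ is prime to the conductor of $\Ocal$, the natural map $\Ocal/(p)\to \Ocal_{K_f}/(p)$ is an isomorphism (a classical fact for orders, proved by considering the conductor ideal and localising at $p$), and since $p$ is totally split in $K_f$ the right hand side is $\prod_i \Ocal_{K_f}/\widetilde{\gP}_i \cong \F_p^g$. Pulling back the maximal ideals $\widetilde{\gP}_i$ of $\Ocal_{K_f}$ along this isomorphism produces the required $\gP_i$ in $\Ocal$, each with residue field $\F_p$; since they are pairwise coprime, their product equals their intersection, which is $(p)$.

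Next, the Chinese remainder theorem immediately yields $\Ocal/(p)\cong \bigoplus_i\Ocal/\gP_i$, and each factor $\Ocal/\gP_i\cong\F_p$ is a field and hence projective and flat over the product ring $\Ocal/(p)$. Lifting the canonical idempotents $\overline{e}_i$ of the product decomposition to elements $e_i\in\Ocal$ gives the ``projections by elements of $\Ocal$'' claim. For any $\Ocal/(p)$-module $\Mcal$, the identity $1=\sum_i e_i$ in $\Ocal/(p)$ yields $\Mcal=\bigoplus_i e_i\Mcal$.

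Finally, I would identify $e_i\Mcal$ with both $\Mcal[\gP_i]$ and $\Mcal/\gP_i\Mcal$. Since $e_j\in\gP_i$ for $j\neq i$ and $1-e_i\in\gP_i$, the inclusion $e_i\Mcal\subseteq \Mcal[\gP_i]$ is immediate, and conversely any $m\in\Mcal[\gP_i]$ satisfies $(1-e_i)m=0$, hence $m=e_i m\in e_i\Mcal$. For the quotient description, observe that for $j\neq i$ the ideal $\gP_i$ surjects onto the field $\Ocal/\gP_j$ (it is not contained in $\gP_j$), so $\gP_i\cdot\Mcal[\gP_j]=\Mcal[\gP_j]$, while $\gP_i\cdot\Mcal[\gP_i]=0$; summing over $j$ gives $\gP_i\Mcal=\bigoplus_{j\neq i}\Mcal[\gP_j]$, whence $\Mcal/\gP_i\Mcal\cong\Mcal[\gP_i]$ canonically.

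There is no real obstacle here; the only point requiring a touch of care is the step asserting $\Ocal/(p)\cong\Ocal_{K_f}/(p)$ under the hypothesis that $p$ is prime to the conductor, which is what guarantees that $\Ocal$ behaves like the maximal order after localisation at $p$. Everything else is formal manipulation of a semisimple ring and its modules.
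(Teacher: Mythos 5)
Your proof is correct and takes essentially the same route as the paper's: decompose $\Ocal/(p)$ by CRT, then decompose modules accordingly and identify $\Mcal/\gP\Mcal$ with $\Mcal[\gP]$ using the complementary primes. The only cosmetic differences are that you work through explicit idempotents where the paper simply tensors $\Mcal$ with the decomposed ring, and that you re-derive the factorisation $p\Ocal=\gP_1\cdots\gP_g$ from the hypotheses on $p$, which the paper treats as already established in the setup just before the lemma.
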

	
	\begin{proof}
		The $\gP$ are pairwise coprime so the Chinese remainder theorems holds, and tensoring $\Mcal$ by $\Ocal/(p)$ on one hand fixes it and the other one decomposes it canonically into $\bigoplus_\gP \Mcal/\gP \Mcal$. The latter clearly identifies each $\Mcal/\gP \Mcal$ with the $\gP$-torsion part of $\Mcal$, and the other statements follow.
	\end{proof}
	
	The $\Ocal$-linear representation $A[p]$ of $\GalQ$ thus splits into $\bigoplus_{\gP} A[\gP]$	and for any extension $L$ of $\Q$, we have canonical isomorphisms of $\Ocal/(p)$-modules
	\begin{equation}
	\label{eqcanident}
	(A(L)/pA(L)) [\gP] \cong A(L)/\gP A(L) \quad H^1(L,A[p])[\gP] \cong  H^1(L,A[\gP]).
	\end{equation}
	If $L$ is a number field, for every place $v$ of $L$, the natural localisation maps $\loc_v$ give rise to a commutative diagram
	\begin{equation}
	\label{eqdiagH1An}
	\begin{tikzcd}
	0 \arrow[r] & A(L)/\gP A(L) \arrow[r,"\delta"] \arrow[d, "\loc_v"] & H^1(L,A[\gP]) \arrow[r] \arrow[d, "\loc_v"] & H^1(L,A)[\gP] \arrow[r] \arrow[d, "\loc_v"] & 0 \\
	0 \arrow[r] & A(L_v)/\gP A(L_v) \arrow[r, "\delta_v"]  & H^1(L_v,A[\gP]) \arrow[r] & H^1(L_v,A)[\gP] \arrow[r] & 0,
	\end{tikzcd}
	\end{equation}
	inherited by flatness from the commonly known analogous diagram for the ideal $(p)$ (for references on those facts and the Selmer groups, see \cite{HindrySilverman}, Appendix C.4). Let us define the $\gP$-Selmer group as 
	\begin{equation}
	\label{eqdefgPSel}
	\Sel_\gP(L,A) := \{ s \in H^1(L,A[\gP]), \forall v, \loc_v s \in \delta_v(A(K_v)/\gP A(K_v)) \},
	\end{equation}
	again canonically identified to $\Sel_{p}(L,A)[\gP]$ hence fitting by the same arguments into the exact sequence
	\begin{equation}
	\label{eqsuitexSelTSgP}
	\begin{tikzcd}
	0 \arrow[r] & A(L)/\gP A(L) \arrow[r, "\delta"] & \Sel_\gP(L,A) \arrow[r] & \Sha(L,A)[\gP] \arrow[r] & 0.
	\end{tikzcd}
	\end{equation}

	Now, consider an imaginary quadratic field $K$ whose discriminant $D_K <-4$ is squarefree, prime to the level $M$ and a square modulo $M$. These conditions guarantee that there is a Heegner point (we fix definitively $\gn$ and $[\ag_0]$)
	\begin{equation}
	\label{eqdefHeegnerx}
	x = \left( \Ocal_K, \ng, [\ag_0] \right) \in X_0(M)(H)
	\end{equation}
	in the notation of \cite{Gross84}, where $H$ is the Hilbert class field of $K$.
	As $f_{|w_M} = f$, $\pi \circ w_M = \pi$ therefore by elementary properties of Heegner points (\cite{Gross84}, formulas (4.1) to (5.2)), for $y_1 = \pi ((x)-(\infty)) \in A(H)$, one has
	\begin{equation}
	y_K := \Tr_{H/K} y_1 = \pi \left( \sum_{[\ag] \in \Cl(K)} (\Ocal_K,\ng,[\ag]) - h_K (\infty) \right) \in A(K), 
	\end{equation}
	\begin{equation}
	\label{eqyKalmostdefoverQ}
	\tau(y_K) = \pi \left( \sum_{[\ag] \in \Cl(K)} w_M \cdot (\Ocal_K,\ng,[\ag]) - h_K (\infty) \right) \in y_K + A(\Q)_{\rm{tors}},
	\end{equation}
	
	Now, using a theorem of Waldspurger \cite[Théorème 2.3]{Vigneras81}, let us fix once and for all a $K$ such that $L(f \otimes \varepsilon_K,1) \neq 0$ where $\varepsilon_K$ is the Dirichlet character associated to $K$. By Gross--Zagier formula (\cite{GrossZagier86}, Theorem I.6.3), the point $y_K$ is then nontorsion in $A(K)$ and has an integer multiple  in $A(\Q)$ by \eqref{eqyKalmostdefoverQ}. The subgroup $\Ocal\cdot y_K$ is thus a subgroup of $A(K)$ of rank $g$ (as nonzero elements of $\Ocal$ act by isogenies), which leads us to 
	
	\textbf{Reduction 1} \enquote{Prove that $\Ocal \cdot y_K$ is of finite index in $A(K)$}.
	
	Now, for $p$ large enough, 
	\begin{equation}
	\label{eqhypy1}
	y_K \notin \gP A(K) \textrm{  for all  }\gP,
	\end{equation}
	which further leads by \eqref{eqsuitexSelTSgP} to
	
	\textbf{Reduction 2} \enquote{Prove that for all $\gP$, $\delta(\overline{y_K})$ generates $\Sel_\gP(K,A)$}.
	
	\begin{proof}
	If this claim holds, every $\Sel_\gP(K,A)$ is an $\Ocal/\gP \cong \F_p$-vector space of dimension 1, so $A(K)/\gP A(K)$ is of dimension at most 1 by \eqref{eqsuitexSelTSgP}, and 
	\[
	A(K)/pA(K) \cong \bigoplus_\gP A(K)/\gP A(K)
	\] is of dimension at most $g$ over $\F_p$. This imposes that the Mordell--Weil rank of $A(K)$ over $\Z$ is at most $g$, hence the equality using $\Ocal \cdot y_K$.
	\end{proof}
	
	To conclude this paragraph, $\tau$ acts naturally on $A(\Qb), A[\gP]$, $H^1(K,A[\gP])$ and $\Sel_\gP(K,A)$, and the action of $\Ocal$ and the morphisms between those in \eqref{eqcanident} and \eqref{eqdiagH1An} are $\tau$-equivariant. We fix from now on a polarisation $A \rightarrow \widehat{A}$ of degree prime to $p$ (otherwise choose a larger prime $p$), which thus defines a Weil pairing $A[p] \times A[p] \ra \mu_p$. Its elementary properties (\cite{MilneAbVar86}, Lemma 16.2) then imply the following structural result, crucial for our understanding.
	
	\begin{lemma}
		\label{lemWeil}
		For every $\gP$ and $\varepsilon = \pm 1$: 
		
		$\bullet$ The $2g$ spaces $A[\gP]^\varepsilon$ are pairwise orthogonal for the Weil pairing, except the $A[\gP]^\varepsilon$ with the same $\gP$ and opposite sign.
		
		$\bullet$ The two spaces $A[p]^\varepsilon$ are isotropic for the Weil pairing
		
		$\bullet$ Each $A[\gP]^\varepsilon$ is then of dimension 1 over $\F_p$ and $\dim_{\F_p} A[\gP] = 2$.
	\end{lemma}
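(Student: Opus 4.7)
The plan is to combine two symmetry properties of the Weil pairing $e_p$: $\Ocal$-equivariance coming from Rosati self-adjointness, and equivariance under complex conjugation acting simultaneously on $A(\Qb)$ and $\mu_p$.

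First I would exploit Rosati self-adjointness. Since the Rosati involution attached to the chosen polarisation is a positive involution stabilising $K_f = \End^0 A$, and $K_f$ is totally real, the only positive involution on it is the identity, so every $\alpha \in \Ocal$ is Rosati-fixed. The standard identity $e_p(\alpha x, y) = e_p(x, \alpha^\dagger y)$ then simplifies to $e_p(\alpha x, y) = e_p(x, \alpha y)$. Given two distinct primes $\gP \neq \gP'$ above $p$, I would pick $\alpha \in \gP' \setminus \gP$: it acts invertibly on $A[\gP]$ and as zero on $A[\gP']$, which forces $A[\gP] \perp A[\gP']$. For the second equivariance, the compatibility $e_p(\tau x, \tau y) = \tau(e_p(x,y)) = e_p(x,y)^{-1}$ (using that $\tau$ inverts $\mu_p$) applied to $x, y \in A[p]^\varepsilon$ yields $e_p(x,y)^2 = 1$, hence $e_p(x,y) = 1$ since $p$ is odd. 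This gives the isotropy of $A[p]^+$ and $A[p]^-$, and combined with the previous orthogonality establishes the first two items of the Lemma: the only pairings among the $A[\gP]^\varepsilon$ which can be non-trivial are those between $A[\gP]^+$ and $A[\gP]^-$ for one and the same $\gP$.

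For the dimension count I would invoke that $A = A_f$ is of $\GL_2$-type, so $V_p A$ is free of rank $2$ over $K_f \otimes \Q_p$; since $p$ splits in $K_f$ and is prime to the conductor of $\Ocal$, this gives $\dim_{\F_p} A[\gP] = 2$ for every $\gP$. The Weil pairing on $A[p]$ is non-degenerate (the polarisation has degree prime to $p$) and, by the orthogonality just proven, splits as an orthogonal direct sum over $\gP$ of pairings on the $A[\gP]$, each of which is therefore itself non-degenerate. Inside the two-dimensional symplectic space $A[\gP]$, neither $A[\gP]^+$ nor $A[\gP]^-$ can be the full space (the alternating form would then vanish identically, contradicting non-degeneracy), so both are non-zero isotropic subspaces, forcing each to have $\F_p$-dimension $1$.

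The only non-formal input is the $\GL_2$-type property, i.e.\ that $V_p A_f$ has rank $2$ over $K_f \otimes \Q_p$, which is where the modular origin of $A$ enters via Eichler--Shimura; everything else is symplectic linear algebra organised around the two equivariance properties above, and I would expect no genuine obstacle in the execution.
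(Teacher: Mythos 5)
Your proof is correct and follows exactly the argument the paper intends but does not spell out: the paper simply cites the elementary properties of the Weil pairing from Milne's Lemma 16.2 (Rosati adjointness $e_p(\alpha x, y) = e_p(x, \alpha^\dagger y)$, which reduces to $\Ocal$-sesquilinearity because the Rosati involution is trivial on the totally real $K_f$, plus $\tau$-equivariance), and your write-up supplies precisely the deduction the paper leaves implicit, including the correct use of the $\GL_2$-type property to see that $A[p]$ is free of rank $2$ over $\Ocal/(p)$ so that each $A[\gP]$ is $2$-dimensional.
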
 

	\subsection{Pairing the Galois group and Selmer groups, and Kolyvagin primes}

	Throughout this appendix, we fix 
	\[
	L:=K(A[\gP]), \quad G:=\Gal(L/K).
	\]
	(notice $L$ is Galois over $\Q$ ).
	\begin{Proposition}
		\label{propresmorph}
		For $p$ large enough:
		
		$(a)$ $A[\gP]$ is (absolutely) irreducible as a representation of $\GalQ$.
		
		$(b)$ The canonical restriction morphism 
		\[
		H^1(K,A[\gP]) \overset{\operatorname{res}}{\ra} H^1(L,A[\gP])^G = \Hom_G(\Gal(L^{\rm{ab}}/L),A[\gP])
		\]
	 is injective, with the action of $G$ on $\Gal(L^{\rm{ab}}/L)$ defined by conjugation in $\GalQ$.
	\end{Proposition}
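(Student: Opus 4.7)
The plan is to reduce both parts of the statement to well-known properties of the mod-$\gP$ residual Galois representation attached to $f$, using standard results on the bigness of Galois images for modular forms of $\GL_2$-type.

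For part $(a)$, I would first recall that, via Eichler--Shimura theory, the action of $\GalQ$ on $A[\gP]$ coincides with the mod-$\gP$ reduction $\bar{\rho}_{f,\gP}$ of the $\gP$-adic representation attached to $f$. Since $p$ is totally split in $\Ocal$, $A[\gP]$ is a two-dimensional $\F_p$-vector space and $\bar{\rho}_{f,\gP}$ factors through $\GL_2(\F_p)$. By a theorem of Ribet--Momose (for non-CM forms) or by the explicit description of the image in the CM case, for $p$ outside a finite set (depending only on $f$) the image of $\bar{\rho}_{f,\gP}$ contains $\SL_2(\F_p)$ in the non-CM case, or is a subgroup of a normaliser of a Cartan acting absolutely irreducibly in the CM case. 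Either way, for $p$ large enough, $A[\gP]$ is absolutely irreducible over $\GalQ$. Since we are free to take $p$ as large as needed (cf.\ the convention at the start of the appendix), this suffices.

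For part $(b)$, I would apply the inflation--restriction sequence for the normal subgroup $\Gal(\overline{\Q}/L) \trianglelefteq \Gal(\overline{\Q}/K)$, giving
\[
0 \to H^1(G, A[\gP]^{\Gal(\overline{\Q}/L)}) \to H^1(K, A[\gP]) \xrightarrow{\operatorname{res}} H^1(L, A[\gP])^G.
\]
By definition $L = K(A[\gP])$, so $A[\gP]^{\Gal(\overline{\Q}/L)} = A[\gP]$, and injectivity of $\operatorname{res}$ reduces to the vanishing $H^1(G, A[\gP]) = 0$. To obtain this, I would note that the image of $\Gal(\overline{\Q}/K)$ in $\GL(A[\gP])$ has index at most $2$ in the image of $\GalQ$; since $p$ is odd and $[K:\Q]=2$, from $(a)$ this image still contains $\SL_2(\F_p)$ (respectively remains absolutely irreducible in the CM case) for $p$ large. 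Thus $G$ itself contains the image of $\SL_2(\F_p)$.

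The remaining step is the classical cohomological vanishing: for $p \geq 5$, one has $H^1(\SL_2(\F_p), \F_p^2) = 0$ on the standard module, and therefore also $H^1(H, \F_p^2) = 0$ for any subgroup $H \subseteq \GL_2(\F_p)$ containing $\SL_2(\F_p)$, by inflation--restriction applied to $\SL_2(\F_p) \trianglelefteq H$ together with the vanishing of $H^1$ of the cyclic quotient on $(\F_p^2)^{\SL_2(\F_p)} = 0$. The main obstacle of the proof is not really a conceptual one but rather bookkeeping: one must verify that the bigness result of Ribet--Momose genuinely applies in both CM and non-CM settings, and that in the CM case (where the image of $G$ need not contain $\SL_2(\F_p)$) the same cohomological vanishing still holds because the stabiliser in $G$ of any nonzero vector is small enough, or equivalently because $A[\gP]$ has no nonzero $G$-coinvariants. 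Since for the applications in this paper we may enlarge $p$ at will, this last step reduces to a finite check on residual images, which is standard.
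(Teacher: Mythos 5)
The paper itself gives a one-line proof: $(a)$ is cited from Lemma 3.2 of Ribet and $(b)$ from Proposition 6.1.2 of Nekov\'a\v r, with no further argument. You instead attempt a self-contained proof by invoking big-image results for the residual representation, which is a genuinely different route.

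Your reduction of $(b)$ via inflation--restriction to the vanishing of $H^1(G, A[\gP])$ (using $L=K(A[\gP])$ so that $A[\gP]^{\Gal(\overline{\Q}/L)}=A[\gP]$) is the right first step, and in the non-CM case your argument is sound: once the image of $\Gal(\overline{\Q}/K)$ contains $\SL_2(\F_p)$, the classical vanishing $H^1(\SL_2(\F_p),\F_p^2)=0$ for $p$ large, combined with inflation--restriction along $\SL_2(\F_p)\trianglelefteq G$ and $(\F_p^2)^{\SL_2(\F_p)}=0$, gives what is wanted. However, note that the paper's Remark immediately preceding this Proposition explicitly flags that surjectivity (and the standard route to $\SL_2$-containment, \cite{Ribet76}) is \emph{not} available for the square level $M=N^2$; so if you go via Ribet--Momose you need to verify carefully that the big-image theorem applies here --- in particular in the presence of possible inner twists at square level, where the image of the full $\GalQ$ may be smaller than you expect. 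The approach via Nekov\'a\v r's Proposition 6.1.2 is more robust precisely because it does not require $\SL_2$-containment: it only needs absolute irreducibility together with the existence of a nontrivial homothety in the image of $\Gal(\overline{\Q}/K)$, which suffices for the standard ``scalar kills $H^1$'' argument.

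There is also a genuine error in your treatment of the CM case. You assert that in that case ``the same cohomological vanishing still holds \dots{} because $A[\gP]$ has no nonzero $G$-coinvariants.'' This implication is false: absence of invariants or coinvariants does not imply $H^1(G,V)=0$ in general. What actually saves the CM case is much more elementary: the image of $\Gal(\overline{\Q}/K)$ is contained in the normaliser of a Cartan subgroup, whose order ($2(p-1)^2$ or $2(p^2-1)$) is prime to $p$; hence $p\nmid |G|$ and $H^1(G,A[\gP])=0$ holds automatically for any coefficient module killed by $p$. You should replace your coinvariants claim by this order argument; as written, that step would not survive scrutiny.
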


\begin{Remark}
	Here is an important difference with the $\dim A=1$ case: the Galois representation $\GalQ \ra \GL(A[\gP]) \cong \GL_2(\F_p)$ is not proven to be surjective (\cite{Ribet76} does not cover the square $M$ case), but we will manage with $(a)$ and $(b)$ although it introduces significant changes compared to some arguments in \cite{Gross89}.
\end{Remark}
\begin{proof}
	$(a)$ is Lemma 3.2 of \cite{Ribet04} and $(b)$ is Proposition 6.1.2 of \cite{Nekovar07}.
\end{proof}

	We now choose $S$ a finite sub-$\Ocal$-module of $H^1(K,A[\gP])$, stable by $\tau$ (this will be $\Sel_\gP(K,A)$ and then an auxiliary module for the proof). By Proposition \ref{propresmorph} $(b)$, there is a pairing 
	\[
	\fonctionsansnom{S \times \Gal(L^{\rm{ab}}/L)}{A[\gP]}{(s,\sigma)}{\operatorname{res}(s)(\sigma)}
	\] 
	which is injective on the left. We define $L_S$ the extension of $L$ whose absolute Galois group is the orthogonal of $S$, and thus obtain a nondegenerate pairing between finite abelian $p$-torsion groups
	\[
	[\cdot,\cdot]_S : S \times H_S \rightarrow A[\gP], \quad H_S := \Gal(L_S/L).
	\]
 Keeping track of the actions of $\tau$ and the $\sigma \in G$, we have that
	\begin{equation}
	\label{eqGaloisconjgpairingS}
	\tau[s,\rho]_S = [\tau(s),\tau \rho \tau^{-1}]_S, \quad \sigma[s,\rho] = [s,\sigma \rho \sigma^{-1}].
	\end{equation}
	In particular, the extension $L_S/\Q$ is Galois. 
	
	\begin{lemma}
		\label{lemperfdual}
		This pairing induces a perfect bilinear pairing from $S^\varepsilon \times H_S^+$ to $A[\gP]^\varepsilon \cong \F_p$, hence a duality between $S^\varepsilon$ and $H_S^+$.
	\end{lemma}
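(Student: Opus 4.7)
I would split the proof into three parts: verifying the codomain claim, then non-degeneracy on each side.

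First, for the values claim, the $\tau$-equivariance $\tau[s,\rho]_S=[\tau s,\tau\rho\tau^{-1}]_S$ from \eqref{eqGaloisconjgpairingS} immediately gives, for $s\in S^\varepsilon$ and $\rho\in H_S^+$, the identity $\tau[s,\rho]_S=[\varepsilon s,\rho]_S=\varepsilon[s,\rho]_S$, hence $[s,\rho]_S\in A[\gP]^\varepsilon$.

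The heart of the argument is non-degeneracy on the $S^\varepsilon$-side, which rests on the absolute irreducibility of $A[\gP]$ as a $\GalQ$-module (Proposition \ref{propresmorph}(a)) together with the $G$-equivariance $\sigma[s,\rho]_S=[s,\sigma\rho\sigma^{-1}]_S$ also recorded in \eqref{eqGaloisconjgpairingS}. Assume $s\in S^\varepsilon$ satisfies $[s,\rho]_S=0$ for every $\rho\in H_S^+$; my plan is to extend this to $[s,\rho]_S=0$ for every $\rho\in H_S$, which forces $s=0$ by the full non-degeneracy of $[\cdot,\cdot]_S$. For $\rho\in H_S^-$, equivariance places $[s,\rho]_S$ in the $1$-dimensional, $\tau$-stable subspace $A[\gP]^{-\varepsilon}$ (Lemma \ref{lemWeil}). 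For any $\sigma\in G$, decomposing $\sigma\rho\sigma^{-1}=\eta_++\eta_-$ with $\eta_\pm\in H_S^\pm$ and using $[s,\eta_+]_S=0$, one obtains $\sigma[s,\rho]_S=[s,\eta_-]_S\in A[\gP]^{-\varepsilon}$. Hence the entire $G$-orbit of $[s,\rho]_S$ is confined to $A[\gP]^{-\varepsilon}$; if this orbit were non-zero, then $A[\gP]^{-\varepsilon}$ would be $G$-stable, and combined with its $\tau$-stability and the fact that $\Gal(L/\Q)=\langle G,\tau_0\rangle$, this would make $A[\gP]^{-\varepsilon}$ a proper $\GalQ$-submodule of $A[\gP]$, contradicting absolute irreducibility. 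Thus $[s,\rho]_S=0$, as needed.

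Non-degeneracy on the $H_S^+$-side is handled by a parallel argument. Given $\rho\in H_S^+$ annihilating $S^\varepsilon$ under the pairing, the $G$-equivariance rewritten as $\psi_{\sigma\rho\sigma^{-1}}=\sigma\circ\psi_\rho$, where $\psi_\rho:=[\cdot,\rho]_S$, shows that every $G$-conjugate of $\rho$ also annihilates $S^\varepsilon$; decomposing each conjugate into $\tau$-eigenparts and noting that the two components take values in different $A[\gP]^{\pm\varepsilon}$-summands forces each $\tau$-component to annihilate $S^\varepsilon$ separately. Applying the left-injectivity of the pairings $S^\varepsilon\times H_S^\pm\to A[\gP]^{\pm\varepsilon}$ (which is established for \emph{both} signs by the argument above) together with the same $\GalQ$-irreducibility contradiction applied this time to the kernel of $H_S^+\to(S^\varepsilon)^*$, one concludes $\rho=0$. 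The main obstacle in this last step is that the $G$- and $\tau$-actions on $H_S$ do not commute, so $H_S^+$ is not $G$-stable and one must carefully track the $\pm$-components of $\sigma\rho\sigma^{-1}$ before reducing to the irreducibility of $A[\gP]$. The two injectivities then yield $\dim_{\F_p}S^\varepsilon=\dim_{\F_p}H_S^+$, completing the perfect pairing and the claimed duality.
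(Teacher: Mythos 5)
Your proof follows essentially the same route as the paper's: the $\tau$-equivariance in \eqref{eqGaloisconjgpairingS} gives the claimed codomain, and both injectivities are reduced to the absolute irreducibility of $A[\gP]$ (Proposition \ref{propresmorph}(a)) by showing that the relevant image in $A[\gP]$ is $\GalQ$-stable and lies in a proper subspace. Your treatment of the left-injectivity is a correct, slightly more explicit unwinding of the paper's one-line observation that $[s,H_S]_S=[s,H_S^-]_S\subset A[\gP]^{-\varepsilon}$ is a proper $\GalQ$-stable subgroup. For the right-injectivity you correctly flag the real subtlety (that $H_S^+$ is not $G$-stable, so one must track $\tau$-eigenparts of $\sigma\rho\sigma^{-1}$), but the closing sentence does not yet amount to an argument: the kernel of $H_S^+\to(S^\varepsilon)^*$ is not itself a $\GalQ$-submodule of anything on which irreducibility applies, and the ``left-injectivity of $S^\varepsilon\times H_S^\pm$'' constrains elements of $S^\varepsilon$, not elements of $H_S^+$, so it cannot directly force $\rho=0$. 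Note, however, that the paper's own proof of this half is compressed to the same degree: the assertion ``$[S,H']_S\subset A[\gP]^{-\varepsilon}$'' is not immediate either, since the $G$-conjugates $\sigma h\sigma^{-1}$ generating $H'$ need not lie in $H_S^+$ and so $[S^{-\varepsilon},\sigma h\sigma^{-1}]_S=\sigma[S^{-\varepsilon},h]_S$ need not stay inside $A[\gP]^{-\varepsilon}$. In short, your approach coincides with the paper's, and the one genuinely delicate step (right-injectivity) is left at the same level of terseness in both.
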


	\begin{proof}
		By \eqref{eqGaloisconjgpairingS}, these two pairings (for $\varepsilon= \pm 1$) are well-defined, let us prove they are injective on the left and on the right, they will then be perfect as everything is finite(-dimensional). For $s \in S^\varepsilon$, if $[s, H_S^+]_S=0$,then 
		\[
		[s,H_S]_S = [s,H_S^-]_S \subset A[\gP]^{-\varepsilon}
		\] by the same arguments, but $[s,H_S]_S$ is stable by $\GalQ$ by \eqref{eqGaloisconjgpairingS} again. As $A[\gP]$ is irreducible by Proposition \ref{propresmorph} $(a)$, it imposes $[s,H_S]_S=0$ therefore $s=0$ by nondegeneracy. Now, assume $[S^\varepsilon,h]_S=0$ for some $h \in H_S^+$. This holds for all conjugates $\sigma h \sigma^{-1}$ of $h$ in $H_S$ by \eqref{eqGaloisconjgpairingS}, so on the group $H' \subset H_S$ they generate. Again, this forces $[S,H']_S \subset A[\gP]^{-\varepsilon}$, but this group is stable by $\GalQ$ hence $H'=0$.
	\end{proof}

	\begin{lemma}
		\label{lemortho}
		Fix $\varepsilon=\pm 1$ and $I_S^+$ a proper subgroup of $H_S^+$. Then, $s \in S^\varepsilon$ is 0 if for all $\rho \in H_S^+ \backslash I_S^+$, $[s,\rho]_S =0$.
	\end{lemma}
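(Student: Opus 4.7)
The plan is to exploit that $I_S^+$ is a subgroup in order to upgrade the vanishing on $H_S^+ \setminus I_S^+$ to vanishing on all of $H_S^+$, and then invoke the perfect duality of Lemma \ref{lemperfdual}.

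More precisely, since $I_S^+$ is a \emph{proper} subgroup of $H_S^+$, the complement $H_S^+ \setminus I_S^+$ is nonempty, so I would fix some $\rho_0 \in H_S^+ \setminus I_S^+$. For any $\rho' \in I_S^+$, the product $\rho_0 \rho'$ again lies in $H_S^+ \setminus I_S^+$: otherwise $\rho_0 \rho' \in I_S^+$ and $\rho' \in I_S^+$ would force $\rho_0 \in I_S^+$ (since $I_S^+$ is a subgroup), contradicting the choice of $\rho_0$. The hypothesis then gives both $[s,\rho_0]_S = 0$ and $[s,\rho_0 \rho']_S = 0$, and bilinearity yields $[s,\rho']_S = 0$. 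Hence $[s,\rho]_S = 0$ for every $\rho \in I_S^+$ as well, and combining with the hypothesis we obtain $[s,\rho]_S = 0$ for all $\rho \in H_S^+$.

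Finally, since $s \in S^\varepsilon$, the values $[s,\rho]_S$ for $\rho \in H_S^+$ all land in $A[\gP]^\varepsilon$ by the equivariance formula \eqref{eqGaloisconjgpairingS}, and Lemma \ref{lemperfdual} asserts that the induced pairing $S^\varepsilon \times H_S^+ \to A[\gP]^\varepsilon$ is perfect. The vanishing of $[s,\cdot]_S$ on all of $H_S^+$ therefore forces $s = 0$, as desired.

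There is no serious obstacle here: the argument is essentially group-theoretic, and uses only the subgroup property of $I_S^+$ together with the already-established perfectness from Lemma \ref{lemperfdual}. The only subtlety worth checking is that $\rho_0 \rho' \in H_S^+$ whenever $\rho_0, \rho' \in H_S^+$, which is automatic since $H_S^+$, viewed inside the abelian group $H_S$, is the fixed-point set of $\tau$ and hence a subgroup.
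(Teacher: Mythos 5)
Your proof is correct and takes essentially the same approach as the paper: the paper observes that $H_S^+ \setminus I_S^+$ generates all of $H_S^+$ (``by a counting argument'') and then invokes the perfect duality of Lemma \ref{lemperfdual}, while you supply a concrete version of that generation step via the standard trick of translating $I_S^+$ by a fixed element $\rho_0$ outside it and using additivity of $[s,\cdot]_S$ in the second variable.
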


\begin{proof}
	It is a trivial consequence of the perfect duality above, knowing that the sub-$\F_p$-vector space generated by $ H_0^+ \backslash I_0^+$ is $H_0^+$ itself, e. g. by a counting argument.
\end{proof}

	\textbf{Reduction 3} \enquote{For all $\gP$, apply Lemma \ref{lemortho} to ($s_0=0$, $\varepsilon=-1$) (resp.  $\delta \overline{y_K}$, $\varepsilon=1$) to prove that $\Sel_\gP(K,A)^- = 0$ (resp. $\Sel_\gP(K,A)^+ = \langle  \delta \overline{y_K} \rangle$)}.
	
	The next subsection will show us how to compute the pairing $[\cdot,\cdot]_S$.
	
	\subsection{Kolyvagin primes }

	\begin{Definition}
		\hspace*{\fill}
		\begin{itemize}
		\item A \textit{Kolyvagin prime} $\ell$ is a prime number such that:
		
		$-$ $\ell$ does not divide $D_K M p$ (or the conductor of $\Ocal$), so is unramified in $L$.
		
		$-$ The conjugacy class of $(\ell,L/\Q)$ is the one of $\tau$ in $\Gal(L/\Q)$. In particular, $\ell \Ocal_K =: \lambda_\ell$ is inert over $\ell$. We will often shorten it to $\lambda$ if $\ell$ is nonambiguous, and for any extension $K'$ of $K$, $\lambda_{K'}$ will be a choice of prime ideal of $\Ocal_{K'}$ above $\lambda$ (in a consistent fashion if multiple extensions are considered). 
		
		\item A \textit{Kolyvagin number} $n$ is a squarefree product of Kolyvagin primes $\ell$.
	\end{itemize}
	\end{Definition}

	In the same fashion as in (\cite{Gross84}, (3.3)), Kolyvagin primes have many strong properties.
	
	\begin{Proposition}
		\label{propKolprime}
		For a Kolyvagin prime $\ell$, $\lambda$ splits completely in $L$. Furthermore: 
		\[
		p | a_\ell(f), \quad p | \ell+1 
		\]
		in $\Ocal$, and all the points of $A[\gP]$ are defined over $K_{\lambda}$. Moreover, the two eigenspaces $(A(K_\lambda)/ \gP A(K_\lambda))^\pm$ for the  action of $\Frob(\ell)$ are of dimension 1 over $\F_p$.
	\end{Proposition}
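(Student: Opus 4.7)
The plan is to combine the conjugacy of $\Frob_\ell$ and $\tau$ in $\Gal(L/\Q)$ with the Eichler--Shimura congruence describing the action of $\Frob_\ell$ on $T_\gP A$. Since $L/\Q$ is Galois, the Frobenius $\Frob_\Lcal$ at any prime $\Lcal$ of $L$ above $\ell$ has order $2$ and restricts to $\tau|_K$ on $\Gal(K/\Q)$; in particular $\ell$ is inert in $K$, and the Frobenius of $\Lcal$ viewed as an element of $\Gal(L/K)$ is $\Frob_\Lcal^{f(\lambda/\ell)} = \Frob_\Lcal^{2} = 1$, so $\lambda = \ell\Ocal_K$ splits completely in $L$. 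Since $L = K(A[\gP])$, the chain $A[\gP] \subset A(L) \subset A(L_\Lcal) = A(K_\lambda)$ then yields the rationality claim.

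For the divisibilities, the Eichler--Shimura congruence for $J_0(M)$ combined with \eqref{eqmultHecke} implies that the characteristic polynomial of $\Frob_\ell$ acting on the free rank-two $\Ocal_\gP$-module $T_\gP A$ is $x^2 - a_\ell(f)x + \ell$; reducing modulo $\gP$ gives its characteristic polynomial on $A[\gP]$. On the other hand, the conjugacy of $\Frob_\ell$ and $\tau$ in $\Gal(L/\Q)$ means this same polynomial equals that of $\tau$ on $A[\gP]$, which by Lemma \ref{lemWeil} (each $A[\gP]^{\pm}$ is one-dimensional) is $(x-1)(x+1) = x^2 - 1$. Comparing coefficients yields $a_\ell(f) \equiv 0$ and $\ell + 1 \equiv 0$ modulo $\gP$, the latter being an integer congruence modulo $p$ since $\gP \cap \Z = p\Z$.

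Finally, for the eigenspace decomposition, good reduction of $A$ at $\lambda$ together with $\ell \neq p$ makes the formal group $\hat{A}(\mg_\lambda)$ uniquely $\gP$-divisible, so the reduction sequence yields $A(K_\lambda)/\gP A(K_\lambda) \cong \tilde{A}(\F_{\ell^2})/\gP \tilde{A}(\F_{\ell^2})$. Equivalently, the Kummer map gives a $\Gal(K_\lambda/\Q_\ell)$-equivariant injection of $A(K_\lambda)/\gP A(K_\lambda)$ into the unramified cohomology $H^1_{\mathrm{ur}}(K_\lambda, A[\gP]) = A[\gP]/(\Frob_\lambda - 1)A[\gP]$, which equals $A[\gP]$ because $\Frob_\lambda = \Frob_\Lcal^2$ acts trivially on $A[\gP]$ by the first step. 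A count of orders (both sides have cardinality $|A[\gP]| = p^2$, using the standard identity $|M/\gP M| = |M[\gP]|$ for finite $\Ocal_\gP$-modules on the left) forces this injection to be an isomorphism. The $\Frob_\ell$-action then transports to the involution on $A[\gP]$ with characteristic polynomial $x^2 - 1$, which has distinct eigenvalues $\pm 1$ and hence two one-dimensional eigenspaces, giving the claim. The most delicate step is verifying Galois-equivariance of the Kummer-plus-unramified identification; with that in hand, everything else follows formally from the two structural inputs above.
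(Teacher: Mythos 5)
Your proof is correct and follows essentially the same route as the paper: conjugacy of $\Frob_\ell$ with $\tau$ in $\Gal(L/\Q)$, the Eichler--Shimura congruence giving the characteristic polynomial $x^2 - a_\ell(f)x + \ell$, comparison with the characteristic polynomial $x^2-1$ of $\tau$, and a counting identity $|M/\gP M| = |M[\gP]|$ for the eigenspace dimensions. The only cosmetic differences are that you deduce rationality of $A[\gP]$ over $K_\lambda$ directly from $L_\Lcal = K_\lambda$ (slightly cleaner than the paper's reduce-then-lift), and you phrase the final step through the Kummer map into unramified cohomology rather than the paper's direct reduction-sequence argument; both are equivalent formulations of the same computation.
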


	\begin{proof}
		Up to conjugation, $(\lambda_L,L/K) = (\lambda_L,L/\Q)^{f(\lambda/\ell)} = \tau^2 = \Id$ so $\lambda_L/\lambda$ is totally split. Now, by Eichler-Shimura theory (\cite{KolyvaginLogachev}, formula (2.1.8)), the characteristic polynomial of the Frobenius endomorphism $\Frob(\ell)$ on the reduction $\widetilde{A}$ of $A$ modulo $\ell$ (as an $\Ocal$-linear endomorphism) is $X^2 - a_\ell(f) X + \ell$ and the one of the complex conjugation is $X^2-1$, and they must agree on $\widetilde{A}[p]$. In particular, $\Frob(\ell)^2$ acts trivially on $A[\gP]$ so $\widetilde{A}[\gP] = \widetilde{A}[\gP] (\F_\lambda)$ and we can lift those points to $K_\lambda$. By the same arguments, on also has the decomposition
		\[
		\widetilde{A}[\gP](\F_\lambda) = \widetilde{A}[\gP](\F_\lambda)^{+} \oplus \widetilde{A}[\gP](\F_\lambda)^{-}
		\]
		in two nontrivial spaces, given the characteristic polynomial of $\Frob(\ell)$, so each of the two spaces on the right-hand side is of dimension 1 over $\F_p$. We deduce immediately by the structure of finite abelian groups that as groups,
		\[
		(\widetilde{A}(\F_\lambda)^\varepsilon/\gP \widetilde{A}(\F_\lambda) ^\varepsilon) \cong \widetilde{A}(\F_\lambda)^\varepsilon[\gP],
		\]
		which proves that each $(\widetilde{A}(\F_\lambda)/\gP \widetilde{A}(\F_\lambda))^{\varepsilon}$ must be of dimension 1 over $\F_p$, and this also lifts to $K_\lambda$ (without increasing the dimension as the group of elements reducing to 0 modulo $\lambda$ is $p$-divisible).
	\end{proof}

	To state the next result, recall that for a finite place $v \nmid p$ 
of good reduction of $A$, the image of $A(K_v)/pA(K_v)$ in $H^1(K_v,A[p])$ is precisely the inflation of $H^1(K_v^{\textrm{unr}}/K_v,A[p])$, called the unramified part. The latter is isomorphic to $A[p]$ when all the $p$-torsion is defined over $K_v$, via the evaluation of the cocycles at $\Frob(v)$ the topological generator of $\Gal(K_v^{\textrm{unr}}/K_v)$. The same argument translates for $A[\gP]$ by tensoring by $\Ocal/\gP$ again.
	
	\begin{Proposition}
		\label{propLcaleval}
		Let $\Lcal$ be an unramified prime ideal of $L_S$ whose Frobenius in $\Gal(L_S/\Q)$ is $\tau h$ for $h \in H_S$. It is above a Kolyvagin prime $\ell$ and for every $s \in S$ whose localisation at $\lambda = \ell \Ocal_K$ is unramified,
		\[
		[s,(\tau h)^2]_S = \operatorname{ev}_\lambda(s) := (\loc_\lambda s)(\Frob(\lambda)) \in A[\gP].
		\]
		through the identification  described above, as all $A[\gP]$ is defined over $K_\lambda$.
	\end{Proposition}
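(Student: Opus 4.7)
The plan has three steps: identify the prime $\ell$ below $\Lcal$ as Kolyvagin; recognise $(\tau h)^2$ as the Frobenius of $\Lcal$ in $\Gal(L_S/K)$; and unfold the definition of $\operatorname{ev}_\lambda$ so that it coincides with the restriction of cocycles defining the pairing $[\cdot,\cdot]_S$.

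First, since $h \in H_S = \Gal(L_S/L)$ fixes $L$ pointwise, the restriction of $\tau h$ to $\Gal(L/\Q)$ equals $\tau$, which immediately determines the conjugacy class $(\ell, L/\Q) = [\tau]$. Unramifiedness of $\Lcal$ in $L_S/\Q$ implies unramifiedness of $\ell$ in $L/\Q$; since $L = K(A[\gP])$ is ramified only at places dividing $D_K M p$, the prime $\ell$ is coprime to $D_K M p$ and hence Kolyvagin. Proposition \ref{propKolprime} then gives that $\lambda = \ell \Ocal_K$ is inert and that all of $A[\gP]$ is defined over $K_\lambda$, so the unramified evaluation $\operatorname{ev}_\lambda$ is well-defined on any unramified cohomology class.

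Next, the decomposition group of $\Lcal$ in $\Gal(L_S/\Q)$ is the cyclic subgroup generated by $\tau h$. Its restriction to $\Gal(K/\Q)$ is $\tau$, of order $2$, so the smallest positive power of $\tau h$ lying in $\Gal(L_S/K)$ is $(\tau h)^2$, which is therefore the Frobenius of $\Lcal$ over $\lambda$. Using $\tau^2 = 1$, one has $(\tau h)^2 = (\tau h \tau^{-1}) \cdot h \in H_S$, consistent with the normality of $H_S$ in $\Gal(L_S/\Q)$ (which follows from $L/\Q$ being Galois). In particular, any lift $\Frob_\Lcal \in G_K$ of the Frobenius at $\Lcal$ projects to $(\tau h)^2 \in H_S$.

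Finally, represent $s$ by a cocycle $\tilde s : G_K \to A[\gP]$. Its restriction to $G_L$ is a genuine homomorphism (as $G_L$ acts trivially on $A[\gP]$), which by the very definition of $L_S$ factors through $H_S$ and equals $\operatorname{res}(s)$ on $H_S$. Thus $[s,(\tau h)^2]_S = \operatorname{res}(s)((\tau h)^2)$ by construction of the pairing. On the other side, unramifiedness of $\loc_\lambda s$ means that $\tilde s$ is trivial on the inertia subgroup at $\lambda$, so $\operatorname{ev}_\lambda(s) = \tilde s(\Frob_\Lcal)$ is independent of the choice of Frobenius lift; taking the lift arising from $\Lcal$ we obtain $\operatorname{ev}_\lambda(s) = \operatorname{res}(s)((\tau h)^2) = [s,(\tau h)^2]_S$. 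The only real difficulty is the bookkeeping with ramification and residue degrees in the tower $\Q \subset K \subset L \subset L_S$, which is precisely what identifies the local Frobenius lift with the global element $(\tau h)^2$.
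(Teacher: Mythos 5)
Your proof is correct and follows essentially the same approach as the paper: identify $\ell$ as Kolyvagin via the restriction of $\tau h$ to $\Gal(L/\Q)$, compute $(\Lcal,L_S/L)=(\tau h)^2$ from the residue degrees in the tower $\Q\subset K\subset L\subset L_S$, and unfold the definitions of $\loc_\lambda$ and $[\cdot,\cdot]_S$ to see they agree on a common Frobenius lift. The paper asserts the commutativity of the relevant diagram as \enquote{clear}; your argument makes that commutativity explicit, so it is an expanded version of the same proof rather than a different route.
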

	
	\begin{proof}
		By hypothesis, $(\Lcal,L/\Q)_{|L} = \tau$ so $\lambda_L = \Lcal \cap \Ocal$ is indeed above a Kolyvagin prime $\ell$. On the other hand, $(\Lcal,L_S/L)=(\Lcal,L_S/\Q)^2 = (\tau h)^2$ as the inertia does not change between $K$ and $L_S$. 	Now, the diagram
		\[
		\xymatrix{
			S \subset H^1(K,A[\gP]) \ar[r]^{\operatorname{res}} \ar[d]_{\loc_\lambda} & \Hom_G(H_S,A[\gP]) \ar[d]^{\operatorname{ev}_{(\Lcal,L_S/L)}} \\
			H^1_{\textrm{unr}}(K_\lambda,A[\gP]) \ar[r]^-{\operatorname{ev}_{\Frob(\lambda)}} &  A[\gP]
		}
		\]
		is clearly commutative, which establishes the equality by definition.
	\end{proof}

	\begin{Remark}
		The set of all $(\tau h)^2$ thus obtained is exactly $H_S^+$, by Cebotarev density theorem.
	\end{Remark}
	
	Now, for any place $v$ of $K$, we can construct (\cite{Tate58}, section 2) a canonical bilinear pairing obtained from Tate duality 
	\begin{equation}\label{tate}
	\langle \cdot ,\cdot \rangle_{K_v} : A(K_v)/pA(K_v) \times H^1(K_v,A)[p] \rightarrow \Br(K_v)[p] \cong \Z/p\Z.
	\end{equation}
	The key use of Tate duality is the following Proposition, which is a slight generalisation of \cite[Proposition 8.2]{Gross89}.
	\begin{Proposition}
		\label{propTate}
		If for a prime $\lambda$ of $K$ (above a Kolyvagin prime) and a $\gamma \in H^1(K,A)^{\varepsilon}[\gP]$, one has $\loc_v \gamma = 0$ for all $v \neq \lambda$ but $\loc_\lambda \gamma \neq 0$, then for every $s \in \Sel_\gP(K,A)^{\varepsilon}$, $\loc_\lambda s=0$.
	\end{Proposition}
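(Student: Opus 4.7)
The plan is to imitate the classical Kolyvagin descent at a Kolyvagin prime (cf.\ \cite[\S 8]{Gross89}), adapted to a single $\gP$-component. The input is Tate's global reciprocity applied to the local pairings \eqref{tate}: for $s\in\Sel_\gP(K,A)$ and $\gamma\in H^1(K,A)[\gP]$, the Selmer condition \eqref{eqdefgPSel} allows us to regard $\loc_v s$ as an element of $A(K_v)/pA(K_v)$, so that
\[
\sum_v \langle\loc_v s,\loc_v\gamma\rangle_{K_v}=0.
\]
Using the hypothesis that $\loc_v\gamma=0$ for every $v\neq\lambda$, only the $\lambda$-term survives, yielding $\langle\loc_\lambda s,\loc_\lambda\gamma\rangle_{K_\lambda}=0$.

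Next I would refine this identity into a pairing between one-dimensional $\F_p$-spaces. Since $K_f$ is totally real, every element of $\Ocal \subset \End^\dagger(A)\otimes\Q$ is fixed by the Rosati involution attached to the chosen polarisation (of degree prime to $p$); hence \eqref{tate} is $\Ocal/(p)$-bilinear, and via Lemma \ref{lemOcalp} it restricts to a perfect pairing
\[
(A(K_\lambda)/\gP A(K_\lambda))\times H^1(K_\lambda,A)[\gP]\longrightarrow \F_p.
\]
Because $\tau$ acts trivially on $\F_p$, elements of opposite $\tau$-sign are orthogonal, from which a short check (a left element orthogonal to one $\varepsilon$-eigenspace on the right is orthogonal to the whole right factor, hence zero) shows that the induced pairing on $\varepsilon$-eigenspaces
\[
(A(K_\lambda)/\gP A(K_\lambda))^{\varepsilon}\times H^1(K_\lambda,A)[\gP]^{\varepsilon}\longrightarrow \F_p
\]
is still perfect.

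Proposition \ref{propKolprime} then supplies the crucial numerical input: $(A(K_\lambda)/\gP A(K_\lambda))^{\varepsilon}$ has $\F_p$-dimension one, hence so does $H^1(K_\lambda,A)[\gP]^{\varepsilon}$ by perfection. The nonzero element $\loc_\lambda\gamma$ of this one-dimensional space defines an isomorphism $x\mapsto\langle x,\loc_\lambda\gamma\rangle_{K_\lambda}$ from $(A(K_\lambda)/\gP A(K_\lambda))^{\varepsilon}$ onto $\F_p$, and combining this with the vanishing from the first step forces $\loc_\lambda s=0$.

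The only point that requires genuine checking rather than citation is the $\Ocal$- and $\tau$-equivariance of Tate's local pairing, which is what lets one descend from the $2g$-dimensional duality to a duality between one-dimensional $\varepsilon$-parts of $\gP$-components; once this is in place the result follows by direct combination of global reciprocity with the Kolyvagin prime bookkeeping already gathered in Proposition \ref{propKolprime}.
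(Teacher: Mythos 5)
Your proof is correct and follows essentially the same route as the paper: isolate the $\lambda$-term via global Tate reciprocity, cut down from the full local Tate pairing to the $\gP$-part using the compatibility with the $\Ocal$-action, cut down further to $\varepsilon$-eigenspaces using Galois-equivariance, and then use the one-dimensionality from Proposition \ref{propKolprime} together with $\loc_\lambda\gamma\neq 0$. One small terminological caveat: the pairing takes values in $\F_p$ rather than in $\Ocal/(p)$, so calling it \enquote{$\Ocal/(p)$-bilinear} is a slight abuse; what you actually need and what the Rosati-fixedness of $\Ocal$ gives you is the adjoint property $\langle ax,y\rangle=\langle x,ay\rangle$, from which the orthogonality of distinct $\gP$-parts follows — this is precisely what the paper expresses by saying the pairing is \enquote{inherited from the Weil pairing}.
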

	
	\begin{proof}
		 By its definition, \eqref{tate} comes from the Weil pairing in the sense that the latter induces a cup product 
		\[
		(\cdot,\cdot)_{K_v}: H^1(K_v,A[p]) \times H^1(K_v,A[p]) \ra H^2(K_v,\mu_p) = \Br(K_v)[p],
		\]
		for which $\delta_v(A(K_v)/pA(K_v))$ is isotropic, and the resulting quotiented pairing is exactly $\langle \cdot ,\cdot \rangle_{K_v}$. Now, the so-called  \textit{global Tate duality} states that for any $s \in \Sel_p(K,A)$, $\gamma \in H^1(K,A)[p]$, 
		\[
		\sum_{v \in M_K} \operatorname{inv}_v \langle \delta_v^{-1} \loc_v s, \loc_v \gamma \rangle_{K_v} = 0 \in \Q/\Z,
		\] 
		where $\operatorname{inv}_v : \Br(K_v) \ra \Q/\Z$ is the Brauer invariant isomorphism for all $v$.
		Indeed, let us lift $\gamma$ to $\widetilde{\gamma} \in H^1(K_v,A[p])$, so that for every $v \in M_K$, 
		\[
		\langle \delta_v^{-1} \loc_v s, \loc_v \gamma \rangle_{K_v} = (\loc_v s, \loc_v \widetilde{\gamma})_{K_v} = \loc_{v,\rm{Br}} (s,\widetilde{\gamma})_K
		\]
		with the analogous definition of $(\cdot,\cdot)_{K}$ on $K$, and $\loc_{v,\rm{Br}}: \Br(K) \ra \Br(K_v)$ the usual localisation. Now, by properties of Brauer groups, the sum of $\operatorname{inv}_v \circ \loc_v$ is 0 on $\Br(K)$ hence the formula.
		
		Under our assumptions on $\gamma$ and $s$, we thus have $\loc_\lambda \gamma \neq 0$ and $ \langle \delta_\lambda^{-1} \loc_\lambda s, \loc_\lambda \gamma \rangle_{K_\lambda} = 0$, let us show how this  implies that $\loc_\lambda s =0$.
		 
		 By the original arguments of \cite{Tate58}, the pairing $\langle \cdot, \cdot, \rangle_{K_\lambda}$ is a perfect pairing. Being inherited from the Weil pairing, the $\gP$ and $\gP'$-parts for $\gP \neq \gP'$ are orthogonal, so it induces a duality
		 \[
		 A(K_\lambda) / \gP A(K_\lambda) \times H^1(K_\lambda,A)[\gP] \rightarrow \Z/p\Z.
		 \]
		 Now, it is also invariant by $\Gal(K_\lambda/\Q_\ell)$-action (there is a difference with the Weil pairing here, but it is also inherited from the cup product $(\cdot,\cdot)_{K_\lambda}$), so the $+$ and $-$ spaces on each side are orthogonal. We thus have for $\varepsilon=\pm 1$ a duality
		 \[
		 (A(K_\lambda) / \gP A(K_\lambda))^\varepsilon \times H^1(K_\lambda,A)^\varepsilon[\gP] \rightarrow \Z/p\Z,
		 \]
		 but making use of the fact that $\lambda$ is above a Kolyvagin prime, each space of the duality is thus of dimension 1 over $\F_p$ (Proposition \ref{propKolprime}), and so the pairing can be 0 only if one of the terms is 0, hence $\loc_\lambda s=0$.
	\end{proof}

	\subsection{Construction of the Kolyvagin classes}
	
	Following \cite{KolyvaginLogachev}, one takes the classes $[\ag]$ and prime ideal $\ng$ induced by the choices made in \eqref{eqdefHeegnerx} on orders of $\Ocal_K$, and for any Kolyvagin number $n$, we get Heegner points
	\[
	x_n = (\Z + n \Ocal_K,\ng \cap (\Z + n \Ocal_K),[\ag]), \quad y_n = \pi ((x_n)- (\infty)) \in A(K_n),
	\]
	where by class field theory, $K_n$ is the class ring field of conductor $n$ ($K_1=H$).
	
	The  notation $\lambda_{n,\ell}$ will refer to a choice of prime ideal of $K_n$ above $\ell$ a Kolyvagin prime, consistent in case of towers of extensions, shortened to $\lambda_n$ if there is no doubt on $\ell$.  One has that $G_n := \Gal(K_n/K_1) \cong (\Ocal_K/n \Ocal_K)^* / (\Z/n\Z)^*$ and the following diagrams for $n=\ell m$ by class field theory:
	
	\begin{equation}
	\label{eqdiagsCFT}
	\xymatrix{
& K_n \ar@{-}[ld]^{G_\ell} \ar@{-}[rd]_{G_m} \ar@{-}[dd]^{G_n} & && & \lambda_n \ar@{-}[ld]_{\textrm{tot.ram.}} \ar@{-}[rd]^{\textrm{tot.sp.}} & \\
	K_m \ar@{-}[rd]^{G_m} &  & K_\ell \ar@{-}[ld]^{G_\ell} && \lambda_m \ar@{-}[rd]_{\textrm{tot.sp.}} &  & \lambda_\ell \ar@{-}[ld]^{\textrm{tot.ram.}} \\
	& K_1=H \ar@{-}[d]^{\Cl(K)} & && & \lambda_1 \ar@{-}[d]^{\textrm{tot.sp.}} & \\
	& K \ar@{-}[d]^{\langle 1,\tau \rangle} & && & \lambda \ar@{-}[d]^{\textrm{in.}} & \\
		& \Q & && & \ell &
}
	\end{equation}
In particular, $\F_{\lambda_n} = \F_{\lambda_m}= \F_{\lambda}$, a fact which will be ubiquitous and used without further mention in the end of the argument.
 
The crucial properties of these points (making them a \enquote{Kolyvagin system}) are the following, $\widetilde{A}$ denoting the (good) reduction of $A$ modulo $\ell$ and $\Frob(\ell)$ the associated Frobenius endomorphism on $\widetilde{A}$. 
\begin{Proposition}
	For $n = \ell m$ a Kolyvagin number,
\begin{eqnarray}
\label{eqTryn}
\Tr_{K_n/K_m} y_n & = & [a_\ell(f)] y_m \in A(K_m) \\
\label{eqynmodlambda}
 y_n \mod \lambda_{n} & = & \Frob(\ell) \cdot y_m \, \, {\rm{  in  }} \, \,   \widetilde{A}(\F_{\lambda_n}) = \widetilde{A}(\F_{\lambda}) \\
 \label{eqynconj}
\tau(y_n) & \in & \sigma(y_n) + A(K_n)_{\rm{tors}}
\end{eqnarray}
for some $\sigma \in \Gcal_n := \Gal(K_n/K)$.
\end{Proposition}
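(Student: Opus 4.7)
The plan is to establish each of the three relations first as an identity in $J_0(M)(K_n)$ (or in the special fibre for the Frobenius congruence) and then to push it forward to $A(K_n)$ through $\pi$. The necessary intertwining properties of $\pi$ are \eqref{eqmultHecke}, the identity $\pi\circ w_M=\pi$ which holds because $f_{|w_M}=f$, and the compatibility of $\pi$ with reduction at primes of good reduction. The overall strategy closely follows \cite[\S 3]{Gross89} and \cite[\S 1.7]{KolyvaginLogachev}; the only mild novelty here is that $\End(A)\otimes\Q$ is the totally real field $K_f$ and not $\Q$, so all scalar identities must be read as identities inside $\Ocal$.

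For \eqref{eqTryn} I would use the moduli description of $X_0(M)$: the group $\Gal(K_n/K_m)\cong (\Ocal_K/\ell\Ocal_K)^*/(\Z/\ell\Z)^*$, of order $\ell+1$ since $\ell$ is inert in $\Ocal_K$, permutes cyclic $\ell$-isogenies from the CM curve $E_m$, and its orbit of $x_n$ accounts for all but the single cyclic subgroup corresponding to the natural map $E_m\to E_n$. This exhibits $\Tr_{K_n/K_m}((x_n)-(\infty))$ and $T_\ell((x_m)-(\infty))$ as equal divisor classes on $X_0(M)$. Applying $\pi$ and using \eqref{eqmultHecke} then yields the relation.

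For \eqref{eqynmodlambda}, the key input is the Deuring lifting theorem. Since $\ell$ is inert in $\Ocal_K$, every elliptic curve with CM by the order $\Ocal_n=\Z+n\Ocal_K$ (here $n$ is coprime to $\ell$) has supersingular reduction modulo $\lambda$, and the cyclic $\ell$-isogeny $E_n\to E_m$ reduces to the absolute geometric Frobenius. Transporting this through the moduli interpretation gives $\overline{x_n}=\Frob(\ell)\cdot \overline{x_m}$ in $X_0(M)(\F_\lambda)$, and pushing through $\pi$, which commutes with reduction at $\ell$, gives the claim in $\widetilde{A}(\F_\lambda)$. For \eqref{eqynconj}, complex conjugation swaps the roles of $E$ and $E/C$ and conjugates the ideal class $[\ag]$, so one verifies that $\tau(x_n)=(w_M(x_n))^\sigma$ in $X_0(M)(K_n)$ for some $\sigma\in\Gcal_n$ determined by $[\ag]$. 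Since $\pi\circ w_M=\pi$, this gives $\tau(y_n)=\sigma(y_n)+\pi((\sigma\infty)-(\infty))$, and the Manin--Drinfeld theorem ensures that the cuspidal contribution is torsion in $J_0(M)$, hence has torsion image in $A(K_n)$.

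The main obstacle is \eqref{eqynmodlambda}: identifying the cyclic $\ell$-isogeny between two CM elliptic curves with the absolute Frobenius modulo a supersingular prime requires the Deuring correspondence and a careful distinction between the geometric Frobenius $\Frob(\ell)$ (an $\F_\ell$-linear endomorphism) and the relative Frobenius $\Frob(\lambda)=\Frob(\ell)^2$ at the inert prime $\lambda$; this is what ultimately forces $\Frob(\ell)$ rather than $\Frob(\lambda)$ to appear in the formula. The other two relations are, once the correct moduli framework is in place, essentially formal.
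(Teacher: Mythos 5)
Your outline for \eqref{eqTryn} and \eqref{eqynconj} follows exactly the paper's route, which is simply to cite Gross's moduli-theoretic properties of Heegner points (\cite{Gross84}, (4.1)--(5.2)) together with \eqref{eqmultHecke}, $\pi\circ w_{M*}=\pi$, and Manin--Drinfeld; your expansion of those steps is correct, apart from a small slip in the torsion term for \eqref{eqynconj}: the cuspidal divisor that appears is $\pi\bigl((0)-(\infty)\bigr)=\pi\bigl((w_M\infty)-(\infty)\bigr)$, not $\pi\bigl((\sigma\infty)-(\infty)\bigr)$ (the latter vanishes since $\infty$ is $\Q$-rational).

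For \eqref{eqynmodlambda} you genuinely diverge from the paper. The paper does not invoke Deuring lifting at all: it reduces the already-proved trace identity $\Tr_{K_n/K_m}x_n=T_\ell\cdot x_m$ modulo $\lambda_n$, observes that total ramification of $\lambda_n/\lambda_m$ makes the left side $(\ell+1)\,\widetilde{x}_n$, then applies the Eichler--Shimura congruence $T_\ell\equiv\Frob(\ell)+\widehat{\Frob(\ell)}$ on the special fibre of $X_0(M)$ to force $\widetilde{x}_n=\Frob(\ell)\widetilde{x}_m$. This is essentially bookkeeping once \eqref{eqTryn} is known and sidesteps any supersingular-reduction analysis. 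Your Deuring route (identify the cyclic $\ell$-isogeny $E_n\to E_m$ with Frobenius at an inert prime) is a classical alternative and can be made to work, but it is more delicate: you must argue independently that among the $\ell+1$ cyclic $\ell$-isogenies out of $\widetilde{E}_m$ the reduction of the specific one coming from the order inclusion $\Ocal_n\subset\Ocal_m$ is the Frobenius rather than one of its Verschiebung-like twists, and this is precisely the sort of computation the paper's trace-plus-Eichler--Shimura argument avoids. Note also the parenthetical ``(here $n$ is coprime to $\ell$)'' is wrong as stated: $n=\ell m$ is divisible by $\ell$, so $E_n$ has CM by an order of conductor divisible by $\ell$; what is coprime to $\ell$ is $m$, the conductor of $\End(E_m)$. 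This matters for the Deuring analysis because the two curves have CM by different orders of $\Ocal_K$, and one must be careful about which one is genuinely an ``$\ell$-power Frobenius translate'' of the other. None of this is fatal, but the paper's argument is cleaner precisely because it never needs to untangle it.
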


\begin{proof}
	
	By classical properties of Heegner points (\cite{Gross84}, paragraphs 4 and 5) and class field theory for $K_n/K_m$,
	\begin{equation}
	\label{eqTrxn}
	\Tr_{K_n/K_m} x_n = T_\ell \cdot x_m
	\end{equation} as divisors on $X_0(N)$, which proves \eqref{eqTryn} when combined with \eqref{eqmultHecke}. We obtain \eqref{eqynconj} with the same properties.
	
	Looking at the diagrams \eqref{eqdiagsCFT}, as $\lambda_n/\lambda_m$ is totally ramified, the reduction of the left-hand side of \eqref{eqTrxn} is $(\ell+1) x_n \mod \lambda_n$, and the one of the right-hand side has one term equal to $\Frob(\ell) x_m$ by the Eichler-Shimura relation $T_\ell = \Frob(\ell) + \widehat{\Frob(\ell)}$, so there exists $\sigma \in \Gal(K_n/K_m)$ such that the reduction of $\sigma x_n$ is $\Frob(\ell) \widetilde{x_m}$, but every $\sigma$ reduces to the identity on $\widetilde{A}(\F_\lambda)$ so the equality is true term by term hence \eqref{eqynmodlambda}. See also \cite[Corollaries 2.3.3 and 2.3.4]{KolyvaginLogachev} for the $n=\ell$ case.
\end{proof}
	\begin{Proposition}
		For every Kolyvagin number $n$, one can define in successive order (using the Heegner points $y_m$ for $m|n$):
		
		$\bullet$ A point $P_n \in A(K_n)$ whose class $[P_n] \in A(K_n)/p A(K_n)$ is fixed by $\Gcal_n$ (and $P_1=y_K$).
		
	$\bullet$ The unique class $c(n) \in H^1(K,A[p])$ whose restriction to $H^1(K_n,A[p])^{\Gcal_n}$ comes from $[P_n]$, and its image $d(n)$ in $H^1(K,A)[p]$. They correspond to one another in the following commutative diagram with exact rows and columns
	\begin{equation}
	\label{diagKolclass}
	\xymatrix{
		& &  & \overset{\widetilde{d(n)}}{H^1(K_n/K,A)[p]} \ar[d]^-{\operatorname{inf}} & \\
		0 \ar[r] & A(K)/pA(K) \ar[r]^{\delta} \ar[d] & \overset{c(n)}{H^1(K,A[p])} \ar[d]^{\operatorname{Res}}_{\sim} \ar[r] & \overset{d(n)}{H^1(K,A)[p]} \ar[r] \ar[d]^{\operatorname{Res}} & 0\\ 
		0 \ar[r]  & \overset{[P_n]}{(A(K_n)/pA(K_n))^{\Gcal_n}} \ar[r]^{\delta_n} & H^1(K_n,A[p])^{\Gcal_n} \ar[r] & H^1(K_n,A)^{\Gcal_n}[p]
	}
	\end{equation}
	\end{Proposition}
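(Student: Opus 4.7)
The plan is to adapt the classical Kolyvagin derivative construction to the present setting, then transfer the resulting class to $H^1(K,A[p])$ via inflation--restriction. For each Kolyvagin prime $\ell$, fix a generator $\sigma_\ell$ of the cyclic group $G_\ell := \Gal(K_\ell/K_1)$ of order $\ell+1$ and set
$$D_\ell := \sum_{i=1}^{\ell} i\,\sigma_\ell^i \in \Z[G_\ell], \qquad D_n := \prod_{\ell \mid n} D_\ell \in \Z[G_n], \quad G_n := \Gal(K_n/K_1).$$
For each coset of $\Gcal_n/G_n = \Cl(K)$ pick a lift $\widetilde\sigma \in \Gcal_n$, and define
$$P_n := \sum_{\widetilde\sigma} \widetilde\sigma\, D_n y_n \in A(K_n),$$
so that $P_1 = \Tr_{K_1/K} y_1 = y_K$ as required.

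The core identity is $(\sigma_\ell - 1)D_\ell = (\ell+1) - N_\ell$, where $N_\ell$ is the norm of $G_\ell$. Combined with \eqref{eqTryn}, which gives $N_\ell y_n = [a_\ell(f)]\, y_{n/\ell}$, this yields, for each $\ell \mid n$,
$$(\sigma_\ell - 1)\,D_n y_n = (\ell+1)\,D_{n/\ell} y_n - [a_\ell(f)]\,D_{n/\ell} y_{n/\ell}.$$
Both terms lie in $p A(K_n)$ thanks to $p \mid \ell+1$ and $p \mid a_\ell(f)$ in $\Ocal$ (Proposition \ref{propKolprime}), so $[D_n y_n]$ is $G_n$-fixed modulo $p$. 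Termwise, the sum defining $P_n$ is therefore independent modulo $pA(K_n)$ of the chosen lifts $\widetilde\sigma$, and the resulting class $[P_n]$ is $\Gcal_n$-invariant: the $G_n$-part by the calculation just performed, the $\Cl(K)$-part by construction of the sum.

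To produce the cohomology classes, I would invoke the inflation--restriction sequence
$$0 \to H^1(\Gcal_n, A[p](K_n)) \to H^1(K,A[p]) \xrightarrow{\mathrm{Res}} H^1(K_n,A[p])^{\Gcal_n} \to H^2(\Gcal_n, A[p](K_n)).$$
For $p$ chosen large enough, $A[p](K_n) = 0$: Proposition \ref{propresmorph}(a) forces the image of $\GalQ$ in $\GL_{\F_p}(A[\gP])$ to have trivial commutant, and since $[K_n:\Q]$ is bounded in the range where the construction is invoked, no non-trivial $K_n$-rational $\gP$-torsion point survives for $p\gg 0$. Hence $\mathrm{Res}$ is an isomorphism; define $c(n) \in H^1(K,A[p])$ as the unique preimage of $\delta_n([P_n])$, and $d(n)$ as its image in $H^1(K,A)[p]$. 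The bottom row of the diagram is the Kummer sequence modulo $p$ over $K_n$ after taking $\Gcal_n$-invariants (exact because $A(K_n)[p]=0$); the top row is the Kummer sequence over $K$; commutativity is naturality. The class $\widetilde{d(n)} \in H^1(K_n/K,A)[p]$ inflating to $d(n)$ exists because $\mathrm{Res}(d(n))$ is the image of $\delta_n([P_n])$ in $H^1(K_n,A)$, which vanishes by exactness.

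The main obstacle is not any single step but the coordination between them: one must ensure that the $\Cl(K)$-averaging really produces a class fixed by all of $\Gcal_n$ and not merely up to a permutation of the lifts $\widetilde\sigma$, which forces one to check that the identity $(\sigma_\ell - 1)D_n y_n \in p A(K_n)$ is uniform across the sum. This in turn relies on the uniform absence of $p$-torsion in $A(K_n)$, itself guaranteed only once $p$ is chosen sufficiently large relative to all input data (the index $[K_n:\Q]$ and the polarisation degree). Given the congruences of Proposition \ref{propKolprime} and the irreducibility statement of Proposition \ref{propresmorph}(a), the construction is otherwise essentially formal.
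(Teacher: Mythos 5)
Your construction of $P_n$ via Kolyvagin derivative operators, and the use of the congruences $p\mid a_\ell(f)$, $p\mid \ell+1$, matches the classical argument (Gross's survey, (3.5)--(4.1)) that the paper delegates to. The identification of the crucial point --- that the restriction map $H^1(K,A[p]) \to H^1(K_n,A[p])^{\Gcal_n}$ is an isomorphism, so that $c(n)$ can be uniquely descended from $\delta_n([P_n])$ --- is also correct, and both you and the paper reduce this to showing $A[p](K_n)=0$.

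However, your justification of $A[p](K_n)=0$ has a genuine gap. You write that ``$[K_n:\Q]$ is bounded in the range where the construction is invoked'' and then take $p\gg 0$. This is false and logically circular: the Kolyvagin primes $\ell$ dividing $n$ are produced \emph{after} $p$ is fixed, via Cebotarev applied to the field $L_S$ which already depends on $p$. Moreover, by Proposition \ref{propKolprime} every Kolyvagin prime $\ell$ satisfies $p\mid \ell+1$, so $[K_n:K_1] = \#G_n$ is divisible by $p^{\omega(n)}$ (where $\omega(n)$ is the number of prime factors of $n$); the degrees $[K_n:\Q]$ therefore grow \emph{with} $p$ and with the Kolyvagin number, and no ``bounded range'' exists. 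One cannot choose $p$ large relative to data that is itself constructed from $p$.

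The correct argument, which the paper uses, is a linear-disjointness argument: $K_n/\Q$ is unramified outside $D_K n$, while $\Q(A[p])/\Q$ is unramified outside $Mp$, and these sets of primes are disjoint by construction (the $\ell\mid n$ are Kolyvagin primes, hence coprime to $D_K M p$). Therefore $K_n$ and $\Q(A[p])$ are linearly disjoint over $\Q$, so $\Gal(K_n(A[p])/K_n)\cong\Gal(\Q(A[p])/\Q)$, and the absolute irreducibility of $A[\gP]$ under $\GalQ$ (Proposition \ref{propresmorph}(a)) forces $A[p]^{\Gal(\overline\Q/K_n)}=A[p](K_n)=0$. This holds uniformly in $n$, with no constraint beyond the choice of $p$ made once at the start. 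Your appeal to the ``trivial commutant'' given by Schur's lemma is also not what is needed here: irreducibility of the $\GalQ$-action does not by itself preclude fixed vectors after restricting to $\Gal(\overline\Q/K_n)$; it is precisely the linear disjointness that transports irreducibility down to $K_n$.
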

	
	\begin{proof}
		The construction and properties of $P_n$ proceeds exactly as in (\cite{Gross89}, (3.5) to (4.1)). The only nontrivial thing to prove (to define $c(n)$ from $[P_n]$)is that the central row of \eqref{diagKolclass} is an isomorphism. The extension $K_n/\Q$ is unramified outside primes dividing $D_K n$, and the extension $\Q(A[p])/\Q$ is unramified outside primes dividing  $Mp$, so as $D_Kn$ and $pM$ are coprime by construction, these extensions are linearly disjoint. In particular, $K_n(A[p])/K_n$ has Galois group isomorphic to $\Gal(\Q(A[p])/\Q)$ and thus no fixed point in $A[p]$ by Proposition \ref{propresmorph} $(a)$. The isomorphism follows by (\cite{Gross89}, (4.2)). 
	\end{proof}
	These points enjoy a wealth of very strong properties detailed below.
	
	\begin{Proposition}
		\label{propclassesKol}
		For every Kolyvagin number $n$:
		
		$(a)$
			$[P_n]$ (resp. $c(n),d(n)$) lives in the $\mu(n)$-eigenspace of $A(K_n)/pA(K_n)$ (resp. $H^1(K,A[p])$, $H^1(K,A)[p]$), where $\mu(n)$ is the Moebius function.
			
			$(b)$ The class $c(n)_\gP \in H^1(K,A[\gP])$ (resp. $d(n)_\gP  \in H^1(K,A)[\gP]$) is trivial if and only if $P_n \in \gP A(K_n)$ (resp. $\gP A(K_n) + A(K)^{\mu(n)}$). 
			
			$(c)$ For every place $v$ of $K$, the class $\loc_v d(n)$ is trivial except if $v|n$.
			
			$(d)$ If $n=\ell m$ and $\lambda = \ell \Ocal_K$, the class $\loc_{\lambda} d(n)_\gP$ is trivial if and only if $P_m \in \gP A(K_{\lambda_m})$ if and only if $\loc_\lambda c(m)_\gP = 0$.
	\end{Proposition}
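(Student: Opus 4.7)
My approach would be to introduce Kolyvagin's derivative operators explicitly and then verify each part in turn, since (a)--(d) all reduce to manipulations with these operators combined with the Heegner point identities already established.

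The first step is to write $P_n$ explicitly as a derivative. For each Kolyvagin prime $\ell \mid n$ pick a generator $\sigma_\ell$ of $G_\ell \simeq \Z/(\ell+1)\Z$, set $D_\ell := \sum_{i=0}^{\ell} i\,\sigma_\ell^i$, and $D_n := \prod_{\ell \mid n} D_\ell$; then $P_n$ is represented by $D_n y_n \in A(K_n)$. The telescoping identity $(\sigma_\ell - 1)D_\ell = (\ell+1) - \mathrm{N}_{G_\ell}$ together with \eqref{eqTryn} and the congruences $p \mid \ell+1$, $p \mid a_\ell(f)$ of Proposition \ref{propKolprime} shows that $[P_n]$ is fixed by $\Gcal_n$ in $A(K_n)/pA(K_n)$. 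This is the standard setup carried over to the $\Ocal$-module setting.

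For part (a), I would use that $\tau$ conjugates elements of $\Gcal_n$ to their inverses (since $K$ is imaginary quadratic) together with the elementary congruence $D_\ell + \sigma_\ell^{-1} \tau D_\ell \tau^{-1} \equiv -\mathrm{N}_{G_\ell} \pmod{(\sigma_\ell - 1)}$ applied in the group ring. Combined with \eqref{eqynconj}, this inductively yields $\tau \cdot D_n y_n \equiv (-1)^{\omega(n)} D_n y_n$ modulo $pA(K_n)$, giving the $\mu(n)$-eigenspace property for $[P_n]$, and hence for $c(n)$ and $d(n)$ via the restriction isomorphism in \eqref{diagKolclass}. Part (b) is then a formal consequence of applying $(\cdot)_\gP$ to \eqref{diagKolclass}: the injectivity/isomorphism of the restriction on $\gP$-components gives the equivalence for $c(n)_\gP$, and the bottom row for $d(n)_\gP$ forces the correction term to live in $A(K)^{\mu(n)}$ because of part (a). Part (c) is a local computation: $c(n)$ is the inflation of a class in $H^1(K_n/K,A[p])$ coming from $P_n \in A(K_n)$, so at a place $v$ of $K$ with $v \nmid n$ and $v$ of good reduction for $A$, the extension $K_{n,v}/K_v$ is unramified, so $\loc_v c(n)$ lies in the unramified subgroup which equals $\delta_v(A(K_v)/pA(K_v))$; this kills $\loc_v d(n)$. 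At primes of bad reduction and at primes above $p$, the coprimality assumptions $(n,Mp)=1$ ensure the relevant extensions decouple so the same argument applies, using the $\Ocal$-flat structure for the $\gP$-components.

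The main obstacle, and the last step, is part (d), which encodes the entire ``Euler system'' flavour of the argument. The strategy is to apply $D_m$ to the congruence \eqref{eqynmodlambda} in order to get $P_n \equiv \Frob(\ell) \cdot D_m y_m \pmod{\lambda_{K_n}}$, then trace through the snake-lemma construction of $d(n)$ to identify $\loc_\lambda d(n)_\gP$ with the evaluation at $\Frob(\ell)$ of a cocycle built from this reduction. Here one has to use that, by Proposition \ref{propKolprime}, $\Frob(\ell)$ swaps the $\pm$-eigenlines of $\widetilde{A}(\F_\lambda)[\gP]$ (its characteristic polynomial on $\widetilde{A}[\gP]$ is $X^2 - 1$, but its action on the Weil pairing forces the two eigenlines to be exchanged by the polarisation), so that $\loc_\lambda d(n)_\gP$ vanishes precisely when $\Frob(\ell) \widetilde{P_m}$ lies in $\gP \widetilde{A}(\F_\lambda)$, equivalently when $P_m \in \gP A(K_{\lambda_m})$. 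The last equivalence with $\loc_\lambda c(m)_\gP = 0$ follows from part (b) applied locally (using that the relevant local cohomology of $A[\gP]$ at $\lambda$ is all unramified by Proposition \ref{propKolprime}). The technical heart is keeping track of the Galois action on $\Frob(\ell)$-eigenspaces in the $\gP$-component, which does not quite parallel the elliptic curve case because we do not have surjectivity of the mod-$\gP$ representation and must lean on Proposition \ref{propresmorph} together with the Weil pairing structure of Lemma \ref{lemWeil}.
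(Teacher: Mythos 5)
Your overall plan — unwind the definition of $P_n$ via Kolyvagin derivative operators, get (a) from the group-ring identity combined with \eqref{eqynconj}, get (b) by tensoring the diagram \eqref{diagKolclass} with $\Ocal/\gP$ using flatness, get (c) from the unramified local condition at places $v\nmid n$, and get (d) from \eqref{eqynmodlambda} via a local computation — is the same architecture the paper uses, except that the paper delegates (a) and (c) to \cite{Gross89} and writes out (b) and (d) explicitly.

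There is, however, a genuine error in your part (d). You assert that by Proposition \ref{propKolprime} the Frobenius $\Frob(\ell)$ \emph{swaps} the $\pm$-eigenlines of $\widetilde{A}(\F_\lambda)[\gP]$, invoking the Weil pairing and the polarisation to justify this. That is incorrect: Proposition \ref{propKolprime} says the characteristic polynomial of $\Frob(\ell)$ on $\widetilde{A}[\gP]$ is $X^2-1 \bmod p$, so $\Frob(\ell)$ is semisimple with eigenvalues $+1$ and $-1$, hence it \emph{fixes} each eigenline setwise (acting by $\pm 1$ on it) rather than swapping them — this is precisely what Lemma \ref{lemWeil} and the decomposition $\widetilde{A}[\gP]=\widetilde{A}[\gP]^+\oplus\widetilde{A}[\gP]^-$ record. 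The Weil-pairing argument you sketch (isotropy forcing the polarisation to exchange eigenlines) describes an orthogonality between the two eigenlines under the pairing, not a swap by $\Frob(\ell)$. Your ultimate conclusion survives only because $\Frob(\ell)$ is an automorphism, so $\Frob(\ell)\widetilde{P_m}\in\gP\widetilde{A}(\F_\lambda)$ iff $\widetilde{P_m}\in\gP\widetilde{A}(\F_\lambda)$ regardless of eigenline bookkeeping; but the reasoning offered does not establish the identification. The paper does something more careful: it introduces the chain of injections \eqref{eqseqH1red}, where the first map $\operatorname{red}$ evaluates a cocycle at a generator of the \emph{totally ramified} decomposition group $D=\Gal((K_n)_{\lambda_n}/K_\lambda)$ (not at $\Frob(\ell)$, as you suggest), computes $\operatorname{red}(\loc_\lambda d(n))=((\ell+1)\Frob(\ell)-[a_\ell(f)])\widetilde{R_m}$ with $\widetilde{R_m}$ a $p$-th root of $\widetilde{P_m}$, and then identifies this, after applying $\Frob(\ell)$ and the explicit description of $\iota$, with $-\Frob(\ell)^{-1}\widetilde{P_m}$. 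Your sketch of ``tracing through the snake lemma'' elides exactly this step, which is the technical heart of (d).

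One smaller slip in (a): the group-ring identity you need is $D_\ell+\sigma_\ell^{-1}\tau D_\ell\tau^{-1}=\ell\,\mathrm{N}_{G_\ell}$, an exact identity in $\Z[G_\ell]$; the congruence you want is then $\equiv -\mathrm{N}_{G_\ell}\pmod{p}$ using $p\mid \ell+1$, not modulo $(\sigma_\ell-1)$ as written.
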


\begin{proof}
	$(a)$ for $[P_n]$ is inherited from \eqref{eqynconj} by the construction of $P_n$ (see Proposition 5.4 of \cite{Gross89}), and deduced for $c(n)$, $d(n)$ by $\tau$-equivariance of the morphisms of \eqref{diagKolclass}. 
	
	$(b)$ is obtained by tensoring \eqref{diagKolclass} by $\Ocal/\gP$, which preserves exactness by flatness and $[P_n]$ seen in $A(K_n)/pA(K_n) \otimes \Ocal/\gP$ is exactly the image of $P_n$ in $A(K_n)/\gP A(K_n)$. The proof of $(c)$ is given by Proposition 6.2 of \cite{Gross89}. 
	
	For $(d)$, define $D=\Gal((K_n)_{\lambda_n}/K_\lambda)$, which is cyclic generated by some $\sigma_\ell$. We thus have injective arrows (defined below)
	\begin{equation}
	\label{eqseqH1red}
	H^1(D, A)[p] \overset{\operatorname{red}}{\hookrightarrow} \widetilde{A}(\F_\lambda)[p] \cong H^1(\F_\lambda,\widetilde{A}[p]) \overset{\iota}{\hookleftarrow} \widetilde{A}(\F_\lambda)/p \widetilde{A}(\F_\lambda) 
	\end{equation}
	where for a cocycle $c  \in Z^1(D, A)$,  $\operatorname{red}(c) = c(\sigma_\ell) \mod \lambda_n$, and invariant up to coboundary because $K_n/K_m$ is totally ramified at $\lambda_m$, so $\operatorname{red}$ is well-defined. As $A^1((K_n)_{\lambda_n})$ is a pro-$\ell$-group, $H^1(D,A^1)[p]=0$ which proves that  $\operatorname{red}$ is injective. The map $\iota$ is the quotiented connecting homomorphism, automatically injective. As  $\widetilde{A}(\F_\lambda)$ is a finite abelian group, the orders of $\widetilde{A}(\F_\lambda)[p]$ and $ \widetilde{A}(\F_\lambda)/p \widetilde{A}(\F_\lambda) $ are readily seen to be equal so $\iota$ is also an isomorphism. By (\cite{Gross89}, Proposition 6.2 (2)), the image of $\loc_\lambda d(n)$ in $\widetilde{A}(\F_\lambda)[p]$ by $\operatorname{red}$ is
	\[
	((\ell+1) \Frob(\ell) - [a_\ell(f)]) \cdot \widetilde{R_m}, 
	\]
	where $\widetilde{R_m}$ is any choice of $p$-th root of $\widetilde{P_m}$ in $\widetilde{A}$. By the proof of Proposition \ref{propKolprime}, its image by $\Frob(\ell)$ is then 
	\[
	\ell(\Frob(\ell)^2 - \Id) \widetilde{R_m} = - (\Frob(\ell)^2 - \Id) \widetilde{R_m},
	\]
	but the injection $\iota$ from \eqref{eqseqH1red} is explicitly given by taking a $p$-th root and applying $(\Frob(\ell)^2 - \Id)$, as $\Frob(\ell)^2 = \Frob(\lambda)$ (\cite{KolyvaginLogachev}, Lemma 3.4.2 for details). The image of $\loc_\lambda d(n)$ in $\widetilde{A}(\F_\lambda)/p \widetilde{A}(\F_\lambda)$ via \eqref{eqseqH1red} is thus exactly $-\Frob(\ell)^{-1} \cdot \widetilde{P_m}$, and its $\gP$-part is trivial if and only if the $\gP$-part of $\widetilde{P_m}$ is. Finally,  $A^1(K_{\lambda_m})$ is $p$-divisible hence the equality of $\Ocal/(p)$-modules $A(K_{\lambda_m})/pA(K_{\lambda_m}) \cong \widetilde{A}(\F_\lambda)/p\widetilde{A}(\F_\lambda)$, so finally $\loc_\lambda d(n)_\gP$ is trivial if and only if $[P_m] \in A(K_{\lambda_m})/pA(K_{\lambda_m})[\gP]$, which is equivalent to $P_m \in \gP A(K_{\lambda_m})$ and the equivalence in terms of $c(m)$ is straightforward. 
\end{proof}

\subsection{End of the proof}

Let $S = \Sel_\gP(K,A)$. By \eqref{eqhypy1}, $P_1 = y_K \notin \gP A(K)$, hence it defines a nonzero $s_K:=c(1) \in S^+$ (Proposition \ref{propclassesKol} $(a)$). Fixing $s \in S$, for every $h \in H_S$, by Cebotarev density theorem, there is a prime ideal $\Lcal$ such that $(\Lcal,L_S/\Q) = \tau h$, and by Proposition \ref{propLcaleval},
\[
[s,(\tau h)^2]_S = \loc_\lambda s ( \Frob(\lambda))
\]
where $\lambda$ is the prime ideal of $K$ below $\Lcal$, and above $\ell$ which is a Kolyvagin prime. Outside of $I_S^+$ (defined as the $+$-part of the orthogonal of $s_K$), this formula proves that $\loc_\lambda s_K \neq 0$, so $\loc_\lambda d(\ell)_\gP \neq 0$  and all other localisations of $d(\ell)_\gP$ are trivial by Proposition \ref{propclassesKol}. By Proposition \ref{propTate}, if $s \in S^-$, $ \loc_\lambda s = 0$ so $[s,(\tau h)^2]_S = 0$, hence $S^-=0$ by Lemma \ref{lemortho}. 

Now, consider $s \in S^+$ such that for some $\Lcal$ as above (fixed, so it fixes $\lambda$ and $h$ above), $\loc_\lambda s=0$. We have $\loc_\lambda s_K \neq 0$ by hypothesis on $h$, so in turn $\loc_\lambda d(\ell)_\gP \neq 0$ by Proposition \ref{propclassesKol} $(d)$ and $c(\ell)_\gP$ does not belong to $S$. By the perfect pairing result of Lemma \ref{lemperfdual} applied to $\langle S,c(\ell) \rangle$if $(\tau h)^2 \notin I_S^+$, the extensions $L_S$ and $L_{\langle c(\ell) \rangle}$ are linearly disjoint over $L$, which allows, for any $h' \in H_S$, to choose $\Lcal'$ a prime ideal of $L_S L_{\langle c(\ell) \rangle}$ whose Frobenius restricted to $L_S$ is $\tau h'$ and whose Frobenius restricted to  $L_{\langle c(\ell) \rangle}$ is of the shape $\tau h_0$ and \textit{not} orthogonal to $c(\ell)_\gP$. Denoting $\ell'$ the corresponding Kolyvagin prime and $\lambda'$ the ideal of $\Ocal_K$, we thus have 
\[
[c(\ell)_\gP, (\tau h_0)^2] = \loc_{\lambda'} c(\ell)_\gP (\Frob(\lambda')),
\]
this formula being legitimate because $\loc_{\lambda'}(d(\ell)_\gP) = 0$ by Proposition \ref{propclassesKol} $(c)$. All this proves that $ \loc_{\lambda'} c(\ell)_\gP \neq 0$ so $\loc_{\lambda'} d(\ell \ell')_\gP \neq 0$ by Proposition \ref{propclassesKol} $(d)$, and it belongs to $H^1(K,A)^+[\gP]$. Now, for our $s$ above, the global Tate duality between $s$ and $d(\ell \ell')$ in the proof of Proposition \ref{propTate} has two possible nonzero terms (in $\lambda$ and $\lambda'$ ), but by hypothesis $\loc_\lambda s=0$ so the $\lambda'$-term is alone, therefore 0 as well. This implies by Proposition \ref{propTate} that $\loc_{\lambda'} s = 0$ for all such $\lambda'$, therefore $s=0$ in this case by Lemma \ref{lemortho}.

Finally, for $s \in S^+$, as $\loc_\lambda s_K \neq 0$ and the space $(A(K_\lambda)/\gP A(K_\lambda))^+$ is one-dimensional (Proposition \ref{propKolprime}), there is $k \in \Z$ such that $s - k s_K$ satisfies the previous hypothesis and then $s=k s_K$, so we have proved that $S^+ = \langle s_K \rangle$.

	\bibliographystyle{alphaSLF}
	\bibliography{bibdump}

\end{document}